\documentclass[12pt,a4paper,reqno]{amsart}

\usepackage[utf8]{inputenc}
\usepackage[english]{babel}

\usepackage{amssymb}
\usepackage{amsthm}
\usepackage{amsmath}
\usepackage{mathrsfs}
\usepackage{mathtools}
\usepackage{xcolor}
\usepackage{graphicx}
\usepackage{tikz}
\usetikzlibrary{arrows.meta, positioning}
\usepackage{caption}
\usepackage{esint}
\usepackage{stix}
\usepackage{indentfirst}
\usepackage{enumitem}
\usepackage{hyperref}
\hypersetup{
	colorlinks=true,
	linkcolor=blue,
	urlcolor=blue,
	filecolor=blue,
	citecolor=blue,
}
\theoremstyle{plain}
\newtheorem{theorem}{Theorem}
\newtheorem{proposition}{Proposition}[section]
\newtheorem{lemma}[proposition]{Lemma}
\newtheorem{corollary}[proposition]{Corollary}

\newtheorem{remark}{Remark}[section]

\numberwithin{equation}{section}

\begin{document}
	
\title{Complete Rigidity at infinity and Existence of the Levinson Cavity}

\author{Yan Li}
\address{School of Mathematics and Statistics, Ningbo University, Ningbo, Zhejiang, 315000, China.}
\email{liyan5@nbu.edu.cn}

\author{Sitan Lin}
\address{School of Mathematical Sciences, Institute of Natural Sciences, Shanghai Jiao Tong University, Shanghai 200240, China.}
\email{lin\_sitan@sjtu.edu.cn;linst3@alumni.sysu.edu.cn}

\author{Georg S. Weiss}
\address{University of Duisburg-Essen}
\email{georg.weiss@uni-due.de}

\author{Chunjing Xie}
\address{School of Mathematical Sciences,  Ministry of Education Key Laboratory of Scientific and Engineering Computing, CMA-Shanghai, Shanghai Jiao Tong University, Shanghai 200240, China.}
\email{cjxie@sjtu.edu.cn}

\keywords{free boundary, Neumann-type Bernoulli problem, asymptotic shape, cavity, poetential theory, rigidity at infinity}
\subjclass[2020]{35R35, 35B40, 35B07, 31B10, 76B10}
	
	\begin{abstract}
		We present a {\em potential theoretic approach reducing the analysis of the asymptotic shape of free surfaces to the 
	        analysis of a precise ordinary differential equation resulting from the reduction process}. Although the approach relies mainly on the principal part of the PDE operator to allow for a representation formula and is thus not restricted to problems of elliptic type, we present it at the clean-cut example of {\em three-dimensional axially symmetric steady incompressible cavity flows}, which are Neumann-type Bernoulli free boundary problems and for which frequency formulas are unknown and, if they do exist, insufficient to yield the very precise asymptotic behavior we prove here.
	        
	        In 1946 Norman Levinson derived by a power-law ansatz with a slowly varying correction a precise formula for the asymptotic shape of such cavities. However his result requires very strong assumptions such that it has remained an open problem for 80 years whether the cavity solutions we know to exist by a result by Garabedian-Lewy-Schiffer \cite{GLS_1952} actually share this asymptotic behavior, or whether at least one solution possessing the Levinson asymptotics exists.
	        
	        Here we answer both questions affirmatively, and we obtain {\em complete rigidity at infinity} of the Levinson solution in the class of axially symmetric solutions, that is, any solution satisfying mild and natural assumptions at the fixed boundary and infinity converges asymptotically to the Levinson profile $(\log r)^{-1/4}\sqrt{r}$. 	\end{abstract}

        \maketitle
    
	\tableofcontents
	
	\section{Introduction}

	\subsection{The Potential Theoretic Reduction to ODE approach}
	
	The analysis of the asymptotic behavior at singularities and at infinity ---the two of which are actually closely related--- in problems with {\em free surface} is important both from the perspective
of natural sciences and from a mathematical perspective.
Examples are the Euler equations with free surface
(including {\em jets} and {\em cavities} in potential flow
but also {\em standing water waves}), the {\em two-phase Stefan problem},
the {\em Muskat problem}, the {\em Mumford-Shah problem}, but also 
singularities less known in the analysis of partial differential equations  like the {\em Taylor cone} singularity in 
the {\em Electro-Hydrodynamic Equations}.

Let us focus here on results analyzing unknown singularities/unknown blow-down profiles rather than results proving the existence of a solution with a certain asymptotic behavior, although both types of methods may be related and complement each other. While drawing on geometric analysis, tremendous progress has been made
analyzing unknown singularities/unknown blow-downs in problems where a {\em monotonicity and/or frequency formula} is available (see for example
\cite{bonnet,efw,figalli2,figalli1,garofalo_lin,PSU,vw1} and references therein),
much less is known in problems where such  
monotonicity or frequency formulas seem unavailable like
the two-phase Stefan problem, standing water waves, the Muskat problem,
the Electro-Hydrodynamic Equations equations etc. One may refer to \cite{SJWu, cordoba,stefan, ehd} and references therein for the progress on these problems.

Moreover, problems in which the free surface is not a {\em level set},
seem to be notoriously hard.

However, {\em potential theoretic methods} can actually help
in these problems, provided that the principle part of the operator
(or at least some part of it) 
has a Green's function of which sufficient information is available.
Our approach draws on Section~4 of the result by Eberle-Figalli-Weiss \cite{efw},
in which the {\em generalized Newtonian potential}
has been applied to a point on the ``asymptotic axis of symmetry''
to obtain for the section of coincidence set and its volume
 the integral inequality, 
from which the authors deduce a sharp growth bound from above of the coincidence set.

\begin{center}
\begin{figure}
\resizebox{0.8\textwidth}{!}{%
\begin{tikzpicture}[
  node distance=1.15cm,
  box/.style={
    draw,
    rounded corners,
    align=center,
    text width=20cm,
    inner sep=6pt
  },
  arrow/.style={-{Latex[length=2mm]}, thick}
]

\node[box] (p1) {1. Identification of the part of the PDE operator to which a Green-type representation formula is applied. Choice of the correct Generalized Potential Kernel. The representation formula is then (differently from \cite{efw}) evaluated at a point \textbf{on the free surface}.};

\node[box, below=of p1] (p2) {2. Determination of the correct size of the \textbf{transition window} around the singularity of the kernel.};

\node[box, below=of p2] (p3) {3. Careful estimates of the \textbf{transition window integral, the tail integral and the head integral}.};

\node[box, below=of p3] (p4) {4. Derivation of an \textbf{integral inequality} for the surface (in the non-axially-symmetric case consider the outer/inner axially symmetric hull or some mean value).};

\node[box, below=of p4] (p5) {5. Proof of a \textbf{one-sided growth estimate} based on analysis of the asymptotic behavior of the corresponding ODE.};

\node[box, below=of p5] (p6) {6. If need be, use of the growth estimate in order to improve the integral estimates above, in particular the tail estimate.};

\node[box, below=of p6] (p7) {7. Derivation of a \textbf{perturbed integral equality}.};

\node[box, below=of p7] (p8) {8. Analysis of the corresponding \textbf{Ordinary Differential Equation} (typically of second order) which may include analysis of slow movement close to the center manifold.};

\node[box, below=of p8] (p9) {9. \textbf{Tauberian step} or, if necessary, Tauberian theory to bridge the gap between the perturbed integral equality and the (unperturbed) ODE.};

\node[box, below=of p9] (p10) {10. \textbf{Precise growth estimate} of the free surface from above and below};

\draw[arrow] (p1) -- (p2);
\draw[arrow] (p2) -- (p3);
\draw[arrow] (p3) -- (p4);
\draw[arrow] (p4) -- (p5);
\draw[arrow] (p5) -- (p6);
\draw[arrow] (p6) -- (p7);
\draw[arrow] (p7) -- (p8);
\draw[arrow] (p8) -- (p9);
\draw[arrow] (p9) -- (p10);

\draw[arrow]
  (p6.west) -- ++(-1.2,0)
  |- (p3.west);

\end{tikzpicture}
}
\caption{Potential Theoretic Reduction to ODE}\label{method}
\end{figure}
\end{center}

Although the approach introduced in the present paper, sketched in Figure \ref{method},
shares with that of \cite{efw} the idea of using the singularity in the potential expansion in order to derive a differential inequality,
there are major differences which make our approach
versatile and robust. First, the two-stage/iterative character
is vital in obtaining the precise ordinary differential equation
in a rigorous way. Second, in Section~4 of \cite{efw} the expansion point in the representation formula is chosen on the ``asymptotic axis of symmetry'' which simplifies calculations significantly and yields only a differential inequality and a growth estimate from above, while the growth estimate from below turns out to be much more subtle in all problems we have so far applied the approach to. Moreover, the different potential in the current paper
(while \cite{efw} uses a generalized Newtonian potential, the potential we use in the present paper is neither a generalized Newtonian potential nor a single-layer or double-layer potential but of mixed/combined type)
and its nonlocal character lead ---when carrying out consistently the approach in Figure \ref{method}---
quickly to an analysis in the present paper which is quite different from
that in Section~4 of \cite{efw}. 

Apart from the successful application in the present paper,
Dennis Kriventsov and the third author have applied the method successfully
in \cite{preprint_stef} (work in preparation)
to the 
{\em two-phase Stefan problem}, a parabolic free boundary problem, in which the
free boundary condition is considered to be of hyperbolic type.
It may be worth mentioning that in case of the Stefan problem, 
the principal part of the PDE operator is {\em truly quasilinear}
and does as a whole not admit a representation formula, and 
the desired
growth estimate follows only after a careful {\em center manifold analysis}.

We conjecture that the method ---apart from being more precise than
for example an analysis by frequency formulas--- is so general and robust
that it is applicable to the analysis of various (so far inaccessible) singularities and blow-downs in PDE of different or even mixed type and in 
integro-differential equations,
that it can be extended to non-axially-symmetric settings, and
that it may even be used to show flatness-implies-regularity 
and certain De Giorgi conjectures. Another significant point of our
approach is that while an underlying ODE may also be present in matched-asymptotic existence proofs or behind power-law ansatz results, its role is often implicit, and the corresponding dynamical mechanism is not made explicit. By contrast the ODE \eqref{ODE} in our approach is explicit and tangible.

\subsection{Cavity flows}
    
Steady cavity flows have long been a classical topic in fluid mechanics (\cite{BZ_1957,Gilbarg_1960}). Physically, a cavity appears in the low-pressure region behind a rigid body when the relative velocity of the fluid is sufficiently high. The boundary separating cavity and fluid is a free surface, the shape of which is not known a priori and must be determined as part of the solution. For inviscid incompressible flows without vorticity, the cavity problem reduces to a nonlinear free boundary problem for a harmonic function, namely the velocity potential, subject to a Neumann condition on the fixed boundary and overdetermined boundary conditions on the unknown free surface. The problem is of fundamental importance not only for its engineering applications, e.g. hydrofoil design, propeller performance, and high-speed underwater vehicles, but also for its role as a prototype free boundary problem in continuum mechanics. Understanding the asymptotic shape of the cavity is particularly important as it gives information on the global structure of the flow and is closely linked to the computation of hydrodynamic drag exerted on the body (\cite{Wu_1972}).

Early mathematical results relied on the hodograph method and conformal mappings. Using such complex analysis  techniques, the existence of two-dimensional infinite cavity flows past a curved body was established, see \cite{BZ_1957}. Moreover, it is known that the free boundary grows at most like the square root of the downstream distance, see \cite[Section 25]{Gilbarg_1960}. 

The axially symmetric cavity problem is considerably more challenging, as conformal mapping techniques are no longer available, 
and (see below) the cavity is not a paraboloid as in the two-dimensional cavity setting or the obstacle problem.
For incompressible cavity flows without vorticity, the velocity potential $\phi$ solves the following Neumann-type Bernoulli problem
    \begin{equation}\label{eq:cavity_pde}
    \begin{cases}\begin{split}
        &\Delta\phi = 0 && \text{in } D,\\
		&\partial_{\nu}\phi = 0 && \text{on } N \cup \Gamma,\\
		&\lvert\nabla \phi\rvert = 1 && \text{on } \Gamma,
    \end{split}\end{cases}
    \end{equation}
    where $D\subset \mathbb R^3$ is the flow region, $N$ is the fixed  boundary, $\Gamma$ is the free boundary, $\nu$ and $s$ are the unit outward normal and unit tangent on $\partial D:=N\cup\Gamma$, respectively (see Figure \ref{Fig:1}). 
	\begin{center}
		\includegraphics[height=4.5cm]{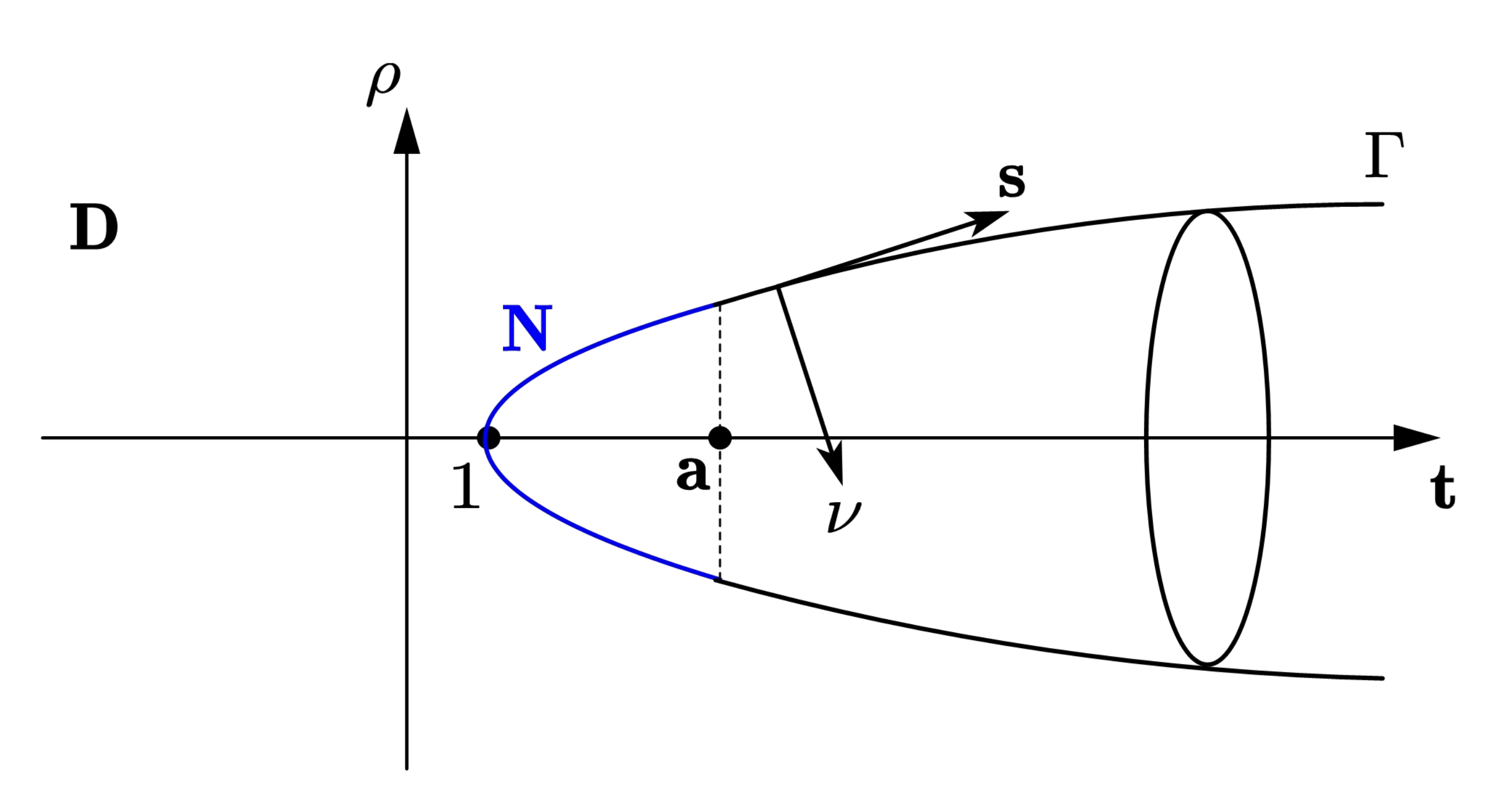}
		\captionof{figure}{Three-dimensional axially symmetric cavity problem}  
		\label{Fig:1}
	\end{center}
    We adopt cylindrical coordinates $(t,\rho,\lambda)$, where $t$ denotes the axial coordinate, $\rho$ the radial distance from the axis, and $\lambda$ the azimuthal angle. Throughout the paper, we always assume that the boundaries $N$ and $\Gamma$ are represented by a continuous function $g : [1,\infty) \to [0,\infty)$ as
		\begin{equation}\label{eq:condition_boundaryN}
			N:=\{(t,g(t),\lambda): t\in [1,a],\, \lambda \in [0,2\pi)\} 
		\end{equation}
		and
		\begin{equation}\label{eq:condition_boundaryG}
			\Gamma:=\{(t,g(t),\lambda):t\in (a,\infty),\, \lambda \in [0,2\pi)\},
		\end{equation}
		respectively, where $a>1$ is a constant, $g$ is smooth on $[1,a]$ and $[a,\infty)$ with $g^{\prime}(a-)=g^{\prime}(a+)$.
    
    In a landmark paper, Levinson (\cite{Levinson_1946}) used an integral representation by Green's theorem and derived the asymptotic behavior of the cavity by a power-law ansatz with a slowly varying correction, followed by an asymptotic balance analysis. Under the assumptions that $g(r)$ is concave at infinity and takes the form $g(r)=r^kh(r)$ with $0<k<1$, and that $\lim_{r\to\infty} rh'(r)/h(r)=0$ (note that this assumption implies
	\begin{align*}
\frac{rg'(r)}{g(r)} \to k \in (0,1),
\end{align*}
which is stronger than e.g. a convergence of the mean frequency and excludes oscillations by assumption), Levinson obtained 
	\begin{equation}\label{eq:intro_Levinson}
		\frac{\sqrt{r}}{(\log r)^{1/4+\varepsilon}}<g(r)<\frac{\sqrt{r}}{(\log r)^{1/4-\varepsilon}}
	\end{equation}
	for each $\varepsilon>0$ and for large $r$. 
	A further assumption that $r(\log r) h^{\prime}(r)/h(r)=O(1)$ for large $r$ leads to the formula
	\begin{equation*}
		g(r)=\frac{C r^{1/2}}{(\log r)^{1/4}}\left[1-\frac{1}{8}\frac{\log\log r}{\log r}+O\left(\frac{1}{\log r}\right)\right]\qquad \text{as } r\to\infty.
	\end{equation*}
    Note that some integrals in the derivation of Levinson's representation formula and thereafter are not convergent in the range $k\in (3/4,1)$ (for example the first integral in \cite[(2.12)]{Levinson_1946}), so that the proofs would have to be modified in order to make the result rigorous.
    
	The Levinson asymptotic behavior was derived independently by Gurevich (\cite{Gurevich_1947}), not using the Bernoul\-li free boundary condition at all but assuming that the Stokes
	stream function is an explicit Legendre-mode perturbation of a uniform stream. Subsequently, Scheid (\cite{Scheid_1950}) extended the expansion to higher order terms under additional assumptions on the asymptotic behavior of $g^{\prime}(r)$.
	
On the other hand, the {\em existence} of axially symmetric infinite cavities was first proved by Garabedian, Lewy, and Schiffer (\cite{GLS_1952}) using a variational approach together with a limit process from finite cavities. The uniqueness for axially symmetric infinite cavities was investigated in \cite{Gilbarg_1952}. Garabedian (\cite{Garabedian_1956}) later developed a perturbation method that treats axially symmetric flow as a deformation of plane flow within a family parametrized by a dimension parameter. Furthermore, the numerical result in \cite{Garabedian_1956} for the circular disk is consistent with Levinson's asymptotic formula for the infinite cavity shape. 
In the 1980s, Alt, Caffarelli, and Friedman (\cite{ACF_1985,Friedman_1982}) introduced a systematic variational framework for free boundary problems, proving existence and regularity for both plane and axially symmetric cavities. In particular, Caffarelli and Friedman (\cite{CF_1982}) established some basic physical properties for the axially symmetric infinite cavities obtained in \cite{GLS_1952}. Most importantly, they proved that the cavity has a sublinear growth rate at infinity. However, the sublinear growth rate is far away from the assumptions needed in Levinson's analysis (\cite{Levinson_1946}). Consequently, while the existence of a solution with sublinear growth is known, whether such a solution exhibits the precise asymptotic behavior predicted by Levinson has remained a challenging open question for 80 years. 

In fact, while Dirichlet-type Bernoulli problems, in which the free surface is a level set of the solution, have been extensively researched in the free boundary community, Neumann-type Bernoulli problems are notoriously hard, and not even well-posedness in the sense of a flatness-implies-regularity result is known for this important problem in mathematical physics. Even in an axially-symmetric setting, where the problem can be transformed into a Dirichlet-type Bernoulli problem with singular coefficients, known PDE techniques seem not to be applicable. For example, while it is possible (see \cite{efw}) to obtain frequency estimates for the difference of the solution and its blow-down limit in the obstacle problem, we have tried in vain to do something similar
in the cavity problem; on the contrary we have found evidence pointing towards a frequency formula corresponding to the one in \cite{aw} to be non-existent in the cavity problem.

In this paper, we obtain by the Potential Theoretic Reduction to ODE approach described above {\em complete rigidity at infinity} of the Levinson solution in the class of axially symmetric solutions, that is, any solution satisfying mild and natural assumptions at the fixed boundary and infinity converges asymptotically to the {\em Levinson profile}
	in the sense of \eqref{eq:intro_Levinson}. Combining our result with the existence result by Garabedian-Lewy-Schiffer \cite{GLS_1952} and that by Caffarelli-Friedman \cite{CF_1982}, we also show existence of a solution with the Levinson asymptotics \eqref{eq:intro_Levinson}, which has been an open problem ever since Levinson's result in 1946. The rigidity part of our result does not use energy-minimizing or finite Morse index assumptions at all, and we contend that extension to convex non-axially-symmetric cavities as well as flows with vorticity (where the vorticity satisfies certain structural assumptions) is possible, albeit even more technical than the present result.
	
    \subsection{Main results}
    We now turn to the precise formulation of our results. Consider the axially symmetric cavity problem \eqref{eq:cavity_pde}, with boundaries $N$ and $\Gamma$ parametrized by $g$ as in \eqref{eq:condition_boundaryN} and \eqref{eq:condition_boundaryG}, respectively. The trivial case $g\equiv0$ which corresponds to the absence of a cavity is excluded, meaning that we assume that
    \begin{equation}\label{assumption_g_not_zero}
     g\not\equiv0  \qquad\text{on }  [1,\infty). 
    \end{equation}
	We work under the following conditions.
	\begin{enumerate}[label=(\arabic*)] 
		\item The function $g$ has sublinear growth at infinity, i.e. 
		\begin{equation}\label{eq:condition_g_sublinear}
		\lim_{t \to \infty} \frac{g(t)}{t} = 0.
		\end{equation} 
		
		\item The flow speed on the fixed boundary $N$ does not exceed that on the free boundary $\Gamma$, i.e., 
		\begin{equation}\label{eq:condition_speed}
		\lvert \nabla \phi\rvert \le 1 \qquad\text{on } N.
		\end{equation}
		
		\item The flow velocity is asymptotically horizontal, i.e.,
		\begin{equation}\label{eq:condition_phid}
		\lim_{\lvert X\rvert \to \infty}\nabla\phi(X)=(1,0,0) \qquad \text{for } X=(t,\rho,\lambda)\in D.
		\end{equation}
	\end{enumerate}

    The main results of this paper are stated as follows.
	
	\begin{theorem}\label{thm:results_1}
		Assume that \eqref{eq:condition_g_sublinear}-\eqref{eq:condition_phid} hold. Then
		\begin{equation*}
		\limsup_{r\to \infty} \frac{g(r)}{r^{1/2} (\log r)^{-1/4+\varepsilon}} < \infty \qquad \text{for every }  \varepsilon > 0.
		\end{equation*}
	If in addition, the fixed boundary $N$ is concave (toward the cavity region), i.e.,  
		\begin{equation}\label{eq:concave_N}
		g^{\prime\prime}\le 0 \qquad\text{on } (1,a),
		\end{equation}
		then 
		\begin{equation*}
		\liminf_{r \to \infty} \frac{g(r)}{r^{1/2}(\log r)^{-1/4-\varepsilon}} > 0 \qquad \text{for every } \varepsilon>0.
		\end{equation*}
	\end{theorem}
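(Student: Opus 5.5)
We follow the scheme of Figure~\ref{method}. Set $u=\phi-t$, the perturbation potential. By \eqref{eq:condition_phid} we have $\nabla u\to0$ at infinity and $\Delta u=0$ in $D$. On $\partial D$ the Neumann condition gives $\partial_\nu u=-\nu_t$, where $\nu_t=\pm g'/\sqrt{1+g'^2}$ is the axial component of the normal. Green's representation formula at a point $X_0=(r,g(r),0)\in\Gamma$, applied with the fundamental solution averaged over the azimuth, expresses $u(X_0)$ through two terms: a \emph{single-layer} term with density $\nu_t$, which is essentially $\frac{d}{dt}(g^2)$ after integrating in $\lambda$, and a \emph{double-layer} term with density $u$ on $\partial D$. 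On the free surface, Bernoulli's condition $|\nabla\phi|=1$ together with $\partial_\nu\phi=0$ gives $|\partial_s\phi|=1$. Hence the tangential derivative of $u$ along $\Gamma$ is $1-\sqrt{1+g'^2}\approx -\tfrac12 g'^2$, so $u$ is controlled along $\Gamma$ by the surface itself. Differentiating the representation in $r$, or equivalently comparing $\partial_s u$ on $\Gamma$ with its representation, yields an integral identity relating $g^2$ and its derivatives along the surface.

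We split the $t$-integral into three regions:
\begin{itemize}[label=$\cdot$]
\item a \emph{head} region $t<r-\delta(r)$, which includes the fixed boundary $N$;
\item a \emph{transition window} $|t-r|<\delta(r)$ with $g(r)\ll\delta(r)\ll r$;
\item a \emph{tail} $t>r+\delta(r)$.
\end{itemize}
In the window, the axially averaged kernel has the logarithmic singularity $\log(\delta/g(r))$ typical of slender bodies. This produces the leading term
\[
\tfrac12\log\Big(\tfrac{r}{g(r)^2}\Big)\,(g^2)''(r)
\]
in slender-body form. The head and tail integrals give $O(1)$ contributions relative to this term, up to a constant related to the source strength (drag) at the body. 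The head uses \eqref{eq:condition_speed} and \eqref{eq:condition_g_sublinear} to bound the contribution of $N$. Matching against the Bernoulli term $-\tfrac12 g'^2$, we arrive at the model ODE for $G=g^2$:
\[
G''\,\log\frac{r}{G}\approx -\frac{c}{r}\quad\text{or, equivalently,}\quad G'\approx \frac{C}{\log r}.
\]
Its solution $G\sim r/\sqrt{\log r}$ gives $g\sim r^{1/2}(\log r)^{-1/4}$.

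First, for the upper bound, we use only one-sided estimates. Positivity of the relevant kernel parts and sublinearity let us discard terms and obtain a differential inequality $G'\log(r/G)\le C+o(\cdot)$. Integrating this yields $G\le Cr(\log r)^{-1/2+2\varepsilon}$. The $\varepsilon$-loss comes from the crude tail estimate, whose error relative to the window term is only $o(\log r)$.

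Second, we feed this growth bound back into the tail and head estimates, which is Step~6 of the scheme. This turns the inequality into a perturbed equality with relative error $o(1)$.

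Third, for the lower bound, concavity \eqref{eq:concave_N} ensures that the contribution of $N$, namely the total flux (drag) term, has a definite sign. Then the constant $C$ is positive, which excludes the degenerate case where $G'$ decays faster than $1/\log r$. A Tauberian step then passes from the integral (averaged) equality to the pointwise bound $G\ge c\,r(\log r)^{-1/2-2\varepsilon}$. This step uses a monotonicity-type argument for $G$ on windows, since $G'\ge0$ fails a priori.

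The main obstacle will be the transition window analysis for the lower bound. We must show that the logarithmic singularity produces exactly $\log(r/G)$ without a priori regularity of $g$ beyond sublinearity, and we must exclude oscillations of $g$ inside the window. I would first try to control $g'$ on $\Gamma$ by a flatness and Lipschitz estimate obtained from \eqref{eq:condition_phid} via rescaling, and then use the window average of $G$ in place of pointwise values.
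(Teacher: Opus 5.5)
Your outline follows the scheme of Figure~\ref{method}, but it has genuine gaps, and the heuristic that is supposed to select the exponent $-1/4$ is wrong.

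\textbf{Missing input: concavity of $\Gamma$, and the wrong kernel.} You never derive concavity of the free boundary, and you even say that $G'\ge0$ fails a priori. Under the hypotheses it holds. By \eqref{eq:condition_speed} and \eqref{eq:condition_phid}, subharmonicity of $\lvert\nabla\phi\rvert^2$ and the maximum principle give $\lvert\nabla\phi\rvert\le1$ in $D$ (Lemma~\ref{lem:Bernstein_property}). Then $\partial_\nu q=kq$ forces $g''\le0$ on $(a,\infty)$ (Lemma~\ref{lem:concavity}), whence $g'\ge0$, $g'\to0$ and $g'(t)\le 2g(t)/t$ for $t>2a$. This is where \eqref{eq:condition_speed} is really used, not in bounding the contribution of $N$. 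Every estimate rests on it:
\begin{itemize}
\item the sign information on the double-layer kernel comes from $g(t)+(r-t)g'(t)\ge g(r)$ (Lemma~\ref{lem:Fg_lower_bound});
\item the oscillations in the window that you worry about are excluded simply by $g(r-\eta r)g'(r+\eta r)\le g(t)g'(t)\le g(r+\eta r)g'(r-\eta r)$ for $\lvert t-r\rvert\le\eta r$.
\end{itemize}
A flatness estimate from \eqref{eq:condition_phid} by rescaling gives at most $g'\to0$, with no sign or monotonicity, and window averages do not replace this.

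Moreover, the plain (azimuthally averaged) fundamental solution does not yield a usable identity. On the very profile you are after, the single-layer integrand $gg'\lvert X-Y\rvert^{-1}$ behaves like $(2t\sqrt{\log t})^{-1}$, which is not integrable at infinity. Also, with only $u=o(R)$ known, the term on $\partial B_R$ is not controlled. The paper uses the renormalized kernel \eqref{eq:Gdef}.

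\textbf{Wrong leading-order balance.} The $-1/\lvert X\rvert$ part of that kernel produces the head integral $-\int_a^{2r}g(t)g'(t)t^{-1}\,\mathrm{d}t$, which is \emph{not} lower order. For $g=r^{1/2}(\log r)^{-1/4}$ one has $V(r)\approx gg'\log(r/g)\approx\frac14\sqrt{\log r}$ and $\frac12\int_a^r gg'/t\,\mathrm{d}t\approx\frac12\sqrt{\log r}$. These three cancel at leading order in Proposition~\ref{prop:lower_bound_log1}, and that cancellation is what selects $-1/4$. Without the head term, the formal balance for $g=r^{1/2}(\log r)^{\beta}$ would give $\beta=-3/4$.

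Your model ODE is also internally inconsistent. $G'\approx C/\log r$ integrates to $G\approx Cr/\log r$, i.e.\ $g\approx\sqrt{r/\log r}$, which is exactly the spurious critical behaviour the paper has to rule out. The window logarithm is $\log(r/g)\approx\frac12\log r$, not $\log(r/g^2)$. An inequality $G'\log(r/g)\le C$, with errors small relative to the left side, is in fact false for Levinson-type solutions. The correct inequality keeps the growing head integral on the right:
\begin{itemize}
\item it reads $(4-\varepsilon)gg'\log(r/g)\le C_\varepsilon+\int_a^{r/2}gg'/t\,\mathrm{d}t$ (Corollary~\ref{coro:refined_ineq_sharp}), with Hardy's inequality used to trade $V$ for the head integral at the sharp constant;
\item the exponent comes from $2\le1/(2\delta+1)$, so all constants must be sharp rather than crude.
\end{itemize}

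\textbf{Missing lower-bound mechanism.} Your idea that concavity of $N$ should supply a sign excluding the regime $G'\log r\to0$ points the right way. But the sign of the constant in the perturbed equality is irrelevant (Proposition~\ref{prop:Tauberian} allows any $\tilde C$). What is needed is $\liminf_{r\to\infty}gg'\log(r/g)>0$, so that the $o(1)$ error becomes $o(gg'\log(r/g))$. The paper obtains it as follows:
\begin{itemize}
\item subtract the representation at a fixed $Y_0\in\Gamma$;
\item use $\frac1{2\pi}\int_0^{2\pi}\int_1^\infty gF_g\,\mathrm{d}t\,\mathrm{d}\lambda=1$ (Lemma~\ref{lem:Nconcave_liminf1});
\item prove positivity of the resulting constant $C_0$ from \eqref{eq:condition_speed} on $N$ and \eqref{eq:concave_N}.
\end{itemize}
The Tauberian step is then the first-order system $p'/p=1-2p+o(1)$ in $x=\log r$ for $p=rU/(g^2\log(r/g))$. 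This forces $p\to1/2$, i.e.\ $rg'/g\to1/2$. Your window-monotonicity argument for $G$ is not specified and is not needed.
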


    There are a few remarks in order.

        \begin{remark}
        As we mentioned before, the existence of the axially symmetric cavities with properties  \eqref{eq:condition_g_sublinear} and \eqref{eq:condition_phid} was established in \cite{CF_1982,GLS_1952}. Moreover, for fixed boundaries with convex profile $g$, a standard maximum principle argument implies that the fluid attains its maximum speed on the free boundary (see, e.g., the argument in \cite[Theorem 7.1]{ACF_1985}), thereby verifying condition \eqref{eq:condition_speed}. 
           Since for a conical fixed boundary $g$ is linear, it satisfies both the concavity hypothesis required for the lower-bound estimate in Theorem \ref{thm:results_1} and the convexity condition needed for the maximum principle argument.
    \end{remark}
    
        \begin{remark}
		The condition \eqref{eq:condition_speed} is equivalent to the pressure on the fixed boundary not to exceed the cavity pressure. If the liquid pressure on a portion of the fixed boundary were below the cavity pressure, that portion would itself be susceptible to cavitation. So the condition assures that $N$ remains wetted and a clear distinction of $N$ and the cavity is possible.   In the proof, the condition  \eqref{eq:condition_speed} will only be applied to establish the concavity of the free boundary, see Lemmas \ref{lem:Bernstein_property} and \ref{lem:concavity}.
	\end{remark}

           The results of \cite{CF_1982,GLS_1952} and Theorem \ref{thm:results_1} then yield the following existence theorem.
           
        \begin{theorem}\label{thm:existence}
        There exist three-dimensional axially symmetric cavity flows such that the free boundary has the asymptotic behavior
        \begin{equation}\label{eq:thm2}
        r^{\frac12}(\log r)^{-\frac14-\varepsilon}<g(r)<r^{\frac12}{(\log r)^{-\frac 14+\varepsilon}} 
        \end{equation}
        for each $\varepsilon>0$ and for large $r$.
        \end{theorem}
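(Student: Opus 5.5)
The plan is to derive Theorem \ref{thm:existence} as a corollary of Theorem \ref{thm:results_1} applied to a suitable cavity flow from \cite{GLS_1952,CF_1982}. I would fix a concrete fixed boundary that satisfies every hypothesis of Theorem \ref{thm:results_1} at once. The natural choice, as the remark after Theorem \ref{thm:results_1} suggests, is a conical obstacle: take $g$ linear on $[1,a]$, say $g(t)=c(t-1)$ with $c>0$. This profile is simultaneously concave, giving \eqref{eq:concave_N} since $g''\equiv0$, and convex, which is what the maximum principle argument needs. For such an obstacle, \cite{GLS_1952} yields an axially symmetric infinite cavity flow $\phi$ solving \eqref{eq:cavity_pde}, with free boundary $\Gamma$ detaching from $N$ at some $t=a$. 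Moreover \cite{CF_1982} establishes the following properties of this flow:
\begin{itemize}
\item $\Gamma$ is a graph $\rho=g(t)$ for $t>a$;
\item $\Gamma$ detaches smoothly, so $g'(a-)=g'(a+)$;
\item $g$ grows sublinearly, giving \eqref{eq:condition_g_sublinear};
\item the velocity tends to the uniform stream, giving \eqref{eq:condition_phid}.
\end{itemize}
The nontriviality \eqref{assumption_g_not_zero} is automatic, since $g>0$ on $(1,a]$.

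Next I would verify \eqref{eq:condition_speed}. The function $|\nabla\phi|^2$ is subharmonic in $D$ because $\phi$ is harmonic. On $\Gamma$ it equals $1$, and at infinity it tends to $1$ by \eqref{eq:condition_phid}. On the convex part of $N$ one argues as in \cite[Theorem 7.1]{ACF_1985}: at an interior maximum point on $N$, the Neumann condition together with convexity of $g$ would give $\partial_\nu|\nabla\phi|^2\le 0$, contradicting Hopf's lemma. Hence the maximum of $|\nabla\phi|$ is not attained on $N$ unless $\phi$ is trivial. The conical tip at $t=1$ lies on the axis and needs separate care. There I would use the local behaviour of the flow near the vertex (a stagnation point, where $|\nabla\phi|\to 0$) to exclude a maximum. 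This gives $|\nabla\phi|\le1$ on $N$.

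With all hypotheses in place, Theorem \ref{thm:results_1} applies. For every $\varepsilon>0$ it gives $\limsup g(r)\,r^{-1/2}(\log r)^{1/4-\varepsilon}<\infty$. It also gives $\liminf g(r)\,r^{-1/2}(\log r)^{1/4+\varepsilon}>0$. To reach the strict inequalities in \eqref{eq:thm2} with constant $1$, I would apply these bounds with $\varepsilon/2$ in place of $\varepsilon$. The upper bound becomes $g(r)\le C r^{1/2}(\log r)^{-1/4+\varepsilon/2}$, and $C(\log r)^{\varepsilon/2}<(\log r)^{\varepsilon}$ for $r$ large. The lower bound is handled in the same way. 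This proves \eqref{eq:thm2}.

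The main obstacle is checking that the Garabedian--Lewy--Schiffer/Caffarelli--Friedman solution meets the regularity framework of the paper exactly. This means that $\Gamma$ is a global graph over $(a,\infty)$, that $g$ is smooth up to $a$ with matching one-sided derivatives, and that \eqref{eq:condition_speed} holds up to the vertex of the cone. I would cite the smooth detachment and graph properties from \cite{CF_1982,ACF_1985}. The delicate point is the maximum principle at the non-smooth tip, which I would handle by the stagnation-point argument above. Alternatively, one could smooth the tip slightly while keeping $g$ both concave and convex near it, which is possible only for linear $g$. So if a rounded nose is preferred, one would instead keep only convexity and would lose the concavity needed for the lower bound; for that reason I would stick with the cone.
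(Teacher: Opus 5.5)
Your proposal is correct and follows the paper's own argument. The paper likewise takes a conical fixed boundary (linear $g$, hence both concave and convex), invokes \cite{GLS_1952,CF_1982} for existence, sublinear growth and \eqref{eq:condition_phid}, verifies \eqref{eq:condition_speed} by the maximum-principle argument of \cite[Theorem 7.1]{ACF_1985}, and then applies Theorem~\ref{thm:results_1}. Your treatment of the cone's vertex as a stagnation point and your $\varepsilon/2$ step to obtain constant $1$ in \eqref{eq:thm2} make explicit details the paper leaves implicit.
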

          
    \subsection{Key ideas of the proof and the structure of the paper}\label{subsec:structure of the paper}
In this subsection, we give a detailed outline for the proof of Theorem \ref{thm:results_1} and present the structure of the paper simultaneously.

Let us stress that using the assumption of axial symmetry, it would be possible to
transform the Neumann Bernoulli problem, using the Stokes stream function,
to a Dirichlet Bernoulli problem with
singular coefficients. However, aiming at future results on the Neumann Bernoulli
problem such as an extension of the present result to a non-axially-symmetric
configuration as well as a flatness-implies-regularity result,
all our proofs will use exclusively the original Neumann Bernoulli problem. 

In Section \ref{sec:pre}, we derive several geometric properties of the free boundary,
especially its concavity, from the velocity potential conditions \eqref{eq:condition_speed} and \eqref{eq:condition_phid}. We also identify the blow-down limit of the velocity potential. 

Let $V$ be the difference between the velocity potential and its blow-down limit defined in \eqref{eq:condition_blowdown}. In Section \ref{sec:potential_expansion} we use Green's theorem to represent $V(Y)$ (in contrast to \cite{efw}) {\em at a free boundary point} $Y$ (see Proposition \ref{prop:pe}). The classical Green's function fails to ensure convergence at infinity whereas a generalized Newtonian potential kernel (see \eqref{eq:Gdef}) does, thus yielding an exact integral identity connecting the velocity potential to the free boundary profile $g$ (see \eqref{eq:pe_F1}). We stress that this  integral identity differs from Levinson's (only formally derived) integral equation (1.5) in \cite{Levinson_1946}. As already mentioned, the resulting potential we use is neither a generalized Newtonian potential nor a single-layer potential or double-layer potential but of mixed/combined type.

The idea is now that the part of the integral close to the singularity of the kernel
 at point 
$Y=(r,g(r),0)$ should be close to some function of $r, g(r), g'(r)$ and $V(r)$.
Although finding the correct transition window around the singularity of the kernel
is a key step in our method, for the sake of shortness we give at the end of Section
\ref{sec:potential_expansion} just the correct transition window $(r/2,2r)$
for the division of the integral into integrals over subintervals; in general the size of the transition window
does depend on $g$, and actually on our way to the final transition window we started out with a preliminary ``short transition window'' $(r-g(r),r+g(r))$. According to the definition of the transition window, we define the transition window integral $J_g$,
the tail integral $T_g$ and the head integral $H_g$ as well as the fixed boundary integral
$N_g$.

In Section \ref{sec:initial_growth}, we establish lower bounds for each of the integrals arising from the decomposition.
In particular, we show (see Lemma \ref{lem:Jg_lower_bound}) that the transition window integral $J_g$
is bounded from below by the function 
$$\frac{1}{9}g(r)g^{\prime}(r)\log\frac{r}{2g(r)}$$
minus integral terms.
However the coefficient $1/9$ is not sharp and the function
is {\em not} the correct function to derive the ODE for our problem. 
Still these crude estimates yield an integro-differential inequality for $g$, from which we obtain an initial upper growth bound $g(r) \le r^{1/2+\varepsilon}$ at infinity
(see Proposition \ref{prop:r_root_growth}).

The initial upper bound on $g$, though not yet optimal, plays a crucial role in the subsequent refinement. It allows us to refine in Section \ref{sec:refined_growth} the integral estimates with additional control on $g$. Obviously the estimate on the tail
integral $T_g$ improves. What is not so obvious, is that we divide the transition window integral $J_g$ into two parts. The first part we estimate in Lemma \ref{lem:Jg_refine_gvF} as follows:
\begin{equation*}
	\frac{1}{2\pi} \int_{0}^{2\pi} \int_{\frac r2}^{2r} g(t)V(t)F_{g}(t,\lambda;r) \mathrm{d}t\mathrm{d}\lambda \ge (1-\varepsilon)V(r) - \varepsilon\qquad  \text{for } r>R_{\varepsilon}.
\end{equation*}
Note that unfortunately, $V(r)$ on the right-hand side 
depends on $r$ and $g'$ in a nonlocal way. On the other hand, $V(r)$ appears already on the left-hand side
of our representation formula (see \eqref{eq:pe_main}), so 
while introducing a nonlocal quantity or second variable
into our integro-differential inequality, this term is unavoidable anyway.

The second part of $J_g$ we estimate in Lemma \ref{lem:Jg_refine_gdg}
as
\begin{equation*}
	\begin{split}
		& \frac{1}{2\pi} \int_{0}^{2\pi} \int_{\frac r2}^{2r} g(t)g^{\prime}(t)\left(\frac{1}{\lvert X-Y\rvert} - \frac{1}{\lvert X\rvert}\right) \mathrm{d}t\mathrm{d}\lambda\\
		\ge &\ 2(1-2\eta)g(r-\eta r)g^{\prime}(r+\eta r)\log\frac{\eta r}{g(r)} - \int_{\frac r2}^{\frac{r+\eta r}2} \frac{g(t)g^{\prime}(t)}{t} \mathrm{d}t
		\qquad \text{for } r > R_{\eta}.
	\end{split}
\end{equation*}
While this gives us the sharp coefficient, it introduces the unwanted slight $\eta$-shift,
which has to be taken care of in the proof of the final integro-differential inequality 
in Lemma \ref{lem:refined_differential_ineq}. Corollary \ref{coro:refined_ineq_sharp} uses Hardy's inequality to obtain an integro-differential inequality without the nonlocal term $V$, which in turn leads to the sharp
upper bound $g(r)\le r^{1/2}(\log r)^{-1/4+\varepsilon}$ at infinity (see Proposition \ref{prop:g_refined_growth}). 

In Subsection \ref{subsec:sharp_estimate} we derive the corresponding estimates from above
and obtain in Proposition \ref{prop:lower_bound_log1} the {\em perturbed integral equality}
\begin{equation}\label{pert_int}
		V(r) + (1+o(1))g(r)g^{\prime}(r)\log\frac{r}{g(r)} - \frac{1}{2} \int_{a}^{r} \frac{g(t)g^{\prime}(t)}{t}\mathrm{d}t = C+o(1) \qquad\text{as } r \to \infty.
	\end{equation}
Formally, this corresponds to the precise second order ODE
\begin{equation}\label{ODE}
\left(g(r)g^{\prime\prime}(r)+g^{\prime}(r)^2\right)
\log\frac{r}{g(r)}
+\frac{g(r)g^{\prime}(r)}{2r}
+\sqrt{1+g^{\prime}(r)^2}
-1-g^{\prime}(r)^2
=0,
\end{equation}
which can be analyzed and selects the desired logarithmic exponent $-1/4$.
Unfortunately there is another critical logarithmic exponent $-1/2$,
which is not relevant for the ODE, but is highly relevant for the
perturbed integral equality. It is the threshold at which the asymptotic
behavior of $g(r)g^{\prime}(r)\log(r/g(r))$ changes from unbounded to
bounded behavior as the exponent decreases, and the asymptotic behavior
$$\sqrt{\frac{r}{\log r}}$$
cannot be easily discarded using only the perturbed integral equality.
It may be worth mentioning that it is possible to construct examples of concave increasing polygonal functions $g$ which 
oscillate between $\sqrt{r/\log r}$ and $r^\alpha$ for some $\alpha\in(0,1/2)$ and satisfy the cruder perturbed integral equality
\begin{equation*}
		V(r) + g(r)g^{\prime}(r)\log\frac{r}{g(r)} - \frac{1}{2} \int_{a}^{r} \frac{g(t)g^{\prime}(t)}{t}\mathrm{d}t = o(V(r))\qquad\text{as } r \to \infty.
	\end{equation*}    

A crucial step in overcoming the asymptotic behavior $\sqrt{r/\log r}$ as well as oscillations is the bound from below,
\begin{equation}\label{eq:logbound}
\liminf_{r \to \infty} g(r)g^{\prime}(r)\log\frac{r}{g(r)} > 0,
\end{equation}
which is proven in Proposition \ref{prop:concave_log} in Subsection \ref{subsec:logbound},
assuming that the fixed boundary is concave. 
This bound from below means that the $o(1)$-perturbation in \eqref{pert_int}
is for large $r$ much smaller than $g(r)g^{\prime}(r)\log(r/g(r))$. 
The last missing piece is then a {\em Tauberian step} in Subsection \ref{subsec:taub}.
Proposition \ref{prop:Tauberian} transforms the perturbed integral identity into a {\em first-order dynamical system}, thereby avoiding both the use of more elaborate Tauberian machinery and estimating differences of the representation formula, and shows that the improved perturbed integral equality
\begin{equation*}
V(r) + g(r)g^{\prime}(r)\log\frac{r}{g(r)} - \frac{1}{2}\int_{a}^{r} \frac{g(t)g^{\prime}(t)}{t}\mathrm{d}t 
= \tilde C+o\left(g(r)g^{\prime}(r)\log\frac{r}{g(r)}\right) \qquad\text{as } r \to \infty
\end{equation*}
(which is a consequence of \eqref{eq:logbound}),
implies
the sharp
lower bound $g(r)\ge r^{1/2}(\log r)^{-1/4-\varepsilon}$ at infinity.

Together with the upper bound obtained earlier, this yields the precise growth rate of the free boundary.

As a consequence, we also obtain the existence of axially symmetric cavity flows with this exact asymptotic behavior.

\subsection{Notations}     Throughout the paper, the space $\mathbb{R}^3$ is equipped with the Euclidean inner product $X\cdot Y$ and the associated norm $\lvert X\rvert$. We use cylindrical coordinates $(t,\rho,\lambda)$, where $t$ denotes the axial coordinate, $\rho$ the radial distance from the axis, and $\lambda$ the azimuthal angle. The function $g$ is a parametrization of the fixed boundary $N$ and the free boundary $\Gamma$, with $a>0$ denoting the axial coordinate of the junction between $N$ and $\Gamma$.

 We denote by $B_R$ the open ball in $\mathbb{R}^3$ centered at the origin with radius $R>0$ and $\partial B_R$ its boundary.

    In the estimates, $C$ denotes a generic positive constant, whose value may vary from line to line.

    \section{Preliminaries}\label{sec:pre}
    This section collects some preliminary results that will be used in the subsequent analysis.

    \subsection{Geometric properties of the free boundary}\label{subsec:g_properties}
    
	We first show that the flow speed $\lvert\nabla\phi\rvert$ attains its maximum on the free boundary $\Gamma$ throughout the flow region under the conditions \eqref{eq:condition_speed} and \eqref{eq:condition_phid}. Note that $\lvert\nabla\phi\rvert=1$ on $\Gamma$ by \eqref{eq:cavity_pde}. 
	
	\begin{lemma}\label{lem:Bernstein_property}
		Assume that the velocity potential $\phi$ satisfies \eqref{eq:condition_speed} and \eqref{eq:condition_phid}. 
		Then 
		\begin{equation}\label{eq:Bernstein_property}
			\lvert \nabla \phi\rvert \le 1 \qquad\text{in } D,
		\end{equation}
		where $D$ is the flow region.
	\end{lemma}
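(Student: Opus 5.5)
The plan is to apply the maximum principle to the subharmonic function $w:=\lvert\nabla\phi\rvert^2$. Since $\phi$ is harmonic in $D$, each component $\partial_i\phi$ is harmonic. Hence
\[
\Delta w = 2\sum_{i,j}(\partial_{ij}\phi)^2\ge 0,
\]
so $w$ is subharmonic in $D$. The boundary values are known from the hypotheses: $w=1$ on $\Gamma$ by \eqref{eq:cavity_pde}, $w\le 1$ on $N$ by \eqref{eq:condition_speed}, and $w\to 1$ as $\lvert X\rvert\to\infty$ in $D$ by \eqref{eq:condition_phid}.

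To handle the unbounded domain, fix $\delta>0$. By \eqref{eq:condition_phid} there is $R_\delta$ with $w\le 1+\delta$ on $D\cap\{\lvert X\rvert\ge R_\delta\}$. Now apply the weak maximum principle on the bounded set $D_R:=D\cap B_R$ with $R\ge R_\delta$. Its boundary consists of three pieces:
\begin{enumerate}[label=(\roman*)]
\item pieces of $N$, where $w\le 1$;
\item pieces of $\Gamma$, where $w=1$;
\item $D\cap\partial B_R$, where $w\le 1+\delta$.
\end{enumerate}
This gives $w\le 1+\delta$ in $D_R$. Letting $R\to\infty$ and then $\delta\to0$ yields \eqref{eq:Bernstein_property}.

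The only delicate point, and the one I expect to be the main obstacle, is that the maximum principle needs $w$ to be upper semicontinuous up to $\partial D$. In particular $\nabla\phi$ must extend continuously to $N$, to $\Gamma$, to the junction circle $t=a$, and to the axis points where $g$ vanishes, for example the tip at $t=1$ if $g(1)=0$. On the smooth parts of $N$ and $\Gamma$ I would invoke boundary regularity for the Neumann problem: $g$ is smooth on $[1,a]$ and on $[a,\infty)$ with matching one-sided derivatives at $a$, so the boundary is $C^1$ there and locally $C^{1,\alpha}$/$C^\infty$ away from it. At any exceptional boundary points where continuity of $\nabla\phi$ is not available, I would use the fact that $w$ is bounded near them (from the local regularity of solutions to \eqref{eq:cavity_pde} implicit in the setting) and that such points form a set of zero capacity: finitely many circles and points in $\mathbb R^3$. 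A bounded subharmonic function then satisfies the maximum principle ignoring such a polar set. Concretely, in $D_R$ compare $w$ with $1+\delta+\epsilon\,h$, where $h$ is a harmonic function blowing up at the exceptional set, for instance a Newtonian-type potential of the junction circle, and then let $\epsilon\to0$.

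Finally, axial symmetry does not interfere. The axis itself lies inside $D$ only where the fluid region contains it, and there $\phi$ is harmonic and smooth in $\mathbb R^3$, so no special treatment is needed.
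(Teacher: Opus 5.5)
Your proposal is correct and is essentially the paper's argument: subharmonicity of $\lvert\nabla\phi\rvert^2$, a bound of the form $1+\delta$ on $D\cap\partial B_R$ coming from \eqref{eq:condition_phid}, and the maximum principle on $D\cap B_R$. The paper phrases this as a contradiction with the level $1+\varepsilon/2$ instead of letting $\delta\to0$, and it says nothing about boundary regularity, so your treatment of the junction circle and the tip (including discarding a polar exceptional set with a positive potential that blows up there) is extra care the paper omits rather than a different route.
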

	
	\begin{proof}
		We argue by contradiction. Suppose that there is a point $X_{0}\in D$ such that $\lvert \nabla \phi(X_{0})\rvert=1+\varepsilon$ with some $\varepsilon>0$. Since $\lvert\nabla\phi(X)\rvert\to1$ as $\lvert X\rvert\to\infty$ by \eqref{eq:condition_phid}, there exists $R>\lvert X_{0}\rvert$ such that
		\begin{equation*}
			\lvert \nabla\phi\rvert \le 1+\frac{\varepsilon}2  \qquad\text{on } D\cap\partial B_{R}.
		\end{equation*}
		For a harmonic function $\phi$ in $D$, $\lvert\nabla\phi\rvert^2$ is subharmonic in $D$. 
Using the maximum principle yields
\begin{equation*}
	(1+\varepsilon)^2=\lvert \nabla \phi(X_{0})\rvert^{2}\le \sup_{N\cup\Gamma\cup (D\cap\partial B_{R})} \lvert\nabla\phi\rvert^{2}\le  \left(1+\frac{\varepsilon}2\right)^{2}.
\end{equation*}
This leads to a contradiction and thus finishes the proof of the lemma.
\end{proof}

On each streamline that bounds fluid from one side, straightforward calculations (see, e.g.,  \cite[Lemma 7.2]{ACF_1985}) give 
\begin{equation}\label{eq:speed_curvature_relation}
\frac{\partial q}{\partial\nu}=kq, 
\end{equation}
where $q:=\lvert\nabla\phi\rvert$ is the flow speed, $\nu$ is the unit outward normal, and $k$ is the curvature of the streamline. On the free boundary $\Gamma$, 
\begin{equation*}
k:=-\frac{g^{\prime\prime}}{\left(1+\lvert g^{\prime}\rvert^2\right)^{3/2}}.
\end{equation*}
If \eqref{eq:Bernstein_property} holds, then \eqref{eq:speed_curvature_relation} gives $k>0$ and hence $g^{\prime\prime}<0$ on $\Gamma$.  
Therefore, the concavity of the free boundary follows immediately from Lemma \ref{lem:Bernstein_property}.

\begin{lemma}\label{lem:concavity}
Assume that the velocity potential $\phi$ satisfies \eqref{eq:condition_speed} and \eqref{eq:condition_phid},  then the free boundary $\Gamma$ is concave (toward the cavity region), i.e.
\begin{equation}\label{eq:concave_fb}
g^{\prime\prime} \le 0 \qquad \text{on } (a,\infty).
\end{equation}
\end{lemma}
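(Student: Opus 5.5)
The concavity will follow from Lemma \ref{lem:Bernstein_property} combined with the speed--curvature relation \eqref{eq:speed_curvature_relation} applied along the free streamline $\Gamma$.

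First, by Lemma \ref{lem:Bernstein_property}, $q=\lvert\nabla\phi\rvert\le 1$ in $D$, while $q=1$ on $\Gamma$ by \eqref{eq:cavity_pde}. Hence every point of $\Gamma$ is a boundary maximum of $q$ over $\overline D$. Since $\Gamma$ is smooth away from the junction $t=a$, the function $\phi$ extends smoothly up to $\Gamma$ by elliptic regularity for the Neumann problem. With $\nu$ the outward normal, the one-sided derivative therefore satisfies $\partial_\nu q\ge 0$ on $\Gamma$, because $q$ cannot increase when moving into $D$ from a point where it attains its maximum.

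Second, we apply \eqref{eq:speed_curvature_relation} on $\Gamma$. Since $\Gamma$ is a streamline bounding fluid on one side, this gives $kq=\partial_\nu q\ge 0$, and $q=1$ there, so $k\ge 0$. By the formula for $k$, this means $-g''/(1+g'^2)^{3/2}\ge0$, that is, $g''\le 0$ on $(a,\infty)$. Note that we only obtain the non-strict inequality, which is exactly what the statement claims. The strict version $k>0$ mentioned in the text would need Hopf's lemma applied to the subharmonic function $q^2$, which we do not need here.

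The one point requiring care is the sign convention and the regularity. The relation \eqref{eq:speed_curvature_relation} (from \cite[Lemma 7.2]{ACF_1985}) must be used with the same choice of $\nu$ and of the sign of $k$ as in the displayed formula for $k$. I would verify this directly in cylindrical coordinates in the meridian plane. Since $\partial_\nu\phi=0$ and $\Delta\phi=0$, differentiating $q^2=\phi_s^2$ along the normal gives $\partial_\nu q=q\,\kappa$, where $\kappa$ is the curvature of the meridian curve, with sign positive when the curve bends away from the fluid. The azimuthal principal curvature does not enter, because the velocity has no azimuthal component. For $\rho=g(t)$ with fluid above and the outward normal pointing downward into the cavity, this $\kappa$ equals $-g''/(1+g'^2)^{3/2}$, consistent with the formula for $k$. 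Smoothness of $g$ on $(a,\infty)$ is assumed, so no further regularity issue arises.
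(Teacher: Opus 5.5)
Your proposal is correct and follows the paper's argument: Lemma \ref{lem:Bernstein_property} makes every point of $\Gamma$ a boundary maximum of $q=\lvert\nabla\phi\rvert$, so $\partial_\nu q\ge 0$ there, and then the relation $\partial_\nu q=kq$ with $q=1$ gives $k\ge0$, i.e.\ $g''\le0$. You also fill in details the paper leaves implicit: you check the sign convention directly (it does give $k=-g''/(1+g'^2)^{3/2}$ for the outward normal $(g',-1)/\sqrt{1+g'^2}$, and only the meridional curvature enters because the flow has no swirl), and you note that the non-strict inequality suffices, so Hopf's lemma is not needed.
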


By the concavity of the free boundary \eqref{eq:concave_fb}, it holds that
\begin{equation}\label{eq:condition_dg_sign}
g^{\prime}\ge 0 \quad\text{on } (a,\infty).
\end{equation}
Otherwise, $g$ would eventually become negative, contradict its nonnegativity. For nontrivial $g$ (see \eqref{assumption_g_not_zero}), this implies that $g(t)>0$ for large $t$. Moreover, \eqref{eq:concave_fb} together with the sublinear growth condition \eqref{eq:condition_g_sublinear} gives
\begin{equation}\label{eq:condition_dg_to0}
g^{\prime}(t)\le \frac{g(t)-g(a)}{t-a}\to0 \qquad\text{as }t\to\infty.
\end{equation}
Consequently, $g^{\prime}$ is bounded on $(2a,\infty)$ even though $g^{\prime}(a)$ may be infinite.

    \subsection{Identification of the blow-down limit}
    
    We next determine the blow-down limit of the velocity potential.
    
    \begin{lemma}
       Assume that the velocity potential $\phi$ satisfies \eqref{eq:condition_phid}. Then the blow-down limit of $\phi$ exists and is given by
		\begin{equation}\label{eq:condition_blowdown}
			\phi_{\infty}(X):=\lim_{R \to \infty} \frac{\phi (RX)}{R} = t \qquad\text{for }X=(t,\rho,\lambda)\in \mathbb{R}^3.
		\end{equation} 
    \end{lemma}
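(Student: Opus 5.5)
The claim follows from the asymptotic condition \eqref{eq:condition_phid} by a uniform Lipschitz-type argument along paths in $D$. Fix a base point $X_0\in D$ and write $e_1=(1,0,0)$ for the axial direction. Given $\varepsilon>0$, use \eqref{eq:condition_phid} to choose $R_\varepsilon$ with
\[
\lvert\nabla\phi(Z)-e_1\rvert<\varepsilon \qquad\text{for } Z\in D,\ \lvert Z\rvert>R_\varepsilon .
\]
Set $\psi:=\phi-t$. On $D\setminus B_{R_\varepsilon}$ we have $\lvert\nabla\psi\rvert<\varepsilon$. Hence for any two points of $D\setminus B_{R_\varepsilon}$ joined by a path in that set of length $L$, the values of $\psi$ differ by at most $\varepsilon L$.

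For fixed $X=(t,\rho,\lambda)$ with $\rho>0$ or $t>0$, the point $RX$ lies in $D$ once $R$ is large. Indeed, $g$ is sublinear by \eqref{eq:condition_g_sublinear}, so the cavity $\{\rho<g(t)\}$ occupies an asymptotically thin cusp around the positive $t$-axis. I connect $RX$ to a fixed reference point $Z_R$ of size comparable to $R_\varepsilon$ by a path in $D\setminus B_{R_\varepsilon}$ of length at most $C\lvert RX\rvert+C$. Concretely, I first move radially or along a large sphere avoiding the thin cusp, which is possible because $g(t)/t\to0$; a path on $\partial B_s$ going around the cusp has length at most $\pi s$. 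The constant $C$ is geometric, so this gives
\[
\lvert\psi(RX)-\psi(Z_R)\rvert\le C\varepsilon R\lvert X\rvert+C_\varepsilon .
\]
Dividing by $R$ and letting $R\to\infty$ yields $\limsup\lvert\psi(RX)/R\rvert\le C\varepsilon\lvert X\rvert$. Since $\varepsilon$ is arbitrary, $\phi(RX)/R\to t$. This handles all points off the ray $\{\rho=0,\,t>0\}$ and the origin.

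For points on the axis $\rho=0$, $t>0$, the rescaled point $RX$ lies inside the cavity, so $\phi$ is not defined there. I interpret the limit either via a continuous extension or as a locally uniform limit on compact subsets of the complement of the positive axis. Because the bound above is uniform on compact sets, the blow-downs $\phi_R$ converge locally uniformly to $t$, which extends continuously to all of $\mathbb{R}^3$; that continuous extension identifies $\phi_\infty=t$.

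The main obstacle is the path construction: I need paths that avoid the cavity and the body, stay outside $B_{R_\varepsilon}$, and have length comparable to $R$. Sublinearity of $g$ gives exactly the required thinness, since $\{\lvert Z\rvert=s\}\cap D$ is the sphere minus a cap of angular radius $o(1)$ and is therefore path-connected with intrinsic diameter at most $\pi s$.
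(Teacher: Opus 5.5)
Your proof is correct, but it takes a different route from the paper. The paper integrates along the straight segment from the origin, $\phi(RX)=\phi(0)+R\int_0^1\nabla\phi(\tau RX)\cdot X\,\mathrm{d}\tau$, and lets $R\to\infty$ inside the integral using \eqref{eq:condition_phid}. That is a two-line argument, but it has two gaps. First, it tacitly needs the whole segment $\{\tau X:0\le\tau\le R\}$ to lie in $\overline D$, while the paper only requires the endpoint $RX\in\overline D$. This fails for directions of small polar angle, whose ray from the origin crosses the body $N$ or the cavity. Second, it does not address the positive axis, where $RX$ lies in the cavity for large $R$.

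Your far-field path avoids both problems. Concretely, take the meridian arc on $\partial B_{R\lvert X\rvert}$ from $RX$ to $(-R\lvert X\rvert,0,0)$, followed by the negative axis down to the fixed point $Z=(-2R_\varepsilon,0,0)$. This path has length at most $(\pi+1)R\lvert X\rvert$ and stays in $D\setminus B_{R_\varepsilon}$ for large $R$, so $\lvert\psi(RX)-\psi(Z)\rvert\le(\pi+1)\varepsilon R\lvert X\rvert$. The argument uses only the smallness of $\nabla\phi-e_1$ outside a large ball and never needs any control near $N$. It also gives convergence that is locally uniform away from the closed ray $\{\rho=0,\ t\ge 0\}$.

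The price is that you invoke the sublinearity \eqref{eq:condition_g_sublinear}. This is a standing assumption of the paper, but it is not a hypothesis of the lemma, and it is exactly what guarantees that $RX\in D$ and that the arc avoids the cavity. Your remark that on the positive axis $\phi_\infty=t$ can only mean the continuous extension is a genuine clarification of the statement.

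Three minor points:
\begin{enumerate}
\item The admissible $X$ are those with $\rho>0$ or $t\le 0$. You wrote ``$\rho>0$ or $t>0$'', which includes points of the positive axis, for which $RX$ lies inside the cavity.
\item The reference point must not depend on $R$, only on $\varepsilon$. Otherwise $\psi(Z_R)/R\to 0$ is not automatic.
\item Without monotonicity of $g$, the set $\partial B_s\setminus D$ need not be exactly a cap. However, since $g(t)\le\delta t+C_\delta$ for every $\delta>0$, it lies in a cap of angular radius $o(1)$, which is all the meridian path needs.
\end{enumerate}
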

    \begin{proof}
     For $X\in \mathbb{R}^3$ and sufficiently large $R>0$ with $RX\in \overline D$,  one has
		\begin{equation*}
		\phi(RX)=\phi(0)+\int_0^R \nabla \phi(\tau X) \cdot X \mathrm d\tau.
		\end{equation*}
		Hence it holds that
		\begin{equation*}
		\frac{\phi(RX)}{R}=\frac{\phi(0)}R+\frac1R\int_0^R \nabla \phi(\tau X) \cdot X \mathrm d\tau=\frac{\phi(0)}R+\int_0^1\nabla \phi(\tau RX) \cdot X \mathrm d\tau.
		\end{equation*}
		Letting $R \to \infty$, we obtain \eqref{eq:condition_blowdown} from \eqref{eq:condition_phid}.   
    \end{proof}

    \subsection{Auxiliary inequalities}
    
We shall need the following Hardy's inequality on a finite interval, which can be obtained from the classical one (see \cite[Theorem 327]{HLP_1952_Inequalities}) via zero extension of the associated functions.

\begin{lemma}[Hardy's inequality]\label{lemma:preliminaries_1}
Let $p\in(1,\infty),\, r>0$, and let $f\in L^{p}(0,r)$ be nonnegative. Then 
\begin{equation*}
\int_{0}^{r} \left(\frac{1}{t}\int_{0}^{t} f(\tau)\mathrm{d}\tau \right)^{p}\mathrm{d}t \le \left( \frac{p}{p-1} \right)^{p} \int_{0}^{r} f(t)^{p}\mathrm{d}t.
\end{equation*}
\end{lemma}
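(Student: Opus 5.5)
The plan is to deduce this finite-interval version from the classical Hardy inequality on the half-line \cite[Theorem 327]{HLP_1952_Inequalities} by extending $f$ by zero. Define
\[
\tilde f(t) := f(t)\quad\text{for } 0<t<r,\qquad \tilde f(t):=0 \quad\text{for } t\ge r.
\]
Then $\tilde f$ is nonnegative and lies in $L^p(0,\infty)$, with
\[
\|\tilde f\|_{L^p(0,\infty)}^p=\int_0^r f^p\,\mathrm{d}t.
\]

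The classical inequality, applied to $\tilde f$, gives
\[
\int_0^\infty\Big(\frac1t\int_0^t\tilde f(\tau)\,\mathrm{d}\tau\Big)^p\mathrm{d}t\le\Big(\frac{p}{p-1}\Big)^p\int_0^\infty\tilde f^p\,\mathrm{d}t .
\]
Two observations finish the argument. For $t\in(0,r)$ the inner average of $\tilde f$ coincides with that of $f$. Since the integrand is nonnegative, restricting the left-hand integral to $(0,r)$ can only decrease it. Chaining these gives the claimed bound.

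There is no real obstacle here. The only point to check is that the zero extension stays in $L^p(0,\infty)$ and keeps nonnegativity, which is immediate.

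If one preferred a self-contained argument, the standard route is as follows. Assume first that $f$ is continuous with compact support in $(0,r]$, and set $F(t)=\int_0^t f$. Integrate $\int_0^r (F/t)^p\,\mathrm{d}t$ by parts using $\mathrm{d}(t^{1-p})/(1-p)$. The boundary term at $r$ has a favorable sign, because it equals $-r^{1-p}F(r)^p/(p-1)\le0$. This yields
\[
\int_0^r\Big(\frac Ft\Big)^p\,\mathrm{d}t\le \frac{p}{p-1}\int_0^r\Big(\frac Ft\Big)^{p-1}f\,\mathrm{d}t .
\]
Hölder's inequality on the right-hand side then gives the result for such $f$. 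The general case follows by monotone approximation.
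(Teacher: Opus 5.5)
Your argument is correct and matches the paper, which states that the finite-interval inequality follows from \cite[Theorem 327]{HLP_1952_Inequalities} via zero extension; you carry out exactly that step, using that the averages agree on $(0,r)$ and that the left-hand integrand is nonnegative on $(r,\infty)$. Your optional self-contained integration-by-parts route is also fine, though the final step is cleaner done by $L^p$ approximation plus Fatou's lemma (or by truncating to $f\chi_{[\delta,r]}$ and letting $\delta\to 0$ by monotone convergence) than by monotone approximation with continuous functions, which does not reach an arbitrary $L^p$ function.
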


We will also use the following algebraic inequality.

\begin{lemma}\label{lem:Nconcave_liminf2}
Let $p,\, x,\, y\geq 0$, and $0\le w\le 2$.
Then 
\begin{equation*}
\frac{x^{2}-xp -x(x+y)w}{1+\sqrt{1+p^{2}}} + 1+x^{2}+2y(x+y)w > 0.
\end{equation*}
\end{lemma}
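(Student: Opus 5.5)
The plan is to clear the denominator and then absorb each negative term into a positive one, using only the elementary facts $s:=\sqrt{1+p^{2}}\ge \max\{1,p\}$ and $0\le w\le 2$. Since $1+s>0$, the claimed inequality is equivalent to
\begin{equation*}
x^{2}-xp-x(x+y)w+(1+s)(1+x^{2})+2(1+s)\,y(x+y)w>0 .
\end{equation*}
I split $(1+s)(1+x^{2})=(1+x^{2})+s(1+x^{2})$. The only negative terms are $-xp$, $-x^{2}w$ and $-xyw$, and I handle them one at a time.

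First, I absorb $-xp$ into $s(1+x^{2})$. Because $s\ge p$ and $1+x^{2}\ge 2x$, we get
\begin{equation*}
s(1+x^{2})\ge p(1+x^{2})\ge 2px\ge xp .
\end{equation*}
Hence $s(1+x^{2})-xp\ge 0$.

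Next, I absorb $-x^{2}w$ into $x^{2}+(1+x^{2})=1+2x^{2}$. Since $w\le 2$, we have $1+2x^{2}-x^{2}w\ge 1$.

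Finally, I absorb $-xyw$ into the cross term. Write $2(1+s)y(x+y)w\ge 2(1+s)xyw$, and note $2(1+s)\ge 4$ because $s\ge1$. Since $x,y,w\ge0$, this gives $2(1+s)y(x+y)w-xyw\ge 3xyw\ge 0$.

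Adding the three estimates, the left-hand side is at least $1>0$, which proves Lemma \ref{lem:Nconcave_liminf2}. There is no real obstacle here. The only point requiring care is to allot the positive term $(1+s)(1+x^{2})$ correctly: its $s$-part is used against $-xp$, and its $1$-part, together with the free $x^{2}$, is used against $-x^{2}w$. The strictness of the inequality comes from the leftover constant $1$.
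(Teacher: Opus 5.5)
Your proof is correct: after clearing the positive denominator $1+\sqrt{1+p^{2}}$, absorbing $-xp$, $-x^{2}w$ and $-xyw$ separately does leave a lower bound of $1$. This is essentially the paper's argument (discard the nonnegative terms using $w\le 2$ and $\sqrt{1+p^{2}}\ge 1$, then handle $-xp$ by a quadratic positivity estimate). The only difference is that your AM--GM step $\sqrt{1+p^{2}}\,(1+x^{2})\ge 2px$ replaces the paper's completion of the square in $x$.
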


\begin{proof}
Rearranging the terms yields
\begin{equation*}
\begin{split}
&\ \frac{x^{2}-xp -x(x+y)w}{1+\sqrt{1+p^{2}}} + 1+x^{2}+2y(x+y)w\\
=&\  \frac{2-w+\sqrt{1+p^{2}}}{1+\sqrt{1+p^{2}}} x^{2} - \frac{p}{1+\sqrt{1+p^{2}}}x + 1 + \left( \frac{-1}{1+\sqrt{1+p^{2}}} + 2 \right) xyw + 2y^{2}w.
\end{split}
\end{equation*}
Since $2-w\ge 0$ and the last two terms in the above equality are both nonnegative, calculating directly we get 
\begin{equation*}
	\begin{split}
		&\ \frac{x^{2}-xp -x(x+y)w}{1+\sqrt{1+p^{2}}} + 1+x^{2}+2y(x+y)w\\
		\ge &\ \frac{\sqrt{1+p^{2}}}{1+\sqrt{1+p^{2}}} x^{2} - \frac{p}{1+\sqrt{1+p^{2}}} x + 1 = \frac{\sqrt{1+p^{2}}}{1+\sqrt{1+p^{2}}} \left( x^{2} - \frac{p}{\sqrt{1+p^{2}}} x \right) + 1\\
		=&\ \frac{\sqrt{1+p^{2}}}{1+\sqrt{1+p^{2}}} \left( \left( x-\frac{p}{2\sqrt{1+p^{2}}}\right)^{2} - \frac{p^{2}}{4(1+p^{2})} \right) + 1\\
		\ge&\ - \frac{\sqrt{1+p^{2}}}{1+\sqrt{1+p^{2}}}\cdot \frac{p^{2}}{4(1+p^{2})} + 1 \geq -\frac{1}{4}+1>0.
	\end{split}
\end{equation*}
This finishes the proof.
\end{proof}

\section{A potential expansion identity for the cavity problem}\label{sec:potential_expansion}

In this section, we derive an integral identity by potential expansion for the three-dimensional axially symmetric cavity problem \eqref{eq:cavity_pde}. This identity provides the basis for our subsequent analysis on the asymptotic shape of the free boundary. The derivation uses a generalized Newtonian potential kernel. This choice of kernel is a key  point of departure from Levinson's integral equation in \cite{Levinson_1946}, as it yields an exact identity in which the convergence of all integrals follows automatically from the natural assumption \eqref{eq:condition_phid}.

Define 
\begin{equation}\label{eq:Gdef}
G(X,Y) := \frac{1}{\lvert X-Y\rvert} - \frac{1}{\lvert X\rvert}
\end{equation}
and
\begin{equation}\label{eq:Vdef}
V(X):= \phi(X) - \phi_{\infty}(X),
\end{equation}
where $\phi_{\infty}$ is the blow-down limit of $\phi$ as in \eqref{eq:condition_blowdown}. We have the following potential expansion of $V$.

\begin{proposition}\label{prop:pe}
Let $G$ and $V$ be defined in \eqref{eq:Gdef} and \eqref{eq:Vdef}, respectively. For each $Y \in \Gamma$, 
\begin{equation}\label{eq:pe_1}
	-V(Y) + 2V(0) = \frac{1}{2\pi} \int_{N \cup \Gamma} V(X)\partial_{\nu}G(X,Y) - G(X,Y)\partial_{\nu}V(X) \mathrm{d}\mathcal{H}^{2}(X).
\end{equation}
\end{proposition}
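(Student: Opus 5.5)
The plan is to apply Green's second identity to $V$ and the kernel $G(\cdot,Y)$ on a truncated, punctured flow region, and then let the truncation and puncture parameters tend to their limits. Both $V$ and $G(\cdot,Y)$ are harmonic in $D$ away from $Y$ and $0$:
\begin{itemize}
\item $V=\phi-t$ is harmonic by \eqref{eq:cavity_pde} and \eqref{eq:condition_blowdown};
\item $G(\cdot,Y)$ is harmonic away from its two singular points, $Y$ and the origin.
\end{itemize}
The origin lies in the interior of $D$, since the boundary $N\cup\Gamma$ lives in $\{t\ge 1\}$. The point $Y=(r,g(r),\lambda)\in\Gamma$ is a smooth boundary point, because $g$ is smooth on $(a,\infty)$. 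For $R$ large and $\delta$ small, set
\[
D_{R,\delta}:=(D\cap B_R)\setminus\bigl(\overline{B_\delta(0)}\cup\overline{B_\delta(Y)}\bigr).
\]
Green's second identity on this region gives
\[
\int_{\partial D_{R,\delta}} \bigl(V\partial_\nu G-G\partial_\nu V\bigr)\,\mathrm d\mathcal H^2=0 .
\]
The boundary $\partial D_{R,\delta}$ has four pieces: $(N\cup\Gamma)\cap B_R\setminus B_\delta(Y)$, the sphere $D\cap\partial B_R$, the full small sphere $\partial B_\delta(0)$, and the portion $D\cap\partial B_\delta(Y)$, which is asymptotically a half-sphere.

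Next I compute the two small-sphere contributions, using continuity of $V$ and boundedness of $\nabla V$. The latter follows from Lemma \ref{lem:Bernstein_property}, since $|\nabla V|\le|\nabla\phi|+1\le 2$.
\begin{itemize}
\item Around $0$, only the term $-1/|X|$ is singular. With the normal pointing toward the centre, $\int V\partial_\nu(-1/|X|)\to -4\pi V(0)$, and the $G\partial_\nu V$ term is $O(\delta)$.
\item Around $Y$, the boundary is smooth, so $D\cap\partial B_\delta(Y)$ has area $2\pi\delta^2(1+o(1))$. Hence $\int V\partial_\nu(1/|X-Y|)\to 2\pi V(Y)$, and again $\int G\,\partial_\nu V=O(\delta)$.
\end{itemize}
On $(N\cup\Gamma)\setminus B_\delta(Y)$ we have $G=O(1/|X-Y|)$, which is integrable on a smooth surface. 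Also $\partial_\nu V=-\nu_t$ is bounded, by the Neumann condition $\partial_\nu\phi=0$ in \eqref{eq:cavity_pde}. So letting $\delta\to0$ produces the surface integral over $(N\cup\Gamma)\cap B_R$ in the principal-value-free sense. After dividing by $2\pi$, this accounts exactly for the left-hand side $-V(Y)+2V(0)$ of \eqref{eq:pe_1}, with the sign dictated by the outward normal.

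The step I expect to be the main obstacle is showing that the contribution of $D\cap\partial B_R$ tends to $0$ as $R\to\infty$. Everything else is local. This step is also where the choice of $G$ rather than the plain Green kernel matters: subtracting $1/|X|$ improves the decay to
\[
G(X,Y)=O\!\left(\frac{|Y|}{R^2}\right),\qquad \nabla_X G=O\!\left(\frac{|Y|}{R^3}\right)\quad\text{for }|X|=R .
\]
By \eqref{eq:condition_phid}, $\nabla V(X)\to0$ as $|X|\to\infty$, and integrating along rays as in the proof of the blow-down lemma gives $V(X)=o(|X|)$. Since the area of $\partial B_R$ is $4\pi R^2$, the two sphere terms are bounded by
\[
o(R)\cdot R^{-3}\cdot R^2=o(1)\qquad\text{and}\qquad R^{-2}\cdot o(1)\cdot R^2=o(1).
\]
Therefore the sphere integral vanishes in the limit, and the surface integral over $N\cup\Gamma$ converges, at least as $\lim_{R\to\infty}$ of the truncated integrals, which is how \eqref{eq:pe_1} is to be read. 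This yields \eqref{eq:pe_1}.

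A minor technical point remains. Green's identity needs $V\in C^1$ up to the boundary, which may fail near the junction $t=a$, where $g'(a)$ may be infinite, or near a possible cusp at $t=1$. I would handle this by applying the identity on slightly shrunken smooth subdomains. One then passes to the limit using the boundedness of $\nabla\phi$ from Lemma \ref{lem:Bernstein_property} and dominated convergence, since the integrands are uniformly integrable there.
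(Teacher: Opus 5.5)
Your proposal is correct and follows the paper's proof: Green's identity for $V$ and $G(\cdot,Y)$ on $D\cap B_R$, with the boundary point $Y$ picking up half the weight of the interior point $0$, and the contribution of $D\cap\partial B_R$ vanishing because $G=O(R^{-2})$ and $\partial_\nu G=O(R^{-3})$ while $V=o(R)$ and $\partial_\nu V=o(1)$. Your explicit excision of small balls around $0$ and $Y$ and your remark about regularity at the junction fill in details the paper leaves implicit; to justify the principal-value-free limit at $Y$ you should also note that $\partial_\nu G(\cdot,Y)=O(1/\lvert X-Y\rvert)$ on the $C^2$ surface $\Gamma$, not only that $G$ is.
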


\begin{proof}
For each $Y \in \Gamma$ and each $R > \lvert Y\rvert$, since  $\Delta V=0$ in $D$, Green's identity gives
\begin{equation}\label{eq:pe_2}
	\begin{split}
		-\frac 12V(Y)+V(0)
		&=\frac1{4\pi}\int_{\partial (B_{R}\cap D)} V(X)\partial_{\nu}G(X,Y) - G(X,Y)\partial_{\nu}V(X) \mathrm{d}\mathcal{H}^{2}(X).
	\end{split}
\end{equation}
For $X \in \partial B_{R} \cap D$, by \eqref{eq:condition_blowdown} and \eqref{eq:condition_phid}, one has 
\begin{equation*}
	V(X) = (\phi - \phi_{\infty})(X) = o(R) 
	\quad \text{and}\quad 
	\partial_{\nu}V(X) =\nabla(\phi - \phi_{\infty})(X)\cdot \frac{X}{\lvert X\rvert} =o(1) \qquad \text{as } R\to \infty.
\end{equation*}
Moreover, direct calculations yield that for $X \in \partial B_{R} \cap D$, as $R\to \infty$, one has
\begin{equation*}
	G(X,Y) = \frac{\lvert X\rvert - \lvert X-Y\rvert}{\lvert X-Y\rvert \lvert X\rvert} = O(R^{-2})
\end{equation*}
and 
\begin{equation*}
	\begin{split}
		\partial_{\nu}G(X,Y)
		&=\left(-\frac{X-Y}{\lvert X-Y\rvert^{3}} + \frac{X}{\lvert X\rvert^{3}} \right)\cdot\frac{X}{\lvert X\rvert} \\
		&=\frac{Y\cdot X}{\lvert X-Y\rvert^{3}\lvert X\rvert} +\left(\frac{1}{\lvert X\rvert^{3}}-\frac{1}{\lvert X-Y\rvert^{3}}\right)\lvert X\rvert
		= O(R^{-3}).
	\end{split}
\end{equation*}
Hence 
\begin{equation}\label{eq:pe_3}
	\lim_{R \to \infty} \int_{\partial B_{R} \cap D} V(X)\partial_{\nu}G(X,Y) - G(X,Y)\partial_{\nu}V(X) \mathrm{d}\mathcal{H}^{2}(X) = 0.
\end{equation}
Then it follows from \eqref{eq:pe_2} and \eqref{eq:pe_3} that
\begin{equation*}
	\begin{split}
		-V(Y) + 2V(0)
		&= \lim_{R \to \infty}\frac1{2\pi} \int_{B_{R} \cap \partial D} V(X)\partial_{\nu}G(X,Y) - G(X,Y)\partial_{\nu}V(X) \mathrm{d}\mathcal{H}^{2}(X)\\
		&= \frac1{2\pi}\int_{N\cup \Gamma} V(X)\partial_{\nu}G(X,Y) - G(X,Y)\partial_{\nu}V(X) \mathrm{d}\mathcal{H}^{2}(X).
	\end{split}
\end{equation*}
This finishes the proof of the proposition.
\end{proof}

\begin{center}
\includegraphics[height=4cm]{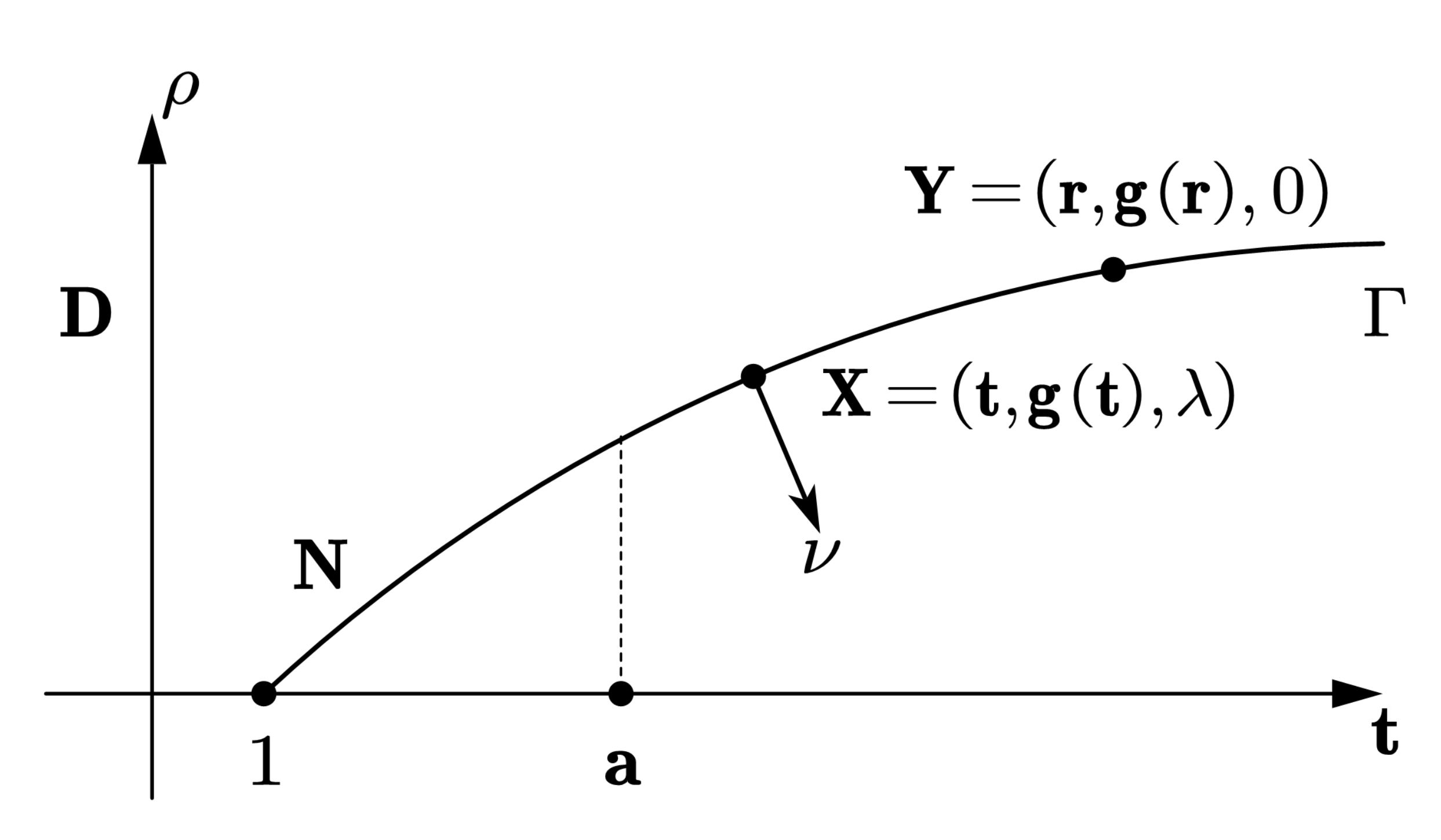}
\captionof{figure}{Potential expansion at $Y=(r,g(r),0)$}
\label{Fig:2}
\end{center}

From now on, we let $Y=(r,g(r),0)\in \Gamma$ and $X=(t,g(t),\lambda)\in N \cup \Gamma$. The unit outward normal at each point $X$ is given by
\begin{equation*}
\nu= \frac{(g^{\prime}(t),1,\lambda + \pi)}{\sqrt{1 + \lvert g^{\prime}(t)\rvert^{2}}},
\end{equation*}
which in Cartesian coordinates becomes 
\begin{equation*}
\nu= \frac{(g^{\prime}(t),-\cos\lambda,-\sin\lambda)}{\sqrt{1 + \lvert g^{\prime}(t)\rvert^{2}}}.
\end{equation*}
Then 
\begin{equation*}
\begin{split}
	\partial_{\nu}G(X,Y) 
	&=\left(-\frac{X-Y}{\lvert X-Y\rvert^{3}} + \frac{X}{\lvert X\rvert^{3}}\right)\cdot\nu=\frac{Y\cdot\nu}{\lvert X-Y\rvert^{3}}+(X\cdot\nu)\left(\frac{1}{\lvert X\rvert^{3}} - \frac{1}{\lvert X-Y\rvert^{3}}\right)\\
	&=\frac{rg^{\prime}(t)- g(r)\cos\lambda}{\sqrt{1 + \lvert g^{\prime}(t)\rvert^{2}}}\cdot\frac{1}{\lvert X-Y\rvert^{3}} + \frac{tg^{\prime}(t)-g(t)}{\sqrt{1 + \lvert g^{\prime}(t)\rvert^{2}}} \left(\frac{1}{\lvert X\rvert^{3}} - \frac{1}{\lvert X-Y\rvert^{3}}\right),
\end{split}
\end{equation*}
and since $\partial_{\nu}\phi=0$ on $N\cup\Gamma$,  
\begin{equation*}
\partial_{\nu}V(X) 
=\nabla(\phi - \phi_{\infty})(X)\cdot\nu
=-\nabla \phi_{\infty}(X)\cdot\nu
=\frac{-g^{\prime}(t)}{\sqrt{1 + \lvert g^{\prime}(t)\rvert^{2}}}.
\end{equation*}
Therefore, \eqref{eq:pe_1} can be rewritten as
\begin{equation}\label{eq:pe_F1}
\begin{split}
&\ -V(Y) + 2V(0)\\
=&\ \frac{1}{2\pi} \int_{0}^{2\pi} \int_{1}^{\infty} g(t)\sqrt{1 + \lvert g^{\prime}(t)\rvert^{2}} \big( V(X)\partial_{\nu}G(X,Y) - G(X,Y)\partial_{\nu}V(X) \big) \mathrm{d}t \mathrm{d}\lambda\\
=&\ \frac{1}{2\pi} \int_{0}^{2\pi} \int_{1}^{\infty} g(t)V(X)F_{g}(t,\lambda;r) + g(t)g^{\prime}(t)\left( \frac{1}{\lvert X-Y\rvert} - \frac{1}{\lvert X\rvert} \right) \mathrm{d}t \mathrm{d}\lambda,
\end{split}
\end{equation}
where 
\begin{equation}\label{eq:Fg_def}
F_{g}(t,\lambda;r) := \frac{rg^{\prime}(t) - g(r)\cos\lambda}{\lvert X-Y\rvert^{3}} + (tg^{\prime}(t) - g(t) ) \left( \frac{1}{\lvert X\rvert^{3}} - \frac{1}{\lvert X-Y\rvert^{3}} \right).
\end{equation}

Write $V(t) := V(X)$ and $\partial_{s}\phi(t) := \partial_{s}\phi(X)$ for each $X = (t,g(t),\lambda) \in N\cup \Gamma$. By the definition of $V$ in \eqref{eq:Vdef}, we have 
\begin{equation}\label{eq:Vt_expression}
V(t)= V(a)+\int_{a}^{t} \left(\partial_{s}\phi(\tau) \sqrt{1 + \lvert g^{\prime}(\tau)\rvert^{2}} - 1 \right) \mathrm{d}\tau.
\end{equation}
Since $\phi$ is defined up to an additive constant, we may assume $V(a) = 0$. Then
\begin{equation}\label{eq:Vt_def}
V(t)= 
\begin{cases}
\displaystyle \int_{t}^{a} \left( 1 - \partial_{s}\phi(\tau)\sqrt{1 + \lvert g^{\prime}(\tau)\rvert^{2}} \right) \mathrm{d}\tau \quad&\text{if } t\in (1,a];\\[12pt]
\displaystyle \int_{a}^{t}\left(\sqrt{1 + \lvert g^{\prime}(\tau)\rvert^{2}} - 1\right)\mathrm{d}\tau \quad &\text{if } t \in (a,\infty),
\end{cases}
\end{equation}
where $\partial_s\phi=1$ on $\Gamma$ has been used for $t\in(a,\infty)$.  
From \eqref{eq:pe_F1}, for $Y=(r,g(r),0)$ with $r>a$, we have the following identity
\begin{equation}\label{eq:pe_main}
-V(r) + 2V(0) = \frac{1}{2\pi} \int_{0}^{2\pi} \int_{1}^{\infty} g(t)V(t)F_{g}(t,\lambda;r) + g(t)g^{\prime}(t)\left( \frac{1}{\lvert X-Y\rvert} - \frac{1}{\lvert X\rvert} \right) \mathrm{d}t\mathrm{d}\lambda.
\end{equation}

In the following, we use the decomposition
\begin{equation}\label{eq:pe_main_NHJT}
-V(r) + 2V(0)=N_g(r)+H_g(r)+J_g(r)+T_g(r),
\end{equation}
where 
\begin{align}
&N_g(r) := \frac{1}{2\pi} \int_{0}^{2\pi} \int_{1}^{a} g(t)V(t)F_{g}(t,\lambda;r) + g(t)g^{\prime}(t)\left(\frac{1}{\lvert X-Y\rvert} - \frac{1}{\lvert X\rvert}\right) \mathrm{d}t\mathrm{d}\lambda,\label{eq:Ng_def}\\
&H_{g}(r) := \frac{1}{2\pi} \int_{0}^{2\pi} \int_{a}^{\frac r2} g(t)V(t)F_{g}(t,\lambda;r) + g(t)g^{\prime}(t)\left( \frac{1}{\lvert X-Y\rvert} - \frac{1}{\lvert X\rvert} \right) \mathrm{d}t\mathrm{d}\lambda,\label{eq:Hg_def}\\
&J_{g}(r) := \frac{1}{2\pi} \int_{0}^{2\pi} \int_{\frac r2}^{2r} g(t)V(t)F_{g}(t,\lambda;r) + g(t)g^{\prime}(t)\left( \frac{1}{\lvert X-Y\rvert} - \frac{1}{\lvert X\rvert} \right) \mathrm{d}t\mathrm{d}\lambda,\label{eq:Jg_def}\\
&T_{g}(r):= \frac{1}{2\pi} \int_{0}^{2\pi} \int_{2r}^{\infty} g(t)V(t)F_{g}(t,\lambda;r) + g(t)g^{\prime}(t)\left( \frac{1}{\lvert X-Y\rvert} - \frac{1}{\lvert X\rvert} \right) \mathrm{d}t\mathrm{d}\lambda. \label{eq:Tg_def}
\end{align}

\section{An initial growth estimate for the free boundary}\label{sec:initial_growth}

In this section, we estimate the four terms in the identity  \eqref{eq:pe_main_NHJT} one by one, and combine the resulting lower bounds to obtain an integro-differential inequality. This leads to an algebraic upper bound for the growth of the free boundary with exponent $1/2+\varepsilon$.

We shall temporarily work under the additional assumption that $g$ is unbounded, i.e.,
\begin{equation}\label{eq:condition_g_unbounded}
\limsup_{t \to \infty} g(t) = \infty.
\end{equation} 
This assumption is for technical convenience only. As will be shown below (see Propositions \ref{prop:r_root_growth} and \ref{prop:g_refined_growth}), the bounded case follows by a trivial argument, so it suffices to treat the unbounded case.  
Note that the unboundedness of $g$ together with the concavity of the free boundary \eqref{eq:concave_fb} yields 
\begin{equation}\label{eq:condition_g_infty}
\lim_{t\to\infty} g(t)=\infty.
\end{equation}

\subsection{Estimates on the decomposed integrals}\label{subsec:lower_bounds}

First, we prove the following two preliminary lemmas.

\begin{lemma}\label{lem:Fg_lower_bound}
Assume that $g$ satisfies \eqref{eq:concave_fb}. Let $F_g$ be defined in \eqref{eq:Fg_def}. Then it holds that
\begin{equation*}
	F_{g}(t,\lambda;r) \ge \frac{g(r)(1-\cos\lambda)}{\lvert X-Y\rvert^{3}} + \frac{tg^{\prime}(t) - g(t)}{\lvert X\rvert^{3}}
	\qquad \text{for } t,\, r>a.
\end{equation*}
\end{lemma}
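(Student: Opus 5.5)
The plan is to subtract the claimed lower bound from the definition \eqref{eq:Fg_def} of $F_g$. The difference should collapse to a single fraction whose numerator is nonnegative precisely by the concavity \eqref{eq:concave_fb} of $g$ on $(a,\infty)$. No estimates on $\lvert X-Y\rvert$ or $\lvert X\rvert$ should be needed beyond $\lvert X-Y\rvert>0$ for $X\neq Y$; the inequality is to be read pointwise away from the singular point.

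First I expand \eqref{eq:Fg_def} as
\begin{equation*}
F_g(t,\lambda;r)=\frac{rg'(t)-g(r)\cos\lambda-\big(tg'(t)-g(t)\big)}{\lvert X-Y\rvert^{3}}+\frac{tg'(t)-g(t)}{\lvert X\rvert^{3}}.
\end{equation*}
The term $(tg'(t)-g(t))/\lvert X\rvert^{3}$ appears identically in the claimed bound, so it cancels. The remaining term of the bound, $g(r)(1-\cos\lambda)/\lvert X-Y\rvert^{3}$, has the same denominator. Subtracting it, the $\cos\lambda$ contributions cancel and one should obtain
\begin{equation*}
F_g(t,\lambda;r)-\frac{g(r)(1-\cos\lambda)}{\lvert X-Y\rvert^{3}}-\frac{tg'(t)-g(t)}{\lvert X\rvert^{3}}
=\frac{g(t)+g'(t)(r-t)-g(r)}{\lvert X-Y\rvert^{3}}.
\end{equation*}

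It then remains to show $g(t)+g'(t)(r-t)\ge g(r)$ for all $t,r>a$. This says exactly that the tangent line to $g$ at $t$ lies above the graph of $g$. That holds for a $C^1$ concave function on the interval $(a,\infty)$, since $g''\le 0$ there by Lemma \ref{lem:concavity}, i.e.\ \eqref{eq:concave_fb}. Concretely, I would write
\begin{equation*}
g(r)-g(t)-g'(t)(r-t)=\int_t^r\big(g'(\tau)-g'(t)\big)\,\mathrm d\tau,
\end{equation*}
which is $\le0$ in both cases $r>t$ and $r<t$ because $g'$ is nonincreasing. Since the denominator is positive, the difference is nonnegative, which proves the lemma.

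I expect no real obstacle here. The only point of care is restricting to $t>a$, so that $g$ is smooth and concave on the whole segment between $t$ and $r$ and the tangent-line inequality applies.
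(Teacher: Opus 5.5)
Your proposal is correct and is the paper's argument: write the numerator over $\lvert X-Y\rvert^{3}$ as $g(t)+(r-t)g^{\prime}(t)-g(r)\cos\lambda$, then apply the tangent-line inequality $g(r)\le g(t)+(r-t)g^{\prime}(t)$, which holds by concavity on $(a,\infty)$. Your integral identity justifying that inequality only spells out a step the paper states in one line.
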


\begin{proof}
By concavity of $g$ on $(a,\infty)$ (see \eqref{eq:concave_fb}), one has 
\begin{equation*}
g(r)\le  
g(t) +(r-t)g^{\prime}(t) \qquad \text{for } t,\, r>a.
\end{equation*}
Hence, it follows from the definition of $F_g$ in \eqref{eq:Fg_def} that 
\begin{equation*}
F_{g}(t,\lambda;r)= \frac{g(t) +(r-t)g^{\prime}(t) - g(r)\cos\lambda}{\lvert X-Y\rvert^{3}} + \frac{tg^{\prime}(t) - g(t)}{\lvert X\rvert^{3}}
\ge \frac{g(r)(1-\cos\lambda)}{\lvert X-Y\rvert^{3}} + \frac{tg^{\prime}(t) - g(t)}{\lvert X\rvert^{3}}.
\end{equation*}
This finishes the proof of the lemma. 
\end{proof}

\begin{lemma}\label{lem:Vg_relation}
Assume that $g$ satisfies \eqref{eq:condition_g_sublinear} and \eqref{eq:condition_g_infty}. Let $V$ be as in \eqref{eq:Vt_def}. There exists a constant $t_{0} > a$ such that
\begin{equation*}
\frac{g(t)^{2}}{t} \le 4V(t)\qquad \text{for } t> t_{0}.
\end{equation*}
\end{lemma}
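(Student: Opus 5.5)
Our plan is to bound the integrand in \eqref{eq:Vt_def} from below and then compare with $g^2/t$ using the monotonicity and concavity of $g$. For $\tau>a$ we have
\begin{equation*}
\sqrt{1+g^{\prime}(\tau)^2}-1=\frac{g^{\prime}(\tau)^2}{\sqrt{1+g^{\prime}(\tau)^2}+1}.
\end{equation*}
By \eqref{eq:condition_dg_to0}, $g^{\prime}(\tau)\to0$, so there is $t_1>a$ with $\sqrt{1+g^{\prime 2}}+1\le 4$ on $(t_1,\infty)$; in fact $\le 2+\delta$ for any $\delta>0$ once $t_1$ is large. Since the integrand is nonnegative, this gives
\begin{equation*}
V(t)\ge \frac{1}{4}\int_{t_1}^{t} g^{\prime}(\tau)^2\,\mathrm d\tau\qquad\text{for } t>t_1 .
\end{equation*}
The task therefore reduces to showing $\int_{t_1}^t g^{\prime 2}\ge g(t)^2/t$ up to the slack in the constant.

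Next we apply Cauchy--Schwarz:
\begin{equation*}
\bigl(g(t)-g(t_1)\bigr)^2=\Bigl(\int_{t_1}^t g^{\prime}\Bigr)^2\le (t-t_1)\int_{t_1}^t g^{\prime 2}\le t\int_{t_1}^t g^{\prime 2}.
\end{equation*}
This yields
\begin{equation*}
V(t)\ge \frac{1}{2+\delta}\cdot\frac{(g(t)-g(t_1))^2}{t}.
\end{equation*}
By \eqref{eq:condition_g_infty}, $g(t)\to\infty$, so $g(t)-g(t_1)\ge (1-\delta^{\prime})g(t)$ for $t$ large. Choosing $\delta,\delta^{\prime}$ small gives $V(t)\ge g(t)^2/(4t)$, for instance $(1-\delta^{\prime})^2/(2+\delta)\ge 1/4$, for all $t>t_0$.

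There is no real obstacle. The only points needing care are the two uses of the assumptions. Hypothesis \eqref{eq:condition_g_sublinear}, via concavity \eqref{eq:condition_dg_to0}, makes $g^{\prime}$ small, which lets us replace $\sqrt{1+g^{\prime 2}}-1$ by essentially $g^{\prime 2}/2$. Hypothesis \eqref{eq:condition_g_infty} lets us absorb the constant $g(t_1)$. Because the constant $4$ leaves a factor-two slack relative to the sharp $2$, both limits can be taken crudely.
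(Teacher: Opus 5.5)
Your argument is correct in the paper's setting, but it takes a different route from the paper and uses slightly more than the lemma's listed hypotheses. The paper does not linearize. It compares arc length with chord length between $(a,g(a))$ and $(t,g(t))$:
\[
V(t)\ge \sqrt{(t-a)^2+(g(t)-g(a))^2}-(t-a)=\frac{(g(t)-g(a))^2}{\sqrt{(t-a)^2+(g(t)-g(a))^2}+(t-a)}.
\]
This is Jensen's inequality for the convex function $s\mapsto\sqrt{1+s^2}-1$ applied to the average slope. Then $g(t)=o(t)$ makes the denominator $(2+o(1))(t-a)$, and $g(t)\to\infty$ absorbs $g(a)$, which gives the constant $1/3$ and then $1/4$. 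No pointwise information on $g'$ is needed, only \eqref{eq:condition_g_sublinear} and \eqref{eq:condition_g_infty}.

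You instead use the pointwise linearization $\sqrt{1+g'^2}-1\ge g'^2/(2+\delta)$ together with Cauchy--Schwarz, which is Jensen for $s^2$. The linearization requires $g'$ to be eventually small. You take this from \eqref{eq:condition_dg_to0}, which comes from the concavity \eqref{eq:concave_fb}, and concavity is not among the lemma's hypotheses. Within the paper this is harmless: concavity holds by Lemma \ref{lem:concavity} under the standing assumptions, and every later use of this lemma is made under \eqref{eq:concave_fb}. Strictly, though, you prove a slightly weaker statement. For a function satisfying only the two listed conditions, say one with occasional steep increments, your bound on $\sqrt{1+g'^2}+1$ fails while the chord argument still works.

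Both routes give the same asymptotic constant, $V(t)\gtrsim g(t)^2/(2t)$. Yours matches the linearized estimate $V'\ge\frac{1-\sigma}{2}g'^2$ that the paper uses later in Corollary \ref{coro:refined_ineq_sharp}. The paper's is shorter and uses only the stated hypotheses.
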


\begin{proof}
In view of \eqref{eq:Vt_def}, together with the fact that arc length dominates chord length, we have
\begin{equation*}
	\begin{split}
		V(t) =  \int_{a}^{t} \left(\sqrt{1 + \lvert g^{\prime}(\tau)\rvert^{2}} - 1\right) \mathrm{d}\tau
		&\ge \sqrt{(t - a)^{2} + (g(t) - g(a))^{2}} - (t - a)\\
		&=  \frac{(g(t) - g(a))^{2}}{\sqrt{(t - a)^{2} + (g(t) - g(a))^{2}} + (t - a)} 
        \qquad\text{for } t>a.
	\end{split}
\end{equation*}
Since $g(t)=o(t)$ and $g(t)\to\infty$ as $t\to\infty$ (see \eqref{eq:condition_g_sublinear} and \eqref{eq:condition_g_infty}), it holds that
\begin{equation*}
	\lim_{t\to\infty}\frac{g(t)-g(a)}{t-a}=0 
	\quad\text{and} \quad 
	\lim_{t \to \infty} \frac{(g(t) - g(a))^{2}}{t-a}\cdot\frac{t}{g(t)^{2}} = 1. 
\end{equation*}
Therefore, there exists a $t_0>a$ such that 
\begin{equation*}
	V(t) \ge \frac{(g(t) - g(a))^{2}}{3(t - a)}\ge \frac{g(t)^2}{4t} \qquad \text{for } t> t_{0}.
\end{equation*}
Hence the proof of the lemma is completed.
\end{proof}

Now we estimate $N_g(r)$, $H_g(r)$, $J_g(r)$, and $T_g(r)$ in turn.

\begin{lemma}\label{lemma_1_potential_expansion}
Let $N_g$ be defined in \eqref{eq:Ng_def}. Then the limit
\begin{equation}\label{eq:Ng_estimate}
\lim_{r\to \infty}N_g(r) =: C_{N}\in \mathbb{R} \qquad \text{exists.}
\end{equation}
\end{lemma}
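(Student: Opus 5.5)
The plan is to show that the integrand of $N_g(r)$ converges pointwise as $r\to\infty$, for fixed $(t,\lambda)\in(1,a)\times[0,2\pi)$, and to dominate it uniformly in large $r$ by an integrable function. Dominated convergence then gives the limit, which we expect to be
\[
C_N=\frac{1}{2\pi}\int_0^{2\pi}\int_1^a g(t)V(t)\,\frac{g(t)-t g'(t)}{|X|^3}\,\mathrm{d}t\,\mathrm{d}\lambda ,
\]
possibly plus a correction coming from the term $r g'(t)/|X-Y|^3$ (see below). Throughout, $X=(t,g(t),\lambda)$ with $t\in[1,a]$ ranges over the compact set $N$, while $Y=(r,g(r),0)$ with $g(r)=o(r)$ by \eqref{eq:condition_g_sublinear}. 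Hence $|X-Y|\ge r-a$ and $|X-Y|=r(1+o(1))$ uniformly in $X\in N$.

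First I treat the $G$-part. We have
\[
\frac{1}{|X-Y|}-\frac{1}{|X|}\to-\frac{1}{|X|},
\]
and $|X|\ge t\ge1$, so the integrand $g(t)g'(t)G(X,Y)$ is bounded by $C\,g(t)|g'(t)|$ for $r\ge 2a$. This is integrable on $[1,a]$, since $g$ is smooth on $[1,a]$. The only issue is that $g'(a)$ may be infinite, as the paper notes; in that case one uses that $\int_1^a g\,|g'|$ is finite, which holds since $g$ is monotone near $a$ and $g\,g'=(g^2/2)'$. So this part converges to $-\frac{1}{2\pi}\int\!\!\int g(t)g'(t)/|X|\,\mathrm{d}t\,\mathrm{d}\lambda$.

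Next I treat the $V F_g$-part. $V$ is bounded on $[1,a]$ by \eqref{eq:Vt_def}, since $|\nabla\phi|$ is bounded there by \eqref{eq:condition_speed} and the arc length is finite. In $F_g$ I split into three terms.
\begin{itemize}
\item The term $(tg'-g)\big(|X|^{-3}-|X-Y|^{-3}\big)$ converges to $(tg'-g)|X|^{-3}$ and is dominated by $C(|g'|+1)$.
\item The term $-g(r)\cos\lambda/|X-Y|^3$ is $O(g(r)/r^3)\to0$.
\item The delicate term is $r g'(t)/|X-Y|^3$. Pointwise it is $O(r^{-2})\to 0$, and it is dominated by $C|g'(t)|$, which is again integrable on $[1,a]$ (arc length of $N$ finite). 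So it also vanishes in the limit.
\end{itemize}
Hence no correction appears, and $C_N$ is exactly the limit of the remaining terms, a finite real number.

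The main obstacle I expect is integrability near the junction $t=a$, where $g'(a)$ may blow up. Since the assumption only gives smoothness on the closed interval $[1,a]$, I would interpret it as $g'$ bounded, or at least $\int_1^a\sqrt{1+g'^2}<\infty$ (finite surface area of $N$). Everything above only needs $g'\in L^1(1,a)$ together with boundedness of $g$ and $V$ on $[1,a]$. With these dominating functions, which are independent of $r\ge 2a$, dominated convergence concludes the proof.
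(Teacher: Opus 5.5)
Your proposal is correct and follows the paper's proof: on the compact fixed boundary, $|X-Y|\ge r-a$, so $1/|X-Y|$ and the $(X-Y)$-part of $\partial_\nu G$ vanish uniformly, leaving the $r$-independent terms, while $V$ and $\partial_\nu V$ stay bounded on $N$. The paper writes these as explicit $O(r^{-1})$, $O(r^{-2})$ errors in the surface-integral form, while you do the same termwise in the $(t,\lambda)$ parametrization with dominated convergence under $g'\in L^1(1,a)$. One slip to fix: your preliminary guess for $C_N$ has the wrong sign ($g-tg'$ should be $tg'-g$) and omits $-\frac{1}{2\pi}\int_0^{2\pi}\int_1^a g(t)g'(t)\lvert X\rvert^{-1}\,\mathrm{d}t\,\mathrm{d}\lambda$. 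Your own termwise computation later gives both correctly, and the resulting limit agrees with the paper's $C_N$.
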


\begin{proof}
From the derivation of \eqref{eq:pe_F1}, we have
\begin{equation*}
N_g(r) = \frac{1}{2\pi} \int_{N} V(X)\partial_{\nu}G(X,Y) - G(X,Y)\partial_{\nu}V(X) \mathrm{d}\mathcal{H}^{2}(X),
\end{equation*}
where $Y=(r,g(r),0)\in \Gamma$. Note that 
$V$ and $\partial_{\nu}V$ are bounded on the fixed boundary $N$. Moreover, as $r\to\infty$, one has
\begin{equation*}
G(X,Y) = \frac{1}{\lvert X-Y\rvert} - \frac{1}{\lvert X\rvert} = O\left(r^{-1}\right) - \frac{1}{\lvert X\rvert}
\end{equation*}
and
\begin{equation*}
\partial_{\nu}G(X,Y) = \left(-\frac{X-Y}{\lvert X-Y\rvert^{3}} + \frac{X}{\lvert X\rvert^{3}}\right)\cdot \nu = O\left(r^{-2}\right) +\frac{X\cdot\nu}{\lvert X\rvert^{3}}.
\end{equation*}
Therefore, it holds that 
\begin{equation*}
\begin{split}
N_{g}(r) &= \frac{1}{2\pi} \int_{N} V(X)\left(O\left(r^{-2}\right) + \frac{X\cdot \nu}{\lvert X\rvert^{3}}\right) -  \left(O\left(r^{-1}\right) - \frac{1}{\lvert X\rvert}\right)\partial_{\nu}V(X)\mathrm{d}\mathcal{H}^{2}(X)\\
&= C_N + O(r^{-2})+O(r^{-1}) \qquad\text{as } r \to \infty,
\end{split}
\end{equation*}
where 
\begin{equation*}
C_N:= \frac{1}{2\pi} \int_{N} V(X)\frac{X\cdot \nu}{\lvert X\rvert^{3}}+ \frac{1}{\lvert X\rvert}\partial_{\nu}V(X)\mathrm{d}\mathcal{H}^{2}(X).
\end{equation*}
This proves the lemma.
\end{proof}

We have the following lower bound for the head term $H_g(r)$. 

\begin{lemma}\label{lem:Hg_lower_bound} 
Assume that $g$ satisfies \eqref{eq:condition_g_sublinear},  \eqref{eq:concave_fb}, and \eqref{eq:condition_g_unbounded}. Let $H_g$ be defined in \eqref{eq:Hg_def} and $V$ be as in \eqref{eq:Vt_def}. There exists a constant $C>0$ independent of $r$ such that
\begin{equation*}
	H_{g}(r) \ge -C - 4 \int_{a}^{\frac r2} \frac{V(t)^{2}}{t^{2}} \mathrm{d}t - \int_{a}^{\frac r2} \frac{g(t)g^{\prime}(t)}{t} \mathrm{d}t\qquad \text{for } r> 2t_0,
\end{equation*}
where $t_0$ is given by Lemma \ref{lem:Vg_relation}.
\end{lemma}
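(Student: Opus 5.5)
The plan is to bound the two pieces of the integrand in \eqref{eq:Hg_def} from below separately, pointwise in $(t,\lambda)$ for $t\in(a,\frac r2)$. Only sign information and Lemmas \ref{lem:Fg_lower_bound} and \ref{lem:Vg_relation} are needed; no averaging in $\lambda$ is required.

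\emph{Step 1 (signs).} On $(a,\infty)$ we have $g\ge0$, and $g'\ge0$ by \eqref{eq:condition_dg_sign}. By \eqref{eq:Vt_def}, $V(t)=\int_a^t(\sqrt{1+|g'|^2}-1)\,\mathrm d\tau\ge0$ for $t>a$.

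\emph{Step 2 (the $gVF_g$ term).} Lemma \ref{lem:Fg_lower_bound} applies since $t,r>a$. Dropping the nonnegative term $g(r)(1-\cos\lambda)/|X-Y|^3$ and using $tg'(t)\ge0$ and $|X|\ge t$ gives
\begin{equation*}
g(t)V(t)F_g(t,\lambda;r)\ \ge\ g(t)V(t)\,\frac{tg'(t)-g(t)}{|X|^3}\ \ge\ -\frac{g(t)^2V(t)}{|X|^3}\ \ge\ -\frac{g(t)^2V(t)}{t^3}.
\end{equation*}
We then split the $t$-range at $t_0$ from Lemma \ref{lem:Vg_relation}; here $r>2t_0$ guarantees that $t_0<\frac r2$.
\begin{itemize}
\item For $t\in(t_0,\frac r2)$, Lemma \ref{lem:Vg_relation} gives $g(t)^2/t\le4V(t)$. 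Hence the integrand is at least $-4V(t)^2/t^2$, which produces the term $-4\int_a^{r/2}V^2/t^2\,\mathrm dt$ after enlarging the range to $(a,\frac r2)$.
\item For $t\in(a,t_0)$, both $g$ and $V$ are bounded, and $t\ge a>1$. This range therefore contributes at least $-\int_a^{t_0}g^2V/t^3\,\mathrm dt\ge-C$, with $C$ independent of $r$.
\end{itemize}

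\emph{Step 3 (the kernel term).} Since $g g'\ge0$ and $1/|X-Y|\ge0$, and $|X|\ge t$,
\begin{equation*}
g(t)g'(t)\Big(\frac1{|X-Y|}-\frac1{|X|}\Big)\ \ge\ -\frac{g(t)g'(t)}{|X|}\ \ge\ -\frac{g(t)g'(t)}{t}.
\end{equation*}
Integrating over $\lambda$ and dividing by $2\pi$ gives exactly the last term of the claimed bound.

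\emph{Main obstacle.} The only subtlety is near $t=a$, where $g'(a)$ may be infinite; one must check that every integral involved is finite. In Step 2 the exact integrand $g^2V/t^3$ contains no $g'$, so the constant $C$ is clearly finite. In Step 3 the integrand $g g'/t$ is still integrable near $a$, because $\int_a^{s} g g'/t\,\mathrm dt\le\frac1{2a}\big(g(s)^2-g(a)^2\big)<\infty$. In any case that term is kept explicitly in the statement rather than absorbed into $C$. Summing Steps 2 and 3 yields the stated lower bound for $r>2t_0$.
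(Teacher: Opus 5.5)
Your proposal is correct and is essentially the paper's proof. Both arguments use Lemma \ref{lem:Fg_lower_bound} with $g'\ge 0$ to get $F_g\ge -g(t)/|X|^3$, bound the kernel term below by $-g(t)g'(t)/|X|$, and replace $|X|$ by $t$. Both then split at $t_0$ and apply Lemma \ref{lem:Vg_relation}, with the constant $C=\int_a^{t_0} g(t)^2V(t)/t^3\,\mathrm{d}t$.
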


\begin{proof} 
Using Lemma \ref{lem:Fg_lower_bound} and $g^{\prime}\ge0$ on $(a,\infty)$ in \eqref{eq:condition_dg_sign} yields
\begin{equation}\label{eq:Fg_lower_bound1}
	F_{g}(t,\lambda;r) \ge \frac{g(r)(1-\cos\lambda)}{\lvert X-Y\rvert^{3}} + \frac{tg^{\prime}(t) - g(t)}{\lvert X\rvert^{3}}\ge -\frac{g(t)}{\lvert X\rvert^{3}} \qquad\text{for } t,\, r>a.
\end{equation}
This together with $\lvert X\rvert = \sqrt{t^{2} + g(t)^{2}} \ge t$ gives
\begin{equation}\label{eq:Fg_lower_bound2}
	\begin{split}
		H_{g}(r) &= \frac{1}{2\pi} \int_{0}^{2\pi}\int_{a}^{\frac r2} g(t)V(t)F_{g}(t,\lambda;r) + g(t)g^{\prime}(t) \left(\frac{1}{\lvert X-Y\rvert} - \frac{1}{\lvert X\rvert}\right) \mathrm{d}t\mathrm{d}\lambda\\
		&\ge - \int_{a}^{\frac r2} \frac{g(t)^{2}V(t)}{\lvert X\rvert^{3}}\mathrm{d}t - \int_{a}^{\frac r2} \frac{g(t)g^{\prime}(t)}{\lvert X\rvert} \mathrm{d}t\\
        &\ge - \int_{a}^{\frac r2} \frac{g(t)^{2}V(t)}{t^{3}}\mathrm{d}t - \int_{a}^{\frac r2} \frac{g(t)g^{\prime}(t)}{t} \mathrm{d}t
        \qquad\text{for } r>2a.
	\end{split}
\end{equation}
Note that $g(t)^2\le 4tV(t)$ for $t>t_0$ by Lemma \ref{lem:Vg_relation}. Hence one has
\begin{equation}\label{eq:Fg_lower_bound3}
\int_{a}^{\frac r2} \frac{g(t)^{2}V(t)}{t^{3}} \mathrm{d}t 
    \le C + 4\int_{t_{0}}^{\frac r2} \frac{V(t)^{2}}{t^{2}} \mathrm{d}t
      \le C + 4\int_{a}^{\frac r2} \frac{V(t)^{2}}{t^{2}} \mathrm{d}t
      \qquad \text{for } r >2t_{0},
\end{equation} 
where
\begin{equation*}
C=\int_{a}^{t_{0}} \frac{g(t)^{2}V(t)}{t^{3}} \mathrm{d}t>0.
\end{equation*}
The desired estimate follows by combining \eqref{eq:Fg_lower_bound2} and \eqref{eq:Fg_lower_bound3}. 
\end{proof}

Next we estimate the transition term $J_{g}(r)$ from below. 

\begin{lemma}\label{lem:Jg_lower_bound} 
Assume that $g$ satisfies \eqref{eq:condition_g_sublinear},  \eqref{eq:concave_fb}, and \eqref{eq:condition_g_unbounded}.  Let $J_g$ be defined in \eqref{eq:Jg_def} and $V$ be as in \eqref{eq:Vt_def}. There exists $R_0>\max\{2t_0,4a\}$ with $t_0$ as in Lemma \ref{lem:Vg_relation} such that
\begin{equation}\label{eq:Jg_lower_bound} 
J_{g}(r) \ge \frac{1}{9}g(r)g^{\prime}(r)\log\frac{r}{2g(r)} - 4\int_{\frac r2}^{2r} \frac{V(t)^{2}}{t^{2}} \mathrm{d}t - \int_{\frac r2}^{r} \frac{g(t)g^{\prime}(t)}{t} \mathrm{d}t\qquad \text{for } r> R_0.
\end{equation}
\end{lemma}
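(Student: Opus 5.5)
We split $J_g(r)$ into the $V$-part and the $gg'$-part and bound each from below separately.

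\emph{The $V$-part.} We use Lemma \ref{lem:Fg_lower_bound}. Since $V\ge0$ on $(a,\infty)$ by \eqref{eq:Vt_def}, the nonnegative term $g(r)(1-\cos\lambda)/\lvert X-Y\rvert^3$ can be dropped, which gives $F_g\ge -g(t)/\lvert X\rvert^3$ as in \eqref{eq:Fg_lower_bound1}. Exactly as in the proof of Lemma \ref{lem:Hg_lower_bound}, using $\lvert X\rvert\ge t$ and Lemma \ref{lem:Vg_relation} (valid since $t\ge r/2>t_0$), we get
\begin{equation*}
\frac{1}{2\pi}\int_0^{2\pi}\int_{\frac r2}^{2r} g(t)V(t)F_g\,\mathrm dt\,\mathrm d\lambda\ge -\int_{\frac r2}^{2r}\frac{g(t)^2V(t)}{t^3}\mathrm dt\ge -4\int_{\frac r2}^{2r}\frac{V(t)^2}{t^2}\mathrm dt .
\end{equation*}

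\emph{The $gg'$-part.} Since $g'\ge0$, the integrand $g(t)g'(t)\big(\lvert X-Y\rvert^{-1}-\lvert X\rvert^{-1}\big)$ is $\ge -g(t)g'(t)/t$ wherever the bracket is negative. On $(r/2,r)$ we bound the $-1/\lvert X\rvert$ contribution by $-\int_{r/2}^{r} gg'/t\,\mathrm dt$. On $(r,2r)$ we have $\lvert X-Y\rvert\le\lvert X\rvert$ when $t\ge r$ and $g$ is increasing, since $\lvert X\rvert^2-\lvert X-Y\rvert^2=2X\cdot Y-\lvert Y\rvert^2=2tr-r^2+g(r)(2g(t)\cos\lambda-g(r))$ with $2tr-r^2\ge r^2$ dominating for large $r$ by sublinearity. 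So the whole contribution on $(r,2r)$ is nonnegative and can be discarded, except for a subwindow we keep.

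It then suffices to produce the logarithm. We restrict to $t\in(r/2,r)$, more precisely to $t\in(r-r/4,r)$ say, and keep only $\frac{1}{2\pi}\int\int g(t)g'(t)\lvert X-Y\rvert^{-1}$. By concavity and monotonicity, $g(t)\ge g(r/2)\ge g(r)/2$ for $t\ge r/2$ (since $g(r)\le g(r/2)+\tfrac r2 g'(r/2)$ and $g(r/2)\ge \tfrac r2 g'(r/2)$ once $g(a)$ is negligible, i.e.\ $g(r/2)-g(a)\ge (r/2-a)g'(r/2)$). Also $g'(t)\ge g'(r)$ for $t\le r$. Next we bound $\lvert X-Y\rvert^2=(t-r)^2+(g(t)-g(r))^2+2g(t)g(r)(1-\cos\lambda)\le C\big((r-t)^2+g(r)^2\lambda^2\big)$, using $g'$ bounded so that $\lvert g(t)-g(r)\rvert\le C\lvert r-t\rvert$. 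The elementary integral
\begin{equation*}
\frac{1}{2\pi}\int_{-\pi}^{\pi}\frac{\mathrm d\lambda}{\sqrt{s^2+g(r)^2\lambda^2}}\approx\frac{1}{\pi g(r)}\log\frac{g(r)}{s}\quad (s\ll g(r))
\end{equation*}
and then the $s$-integral over $s=r-t\in(0,r/2)$ give roughly $\log\frac{r}{2g(r)}$: the region $s<g(r)$ contributes $O(1)$ after multiplying by $g(r)$, and the region $g(r)<s<r/2$ contributes $\int_{g}^{r/2}\mathrm ds/s$. Collecting the factors $g(t)\ge g(r)/2$ and $g'(t)\ge g'(r)$ and the constants from the bound on $\lvert X-Y\rvert$ yields a coefficient, and the slack allows the claimed $1/9$ for $r>R_0$. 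Here $R_0$ is chosen so that $g(r)\le r/8$ and the above comparisons hold; if $\log\frac{r}{2g(r)}$ is small the term is trivially handled, since the remaining part is nonnegative.

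The main obstacle is this last step: obtaining a clean constant such as $1/9$ uniformly. This requires careful bookkeeping of $\lvert X-Y\rvert\le\sqrt{(r-t)^2(1+\sup g'^2)+g(r)^2\lambda^2}$ with $\sup g'$ small for large $r$ by \eqref{eq:condition_dg_to0}, and of $1-\cos\lambda\le\lambda^2/2$. I would first do the $t$-integral exactly, using $\int_0^{L}(s^2+b^2)^{-1/2}\mathrm ds=\operatorname{arsinh}(L/b)\ge\log(L/b)$, and only then integrate in $\lambda$, which produces $\log\frac{r}{g(r)}$ minus bounded terms absorbed by the coefficient loss.
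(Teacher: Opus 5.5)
Your proposal is correct and is essentially the paper's proof. It has the same lower bound for the $V$-part via \eqref{eq:Fg_lower_bound1}, $\lvert X\rvert\ge t$ and Lemma \ref{lem:Vg_relation}, and the same sign argument ($\lvert X-Y\rvert<\lvert X\rvert$ for $t>r$) to discard $(r,2r)$. It also uses the same bound $-\int_{r/2}^{r}g(t)g^{\prime}(t)/t\,\mathrm{d}t$ for the $-1/\lvert X\rvert$ part, and extracts the logarithm from $g(t)g^{\prime}(t)/\lvert X-Y\rvert$ on $(r/2,r)$.

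The only difference is that the paper avoids the angular bookkeeping you flag as the main obstacle. It restricts to $t\in(r/2,r-g(r))$, where $2g(t)g(r)(1-\cos\lambda)\le 4g(r)^2\le 4(r-t)^2$ and $g^{\prime}<1$ give the pointwise bound $\lvert X-Y\rvert\le\sqrt6\,(r-t)$. Combined with the chord bound $g(t)g^{\prime}(t)\ge\frac{t-a}{r-a}g(r)g^{\prime}(r)\ge\frac13 g(r)g^{\prime}(r)$, this yields the coefficient $\frac{1}{3\sqrt6}\log\frac{r}{2g(r)}\ge\frac19\log\frac{r}{2g(r)}$ directly, with no lower-order terms to absorb. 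Your route (exact $t$-integral via $\operatorname{arsinh}$, then averaging $\log\lvert\lambda\rvert$ over $\lambda$) also works and gives a near-sharp constant.
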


\begin{proof}
Since $g(t)=o(t)$ and $g^{\prime}(t)=o(1)$ as $t \to \infty$ (see \eqref{eq:condition_g_sublinear} and \eqref{eq:condition_dg_to0}), there exists $R_0>\max\{2t_0,4a\}$ such that 
\begin{equation}\label{eq:r0_1}
0<g(t)<\frac t4 \quad\text{and}\quad 0\le g^{\prime}(t)<1 \qquad\text{for } t>\frac {R_0}2.
\end{equation}
Throughout the remainder of this proof, let $r>R_0$. 
As in the proof of Lemma \ref{lem:Hg_lower_bound}, applying \eqref{eq:Fg_lower_bound1}, $\lvert X\rvert\ge t$, and Lemma \ref{lem:Vg_relation} yields
\begin{equation}\label{eq:Jg_lower_gVF}
\frac1{2\pi}\int_0^{2\pi}\int_{\frac r2}^{2r} g(t)V(t)F_{g}(t,\lambda;r) \mathrm{d}t\mathrm{d}\lambda \ge -\int_{\frac r2}^{2r}\frac{g(t)^{2}V(t)}{t^{3}} \mathrm{d}t \ge -4\int_{\frac r2}^{2r} \frac{V(t)^{2}}{t^{2}} \mathrm{d}t.
\end{equation}
Moreover, 
\begin{equation}\label{eq:X-Y_X_sign}
	\begin{split}
		\lvert X-Y\rvert^{2}-\lvert X\rvert^{2} 
		&=\left((t-r)^{2} + (g(t) - g(r))^{2} + 2g(t)g(r)(1-\cos\lambda)\right)-\left(t^{2} + g(t)^{2}\right)\\
		&=-2tr+r^2+g(r)^2-2g(t)g(r)\cos\lambda
		<0 \qquad\text{for } t>r,
	\end{split}
\end{equation} 
where we used $g(r)^2<r^2/16$ and $g(t)g(r)<tr/16$. 
This implies
\begin{equation}\label{eq:X-Y_X_compare}
	\frac1{\lvert X-Y\rvert}>\frac1{\lvert X\rvert} \qquad\text{for } t>r,
\end{equation}
and leads to
\begin{equation}\label{eq:Jg_lower_gdg}
\frac{1}{2\pi} \int_{0}^{2\pi} \int_{r}^{2r} g(t)g^{\prime}(t) \left(\frac{1}{\lvert X-Y\rvert} - \frac{1}{\lvert X\rvert}\right) \mathrm{d}t \mathrm{d}\lambda\ge 0.
\end{equation}
Then it follows from the definition of $J_g$ in \eqref{eq:Jg_def}, \eqref{eq:Jg_lower_gVF}, and \eqref{eq:Jg_lower_gdg} that
\begin{equation}\label{eq:Jg_lower_1}\begin{split}
		J_{g}(r)& =\frac{1}{2\pi} \int_{0}^{2\pi} \int_{\frac r2}^{2r} g(t)V(t)F_{g}(t,\lambda;r) + g(t)g^{\prime}(t)\left( \frac{1}{\lvert X-Y\rvert} - \frac{1}{\lvert X\rvert} \right) \mathrm{d}t\mathrm{d}\lambda\\
		&\ge -4 \int_{\frac r2}^{2r} \frac{V(t)^{2}}{t^{2}} \mathrm{d}t + \frac{1}{2\pi} \int_{0}^{2\pi} \int_{\frac r2}^{r} g(t)g^{\prime}(t) \left(\frac{1}{\lvert X-Y\rvert} - \frac{1}{\lvert X\rvert}\right) \mathrm{d}t \mathrm{d}\lambda.
\end{split}\end{equation}

To estimate the last integral in \eqref{eq:Jg_lower_1}, we note that by the concavity of $g$ on $(a,\infty)$ (see \eqref{eq:concave_fb}), 
\begin{equation*}
	\begin{split}
		g(t)g^{\prime}(t)\ge
		\left(\frac{t-a}{r-a}g(r) + \frac{r-t}{r-a} g(a)\right)g^{\prime}(r) \ge \frac{t-a}{r-a}g(r)g^{\prime}(r)\ge \frac13g(r)g^{\prime}(r) \qquad\text{for } t\in\left(\frac r2,r\right).
	\end{split}
\end{equation*}
In addition, using \eqref{eq:condition_dg_sign}, $\lvert g^{\prime}(r/2)\lvert<1$, and $g(r)\le \lvert t-r\rvert$ for $t\in(r/2,r-g(r))$, we get for $t\in(r/2,r-g(r))$
\begin{equation*}
	\begin{split}
		\lvert X-Y\rvert^{2} &= (t - r)^{2} + (g(t) - g(r))^{2} + 2g(t)g(r)(1- \cos\lambda)\\
		&\le (t-r)^{2} \left(1 + \lvert g^{\prime}(r/2)\rvert^{2}\right) + 4g(r)^{2}
		\le  6(t-r)^2.
	\end{split}
\end{equation*}
Thus 
\begin{equation}\label{eq:Jg_lower_2}
	\frac{1}{2\pi} \int_{0}^{2\pi} \int_{\frac r2}^{r}\frac{ g(t)g^{\prime}(t)}{\lvert X-Y\rvert}\mathrm{d}t\mathrm{d}\lambda\ge \int_{\frac r2}^{r-g(r)} \frac{g(r)g^{\prime}(r)}{3\sqrt{6}\lvert t-r\rvert} \mathrm{d}t \ge \frac{1}{9}g(r)g^{\prime}(r) \log\frac{r}{2g(r)}.
\end{equation}
On the other hand, by $\lvert X\rvert\ge t$,
\begin{equation}\label{eq:Jg_lower_3}
	-\frac{1}{2\pi}\int_{0}^{2\pi}\int_{\frac r2}^{r}\frac{g(t)g^{\prime}(t)}{\lvert X\rvert}\mathrm{d}t \mathrm{d}\lambda\ge
	-\int_{\frac r2}^{r}\frac{g(t)g^{\prime}(t)}{t}\mathrm{d}t.
\end{equation}
Combining \eqref{eq:Jg_lower_1}, \eqref{eq:Jg_lower_2}, and \eqref{eq:Jg_lower_3} gives \eqref{eq:Jg_lower_bound}. Hence the proof of the lemma is completed.
\end{proof}

It remains to estimate the tail term $T_{g}(r)$. 

\begin{lemma}\label{lem:Tg_lower_bound}
Assume that $g$ satisfies \eqref{eq:condition_g_sublinear},  \eqref{eq:concave_fb}, and \eqref{eq:condition_g_unbounded}.  Let $T_g$ be defined in \eqref{eq:Tg_def}. There exist constants $C>0$ independent of $r$ and $R_1>R_0$ with $R_0$ as in Lemma \ref{lem:Jg_lower_bound} such that
\begin{equation*}
T_{g}(r) \ge -C\frac{g(r)^3}{r^{2}}\qquad \text{for } r>R_1.
\end{equation*}
\end{lemma}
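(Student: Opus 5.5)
The plan is to show that, on the tail $t>2r$, everything in the integrand of $T_g$ is nonnegative except the single term $-g(r)\cos\lambda/\lvert X-Y\rvert^{3}$, and that this term is small after averaging in $\lambda$ because its leading part is odd in $\cos\lambda$.

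\textbf{Step 1: choice of $R_1$.} Take $R_1>R_0$ so large that for $t>2r>2R_1$ we have \eqref{eq:r0_1}, namely $0<g<t/4$ and $0\le g'<1$. Using concavity \eqref{eq:concave_fb}, $g'\ge0$ and $g(a)\ge0$, also arrange that
\begin{equation*}
tg'(t)-g(t)\le 0 \qquad\text{and}\qquad g(t)\le C\,\frac{t}{r}\,g(r) \qquad\text{for } t\ge 2r.
\end{equation*}
Both follow from the chord inequality $g'(t)\le (g(t)-g(a))/(t-a)$ and the monotonicity of $(g(t)-g(a))/(t-a)$, with harmless corrections involving $a$ and $g(a)$ absorbed for large $t$.

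\textbf{Step 2: drop the second part of the integrand.} By \eqref{eq:X-Y_X_compare}, for $t>r$ we have $1/\lvert X-Y\rvert-1/\lvert X\rvert>0$. Since $g'\ge0$, the contribution of $g(t)g'(t)\bigl(1/\lvert X-Y\rvert-1/\lvert X\rvert\bigr)$ to $T_g$ is nonnegative and can be discarded.

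\textbf{Step 3: use the exact form of $F_g$, not Lemma \ref{lem:Fg_lower_bound}.} Lemma \ref{lem:Fg_lower_bound} would leave a term of size $\int_{2r}^\infty g^2V/t^3$, which is not small, so I work with \eqref{eq:Fg_def} directly:
\begin{equation*}
F_g=\frac{rg'(t)-g(r)\cos\lambda}{\lvert X-Y\rvert^{3}}+\bigl(tg'(t)-g(t)\bigr)\Bigl(\frac1{\lvert X\rvert^{3}}-\frac1{\lvert X-Y\rvert^{3}}\Bigr).
\end{equation*}
For $t>r$, \eqref{eq:X-Y_X_compare} gives $1/\lvert X\rvert^{3}-1/\lvert X-Y\rvert^{3}<0$. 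Combined with $tg'-g\le0$ from Step 1, the last product is nonnegative. The term $rg'(t)/\lvert X-Y\rvert^3$ is nonnegative as well. Since $V\ge0$ on $\Gamma$ by \eqref{eq:Vt_def}, it remains only to bound from below
\begin{equation*}
-\frac{g(r)}{2\pi}\int_0^{2\pi}\frac{\cos\lambda}{\lvert X-Y\rvert^3}\,\mathrm d\lambda .
\end{equation*}

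\textbf{Step 4: the $\lambda$-average.} Write $\lvert X-Y\rvert^2=A-2g(t)g(r)\cos\lambda$ with $A=(t-r)^2+g(t)^2+g(r)^2\ge (t-r)^2\ge t^2/4$. Expanding $(A-2g(t)g(r)\cos\lambda)^{-3/2}$ to first order in $\cos\lambda$, or equivalently pairing $\lambda$ with $\lambda+\pi$, the zeroth-order part integrates to zero against $\cos\lambda$. The remainder gives
\begin{equation*}
\Bigl\lvert\frac1{2\pi}\int_0^{2\pi}\frac{\cos\lambda}{\lvert X-Y\rvert^{3}}\,\mathrm d\lambda\Bigr\rvert\le C\,\frac{g(t)g(r)}{t^{5}}.
\end{equation*}
Hence the integrand of $T_g$, averaged in $\lambda$, is bounded below by $-C\,g(t)^2V(t)g(r)^2/t^5$.

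\textbf{Step 5: growth of $V$ and conclusion.} I need a crude bound on $V$. From \eqref{eq:Vt_def}, $V(t)\le\frac12\int_a^t g'^2\,\mathrm d\tau$, and $g'\to0$ by \eqref{eq:condition_dg_to0}, so $V(t)\le Ct$. Inserting this and $g(t)\le C(t/r)g(r)$ from Step 1,
\begin{equation*}
T_g(r)\ge -C\int_{2r}^\infty \frac{g(r)^4}{r^2}\,\frac{t^{3}}{t^{5}}\,\mathrm dt=-C\,\frac{g(r)^4}{r^3}\ge -C\,\frac{g(r)^3}{r^2},
\end{equation*}
where the last step uses $g(r)\le r$.

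\textbf{Main obstacle.} The delicate step is Step 4: the bound must be uniform in $t\ge 2r$ and must capture the cancellation in the $\cos\lambda$-average. The crude lower bound from Lemma \ref{lem:Fg_lower_bound} loses this cancellation. A secondary point is Step 1, where the corrections coming from $a$ and $g(a)$ must be absorbed when verifying $tg'-g\le0$ and $g(t)\lesssim (t/r)g(r)$.
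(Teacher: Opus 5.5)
Your proof is correct, but it handles the dangerous term $-g(r)\cos\lambda/\lvert X-Y\rvert^{3}$ differently from the paper.

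\textbf{The paper's route.} It works pointwise in $\lambda$. From $tg'(t)-g(t)\le ag'(t)$ and $r>a$ it gets $F_g\ge -g(r)/\lvert X-Y\rvert^{3}\ge -8g(r)/t^{3}$. Combined with $V\le Cg$, this leaves $-Cg(r)\int_{2r}^{\infty}g^{2}/t^{3}\,\mathrm{d}t$. An integration by parts turns this into $-Cg(r)^{3}/r^{2}-Cg(r)\int_{2r}^{\infty}gg'/t^{2}\,\mathrm{d}t$. The last piece is then absorbed by the positive $gg'$-part of the tail, which the paper bounds below by $\frac r2\int_{2r}^{\infty}gg'/t^{2}\,\mathrm{d}t$ via $(1-z)^{-1/2}-1\ge z/2$ and $g(r)=o(r)$.

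\textbf{Your route.} You discard the $gg'$-part altogether and use the azimuthal cancellation instead. Write $\lvert X-Y\rvert^{2}=A-B\cos\lambda$ with $A-B=(t-r)^{2}+(g(t)-g(r))^{2}\ge t^{2}/4$. Then the $\lambda$-average of $\cos\lambda/\lvert X-Y\rvert^{3}$ is $O(g(t)g(r)/t^{5})$ rather than $O(t^{-3})$. This gain is large enough that the crude bounds $V(t)\le Ct$ and $g(t)\le 3(t/r)g(r)$ suffice. The resulting minorant is absolutely integrable, and you get the sharper bound $-Cg(r)^{4}/r^{3}$ with no integration by parts and no absorption step. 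In exchange, the paper's route needs no cancellation and no interchange of the $\lambda$- and $t$-integrations.

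\textbf{Three points to tidy up.}

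First, in Step 1 the chord inequality based at $a$ only gives $tg'(t)-g(t)\le (ag(t)-tg(a))/(t-a)$, which is positive if $g(a)=0$. There are two easy repairs.
\begin{itemize}
\item Base the chord at a point $s>a$ with $g(s)>0$. This gives $tg'(t)-g(t)\le (sg(t)-tg(s))/(t-s)$, which is negative once $g(t)/t<g(s)/s$.
\item Alternatively, use $tg'-g\le ag'$ as the paper does, and absorb $-ag'(t)/\lvert X-Y\rvert^{3}$ into $rg'(t)/\lvert X-Y\rvert^{3}$.
\end{itemize}

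Second, in Step 5 the integral $\int_a^t g'^{2}$ may diverge if $g'(a)=\infty$. Use $\sqrt{1+z^{2}}-1\le z$ for $z\ge 0$ to get $V(t)\le g(t)-g(a)\le Ct$ directly.

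Third, for fixed $\lambda$ the $\cos\lambda$-term alone need not be integrable in $t$. So carry out the $\lambda$-averaging on $t\in(2r,R)$, where Fubini is harmless, and then let $R\to\infty$. This matches how the representation formula arises from the integrals over $B_R\cap\partial D$.
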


\begin{proof} 
We divide the proof into three steps.

\textit{Step 1. Lower bound for $\int_{2r}^{\infty} g(t)V(t)F_{g}(t,\lambda;r) \mathrm{d}t$.} 
Let $r>R_0$. By the concavity of $g$ in \eqref{eq:concave_fb}, one has
\begin{equation*}
g^{\prime}(t) \le \frac{g(t) - g(a)}{t - a}\le \frac{g(t)}{t - a} \qquad\text{for } t>a, 
\end{equation*}
and hence
\begin{equation*}
tg^{\prime}(t) - g(t) \le ag^{\prime}(t) \qquad\text{for } t>a. 
\end{equation*}
This, together with \eqref{eq:X-Y_X_compare}, gives
\begin{equation*}
	(tg^{\prime}(t) - g(t)) \left(\frac{1}{\lvert X\rvert^{3}} - \frac{1}{\lvert X-Y\rvert^{3}}\right)\ge ag^{\prime}(t)\left(\frac{1}{\lvert X\rvert^{3}} - \frac{1}{\lvert X-Y\rvert^{3}}\right) \ge -\frac{ag^{\prime}(t)}{\lvert X-Y\rvert^{3}} \qquad\text{for } t>r.
\end{equation*}
Note that
\begin{equation*}
\lvert X-Y\rvert^{3} \ge (t-r)^{3} \ge \frac{1}{8}t^3  \qquad\text{for } t>2r.
\end{equation*}
Then it follows from the definition of $F_g$ in \eqref{eq:Fg_def} and $g^{\prime}\ge 0$ on $(a,\infty)$ (see \eqref{eq:condition_dg_sign}) that
\begin{equation}\label{eq:Tg_lower_Fg}
	\begin{split}
		F_{g}(t,\lambda;r) &= \frac{rg^{\prime}(t) - g(r)\cos\lambda}{\lvert X-Y\rvert^{3}} + (tg^{\prime}(t) - g(t)) \left(\frac{1}{\lvert X\rvert^{3}} - \frac{1}{\lvert X-Y\rvert^{3}}\right)\\
		&\ge \frac{rg^{\prime}(t) - g(r)}{\lvert X-Y\rvert^{3}} - \frac{ag^{\prime}(t)}{\lvert X-Y\rvert^{3}}
		\ge -\frac{g(r)}{\lvert X-Y\rvert^{3}}
		\ge  -\frac{8g(r)}{t^3} \qquad\text{for } t>2r.
	\end{split}
\end{equation}

On the other hand, since $g^{\prime}$ is bounded on $(2a,\infty)$ (see Section \ref{subsec:g_properties}),   
using the expression of $V(t)$ in \eqref{eq:Vt_def} and the fact that $V(2a)$ is finite, we get \begin{equation}\label{eq:Tg_lower_Vt}\begin{split}
		V(t)&=V(2a)+\int_{2a}^{t} \left(\sqrt{1 + \lvert g^{\prime}(\tau)\rvert^{2}} - 1\right) \mathrm{d}\tau
		\le V(2a)+\frac12\int_{2a}^{t}\lvert g^{\prime}(\tau)\rvert^{2} \mathrm{d}\tau\\
		&\le C\left(1+\int_{2a}^{t}g^{\prime}(\tau) \mathrm{d}\tau\right)
		\le Cg(t) \qquad\text{for } t>2a.
\end{split}\end{equation}
Applying \eqref{eq:Tg_lower_Fg} together with \eqref{eq:Tg_lower_Vt} and then integrating by parts yields
\begin{equation*}
	\begin{split}
		\int_{2r}^{\infty} g(t)V(t)F_{g}(t,\lambda;r) \mathrm{d}t 
		&\ge-Cg(r)\int_{2r}^{\infty} \frac{g(t)^{2}}{t^{3}} \mathrm{d}t\\
		&=Cg(r) \left(\lim_{t \to \infty}\frac{g(t)^{2}}{t^{2}}-\frac{g(2r)^{2}}{(2r)^{2}}-2\int_{2r}^{\infty}\frac{g(t)g^{\prime}(t)}{t^{2}}\mathrm{d}t \right).
	\end{split}
\end{equation*}
Note that $g(t)^2/t^2\to0$ as $t\to\infty$ by  \eqref{eq:condition_g_sublinear}. In addition, by the concavity of $g$ on $(a,\infty)$,
\begin{equation}\label{eq:g2r_r}
g(2r)\le g(a)+(2r-a)\cdot\frac{g(r)-g(a)}{r-a}\le 3g(r) \qquad\text{for } r>4a.
\end{equation}
Therefore, it holds that
\begin{equation}\label{eq:Tg_lower_bound_1}
	\begin{split}
		\int_{2r}^{\infty} g(t)V(t)F_{g}(t,\lambda;r) \mathrm{d}t 
		\ge -C\frac{g(r)^3}{r^{2}} - Cg(r) \int_{2r}^{\infty} \frac{g(t)g^{\prime}(t)}{t^{2}} \mathrm{d}t.
	\end{split}
\end{equation}

\textit{Step 2. Lower bound for $\int_{2r}^{\infty} g(t)g^{\prime}(t)\left(\frac{1}{\lvert X-Y\rvert} - \frac{1}{\lvert X\rvert}\right) \mathrm{d}t$.} 
For $t>2r$ with $r>R_0$, since $g(r)^2\le g(t)g(r)\le tr/16$ by \eqref{eq:r0_1} and $r/(2t)<1/4$,
\begin{equation*}
	\begin{split}
		2X\cdot Y-\lvert Y\rvert^{2} &= 2(tr + g(t)g(r)\cos\lambda)- (r^{2} + g(r)^{2})
		\ge 2tr\left(1 -\frac{r}{2t}-\frac1{8}\right)\ge\frac 54tr,
	\end{split}
\end{equation*}
and since $2X\cdot Y<\lvert X\rvert^2+\lvert Y\rvert^{2}$ for $X\neq Y$, it follows that
\begin{equation*}
0<\frac{2X\cdot Y-\lvert Y\rvert^{2}}{\lvert X\rvert^2}<1.
\end{equation*}
Moreover, in view of \eqref{eq:condition_g_sublinear}, one has
\begin{equation*}
\lvert X\rvert = \sqrt{t^{2} + g(t)^{2}}=(1+o(1))t \qquad\text{as } t\to\infty.
\end{equation*}
Hence, by the elementary inequality $(1-z)^{-1/2}-1\ge z/2$ for $z\in(0,1)$, there exists $R_1>R_0$ such that for $t>2r$ with $r>R_1$,
\begin{equation*}
\frac{1}{\lvert X-Y\rvert} - \frac{1}{\lvert X\rvert}
= \frac{1}{\lvert X\rvert} \left(\left(1-\frac{2X\cdot Y- \lvert Y\rvert^{2}}{\lvert X\rvert^{2}}\right)^{-1/2} - 1\right)
\ge \frac{2X\cdot Y - \lvert Y\rvert^{2}}{2\lvert X\rvert^{3}}
\ge \frac r{2t^2}.
\end{equation*}
As a consequence, 
\begin{equation}\label{eq:Tg_lower_bound_2}
\int_{2r}^{\infty} g(t)g^{\prime}(t) \left(\frac{1}{\lvert X-Y\rvert} - \frac{1}{\lvert X\rvert}\right) \mathrm{d}t \ge \frac{r}{2} \int_{2r}^{\infty} \frac{g(t)g^{\prime}(t)}{t^{2}} \mathrm{d}t \qquad\text{for } r>R_1.
\end{equation}

\textit{Step 3. Lower bound for $T_g(r)$.} Combining \eqref{eq:Tg_lower_bound_1} with \eqref{eq:Tg_lower_bound_2} and enlarging $R_1$ if necessary yields         
\begin{equation*}\begin{split}
		T_{g}(r)&= \frac{1}{2\pi} \int_{0}^{2\pi} \int_{2r}^{\infty} g(t)V(t)F_{g}(t,\lambda;r) + g(t)g^{\prime}(t)\left( \frac{1}{\lvert X-Y\rvert} - \frac{1}{\lvert X\rvert} \right) \mathrm{d}t\mathrm{d}\lambda \\
		&\ge -C\frac{g(r)^3}{r^{2}} + \left(\frac{r}{2}-Cg(r)\right)\int_{2r}^{\infty} \frac{g(t)g^{\prime}(t)}{t^{2}} \mathrm{d}t \ge -C\frac{g(r)^3}{r^{2}} \qquad\text{for } r>R_1, 
\end{split}\end{equation*}
where $g(r)=o(r)$ as $r\to\infty$ is used in the last inequality. This finishes the proof of the lemma. 
\end{proof}

\subsection{Proof of the initial growth estimate}\label{subsec:initial_growth}

With Lemmas \ref{lemma_1_potential_expansion}-\ref{lem:Tg_lower_bound} at hand, we derive  the following integro-differential inequality, which is crucial to obtain the $1/2+\varepsilon$- power growth of $g$.

\begin{lemma}\label{lem:r_root_differential_ineq}
Assume that $g$ satisfies \eqref{eq:condition_g_sublinear},  \eqref{eq:concave_fb}, and \eqref{eq:condition_g_unbounded}.  Let $V$ be as in \eqref{eq:Vt_def}. There exist constants $C>0$ independent of $r$ and $R_2>R_1$ with $R_1$ as in Lemma \ref{lem:Tg_lower_bound} such that
\begin{equation}\label{eq:r_root_differential_ineq}
	\frac{1}{9}g(r)g^{\prime}(r)\log\frac{r}{2g(r)} \le C + 4\int_{a}^{2r} \frac{V(t)^{2}}{t^{2}} \mathrm{d}t + \int_{a}^{r} \frac{g(t)g^{\prime}(t)}{t} \mathrm{d}t\qquad \text{for } r>R_2.
\end{equation}
\end{lemma}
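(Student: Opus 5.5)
The plan is to combine the four lower bounds of Lemmas \ref{lemma_1_potential_expansion}--\ref{lem:Tg_lower_bound} in the decomposition \eqref{eq:pe_main_NHJT}, and then absorb the resulting $-V(r)$ and $g(r)^3/r^2$ terms into the right-hand side of \eqref{eq:r_root_differential_ineq}.

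First, for $r>R_1$, I would insert the bounds into $-V(r)+2V(0)=N_g+H_g+J_g+T_g$. Lemma \ref{lemma_1_potential_expansion} gives $N_g(r)\ge C_N-1$ for $r$ large. The head and transition window lemmas contribute $-C-4\int_a^{r/2}V^2/t^2-\int_a^{r/2}gg'/t$ and $\frac19 gg'\log\frac{r}{2g}-4\int_{r/2}^{2r}V^2/t^2-\int_{r/2}^{r}gg'/t$, respectively. These combine into the integrals over $(a,2r)$ and $(a,r)$ appearing in \eqref{eq:r_root_differential_ineq}. The tail gives $-Cg(r)^3/r^2$. Rearranging, we obtain
\[
\frac19 g(r)g'(r)\log\frac{r}{2g(r)}\le C - V(r) + C\frac{g(r)^3}{r^2} + 4\int_a^{2r}\frac{V^2}{t^2}\,\mathrm dt+\int_a^r\frac{gg'}{t}\,\mathrm dt.
\]

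Second, I would drop $-V(r)$, since $V(r)\ge0$ for $r>a$ by \eqref{eq:Vt_def}. This leaves the cubic tail term $Cg(r)^3/r^2$, which is the only real point to check. By Lemma \ref{lem:Vg_relation}, $g(r)^2\le 4rV(r)$, so $Cg(r)^3/r^2\le 4C\frac{g(r)}{r}V(r)$. Because $g(r)/r\to0$, this is at most $V(r)$ for $r$ large, and so it is absorbed by the $-V(r)$ term we just dropped. In other words, one keeps $-V(r)$ until this absorption is done and only then uses $V\ge 0$. Choosing $R_2>R_1$ large enough for both this and the bound on $N_g$ gives \eqref{eq:r_root_differential_ineq}.

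The main obstacle is the absorption of the tail term; everything else is bookkeeping. If one prefers not to use $V(r)$, an alternative is to bound $g(r)^3/r^2$ by an integral. Lemma \ref{lem:Vg_relation} together with the monotonicity of $V$ gives $\int_r^{2r}V^2/t^2\ge V(r)^2/(2r)\ge g(r)^4/(32r^3)$, and this dominates $g(r)^3/r^2$ once $g(r)\ge 32Cr^{0}$, i.e.\ for large $r$ by \eqref{eq:condition_g_infty}. Either route closes the argument.
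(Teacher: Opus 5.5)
Your main argument is correct and is essentially the paper's proof: you insert Lemmas \ref{lemma_1_potential_expansion}--\ref{lem:Tg_lower_bound} into \eqref{eq:pe_main_NHJT}, keep $V(r)$ on the left, and absorb the tail term via Lemma \ref{lem:Vg_relation} and $g(r)=o(r)$, which is exactly the paper's step $V(r)-Cg(r)^3/r^2\ge \frac{g(r)^2}{r}\bigl(\frac14-C\frac{g(r)}{r}\bigr)\ge 0$. Discard the alternative in your last paragraph, however: $g(r)^4/(32r^3)\ge Cg(r)^3/r^2$ is equivalent to $g(r)\ge 32Cr$ (not $32Cr^{0}$), which contradicts \eqref{eq:condition_g_sublinear}, so only the route through $V(r)$ closes the argument.
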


\begin{proof}
It follows from the identity \eqref{eq:pe_main_NHJT} and Lemmas \ref{lemma_1_potential_expansion}-\ref{lem:Tg_lower_bound} that there exist positive constants $C$ and $R_1$ such that
\begin{equation*}\begin{split}
& -V(r) + 2V(0) \\
\ge&  - C - 4\int_{a}^{2r} \frac{V(t)^{2}}{t^{2}} \mathrm{d}t - \int_{a}^{r} \frac{g(t)g^{\prime}(t)}{t} \mathrm{d}t
+ \frac{1}{9}g(r)g^{\prime}(r)\log\frac{r}{2g(r)} - C\frac{g(r)^3}{r^{2}} 
\qquad \text{for } r > R_1.
\end{split}\end{equation*}
This implies 
\begin{equation*}
	\frac{1}{9}g(r)g^{\prime}(r)\log\frac{r}{2g(r)} + V(r) - C\frac{g(r)^3}{r^{2}} \le C+ 4\int_{a}^{2r} \frac{V(t)^{2}}{t^{2}} \mathrm{d}t + \int_{a}^{r} \frac{g(t)g^{\prime}(t)}{t} \mathrm{d}t\qquad \text{for } r > R_1.
\end{equation*}
By virtue of Lemma \ref{lem:Vg_relation} and $g(r)=o(r)$ as $r\to\infty$ from \eqref{eq:condition_g_sublinear}, there exists $R_2>R_1$ such that 
\begin{equation*}
	V(r) - C\frac{g(r)^3}{r^{2}}\ge \frac{g(r)^2}{4r}-C\frac{g(r)^3}{r^{2}}=
	\frac{g(r)^2}{r}\left(\frac14-C\frac{g(r)}{r}\right)\ge 0 \qquad \text{for } r > R_2.
\end{equation*}
Hence \eqref{eq:r_root_differential_ineq} holds and the proof of the lemma is completed.
\end{proof}

We are now in a position to prove the $1/2+\varepsilon$-power upper bound for $g$.

\begin{proposition}\label{prop:r_root_growth}
Assume that $g$ satisfies \eqref{eq:condition_g_sublinear} and \eqref{eq:concave_fb}. Then
\begin{equation*}
	\limsup_{r\to \infty} \frac{g(r)}{r^{\alpha}} < \infty\qquad \text{for every } \alpha \in \left(\frac12,1\right).
\end{equation*}
\end{proposition}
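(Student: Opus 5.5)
The bounded case is immediate. If \eqref{eq:condition_g_unbounded} fails, then $g$ is bounded and $g(r)/r^{\alpha}\to0$. So assume \eqref{eq:condition_g_unbounded}, hence \eqref{eq:condition_g_infty}, and work with the integro-differential inequality \eqref{eq:r_root_differential_ineq} of Lemma~\ref{lem:r_root_differential_ineq}. The aim is to turn it into a closed differential inequality for $u(r):=g(r)^2$, whose derivative is $u'=2gg'$.

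\textbf{Step 1: control of the right-hand side.} By concavity, $g'(\tau)\le g(\tau)/(\tau-a)$, so for $\tau>2a$
\[
V(t)\le C+\tfrac12\int_{2a}^t g'(\tau)^2\,d\tau\le C+C\int_{2a}^t\frac{g(\tau)g'(\tau)}{\tau}\,d\tau .
\]
Write $W(r):=\int_a^r g g'/t\,dt$, which is nondecreasing. Then $V\le C(1+W)$. Together with $g(2r)\le 3g(r)$ from \eqref{eq:g2r_r}, this gives
\[
\int_a^{2r}\frac{V^2}{t^2}\,dt\le C+C\int_a^{2r}\frac{W(t)^2}{t^2}\,dt .
\]
Since $W(t)\le \frac{1}{2}\,u(t)/t\cdot\log$-type quantities are small relative to $u$, a crude version suffices: $W(t)\le u(t)/(2(t_1))$ is not needed. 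Instead use $W(t)\le \int_a^t u'/(2\tau)\le u(t)/(2\,\cdot)$ after integrating by parts:
\[
W(t)=\frac{u(t)}{2t}+\int_a^t\frac{u}{2\tau^2}\,d\tau-C .
\]
The point is that $V^2/t^2$ involves $u^2/t^4$, whereas $u=o(t^2)$. Hence $\int^{2r}V^2/t^2$ is bounded by $\varepsilon\,u(r)/r$ plus lower-order terms, and these are dominated by $W$. The cleaner route is probably Hardy's inequality (Lemma~\ref{lemma:preliminaries_1}) applied with $f=gg'/\cdot$-type integrands, but I would first try the direct bound: since $V(t)\le C(1+W(t))$ and $W(t)=o(\log t)\cdot\sup$, one gets $\int^{2r}V^2/t^2\le C(1+W(2r))\sup_t V(t)/t$, with $V(t)/t\to0$. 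Either way the target is
\[
\frac19\,g g'\log\frac{r}{2g}\le C+(1+\varepsilon)\,W(2r)\le C+C'\,W(r)+C
\]
for large $r$, using $W(2r)-W(r)\le \int_r^{2r}gg'/t\le u(2r)/(2r)\le C u(r)/r$.

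\textbf{Step 2: the ODE comparison.} Suppose the claim fails for some $\alpha>1/2$. Since $g$ is concave, $g\ge r^{\alpha'}$ on a sequence, and I would show this bootstraps. Wherever $g\ge r^{\beta}$ with $\beta<1$, we have $\log(r/2g)\ge c\log r$ up to $\log(1/(1-\beta))$ factors; more simply, $g\le r^{1-\delta}$ eventually by sublinearity plus concavity, which forces $\log(r/2g)\ge \delta'\log r$ only if $g\le r^{1-\delta}$. This must be justified separately, and I would do so by running the same inequality first with $\log(r/2g)\ge \log 2$.

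Granting it, the inequality reads $u'(r)\log r\le C\bigl(1+\int_a^r u'/t\,dt\bigr)$. Set $Z(r):=\int_a^r u'/t$, so that $Z'\,r\log r\le C(1+Z)$. Integrating gives $1+Z\le C(\log r)^{C}$. Then $u'\le C(\log r)^{C-1}$, hence $u\le r(\log r)^C$, so $g\le r^{1/2}(\log r)^{C}$, which implies the claim for every $\alpha>1/2$.

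\textbf{Main obstacle.} The hardest part is the nonlocal term $\int V^2/t^2$ and the $2r$-shift; absorbing them into $W(r)$ needs the sublinearity $g=o(r)$ and the doubling estimate \eqref{eq:g2r_r}. A second delicate point is ensuring $\log(r/2g)\gtrsim\log r$. If $g$ is only $o(r)$, I would first obtain $g\le r^{1-\delta}$ by a preliminary run of the same ODE argument in which $\log(r/2g)$ tends to infinity slowly, and then iterate.
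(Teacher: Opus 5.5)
Your overall plan (close the inequality of Lemma~\ref{lem:r_root_differential_ineq} into a differential inequality for $W(r)=\int_a^r gg'/t\,\mathrm{d}t$, then integrate it) is a legitimate alternative to the paper. The paper instead argues by contradiction at maximizers $r_i$ of $g^2/r^{2\alpha}$ on $[a,i]$. There $g(r_i)g'(r_i)\ge\alpha K_ir_i^{2\alpha-1}$, while both nonlocal terms are $O(K_ir_i^{2\alpha-1})$, so $\log(r_i/2g(r_i))$ would stay bounded, contradicting $g=o(r)$. As written, however, both of your load-bearing steps have gaps.

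\textbf{Step 1.} The bound $V\le C(1+W)$ leads to $\int_a^{2r}(1+W)^2/t^2\,\mathrm{d}t$, which is quadratic in $W$, and the reductions you offer do not hold. The estimate by $C(1+W(2r))\sup_tV(t)/t$ is not justified: the direct estimate carries an extra factor $\log(2r/a)$, and $\sup_tV(t)/t$ is a fixed constant, not a small one. The claim that $\int^{2r}V^2/t^2$ is $o(u(r)/r)$ presupposes $V\lesssim g^2/t$, which is not available a priori; it fails for $g\sim\sqrt t$, where $V$ grows like $\log t$. The coefficient $1+\varepsilon$ in front of $W(2r)$ is also unjustified, but you do not need it. What works is $V(t)\le Cg(t)$ for $t>2a$, from \eqref{eq:Tg_lower_Vt}, combined with your own integration by parts. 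This gives $\int_a^R g^2/(2t^2)\,\mathrm{d}t\le W(R)+C$ and $g(r)^2/(2r)\le W(r)+C$. With \eqref{eq:g2r_r} this yields $W(2r)\le 9W(r)+C$, and hence the closed inequality $gg'\log\frac{r}{2g}\le C+C'W(r)$ for large $r$, where $C'\ge 9$ is a fixed constant that is not small.

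\textbf{Step 2.} You assert that you need $\log(r/2g)\gtrsim\log r$, and the justifications fail:
\begin{itemize}
\item ``$g\ge r^\beta$ gives $\log(r/2g)\ge c\log r$'' has the inequality backwards.
\item The claim that $g\le r^{1-\delta}$ follows from sublinearity and concavity is false: $g(r)=r/\log r$ is concave for large $r$ and sublinear.
\item Running the inequality with only $\log(r/2g)\ge\log 2$ gives $W\lesssim r^{C'/\log 2}$, which is weaker than the trivial bound $W\lesssim r$.
\end{itemize}

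The idea you merely gesture at in your last paragraph is exactly what closes the proof, with no bootstrap or iteration. Since $g=o(r)$, for every $L>0$ there is $R_L$ with $\log(r/2g(r))\ge L$ for $r>R_L$. Then $W'(r)=gg'/r\le (C+C'W(r))/(Lr)$. Gronwall gives $W(r)\le C_L r^{C'/L}$, hence $(g^2)'\le C_L r^{C'/L}$ and $g(r)\le C_L r^{1/2+C'/(2L)}$. Choosing $L\ge C'/(2\alpha-1)$ gives the statement for every $\alpha\in(1/2,1)$.

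This is the same mechanism as in the paper: a logarithm that diverges merely because $g=o(r)$ beats a fixed constant. Gronwall replaces the maximizer argument, and the $(\log r)^{C}$ bound you aim for is not needed.
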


\begin{proof}
If $g$ is bounded, the conclusion follows trivially. Hence, without loss of generality, we assume that $g$ satisfies \eqref{eq:condition_g_unbounded}. 
We argue by contradiction. Suppose that there exists an $\alpha \in(1/2,1)$ such that
\begin{equation}\label{eq:r_root_1}
	\limsup_{r \to \infty} \frac{g(r)}{r^{\alpha}} = \infty.
\end{equation}
We derive a contradiction in four steps.

\textit{Step 1. Set up.} For each $i\in \mathbb{N}$, let $r_{i}\in [a,i]$ be such that
\begin{equation}\label{eq:Ki_def}
\frac{g(r_{i})^2}{r_{i}^{2\alpha}}= \max_{r\in[a,i]} \frac{g(r)^{2}}{r^{2\alpha}}=:K_{i}.
\end{equation}
Then $K_{i},\, r_i\to\infty$ as $i \to \infty$ by \eqref{eq:r_root_1}. 
Consider 
\begin{equation*}
	\Phi_{i}(r) := g(r)^{2} - K_{i}r^{2\alpha}.
\end{equation*}
It follows from \eqref{eq:Ki_def} that $\Phi_{i} \le 0$ on $[a,i]$ and $\Phi_{i}(r_i)=0$. Hence 
\begin{equation}\label{equation_1_g(r)<r^{1/2}}
	\Phi_{i}^{\prime}(r_{i}) = 2g(r_{i})g^{\prime}(r_{i}) - 2\alpha K_{i}r_{i}^{2\alpha-1} \ge 0,
\end{equation}
where $r_i>a$ for large $i$.

\textit{Step 2. Upper bound for  $\int_{a}^{2r_i} \frac{V(t)^{2}}{t^{2}} \mathrm{d}t$.} 
Recall that the function $V$ is given by \eqref{eq:Vt_def}. On $(a,\infty)$, it follows from the concavity of $g$ in \eqref{eq:concave_fb}  that
\begin{equation*}
	V^{\prime}(t)=\sqrt{1 + \lvert g^{\prime}(t)\rvert^{2}}-1
\end{equation*} 
is non-increasing, and that
\begin{equation*}
	\frac{V(t)}{t-a} = \frac{1}{t-a}\int_{a}^{t}V^{\prime}(\tau) \mathrm{d}\tau    
\end{equation*}
is also non-increasing. This leads to
\begin{equation*}
	\begin{split}
		\int_{r}^{2r} \frac{V(t)^{2}}{t^{2}} \mathrm{d}t &\le \int_{r}^{2r} \frac{V(t)^{2}}{(t-a)^{2}}\mathrm{d}t 
		\le 2\int_{\frac r2}^{r} \frac{V(t)^{2}}{(t-a)^{2}}\mathrm{d}t
		\le 8\int_{\frac r2}^{r} \frac{V(t)^{2}}{t^{2}} \mathrm{d}t \qquad\text{for } r>4a.
	\end{split}
\end{equation*}
This implies  
\begin{equation*}
\int_{2a}^{2r} \frac{V(t)^{2}}{t^{2}} \mathrm{d}t \le \int_{2a}^{r} \frac{V(t)^{2}}{t^{2}} \mathrm{d}t + 8\int_{\frac r2}^{r} \frac{V(t)^{2}}{t^{2}}\mathrm{d}t 
\le 9\int_{2a}^{r} \frac{V(t)^{2}}{t^{2}} \mathrm{d}t  \qquad\text{for } r>4a.
\end{equation*}
Furthermore, by $V(t)\le Cg(t)$ for $t>2a$ (see  \eqref{eq:Tg_lower_Vt}) and \eqref{eq:Ki_def}, for large $i$, it holds that 
\begin{equation}\label{eq:Vt2_ri}
	\begin{split}
		\int_{a}^{2r_{i}} \frac{V(t)^{2}}{t^{2}}\mathrm{d}t &\le \int_{a}^{2a} \frac{V(t)^{2}}{t^{2}}\mathrm{d}t + 9\int_{2a}^{r_{i}} \frac{V(t)^{2}}{t^{2}} \mathrm{d}t 
		\le C \left(1+\int_{2a}^{r_{i}} \frac{g(t)^{2}}{t^{2}} \mathrm{d}t\right)\\ 
		&\le C\left(1+\int_{2a}^{r_{i}} \frac{K_{i}t^{2\alpha}}{t^{2}}\mathrm{d}t\right) 
		\le C\left(1+\frac{K_{i}}{2\alpha-1}r_{i}^{2\alpha-1}\right).
\end{split}\end{equation}

\textit{Step 3. Upper bound for   $\int_{a}^{r_i}\frac{g(t)g^{\prime}(t)}{t}\mathrm{d}t$.}
For each $i$, using integration by parts gives
\begin{equation*}
\int_{a}^{r_i} \frac{g(t)g^{\prime}(t)}{t}\mathrm{d}t =\frac{g(r_i)^{2}}{2r_i} - \frac{g(a)^{2}}{2a} + \int_{a}^{r_i} \frac{g(t)^{2}}{2t^{2}}\mathrm{d}t \le \frac{g(r_i)^{2}}{2r_i} + \int_{a}^{r_i} \frac{g(t)^{2}}{2t^{2}}\mathrm{d}t.
\end{equation*}
Thus, we obtain from \eqref{eq:Ki_def} that 
\begin{equation}\label{eq:gdgt_ri}
	\int_{a}^{r_{i}} \frac{g(t)g^{\prime}(t)}{t}\mathrm{d}t \le \frac{K_{i}r_{i}^{2\alpha}}{2r_{i}} +  \int_{a}^{r_{i}} \frac{K_it^{2\alpha}}{2t^{2}} \mathrm{d}t \le \frac{K_{i}}{2} \left(1 + \frac{1}{2\alpha-1}\right) r_{i}^{2\alpha-1} = \frac{\alpha K_{i}}{2\alpha-1}r_{i}^{2\alpha-1}.
\end{equation}

\textit{Step 4. Contradiction.} Combining Lemma \ref{lem:r_root_differential_ineq}, \eqref{eq:Vt2_ri} and \eqref{eq:gdgt_ri} yields that, for large $i$,
\begin{equation}\label{eq:r_root_2}
	\begin{split}
		\frac{1}{9}g(r_{i})g^{\prime}(r_{i})\log\frac{r_{i}}{2g(r_{i})} &\le C + 4\int_{a}^{2r_i} \frac{V(t)^{2}}{t^{2}} \mathrm{d}t + \int_{a}^{r_i} \frac{g(t)g^{\prime}(t)}{t} \mathrm{d}t\\
		&\le C\left(1+\frac{K_{i}}{2\alpha-1}r_{i}^{2\alpha-1}\right).
	\end{split}
\end{equation}
In view of \eqref{equation_1_g(r)<r^{1/2}}, the left-hand side of \eqref{eq:r_root_2} is bounded below by
\begin{equation*}
\frac19g(r_{i})g^{\prime}(r_{i})\log\frac{r_{i}}{2g(r_{i})} \ge \frac{\alpha}9 K_{i}r_{i}^{2\alpha-1}\log\frac{r_{i}}{2g(r_{i})}.
\end{equation*}
Substituting this lower bound into the first inequality in \eqref{eq:r_root_2} and dividing both sides by $K_ir_i^{2\alpha-1}$ gives
\begin{equation*}
	\frac{\alpha}{9}\log\frac{r_{i}}{2g(r_{i})} \le C\left(\frac1{K_ir_i^{2\alpha-1}}+\frac1{2\alpha-1}\right) 
 \qquad \text{for large } i. 
\end{equation*}
Since both $r_i$ and $r_i/g(r_i)$ tend to infinity as $i\to\infty$, we arrive at a contradiction. Hence the proof of the proposition is completed.
\end{proof}

\section{Refined growth estimate for the free boundary}\label{sec:refined_growth}

With the help of Proposition \ref{prop:r_root_growth}, we improve the lower bounds from Lemmas \ref{lem:Hg_lower_bound}-\ref{lem:Tg_lower_bound} to refine the growth estimate for the free boundary. These refined estimates result in the upper bound $g(r)\le r^{1/2}(\log r)^{-1/4+\varepsilon}$ at infinity.

\subsection{Refined lower bounds on the decomposed integrals}\label{subsec:refined_lower}

We first improve the lower bounds of the head term $H_{g}(r)$ and the tail term $T_{g}(r)$ obtained in Lemmas \ref{lem:Hg_lower_bound} and \ref{lem:Tg_lower_bound}, respectively.

\begin{lemma}\label{lem:Hg_refine_lower}
Assume that $g$ satisfies \eqref{eq:condition_g_sublinear},  \eqref{eq:concave_fb}, and \eqref{eq:condition_g_unbounded}.   Let $H_g$ be defined in \eqref{eq:Hg_def}. There exists a constant $C>0$ independent of $r$ such that
\begin{equation*}
H_{g}(r) \ge -C - \int_{a}^{\frac r2} \frac{g(t)g^{\prime}(t)}{t} \mathrm{d}t \qquad\text{for }r>2t_0,
\end{equation*}
where $t_0$ is the same as the one in Lemma \ref{lem:Vg_relation}.
\end{lemma}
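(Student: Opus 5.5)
The plan is to rerun the proof of Lemma~\ref{lem:Hg_lower_bound} up to \eqref{eq:Fg_lower_bound2}. At that point, instead of trading $g^2$ for $4tV$ via Lemma~\ref{lem:Vg_relation}, I will bound the $V$-term directly using the growth estimate of Proposition~\ref{prop:r_root_growth}. Concretely, \eqref{eq:Fg_lower_bound1} together with $\lvert X\rvert\ge t$ already gives, for $r>2a$,
\begin{equation*}
H_g(r)\ge -\int_a^{\frac r2}\frac{g(t)^2V(t)}{t^3}\,\mathrm{d}t-\int_a^{\frac r2}\frac{g(t)g'(t)}{t}\,\mathrm{d}t .
\end{equation*}
It therefore suffices to show that $\int_a^\infty g(t)^2V(t)t^{-3}\,\mathrm{d}t<\infty$. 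The constant $C$ in the statement is then this finite integral.

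For the convergence, I will use $V(t)\le Cg(t)$ for $t>2a$, which is \eqref{eq:Tg_lower_Vt} and relies on the boundedness of $g'$ on $(2a,\infty)$. This reduces the integrand on $(2a,\infty)$ to $Cg(t)^3t^{-3}$. Next I apply Proposition~\ref{prop:r_root_growth} with a fixed exponent $\alpha\in(1/2,2/3)$, say $\alpha=3/5$. Its hypotheses are \eqref{eq:condition_g_sublinear} and \eqref{eq:concave_fb}, both of which are assumed. It gives $g(t)\le Ct^{\alpha}$ for large $t$, so the integrand is at most $Ct^{3\alpha-3}$. Since $3\alpha-3<-1$, this is integrable at infinity.

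On the compact piece $[a,2a]$ the integrand is bounded, because $g$ and $V$ are continuous there. This gives a constant $C$ independent of $r$. The stated range $r>2t_0$ is harmless: we only need $r>2a$, and $t_0>a$.

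I expect no real obstacle here. The only point needing care is the choice of $\alpha<2/3$, so that the cubic power of $g$ against $t^{-3}$ is integrable. Because Proposition~\ref{prop:r_root_growth} holds for every $\alpha>1/2$, such a choice is available.
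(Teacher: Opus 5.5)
Your proof is correct and follows essentially the paper's route: both invoke Proposition~\ref{prop:r_root_growth} to show that the $V$-dependent part of the head estimate is bounded independently of $r$. The only difference is bookkeeping. The paper starts from the final form of Lemma~\ref{lem:Hg_lower_bound} and proves $\int_a^\infty V(t)^2t^{-2}\,\mathrm{d}t<\infty$ via the sharper bound $V(t)\le Ct^{2\alpha-1}$ with $\alpha<3/4$, a bound it reuses later in Lemma~\ref{lem:Jg_refine_gvF}; you instead stop at \eqref{eq:Fg_lower_bound2} and use the cruder $V\le Cg$ with $\alpha<2/3$.
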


\begin{proof}
By Lemma \ref{lem:Hg_lower_bound}, it suffices to show 
\begin{equation*}
	\int_{a}^{\infty} \frac{V(t)^{2}}{t^{2}} \mathrm{d}t < \infty,
\end{equation*}
where $V$ is given by \eqref{eq:Vt_def}. 
Fix $\alpha \in(1/2,3/4)$. Using the concavity of $g$ from  \eqref{eq:concave_fb} and Proposition \ref{prop:r_root_growth} yields
\begin{equation}\label{eq:Vt_upper_alpha}\begin{split}
		V(t)-V(2a)&=\int_{2a}^{t}\left(\sqrt{1+\lvert g^{\prime}(\tau)\rvert^2}-1\right)\mathrm d\tau 
		\le \frac12\int_{2a}^{t}\lvert g^{\prime}(\tau)\rvert^2\mathrm d\tau\\
		&\le \frac12\int_{2a}^{t}\frac{(g(\tau)-g(a))^2}{(\tau-a)^2}\mathrm d\tau
		\le C\int_{2a}^{t}\tau^{2\alpha-2}\mathrm d\tau\le Ct^{2\alpha-1} \qquad\text{for }t>2a.
\end{split}\end{equation}
Hence it follows that
\begin{equation*}\begin{split}
		\int_{a}^{\infty} \frac{V(t)^{2}}{t^{2}} \mathrm{d}t \le C\int_{a}^{\infty}\frac{V(2a)^2+t^{4\alpha-2}}{t^2}\mathrm d t <\infty.
\end{split}\end{equation*}
This finishes the proof of the lemma.
\end{proof}

\begin{lemma}\label{lem:Tg_refine_lower}
Assume that $g$ satisfies \eqref{eq:condition_g_sublinear},  \eqref{eq:concave_fb}, and \eqref{eq:condition_g_unbounded}.  Let $T_g$ be defined in \eqref{eq:Tg_def}. For each $\varepsilon > 0$, there exists  $R_{\varepsilon} > 4a$ such that
\begin{equation*}
	T_{g}(r) \ge -\varepsilon
	\qquad \text{for } r > R_{\varepsilon}.
\end{equation*}
\end{lemma}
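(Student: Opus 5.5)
We follow the three-step structure of Lemma \ref{lem:Tg_lower_bound}, but replace the crude bound $V(t)\le Cg(t)$ with the sharper growth information from Proposition \ref{prop:r_root_growth} and \eqref{eq:Vt_upper_alpha}. Fix $\alpha\in(1/2,3/4)$. Then $g(t)\le Ct^{\alpha}$, and \eqref{eq:Vt_upper_alpha} gives $V(t)\le C(1+t^{2\alpha-1})\le Ct^{2\alpha-1}$ for $t>2a$.

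\textit{First part of $T_g$.} By \eqref{eq:Tg_lower_Fg}, which holds for $r>R_0$, we have $F_g(t,\lambda;r)\ge -8g(r)/t^3$ for $t>2r$. Hence
\begin{equation*}
\int_{2r}^{\infty} g(t)V(t)F_g(t,\lambda;r)\,\mathrm{d}t\ge -Cg(r)\int_{2r}^{\infty}\frac{t^{\alpha}t^{2\alpha-1}}{t^{3}}\,\mathrm{d}t = -Cg(r)\,r^{3\alpha-3}\ge -C r^{4\alpha-3}.
\end{equation*}
Since $\alpha<3/4$, this tends to $0$ as $r\to\infty$.

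\textit{Second part of $T_g$.} Step 2 of Lemma \ref{lem:Tg_lower_bound} already shows that $\frac{1}{|X-Y|}-\frac{1}{|X|}\ge0$ for $t>2r$ and $r>R_1$. The $g g'$ term therefore contributes a nonnegative amount and may simply be dropped.

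Averaging in $\lambda$ and combining the two parts gives $T_g(r)\ge -Cr^{4\alpha-3}$. We then choose $R_\varepsilon>\max\{R_1,4a\}$ so that $Cr^{4\alpha-3}<\varepsilon$ for $r>R_\varepsilon$.

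The only step that needs care is the $\lambda$-uniformity of the bound on $F_g$. This is already built into \eqref{eq:Tg_lower_Fg}, because there $\cos\lambda\le1$ was used. Convergence of the tail integral is guaranteed because $g(t)V(t)/t^3\lesssim t^{3\alpha-4}$ is integrable, so no further obstacle is expected.
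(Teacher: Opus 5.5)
Your proof is correct and uses the same ingredients as the paper: the pointwise bound \eqref{eq:Tg_lower_Fg}, the nonnegativity of the $g g^{\prime}$ term for $t>2r$, and the power bound of Proposition \ref{prop:r_root_growth}. The paper is more economical because it does not reopen the proof of Lemma \ref{lem:Tg_lower_bound}; it inserts $g(r)\le Cr^{\alpha}$ with $\alpha\in(1/2,2/3)$ directly into that lemma's conclusion $T_g(r)\ge -Cg(r)^3/r^2$, which gives $T_g(r)\ge -Cr^{3\alpha-2}$, so your sharper bound \eqref{eq:Vt_upper_alpha} on $V$ is not needed.
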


\begin{proof}
Fix $\alpha \in(1/2,2/3)$. From Lemma \ref{lem:Tg_lower_bound} and Proposition \ref{prop:r_root_growth}, for sufficiently large $r$, we have
\begin{equation*}
	T_{g}(r) \ge -C\frac{g(r)^{3}}{r^{2}} \ge  -Cr^{3\alpha - 2}.
\end{equation*}
Since $3\alpha-2<0$, the desired conclusion follows.
\end{proof}

We now turn to the transition term $J_g(r)$ defined in \eqref{eq:Jg_def}.
We estimate the two parts in $J_g(r)$ separately. 

\begin{lemma}\label{lem:Jg_refine_gvF}
Assume that $g$ satisfies \eqref{eq:condition_g_sublinear},  \eqref{eq:concave_fb}, and \eqref{eq:condition_g_unbounded}. Let $V$ and $F_g$ be as in \eqref{eq:Vt_def} and \eqref{eq:Fg_def}, respectively. For each $\varepsilon\in(0,1)$, there exists $R_{\varepsilon}>4a$ such that
\begin{equation*}
	\frac{1}{2\pi} \int_{0}^{2\pi} \int_{\frac r2}^{2r} g(t)V(t)F_{g}(t,\lambda;r) \mathrm{d}t\mathrm{d}\lambda \ge (1-\varepsilon)V(r) - \varepsilon\qquad  \text{for } r>R_{\varepsilon}.
\end{equation*}
\end{lemma}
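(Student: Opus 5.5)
We use the lower bound of Lemma~\ref{lem:Fg_lower_bound},
\[
F_g(t,\lambda;r)\ge \frac{g(r)(1-\cos\lambda)}{\lvert X-Y\rvert^{3}}+\frac{tg'(t)-g(t)}{\lvert X\rvert^{3}},
\]
and treat the two pieces separately. The first piece is nonnegative and carries the singularity; it produces $V(r)$. The second piece is an error of size $V/t$ times $g^2/t^2$. Since $tg'(t)-g(t)\ge -g(t)$ and $\lvert X\rvert\ge t$, it contributes at least
\[
-\int_{r/2}^{2r}\frac{g(t)^2V(t)}{t^3}\,\mathrm dt .
\]
By Proposition~\ref{prop:r_root_growth}, $g(t)\le t^{\alpha}$ with $\alpha<3/4$, and by \eqref{eq:Vt_upper_alpha}, $V(t)\le Ct^{2\alpha-1}$. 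The integrand is therefore $O(t^{4\alpha-4})$, and the integral is $O(r^{4\alpha-3})=o(1)$. This gives the ``$-\varepsilon$'' part of the claim.

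For the main term, we first freeze $V$. Since $V$ is nondecreasing on $(a,\infty)$, we have $V(t)\ge V(r/2)$ on $(r/2,2r)$. By the monotonicity of $V(t)/(t-a)$ (Step~2 of Proposition~\ref{prop:r_root_growth}) we also have $V(r/2)\ge \tfrac12 V(r)(1-o(1))$. That factor is too crude, so instead we restrict to a short window $t\in(r-\delta r,r+\delta r)$ with $\delta$ small. There $V(t)\ge V(r-\delta r)\ge (1-2\delta)V(r)$, again by monotonicity of $V(t)/(t-a)$. Concavity likewise gives $g(t)\ge (1-2\delta)g(r)$ there. Dropping the rest of the (nonnegative) first piece, it suffices to show
\[
\frac{1}{2\pi}\int_0^{2\pi}\int_{r-\delta r}^{r+\delta r}\frac{g(r)^2(1-\cos\lambda)}{\lvert X-Y\rvert^3}\,\mathrm dt\,\mathrm d\lambda\ \ge\ 1-\varepsilon
\]
for large $r$.

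This integral is the computational heart of the proof, and we expect it to be the main obstacle. Here
\[
\lvert X-Y\rvert^2=(t-r)^2\bigl(1+O(g'(r/2)^2)\bigr)+(g(t)-g(r))^2+2g(t)g(r)(1-\cos\lambda),
\]
and we set $s=t-r$. Up to factors $1+o(1)$, coming from $g'\to0$ and $g(t)/g(r)\to1$ uniformly on the window, we have
\[
\lvert X-Y\rvert^2\approx s^2+4g(r)^2\sin^2(\lambda/2).
\]
Then
\[
\int_{-\infty}^{\infty}\frac{\mathrm ds}{(s^2+b^2)^{3/2}}=\frac{2}{b^2},\qquad b=2g(r)\lvert\sin(\lambda/2)\rvert .
\]
The $s$-integral of $g(r)^2(1-\cos\lambda)/\lvert X-Y\rvert^3$ therefore tends to
\[
g(r)^2\cdot 2\sin^2(\lambda/2)\cdot\frac{2}{4g(r)^2\sin^2(\lambda/2)}=1
\]
for every $\lambda$. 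Averaging over $\lambda$ gives $1$, which is exactly the coefficient in front of $V(r)$. The truncation to $\lvert s\rvert<\delta r$ loses only a fraction of order $g(r)/(\delta r)\to0$, since $b\le 2g(r)=o(r)$.

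To make this rigorous we need a lower bound $\lvert X-Y\rvert^2\le (1+\eta)\bigl(s^2+4g(t)g(r)\sin^2(\lambda/2)\bigr)$ on the window, with $\eta\to0$. We get it from $\lvert g(t)-g(r)\rvert\le g'(r-\delta r)\lvert s\rvert$ together with $g'\to0$ from \eqref{eq:condition_dg_to0}. Fatou's lemma or dominated convergence in $\lambda$ then passes to the limit; the region near $\lambda=0$ is harmless because the $s$-integral equals $1$ for each fixed $\lambda$, uniformly.

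Finally, choose $\delta$ so that $(1-2\delta)^2$ times the $(1-o(1))$ factor is at least $1-\varepsilon/2$. Then take $R_\varepsilon$ large enough that the error term is at most $\varepsilon$.
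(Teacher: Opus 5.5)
Your proof is correct. Its skeleton matches the paper's: Lemma \ref{lem:Fg_lower_bound}, the $O(r^{4\alpha-3})$ error term, freezing $g$ and $V$ near $t=r$, and a kernel whose $\lambda$-average is $1$. What differs is the localization scale, and with it where the $\varepsilon$ is spent.

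The paper restricts to $\lvert t-r\rvert<Mg(r)$ with $M$ fixed. On that window $g(t)=(1+o(1))g(r)$, and Lemma \ref{lem:Vg_relation} gives $V(r)-V(t)\le 8Mg(r)V(r)/r=o(V(r))$, so freezing costs nothing. However, after the substitution $t-r=g(r)u$, the kernel integral over $u\in(-M,M)$ tends to $1$ only as $M\to\infty$. That is handled by the $\tan\theta$ substitution and dominated convergence in \eqref{eq:int_M_u}.

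You restrict instead to $\lvert t-r\rvert<\delta r$. Freezing now costs a factor $(1-2\delta)^2$, obtained from the monotonicity of $V(t)/(t-a)$ and of $(g(t)-g(a))/(t-a)$, without needing Lemma \ref{lem:Vg_relation}. In exchange the kernel integral is explicit. With $b^2=2g(r)^2(1-\cos\lambda)\le 4g(r)^2$, one has $\int_{-\delta r}^{\delta r} g(r)^2(1-\cos\lambda)(s^2+b^2)^{-3/2}\,\mathrm ds=\delta r/\sqrt{(\delta r)^2+b^2}\ge 1-O\bigl(g(r)^2/(\delta r)^2\bigr)$, uniformly in $\lambda$. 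So no second limit and no dominated convergence are needed.

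The paper's $g(r)$-scaled window is the one it later sharpens to $M(r)=(r/g(r))^{1/3}$ in Lemma \ref{lem:Jg_sharp_estimate_1}. There a fixed multiplicative loss $O(\delta)V(r)$ would not be $o(1)$, so your scheme would need $\delta=\delta(r)\to0$, which amounts to the same window.

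One small correction. On a window of width $\delta r$ you do not have $g(t)/g(r)\to1$ uniformly; for $g\sim\sqrt r$ the ratio at $t=r+\delta r$ is about $\sqrt{1+\delta}$. You only have $1+O(\delta)$. If you keep $g(t)g(r)$ in your bound for $\lvert X-Y\rvert^2$, this costs one more factor $(1+2\delta)^{-1}$, since $g(r+\delta r)\le(1+2\delta)g(r)$ by concavity, and the choice of $\delta$ absorbs it. Alternatively, avoid it with \eqref{eq:X-Y_order}, which gives $\lvert X-Y\rvert^2\le(1+2g'(r/2))\bigl((t-r)^2+2g(r)^2(1-\cos\lambda)\bigr)$ on all of $(r/2,2r)$ with a genuine $1+o(1)$ factor.
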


\begin{proof}
The proof is divided into three steps.

\textit{Step 1. Splitting of the integral.}  
Note that $g^{\prime}\ge 0$ on $(a,\infty)$ (see   \eqref{eq:condition_dg_sign}) and  $\lvert X\rvert=(t^2+g(t)^2)^{1/2}\ge t$. By Lemma \ref{lem:Fg_lower_bound}, 		\begin{equation}\label{eq:improved_gVF_int1}
	F_{g}(t,\lambda;r) \ge \frac{g(r)(1-\cos\lambda)}{\lvert X-Y\rvert^{3}} + \frac{tg^{\prime}(t)-g(t)}{\lvert X\rvert^{3}} \ge \frac{g(r)(1-\cos\lambda)}{\lvert X-Y\rvert^{3}} - \frac{g(t)}{t^{3}} \qquad\text{for } t,\, r>a.
\end{equation}
Using \eqref{eq:Vt_upper_alpha} and Proposition \ref{prop:r_root_growth} with $\alpha \in (1/2,3/4)$ yields
\begin{equation}\label{eq:improved_gVF_int2}\begin{split}
\frac{1}{2\pi} \int_{0}^{2\pi}\int_{\frac r2}^{2r} g(t)V(t)\frac{g(t)}{t^{3}}\mathrm{d}t\mathrm{d}\lambda
&\le C\int_{\frac r2}^{2r} (V(2a) + Ct^{2\alpha-1})t^{2\alpha-3}\mathrm{d}t \\
&\le C(r^{2\alpha-2}+r^{4\alpha-3})=o(1) 
\qquad\text{as } r\to\infty.
\end{split}\end{equation}
In the remaining steps, we show that for each $\varepsilon\in(0,1)$ there exists $R_{\varepsilon}>4a$ such that
\begin{equation}\label{eq:improved_gVF_int3}
	\frac{1}{2\pi} \int_{0}^{2\pi} \int_{\frac r2}^{2r} g(t)V(t)\frac{g(r)(1-\cos\lambda)}{\lvert X-Y\rvert^{3}}\mathrm{d}t\mathrm{d}\lambda \ge (1-\varepsilon)V(r)\qquad \text{for } r > R_{\varepsilon}.
\end{equation}
Then the desired result follows from  \eqref{eq:improved_gVF_int1}-\eqref{eq:improved_gVF_int3}.

\textit{Step 2. Localization of the integral in \eqref{eq:improved_gVF_int3}.} 
For each fixed $M>0$, since $g(r)=o(r)$ as $r\to\infty$ by \eqref{eq:condition_g_sublinear}, there exists a constant $R_M>4a$ depending on $M$ such that
\begin{equation*}
	\frac{r}{2} < r-Mg(r) < r+Mg(r) < 2r\qquad \text{for } r > R_M.
\end{equation*}
Hence, 
\begin{equation}\label{eq:improved_gVF_int4}
	\int_{\frac r2}^{2r} g(t)V(t)\frac{g(r)(1-\cos\lambda)}{\lvert X-Y\rvert^{3}} \mathrm{d}t \ge \int_{r-Mg(r)}^{r+Mg(r)} g(t)V(t)\frac{g(r)(1-\cos\lambda)}{\lvert X-Y\rvert^{3}} \mathrm{d}t\qquad \text{for } r > R_M.
\end{equation}

It follows from $g^{\prime}\ge0$ on $(a,\infty)$ that 
\begin{equation}\label{eq:gV_increasing}
    g(t) \ge g(r) \quad\text{and}\quad V(t) \ge V(r)
    \qquad\text{for } t\ge r>a.
\end{equation}
On the other hand, for $t\in(r-Mg(r), r)$, by the concavity of $g$ in \eqref{eq:concave_fb} and the fact that $g^{\prime}(r)=o(1)$ as $r\to\infty$ in \eqref{eq:condition_dg_to0}, we have
\begin{equation}\label{eq:improved_gVF_int5}
0 \le g(r) - g(t)\le g^{\prime}(r/2) \lvert r-t\rvert  \le g^{\prime}(r/2) Mg(r)=o(g(r)) \qquad \text{as } r \to \infty.
\end{equation}
Moreover, using
Lemma \ref{lem:Vg_relation} and \eqref{eq:condition_g_sublinear} in the last two of the following inequalities, 
\begin{equation*}
	\begin{split}
		0 \le V(r) - V(t)&= \int_{t}^{r} \left(\sqrt{1 + \lvert g^{\prime}(\tau)\rvert^{2}} - 1\right)\mathrm{d}\tau
		\le\frac12\int_{t}^{r}\lvert g^{\prime}(\tau)\rvert^{2}\mathrm{d}\tau \\
		&\le \frac12\int_{t}^{r} \frac{(g(\tau)-g(a))^{2}}{(\tau-a)^{2}}\mathrm{d}\tau
		\le \frac{g(r)^{2}}{2} \int_{r-Mg(r)}^{r} \frac{1}{(\tau-a)^{2}} \mathrm{d}\tau\\
		&\le 2rV(r)\left(\frac{1}{r-Mg(r)-a}-\frac{1}{r-a}\right)\le \frac{8Mg(r)}{r}V(r)= o(V(r)) \qquad \text{as } r \to \infty.
	\end{split}
\end{equation*}	
Therefore, it follows that
\begin{equation}\label{eq:improved_gVF_int6}
	\int_{r-Mg(r)}^{r+Mg(r)} g(t)V(t)\frac{g(r)(1-\cos\lambda)}{\lvert X-Y\rvert^{3}} \mathrm{d}t\ge (1-o(1))g(r)^{2}V(r)\int_{r-Mg(r)}^{r+Mg(r)} \frac{1-\cos\lambda}{\lvert X-Y\rvert^{3}} \mathrm{d}t
	\qquad \text{as } r \to \infty.
\end{equation}
Furthermore, for $t\in(r-Mg(r),r+Mg(r))$, similarly to  \eqref{eq:improved_gVF_int5}, one has
\[
\lvert g(t)-g(r)\rvert\le g^{\prime}(r/2)\lvert t-r\rvert.
\]
Consequently,
\begin{equation}\label{eq:X-Y_order}
	\begin{split}
		\lvert X-Y\rvert^{2} &= (t-r)^{2} + (g(t)-g(r))^{2} + 2g(t)g(r)(1-\cos\lambda)\\
		&\le \left(1+(g^{\prime}(r/2))^2\right)(t-r)^{2} + 2g(r)^{2}(1-\cos\lambda) + 2g^{\prime}(r/2)\lvert t-r\rvert g(r)(1-\cos\lambda)\\
		&\le (1 +2g^{\prime}(r/2)) \left((t-r)^{2} + 2g(r)^{2}(1-\cos\lambda)\right) \qquad \text{as } r\to\infty,
\end{split}\end{equation}
where $0<g^{\prime}(r/2)<1$ for large $r$ and 
\begin{equation}\label{eq:X-Y_order1}
2\lvert t-r\rvert g(r)(1-\cos\lambda)
\le \lvert t-r\rvert^2+2g(r)^2(1-\cos\lambda)
\end{equation}
have been used in the last step. 
Then with the substitution $t-r=g(r)u$, and noting that $g^{\prime}(r/2)=o(1)$ as $r\to\infty$, it follows that
\begin{equation}\label{eq:improved_gVF_int7}\begin{split}
		\int_{r-Mg(r)}^{r+Mg(r)} \frac{1-\cos\lambda}{\lvert X-Y\rvert^{3}}\mathrm{d}t &\ge  \frac1{(1 +2g^{\prime}(r/2))^{3/2}}\int_{r-Mg(r)}^{r+Mg(r)} \frac{1-\cos\lambda}{\left((t-r)^{2} + 2g(r)^{2}(1-\cos\lambda)\right)^{3/2}}\mathrm{d}t\\
		&\ge \frac{1-o(1)}{g(r)^{2}} \int_{-M}^{M} \frac{1-\cos\lambda}{\big(u^{2} + 2(1-\cos\lambda)\big)^{3/2}} \mathrm{d}u 
		\qquad \text{as } r \to \infty.
\end{split}\end{equation}
Collecting \eqref{eq:improved_gVF_int4}, \eqref{eq:improved_gVF_int6}, and \eqref{eq:improved_gVF_int7}, and then taking average over $\lambda\in(0,2\pi)$ give
\begin{equation}\label{eq:improved_gVF_int8}\begin{split}
&\ \frac{1}{2\pi}\int_{0}^{2\pi}\int_{\frac r2}^{2r} g(t)V(t)\frac{g(r)(1-\cos\lambda)}{\lvert X-Y\rvert^{3}} \mathrm{d}t\mathrm{d}\lambda \\
\ge&\ \frac{1-o(1)}{2\pi}V(r)\int_{0}^{2\pi}\int_{-M}^{M} \frac{1-\cos\lambda}{\big(u^{2} + 2(1-\cos\lambda)\big)^{3/2}} \mathrm{d}u\mathrm{d}\lambda 
	\qquad \text{as } r \to \infty.
\end{split}\end{equation}

\textit{Step 3. Lower bound for the localized integral.} 
For $\lambda\in(0,2\pi)$ and $M>0$, define 
\begin{equation*}
	\theta_{M,\lambda} := \arctan \frac{M}{\sqrt{2(1-\cos\lambda)}}\in\left(0,\frac{\pi}2\right).
\end{equation*}
Applying the substitution $u= \sqrt{2(1-\cos\lambda)}\tan\theta$ and noting that $\theta_{M,\lambda}\to\pi/2$ as $M\to\infty$ yield \begin{equation*}
	\begin{split}
		\int_{-M}^{M} \frac{1-\cos\lambda}{\big(u^{2} + 2(1-\cos\lambda)\big)^{3/2}} \mathrm{d}u &= \int_{0}^{\theta_{M,\lambda}} \frac{1}{(\tan^{2}\theta + 1)^{3/2}} (\tan\theta)^{\prime}\mathrm{d}\theta\\
		&= \int_{0}^{\theta_{M,\lambda}} \cos\theta \mathrm{d}\theta\to 1 
		\qquad\text{as } M\to\infty.
	\end{split}
\end{equation*}
By the dominated convergence theorem, 
\begin{equation}\label{eq:int_M_u}
	\lim_{M \to \infty} \frac{1}{2\pi} \int_{0}^{2\pi}\int_{-M}^{M} \frac{1-\cos\lambda}{\big(u^{2} + 2(1-\cos\lambda)\big)^{3/2}} \mathrm{d}u \mathrm{d}\lambda= 1.
\end{equation}
This, together with \eqref{eq:improved_gVF_int8}, implies that for each $\varepsilon\in(0,1)$, there exist $M_{\varepsilon} > 0$ and $R_{\varepsilon}>4a$ such that		
\begin{equation*}
	\begin{split}
		\frac{1}{2\pi}\int_{0}^{2\pi}\int_{\frac r2}^{2r} g(t)V(t)\frac{g(r)(1-\cos\lambda)}{\lvert X-Y\rvert^{3}} \mathrm{d}t\mathrm{d}\lambda&\ge \frac{1-\varepsilon/2}{2\pi} V(r) \int_{0}^{2\pi}\int_{-M_{\varepsilon}}^{M_{\varepsilon}}\frac{1-\cos\lambda}{\big(u^{2} + 2(1-\cos\lambda)\big)^{3/2}} \mathrm{d}u \mathrm{d}\lambda\\
		&\ge (1-\varepsilon)V(r)\qquad \text{for } r > R_{\varepsilon},
	\end{split}
\end{equation*}
that is, \eqref{eq:improved_gVF_int3} holds. Hence the proof of the lemma is completed.
\end{proof}

\begin{lemma}\label{lem:Jg_refine_gdg}
Assume that $g$ satisfies \eqref{eq:condition_g_sublinear},  \eqref{eq:concave_fb}, and \eqref{eq:condition_g_unbounded}.  For each $\eta \in (0,1/4)$, there exists $R_{\eta} > 4a$ such that
\begin{equation}\label{eq:Jg_refine_gdg}
	\begin{split}
		& \frac{1}{2\pi} \int_{0}^{2\pi} \int_{\frac r2}^{2r} g(t)g^{\prime}(t)\left(\frac{1}{\lvert X-Y\rvert} - \frac{1}{\lvert X\rvert}\right) \mathrm{d}t\mathrm{d}\lambda\\
		\ge &\ 2(1-2\eta)g(r-\eta r)g^{\prime}(r+\eta r)\log\frac{\eta r}{g(r)} - \int_{\frac r2}^{\frac{r+\eta r}2} \frac{g(t)g^{\prime}(t)}{t} \mathrm{d}t
		\qquad \text{for } r > R_{\eta}.
	\end{split}
\end{equation}
\end{lemma}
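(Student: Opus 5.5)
We split the $t$-interval $(r/2,2r)$ into three pieces: $(r/2,r-\eta r)$, $(r-\eta r,r+\eta r)$ and $(r+\eta r,2r)$. On $(r,2r)$, \eqref{eq:X-Y_X_compare} gives $1/\lvert X-Y\rvert-1/\lvert X\rvert>0$, so the part $(r+\eta r,2r)$ can be dropped. On $(r/2,r-\eta r)$ the positive term $1/\lvert X-Y\rvert$ can also be dropped. That leaves only $-g g'/\lvert X\rvert\ge -gg'/t$ to control. We want the final remainder to be $-\int_{r/2}^{(r+\eta r)/2} gg'/t\,\mathrm dt$, so we keep some of the positive mass from the window to absorb the rest of $-\int gg'/\lvert X\rvert$.

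On the central window $t\in(r-\eta r,r+\eta r)$ we bound the factors separately. Since $g'\ge0$, $g$ is increasing, so $g(t)\ge g(r-\eta r)$. By concavity $g'$ is nonincreasing, so $g'(t)\ge g'(r+\eta r)$. Thus
\[
g(t)g'(t)\ge g(r-\eta r)g'(r+\eta r).
\]
As in \eqref{eq:X-Y_order}, with $\lvert g(t)-g(r)\rvert\le g'(r/2)\lvert t-r\rvert$ and $g'(r/2)=o(1)$, we get $\lvert X-Y\rvert^2\le(1+o(1))\bigl((t-r)^2+2g(r)^2(1-\cos\lambda)\bigr)$. The $\lambda$-average and the $t$-integral of the model kernel then give
\[
\frac1{2\pi}\int_0^{2\pi}\int_{-\eta r}^{\eta r}\frac{\mathrm du}{\sqrt{u^2+2g(r)^2(1-\cos\lambda)}}\,\mathrm d\lambda
=\frac1{2\pi}\int_0^{2\pi}2\,\mathrm{arcsinh}\frac{\eta r}{g(r)\sqrt{2(1-\cos\lambda)}}\,\mathrm d\lambda .
\]
Using $\mathrm{arcsinh}\,x\ge\log(2x)$, the fact that $\log(2/\sqrt{2(1-\cos\lambda)})=-\log\lvert2\sin(\lambda/2)\rvert$, and that the mean of $\log\lvert 2\sin(\lambda/2)\rvert$ over a period is $0$, this is $\ge 2\log(\eta r/g(r))$. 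The factor $(1+o(1))^{-1/2}\ge1-\eta$ for large $r$ is absorbed into the coefficient, giving a main term of at least $2(1-\eta)g(r-\eta r)g'(r+\eta r)\log(\eta r/g(r))$.

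The $-1/\lvert X\rvert$ part over the window contributes at least $-g(r+\eta r)g'(r-\eta r)\cdot 2\eta r/(r-\eta r)\ge -Cg(r-\eta r)g'(r+\eta r)\,\eta$. Here we compare $g(r+\eta r)$ with $g(r-\eta r)$ by concavity, and $g'(r-\eta r)$ with $g'(r+\eta r)$. This comparison is the main obstacle, because $g'$ can drop sharply. The remedy I would try is to avoid the comparison altogether. First, $\int_{r-\eta r}^{r+\eta r}gg'/t\,\mathrm dt\le \frac{1}{r-\eta r}\bigl(g(r+\eta r)^2-g(r-\eta r)^2\bigr)/2$. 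Second, for $t\in(r/2,r-\eta r)$ a change of variables shows that the remainder on $(r/2,r+\eta r)$ is dominated by the remainder on $(r/2,(r+\eta r)/2)$ plus a term of order $g(r)^2/r$ times a constant. That last term is $o(1)\cdot g(r-\eta r)g'(r+\eta r)\log(r/g)$, or is bounded by comparing $g^2/r$ with $gg'$ via concavity ($g'(s)\ge (g(2s)-g(s))/s$).

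Explicitly, the terms $gg'/t$ on $((r+\eta r)/2, r+\eta r)$ integrate to at most $\frac{1}{(r+\eta r)/2}\cdot\frac12\bigl(g(r+\eta r)^2-g((r+\eta r)/2)^2\bigr)\le 2g(r+\eta r)g'((r+\eta r)/2)$. This is the delicate point, since $g'$ is evaluated at the wrong place. I would handle it by splitting off only $(r-\eta r,r+\eta r)$ with kernel difference $1/\lvert X-Y\rvert-1/\lvert X\rvert$, and using on $((r+\eta r)/2,r-\eta r)$ the estimate $1/\lvert X-Y\rvert-1/\lvert X\rvert\ge 1/(r-t+o(r))-1/t\ge0$. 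This holds because $r-t\le t$ there, which is exactly why the cutoff is $(r+\eta r)/2$, where $t\ge r-t$ up to the $\eta$ margin. The sign on that range is thus nonnegative and it can be dropped. The window loss is then only $2\eta r/(r-\eta r)\cdot g(r+\eta r)g'(r-\eta r)$, which is lower order against the $\log$ term. Granting this by the final $1-2\eta$ adjustment, for $r>R_\eta$ we conclude \eqref{eq:Jg_refine_gdg}.
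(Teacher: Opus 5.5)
Your final decomposition matches the paper's: the kernel difference is bounded by $-1/t$ on $(r/2,(r+\eta r)/2)$ and is nonnegative on $((r+\eta r)/2,r-\eta r)$ and on $(r+\eta r,2r)$; your justification via $\lvert X-Y\rvert\le r-t+o(r)$ and $\lvert X\rvert\ge t$ is fine. The gap is on the window $(r-\eta r,r+\eta r)$, and it is exactly the obstacle you flagged and then did not resolve.

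\textbf{The gap.} You bound $\int g g'/\lvert X-Y\rvert$ from below using $g(t)g'(t)\ge g(r-\eta r)g'(r+\eta r)$. You bound $\int gg'/\lvert X\rvert$ from above using $g(t)g'(t)\le g(r+\eta r)g'(r-\eta r)$. You then assert that the loss $\frac{2\eta}{1-\eta}g(r+\eta r)g'(r-\eta r)$ is absorbed by $2\eta\, g(r-\eta r)g'(r+\eta r)\log\frac{\eta r}{g(r)}$. That needs $g'(r-\eta r)/g'(r+\eta r)\lesssim\log\frac{\eta r}{g(r)}$. This fails for general concave, sublinear, unbounded $g$: take a smoothed polygonal $g$ whose slope drops by a factor $(\log r_k)^2$ at sparse points $r_k$, and set $r=r_k$. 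The lemma uses only these hypotheses, so no extra regularity of $g'$ is available. Your detour through $g(r)^2/r$ has the same flaw. Concavity only gives upper bounds on $g'$, and $g'(s)\ge (g(2s)-g(s))/s$ does not give a lower bound comparable to $g(r)/r$, since $g(2s)-g(s)$ can be tiny compared with $g(s)$.

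\textbf{The fix.} Absorb $1/\lvert X\rvert$ into $1/\lvert X-Y\rvert$ pointwise in the kernel, before applying any monotonicity to $gg'$. On the window, $\lvert X\rvert\ge(1-\eta)r$ and $\lvert X-Y\rvert^2\le(t-r)^2+(g(t)+g(r))^2\le(\eta^2+o(1))r^2$. Hence $\lvert X-Y\rvert/\lvert X\rvert\le\frac98\frac{\eta}{1-\eta}\le\frac{3\eta}{2}$ for large $r$. Combined with your bound $\lvert X-Y\rvert^2\le(1+o(1))\bigl((t-r)^2+2g(r)^2(1-\cos\lambda)\bigr)$, this gives
\[
\frac{1}{\lvert X-Y\rvert}-\frac{1}{\lvert X\rvert}\ge\Bigl(1-\frac{3\eta}{2}\Bigr)\frac{1}{\lvert X-Y\rvert}\ge\frac{1-2\eta}{\sqrt{(t-r)^2+2g(r)^2(1-\cos\lambda)}} .
\]
The integrand is now $g(t)g'(t)$ times a nonnegative kernel. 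So the single bound $g(t)g'(t)\ge g(r-\eta r)g'(r+\eta r)$ suffices, and the product $g(r+\eta r)g'(r-\eta r)$ never appears. This is the paper's route.

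Your $\lambda$-averaged $\mathrm{arcsinh}$ computation then finishes the proof. Note that $2/\sqrt{2(1-\cos\lambda)}=1/\lvert\sin(\lambda/2)\rvert$, not $1/\lvert 2\sin(\lambda/2)\rvert$. The average is therefore $2\log\frac{\eta r}{g(r)}+2\log 2$, which only helps. The paper avoids this computation by using $2(1-\cos\lambda)\le 4$ and integrating $1/\sqrt{(t-r)^2+4g(r)^2}$ explicitly.
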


\begin{proof}
Fix $\eta\in(0,1/4)$. Throughout the remainder of this proof, let $r>R_{\eta}$ for some $R_{\eta}>4a$ to be chosen later. Using $g^{\prime}\ge 0$ on $(a,\infty)$ (see \eqref{eq:condition_dg_sign}) and $\lvert X\rvert\ge t$ gives
\begin{equation*}
	g(t)g^{\prime}(t)\left(\frac{1}{\lvert X-Y\rvert} - \frac{1}{\lvert X\rvert}\right) \ge -\frac{g(t)g^{\prime}(t)}{\lvert X\rvert} \ge - \frac{g(t)g^{\prime}(t)}{t} \qquad\text{for } t>a.
\end{equation*}
Hence one has	
\begin{equation*}
	\frac{1}{2\pi} \int_{0}^{2\pi} \int_{\frac r2}^{\frac{r+\eta r}2} g(t)g^{\prime}(t)\left(\frac{1}{\lvert X-Y\rvert} - \frac{1}{\lvert X\rvert}\right) \mathrm{d}t \mathrm{d}\lambda \ge - \int_{\frac r2}^{\frac{r+\eta r}2} \frac{g(t)g^{\prime}(t)}{t} \mathrm{d}t.
\end{equation*}
Moreover, as in \eqref{eq:X-Y_X_compare}, there exists sufficiently large $R_{\eta}$ such that
\begin{equation*}
	\frac{1}{\lvert X-Y\rvert} - \frac{1}{\lvert X\rvert} > 0 \qquad\text{for } t>\frac{r+\eta r}2>\frac{R_{\eta}+\eta R_{\eta}}2.
\end{equation*}
Then it follows that 
\begin{equation*}
	\frac{1}{2\pi} \int_{0}^{2\pi} \int_{\frac{r+\eta r}2}^{r-\eta r} g(t)g^{\prime}(t) \left(\frac{1}{\lvert X-Y\rvert} - \frac{1}{\lvert X\rvert}\right) \mathrm{d}t\mathrm{d}\lambda \ge 0
\end{equation*}
and		
\begin{equation*}
	\frac{1}{2\pi} \int_{0}^{2\pi} \int_{r+\eta r}^{2r} g(t)g^{\prime}(t) \left(\frac{1}{\lvert X-Y\rvert} - \frac{1}{\lvert X\rvert}\right) \mathrm{d}t\mathrm{d}\lambda \ge 0.
\end{equation*}
Therefore, to prove \eqref{eq:Jg_refine_gdg}, it suffices to show 
\begin{equation}\label{eq:Jg_refine_gdg1}\begin{split}
&\ \frac{1}{2\pi} \int_{0}^{2\pi} \int_{r-\eta r}^{r+\eta r} g(t)g^{\prime}(t)\left(\frac{1}{\lvert X-Y\rvert} - \frac{1}{\lvert X\rvert}\right) \mathrm{d}t \mathrm{d}\lambda\\
\ge&\ 2(1-2\eta)g(r-\eta r)g^{\prime}(r+\eta r) \log\frac{\eta r}{g(r)}
\qquad \text{for } r > R_{\eta}.
\end{split}\end{equation}

To that end, let $t\in(r-\eta r, r+\eta r)$. Note that $t\ge (1-\eta)r$, $\lvert t-r\rvert\le \eta r$, and $g(t)+g(r)=o(r)$ as $r\to\infty$ by \eqref{eq:condition_g_sublinear}. We have
\begin{equation*}
	\begin{split}
		\frac{\lvert X-Y\rvert^{2}}{\lvert X\rvert^{2}} &= \frac{(t-r)^{2} + (g(t) - g(r))^{2} + 2g(t)g(r)(1-\cos\lambda)}{t^{2} + g(t)^{2}}\\
		&\le \frac{(t-r)^{2} + (g(t) + g(r))^{2}}{t^{2}} \le \frac{\eta^{2}+ o(1)}{(1-\eta)^{2}} \qquad \text{as } r \to \infty.
	\end{split}
\end{equation*}
In particular, there exists a sufficiently large $R_{\eta}$ such that for $r>R_\eta$, 
\begin{equation*}
\frac{\lvert X-Y\rvert}{\lvert X\rvert} \le \frac98\cdot\frac{\eta}{1-\eta} \le \frac{3\eta}{2}.
\end{equation*}
Hence one has
\begin{equation*}
\frac{1}{\lvert X-Y\rvert} - \frac{1}{\lvert X\rvert} \ge \left(1-\frac{3\eta}2\right)\frac{1}{\lvert X-Y\rvert}.
\end{equation*}
In view of \eqref{eq:X-Y_order} (which also holds for $t\in(r-\eta r,r+\eta r)\subset(r/2,2r)$), we can choose $R_{\eta}$ larger if necessary such that
\begin{equation*}
	\begin{split}
		\frac{1}{\lvert X-Y\rvert} - \frac{1}{\lvert X\rvert}\ge \frac{1-2\eta}{\sqrt{(t-r)^{2} + 2g(r)^{2}(1-\cos\lambda)}}
		\ge \frac{1-2\eta}{\sqrt{(t-r)^{2} + 4g(r)^{2}}} \qquad \text{for } r > R_{\eta}.
	\end{split}
\end{equation*} 
Recall that $g$ is non-decreasing and $g^{\prime}$ is non-increasing on $(a,\infty)$ (see \eqref{eq:condition_dg_sign} and \eqref{eq:concave_fb}). 
Consequently, 
\begin{equation}\label{eq:Jg_refine_gdg2}
	\begin{split}
		&\frac{1}{2\pi} \int_{0}^{2\pi} \int_{r-\eta r}^{r+\eta r} g(t)g^{\prime}(t)\left(\frac{1}{\lvert X-Y\rvert} - \frac{1}{\lvert X\rvert}\right) \mathrm{d}t \mathrm{d}\lambda\\
		\ge &\ (1-2\eta)g(r-\eta r)g^{\prime}(r+\eta r) \int_{r-\eta r}^{r+\eta r} \frac{1}{\sqrt{(t-r)^{2} + 4g(r)^{2}}} \mathrm{d}t\qquad \text{for } r > R_{\eta}.
	\end{split}
\end{equation}
Furthermore, direct computations give
		\begin{equation}\label{eq:Jg_refine_gdg3}
		\begin{split}
		\int_{r-\eta r}^{r+\eta r} \frac{1}{\sqrt{(t-r)^{2} + 4g(r)^{2}}} \mathrm{d}t 
        &= \log\left(t-r + \sqrt{(t-r)^{2} + 4g(r)^{2}}\right)\bigg|_{r-\eta r}^{r+\eta r}\\
        &= \log\frac{\eta r + \sqrt{(\eta r)^{2} + 4g(r)^{2}}}{-\eta r + \sqrt{(\eta r)^{2} + 4g(r)^{2}}}\\
		&= \log\frac{\left(\eta r + \sqrt{(\eta r)^{2} + 4g(r)^{2}}\right)^{2}}{4g(r)^{2}} \ge 2\log \frac{\eta r}{g(r)}.
		\end{split}
		\end{equation}
Substituting \eqref{eq:Jg_refine_gdg3} into \eqref{eq:Jg_refine_gdg2} yields \eqref{eq:Jg_refine_gdg1}. This finishes the proof of the lemma.
\end{proof}

The refined lower bound for $J_g(r)$ follows directly from Lemmas \ref{lem:Jg_refine_gvF} and \ref{lem:Jg_refine_gdg}.

\begin{corollary}\label{coro:Jg_refine_lower}
Assume that $g$ satisfies \eqref{eq:condition_g_sublinear},  \eqref{eq:concave_fb}, and \eqref{eq:condition_g_unbounded}.  Let $J_g$ be defined in \eqref{eq:Jg_def} and $V$ be as in \eqref{eq:Vt_def}.
For each $\varepsilon\in(0,1)$ and each $\eta \in (0,1/4)$, there exists  $R_{\varepsilon,\eta} > 4a$ such that for $r>R_{\varepsilon,\eta}$, one has
\begin{equation*}
	J_{g}(r) \ge (1-\varepsilon)V(r) - \varepsilon + 2(1-2\eta)g(r-\eta r)g^{\prime}(r+\eta r)\log\frac{\eta r}{g(r)} - \int_{\frac r2}^{\frac{r+\eta r}2} \frac{g(t)g^{\prime}(t)}{t} \mathrm{d}t.
\end{equation*}
\end{corollary}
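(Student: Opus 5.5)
This is an immediate consequence of the definition \eqref{eq:Jg_def} together with Lemmas \ref{lem:Jg_refine_gvF} and \ref{lem:Jg_refine_gdg}.

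The plan is to split $J_g(r)$ linearly into its two pieces:
\begin{equation*}
J_g(r)=\frac{1}{2\pi}\int_0^{2\pi}\int_{\frac r2}^{2r} g(t)V(t)F_g(t,\lambda;r)\,\mathrm{d}t\mathrm{d}\lambda+\frac{1}{2\pi}\int_0^{2\pi}\int_{\frac r2}^{2r} g(t)g'(t)\Big(\frac{1}{\lvert X-Y\rvert}-\frac{1}{\lvert X\rvert}\Big)\mathrm{d}t\mathrm{d}\lambda .
\end{equation*}
Both integrals converge for $r$ large, since $g,g'$ are bounded on $(r/2,2r)$ and the kernels are integrable near the singularity at $t=r$. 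The latter holds because the $(1-\cos\lambda)$ factor and the concavity bound from Lemma \ref{lem:Fg_lower_bound} control the $|X-Y|^{-3}$ term after the $\lambda$-average. So the splitting is legitimate; alternatively, one can note that the lemmas already treat these pieces as finite quantities.

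Fix $\varepsilon\in(0,1)$ and $\eta\in(0,1/4)$. Lemma \ref{lem:Jg_refine_gvF} bounds the first piece below by $(1-\varepsilon)V(r)-\varepsilon$ for $r>R_\varepsilon$. Lemma \ref{lem:Jg_refine_gdg} bounds the second piece below by
\begin{equation*}
2(1-2\eta)g(r-\eta r)g'(r+\eta r)\log\frac{\eta r}{g(r)}-\int_{\frac r2}^{\frac{r+\eta r}{2}}\frac{g(t)g'(t)}{t}\,\mathrm{d}t
\end{equation*}
for $r>R_\eta$.

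Take $R_{\varepsilon,\eta}:=\max\{R_\varepsilon,R_\eta\}>4a$ and add the two inequalities to obtain the claim. There is no real obstacle here; the only point to check is that the hypotheses of both lemmas coincide with those assumed in the corollary, namely \eqref{eq:condition_g_sublinear}, \eqref{eq:concave_fb} and \eqref{eq:condition_g_unbounded}, which they do.
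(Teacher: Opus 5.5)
Your proof is correct and matches the paper's argument exactly: you split $J_g(r)$ into its $gVF_g$ and $gg'$ pieces, apply Lemmas~\ref{lem:Jg_refine_gvF} and~\ref{lem:Jg_refine_gdg}, and take $R_{\varepsilon,\eta}=\max\{R_\varepsilon,R_\eta\}$. Your integrability aside is not needed, since the lemmas already treat both pieces as finite; strictly speaking, integrability near $X=Y$ comes from the numerator $g(t)+(r-t)g'(t)-g(r)\cos\lambda$ of the singular part of $F_g$ being $O(\lvert X-Y\rvert^{2})$ on the smooth surface, not from the concavity bound, which only supplies a sign.
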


\subsection{Proof of the refined growth estimate}\label{subsec:refined_growth}

This subsection is devoted to the proof of the refined growth estimate for the free boundary with logarithmic factor. As in Subsection \ref{subsec:initial_growth}, we first derive an integro-differential inequality, which improves Lemma \ref{lem:r_root_differential_ineq}.

\begin{lemma}\label{lem:refined_differential_ineq}
Assume that $g$ satisfies \eqref{eq:condition_g_sublinear},  \eqref{eq:concave_fb}, and \eqref{eq:condition_g_unbounded}. Let $V$ be as in \eqref{eq:Vt_def}. For each $\varepsilon\in(0,1)$, there exist constants $C>0$ independent of $\varepsilon$ and $r$, and $R_{\varepsilon} > 4a$ such that		
\begin{equation}\label{eq:refined_differential_ineq}
	(2-\varepsilon) \left(V(r) + g(r)g^{\prime}(r)\log\frac{r}{g(r)}\right) \le C + \int_{a}^{\frac r2} \frac{g(t)g^{\prime}(t)}{t}\mathrm{d}t
	\qquad \text{for } r > R_{\varepsilon}.
\end{equation}
\end{lemma}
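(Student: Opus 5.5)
The plan is to combine the decomposition \eqref{eq:pe_main_NHJT} with the refined lower bounds of Subsection~\ref{subsec:refined_lower}. The identity is evaluated not at $r$ itself but at the shifted point $r':=r/(1+\eta)$, which removes the unfavourable $\eta$-shift in $g^{\prime}$. Throughout, $\eta\in(0,1/4)$ is chosen small depending on $\varepsilon$.

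\emph{Step 1: the raw inequality.} By Lemma~\ref{lemma_1_potential_expansion}, Lemma~\ref{lem:Hg_refine_lower}, Corollary~\ref{coro:Jg_refine_lower} (with $\varepsilon/4$ in place of $\varepsilon$) and Lemma~\ref{lem:Tg_refine_lower}, for every large $s$ we have
\[
-V(s)+2V(0)\ge -C+(1-\tfrac{\varepsilon}{4})V(s)+2(1-2\eta)g(s-\eta s)g^{\prime}(s+\eta s)\log\frac{\eta s}{g(s)}-\int_a^{\frac{s+\eta s}{2}}\frac{g(t)g^{\prime}(t)}{t}\,\mathrm{d}t .
\]
Here the head integral over $(a,s/2)$ and the piece over $(s/2,(s+\eta s)/2)$ from $J_g$ have been merged into a single integral. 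This gives
\[
(2-\tfrac{\varepsilon}{4})V(s)+2(1-2\eta)g(s-\eta s)g^{\prime}(s+\eta s)\log\frac{\eta s}{g(s)}\le C+\int_a^{\frac{s+\eta s}{2}}\frac{g(t)g^{\prime}(t)}{t}\,\mathrm{d}t .
\]

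\emph{Step 2: evaluate at $s=r'=r/(1+\eta)$.} With this choice three things line up.
\begin{itemize}
\item The upper limit of the integral becomes exactly $r/2$.
\item The derivative factor becomes $g^{\prime}(r)$ itself.
\item The remaining factors convert as follows. By concavity and $g\ge0$, for large $r$ we have $g(\tfrac{1-\eta}{1+\eta}r)\ge(1-3\eta)g(r)$. Since $g(r')\le g(r)$ and $r/g(r)\to\infty$, we have $\log\frac{\eta r'}{g(r')}\ge\log\frac{r}{g(r)}-\log\frac{1+\eta}{\eta}\ge(1-\eta)\log\frac{r}{g(r)}$ for $r$ large depending on $\eta$.
\end{itemize}
Since $g^{\prime}\ge0$, the $gg^{\prime}\log$ term is at least $2(1-C\eta)\,g(r)g^{\prime}(r)\log\frac{r}{g(r)}$.

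\emph{Step 3: replace $V(r')$ by $V(r)$ (main obstacle).} This is the step I expect to be the main obstacle. The naive bound $V(r)-V(r')\le\tfrac12(r-r')g^{\prime}(r')^2$ involves $g^{\prime}(r')\ge g^{\prime}(r)$, which is the wrong direction. The plan is instead to use the concavity bound $g^{\prime}(\tau)\le g(\tau)/(\tau-a)$ to get
\[
V(r)-V(r')\le\tfrac12\int_{r'}^{r}g^{\prime}(\tau)^2\,\mathrm{d}\tau\le \int_{r'}^{r}\frac{g(\tau)g^{\prime}(\tau)}{\tau}\,\mathrm{d}\tau\le\frac{g(r)^2-g(r')^2}{2r'}\le C\eta\frac{g(r)^2}{r}\le 4C\eta\,V(r),
\]
where the last step is Lemma~\ref{lem:Vg_relation}. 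Hence $V(r')\ge(1-C\eta)V(r)$.

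Inserting Steps 2 and 3 into Step 1 yields
\[
(2-\tfrac{\varepsilon}{4})(1-C\eta)V(r)+2(1-C\eta)\,g(r)g^{\prime}(r)\log\frac{r}{g(r)}\le C+\int_a^{r/2}\frac{g(t)g^{\prime}(t)}{t}\,\mathrm{d}t .
\]
Choosing $\eta$ so small that $(2-\varepsilon/4)(1-C\eta)\ge2-\varepsilon$ and $2(1-C\eta)\ge2-\varepsilon$ gives \eqref{eq:refined_differential_ineq} for $r>R_\varepsilon$. The constant $C$ on the right comes only from $C_N$, $V(0)$ and Lemma~\ref{lem:Hg_refine_lower}, so it is independent of $\varepsilon$.
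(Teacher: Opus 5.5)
Your proof is correct and follows the paper's argument. The paper also derives the raw inequality at $r$ and then rewrites everything in terms of $r+\eta r$, which is exactly your substitution $r'=r/(1+\eta)$, and it uses the same concavity comparisons for the $g$-factor and the logarithm. The only difference is your Step 3, which is not really an obstacle: since $V'=\sqrt{1+g'^2}-1$ is non-increasing, $V(t)/(t-a)$ is non-increasing, and this gives $V(r)\ge(1-2\eta)V(r+\eta r)$ in one line, so your detour through $g'(\tau)\le g(\tau)/(\tau-a)$ and Lemma~\ref{lem:Vg_relation} is valid but unnecessary.
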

\begin{proof}
The proof is divided into two steps.

{\it Step 1.}
Fix $\varepsilon\in(0,1)$, and let $\eta \in (0,1/4)$. It follows from the identity \eqref{eq:pe_main_NHJT}, Lemmas \ref{lemma_1_potential_expansion}, \ref{lem:Hg_refine_lower} and \ref{lem:Tg_refine_lower}, as well as Corollary \ref{coro:Jg_refine_lower} that there exists $R_{\varepsilon,\eta}>4a$ such that for $r > R_{\varepsilon,\eta}$,  one has
\begin{equation*}
	\begin{split}
		-V(r) + 2V(0)\ge&\ - C - \int_{a}^{\frac r2} \frac{g(t)g^{\prime}(t)}{t} \mathrm{d}t- 2\varepsilon+ (1-\varepsilon)V(r)\\
		&\ + 2(1-2\eta)g(r-\eta r)g^{\prime}(r+\eta r)\log\frac{\eta r}{g(r)} - \int_{\frac r2}^{\frac{r+\eta r}2} \frac{g(t)g^{\prime}(t)}{t} \mathrm{d}t.
	\end{split}
\end{equation*}
This implies
\begin{equation}\label{eq:refined_ineq_1}
	(2-\varepsilon)V(r) + 2(1-2\eta)g(r-\eta r)g^{\prime}(r+\eta r)\log\frac{\eta r}{g(r)} \le C + \int_{a}^{\frac{r+\eta r}2} \frac{g(t)g^{\prime}(t)}{t} \mathrm{d}t.
\end{equation}

{\it Step 2.}
We estimate the left-hand side of \eqref{eq:refined_ineq_1} from below. For sufficiently large $r$, the following estimates hold.  
First, since $V(t)/(t-a)$ is non-increasing as shown in Step 2 of Proposition \ref{prop:r_root_growth}, one has   \begin{equation}\label{eq:refined_ineq_3}
V(r)\ge (r-a)\frac{V(r+\eta r)}{r+\eta r-a} \ge (1-2\eta) V(r+\eta r).
\end{equation}
Second, using the monotonicity and concavity of $g$ (see \eqref{eq:condition_dg_sign} and  \eqref{eq:concave_fb}) yields 
\begin{equation}\label{eq:refined_ineq_6}
	0 \le g(r+\eta r) - g(r-\eta r) \le 2\eta rg^{\prime}(r-\eta r)
	\le 2\eta r\frac{g(r-\eta r)-g(a)}{r-\eta r-a}\le 4\eta g(r-\eta r).
\end{equation}
This gives		
\begin{equation}\label{eq:refined_ineq_5}
	g(r-\eta r) \ge \frac{g(r+\eta r)}{1 +4\eta}.
\end{equation}
Finally, since $\log((r+\eta r)/g(r+\eta r))\to\infty$ as $r\to\infty$ by \eqref{eq:condition_g_sublinear}, it holds that	
\begin{equation}\label{eq:refined_ineq_4}
	\log\frac{\eta r}{g(r)} \ge \log\frac{\eta r}{g(r+\eta r)} = \log\frac{r+\eta r}{g(r+\eta r)} + \log\frac{\eta}{1+\eta}
	\ge \left(1-\frac{\varepsilon}2\right) \log\frac{r+\eta r}{g(r+\eta r)}.
\end{equation}
Combining \eqref{eq:refined_ineq_3}, \eqref{eq:refined_ineq_5}, and \eqref{eq:refined_ineq_4}, with the choice $\eta=\varepsilon/16$ for fixed $\varepsilon\in(0,1)$, we obtain        
\begin{equation}\label{eq:refined_ineq_2}
	\begin{split}
		&(2-\varepsilon)V(r) + 2(1-2\eta)g(r-\eta r)g^{\prime}(r+\eta r)\log\frac{\eta r}{g(r)}\\
		\ge&\ (2-2\varepsilon)\left(V(r+\eta r) +g(r+\eta r)g^{\prime}(r+\eta r)\log\frac{r+\eta r}{g(r+\eta r)}\right) \qquad \text{for } r > R_{\varepsilon,\eta}
	\end{split}
\end{equation}
after enlarging $R_{\varepsilon,\eta}$ if necessary, where $2-2\varepsilon$ is a common lower bound for the coefficients arising from the estimates. 
Substituting \eqref{eq:refined_ineq_2} into \eqref{eq:refined_ineq_1}, and then replacing $r+\eta r$ by $r$ in the resulting inequality, give the desired estimate \eqref{eq:refined_differential_ineq}. 
\end{proof}

By establishing a lower bound for $V(r)$ involving the integral term in \eqref{eq:refined_differential_ineq}, we obtain the following integro-differential inequality, where the coefficients are essential for the refined growth estimate.

\begin{corollary}\label{coro:refined_ineq_sharp}
Assume that $g$ satisfies \eqref{eq:condition_g_sublinear},  \eqref{eq:concave_fb}, and \eqref{eq:condition_g_unbounded}.  For each $\varepsilon\in(0,1)$, there exist constants $C_{\varepsilon} > 0$ independent of $r$ and $R_{\varepsilon} > 4a$ such that
\begin{equation*}
	(4-\varepsilon)g(r)g^{\prime}(r)\log\frac{r}{g(r)} \le C_{\varepsilon} + \int_{a}^{\frac r2} \frac{g(t)g^{\prime}(t)}{t}\mathrm{d}t\qquad \text{for } r > R_{\varepsilon}.
\end{equation*}
\end{corollary}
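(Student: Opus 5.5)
The plan is to remove $V(r)$ from Lemma \ref{lem:refined_differential_ineq}. To do this I will bound the integral term from above by a multiple of $V$. Write
$$I(r):=\int_{a}^{\frac r2}\frac{g(t)g'(t)}{t}\,\mathrm{d}t .$$
The claim I aim for is: for every $\delta>0$ there is $C_\delta$ with
$$I(r)\le (4+\delta)V(r)+C_\delta \qquad\text{for large } r.$$
Granting this, substitute into \eqref{eq:refined_differential_ineq} with $\varepsilon'$ small in place of $\varepsilon$. This gives
$$(2-\varepsilon')g g'\log\frac{r}{g}\le C+I-(2-\varepsilon')V\le C+I-\frac{2-\varepsilon'}{4+\delta}\,(I-C_\delta).$$
The right-hand side equals $C'+\bigl(1-\tfrac{2-\varepsilon'}{4+\delta}\bigr)I$, and this coefficient is close to $1/2$. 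Dividing by it, the left coefficient becomes $(2-\varepsilon')(4+\delta)/(2+\delta+\varepsilon')$, which tends to $4$ as $\varepsilon',\delta\to0$. Choosing $\varepsilon',\delta$ small in terms of $\varepsilon$ then yields the corollary with coefficient $4-\varepsilon$.

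To prove the claim, the natural tool is Cauchy--Schwarz followed by Hardy's inequality (Lemma \ref{lemma:preliminaries_1} with $p=2$), since the $g'$ near $a$ may be unbounded. I first split off $[a,2a]$ and the possibly infinite $g'(a)$: the piece $\int_a^{2a}gg'/t$ is a finite constant, because $g$ is monotone there. On $[2a,r/2]$ I estimate
$$\int_{2a}^{r/2}\frac{g g'}{t}\le\Bigl(\int_{2a}^{r/2}\frac{g^2}{t^2}\Bigr)^{1/2}\Bigl(\int_{2a}^{r/2}g'^2\Bigr)^{1/2}.$$
Next I write $g(t)=g(2a)+\int_{2a}^t g'$. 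Applying Hardy to $f=g'\chi_{(2a,\infty)}$, extended by zero on $(0,2a)$, gives $\int_{2a}^{r/2}\bigl(\int_{2a}^t g'/t\bigr)^2\le 4\int_{2a}^{r/2}g'^2$. The constant $g(2a)$ only contributes a lower-order cross term, absorbed via $xy\le\delta x^2+C_\delta y^2$ together with $\int g(2a)^2/t^2<\infty$. Altogether this gives
$$I(r)\le (2+\delta)\int_{2a}^{r/2}g'^2+C_\delta .$$

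Finally I compare $\int g'^2$ with $V$. By \eqref{eq:condition_dg_to0}, $g'(\tau)\to0$, so $\sqrt{1+g'^2}-1\ge \tfrac12(1-\delta)g'^2$ for $\tau\ge T_\delta$. From \eqref{eq:Vt_def} this gives
$$\int_{2a}^{r/2}g'^2\le \frac{2}{1-\delta}\bigl(V(r/2)+C_\delta\bigr)\le \frac{2}{1-\delta}V(r)+C_\delta,$$
using that $V$ is nondecreasing on $(a,\infty)$. Combining yields $I\le 4(1+O(\delta))V+C_\delta$, which is the claim.

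The main obstacle I expect is keeping the Hardy constant sharp (exactly $4$, hence the total $2\cdot2=4$) while handling the additive constant $g(2a)$ and the region near $a$. If the product form loses too much, I would instead integrate by parts, $I=\frac{g(r/2)^2}{r}+\int\frac{g^2}{2t^2}+O(1)$, and use $g(r)^2/r\le(2+o(1))V(r)$ (the sharp form of Lemma \ref{lem:Vg_relation}) together with Hardy for the second term. However, that route seems to give a worse constant, so the Cauchy--Schwarz route is the first choice.
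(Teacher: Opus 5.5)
Your proposal is correct and follows essentially the paper's route: Cauchy--Schwarz plus Hardy's inequality to get $\int_a^{r/2} g g'/t \le 2\int (g')^2 + C$, the Taylor bound $V' \ge \frac{1-\sigma}{2}(g')^2$ (for large $t$, using $g'\to 0$) to get $\int (g')^2 \le \frac{2}{1-\sigma}V + C$, and substitution into Lemma \ref{lem:refined_differential_ineq}. The only difference is in how the additive constant is handled. The paper splits off $g(t_\sigma)\int g'/t$ before applying Cauchy--Schwarz, bounds it via $g'(t)/t \le C t^{\alpha-2}$ from Proposition \ref{prop:r_root_growth}, and applies Hardy with the origin shifted to $t_\sigma$. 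You instead absorb $g(2a)$ by a Minkowski/Young step, which costs a harmless factor $2+\delta$ in place of $2$ and does not need the growth estimate at this step.
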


\begin{proof}
It follows from $g^{\prime}(t)=o(1)$ as $t \to \infty$ (cf. \eqref{eq:condition_dg_to0}) and Taylor's expansion that, for each $\sigma\in(0,1)$, there exists $t_{\sigma} > 2a$ such that
\begin{equation*}
	V^{\prime}(t) = \sqrt{1 + \lvert g^{\prime}(t)\rvert^{2}} - 1 \ge \frac{1-\sigma}{2} \lvert g^{\prime}(t)\rvert^{2}\qquad \text{for } t > t_{\sigma}.
\end{equation*}
In the proof below, we let $r>2t_\sigma$. Then
\begin{equation}\label{eq:refined_ineq_sharp_V}
V(r) = V(t_{\sigma})+\int_{t_{\sigma}}^{r} V^{\prime}(t)\mathrm{d}t
\ge \frac{1-\sigma}{2}\int_{t_{\sigma}}^{r} \lvert g^{\prime}(t)\rvert^{2} \mathrm{d}t\ge \frac{1-\sigma}{2}\int_{t_{\sigma}}^{\frac r2} \lvert g^{\prime}(t)\rvert^{2} \mathrm{d}t.
\end{equation}

We claim that
\begin{equation}\label{eq:refined_ineq_sharp_g}
\int_{a}^{\frac r2} \frac{g(t)g^{\prime}(t)}{t} \mathrm{d}t \le C_{\sigma} + 2\int_{t_{\sigma}}^{\frac r2} \lvert g^{\prime}(t)\rvert^{2}\mathrm{d}t.
\end{equation}
Indeed, 		
\begin{equation}\label{eq:refined_ineq_sharp_g1}
	\int_{a}^{\frac r2} \frac{g(t)g^{\prime}(t)}{t} \mathrm{d}t = \int_{a}^{t_{\sigma}} \frac{g(t)g^{\prime}(t)}{t} \mathrm{d}t + g(t_{\sigma})\int_{t_{\sigma}}^{\frac r2} \frac{g^{\prime}(t)}{t} \mathrm{d}t + \int_{t_{\sigma}}^{\frac r2} \frac{g(t)-g(t_{\sigma})}{t} g^{\prime}(t) \mathrm{d}t.
\end{equation}
It follows from the nonnegativity of $g^{\prime}$ in \eqref{eq:condition_dg_sign}, the concavity of $g$ in \eqref{eq:concave_fb}, and Proposition \ref{prop:r_root_growth} that 
\begin{equation*}
	0\le \frac{g^{\prime}(t)}{t} \le \frac{g(t)-g(a)}{t(t-a)} \le Ct^{\alpha-2} \qquad\text{for } t>2a,
\end{equation*}
where $\alpha\in(1/2,1)$. Hence one has
\begin{equation}\label{eq:refined_ineq_sharp_g2}
\int_{a}^{t_{\sigma}} \frac{g(t)g^{\prime}(t)}{t} \mathrm{d}t + g(t_{\sigma})\int_{t_{\sigma}}^{\frac r2} \frac{g^{\prime}(t)}{t} \mathrm{d}t \le \int_{a}^{t_{\sigma}} \frac{g(t)g^{\prime}(t)}{t} \mathrm{d}t + Cg(t_{\sigma}) \int_{2a}^{\infty} t^{\alpha-2}\mathrm{d}t =: C_{\sigma}.
\end{equation}
On the other hand, using H\"{o}lder's inequality and Hardy's inequality (Lemma \ref{lemma:preliminaries_1}) gives		\begin{equation}\label{eq:refined_ineq_sharp_g3}
	\begin{split}
		\int_{t_{\sigma}}^{\frac r2} \frac{g(t)-g(t_{\sigma})}{t}g^{\prime}(t) \mathrm{d}t 
		&\le \left(\int_{t_{\sigma}}^{\frac r2} \left(\frac{g(t)-g(t_{\sigma})}{t-t_{\sigma}}\right)^2 \mathrm{d}t\right)^{\frac12} \left(\int_{t_{\sigma}}^{\frac r2} \lvert g^{\prime}(t)\rvert^{2} \mathrm{d}t\right)^{\frac12}\\
		&\le \left(4\int_{t_{\sigma}}^{\frac r2}\lvert g^{\prime}(t)\rvert^{2} \mathrm{d}t\right)^{\frac12} \left(\int_{t_{\sigma}}^{\frac r2} \lvert g^{\prime}(t)\rvert^{2} \mathrm{d}t\right)^{\frac12} = 2 \int_{t_{\sigma}}^{\frac r2} \lvert g^{\prime}(t)\rvert^{2} \mathrm{d}t.
	\end{split}
\end{equation}
Combining \eqref{eq:refined_ineq_sharp_g1}, \eqref{eq:refined_ineq_sharp_g2}, and \eqref{eq:refined_ineq_sharp_g3} gives
\eqref{eq:refined_ineq_sharp_g}.

In view of \eqref{eq:refined_ineq_sharp_V} and \eqref{eq:refined_ineq_sharp_g}, one has
\begin{equation*}
V(r)\ge \frac{1-\sigma}{4} \left(\int_{a}^{\frac r2}\frac{g(t)g^{\prime}(t)}{t}\mathrm{d}t - C_{\sigma}\right).
\end{equation*}
Fix $\varepsilon\in(0,1)$. The above lower bound for $V(r)$ together with Lemma \ref{lem:refined_differential_ineq} yields 
\begin{equation*}\begin{split}
&(2-\varepsilon) \left(\frac{1-\sigma}{4} \int_{a}^{\frac r2} \frac{g(t)g^{\prime}(t)}{t}\mathrm{d}t-\frac{1-\sigma}{4}C_{\sigma} + g(r)g^{\prime}(r)\log\frac{r}{g(r)}\right)\\ 
\le&\ C + \int_{a}^{\frac r2} \frac{g(t)g^{\prime}(t)}{t}\mathrm{d}t\qquad \text{for } r > \max\{R_{\varepsilon},\, 2t_{\sigma}\}. 
\end{split}\end{equation*}
Choose $\sigma = \varepsilon^2/2$. Rearranging the preceding inequality, dividing through by the coefficient of the integral term, and absorbing constants into $C_\varepsilon$, we obtain
\begin{equation*}
	(4-4\varepsilon)g(r)g^{\prime}(r)\log\frac{r}{g(r)} \le C_{\varepsilon} + \int_{a}^{\frac r2} \frac{g(t)g^{\prime}(t)}{t}\mathrm{d}t\qquad \text{for } r > R_{\varepsilon}
\end{equation*}
with a possibly larger $R_{\varepsilon}$. Hence the proof of the corollary is finished.
\end{proof}

We now derive the refined growth estimate for $g$.

\begin{proposition}\label{prop:g_refined_growth}
Assume that $g$ satisfies  \eqref{eq:condition_g_sublinear} and  \eqref{eq:concave_fb}. Then
\begin{equation*}
	\limsup_{r \to \infty} \frac{g(r)}{r^{1/2}(\log r)^{\delta}} < \infty\qquad \text{for every } \delta > -\frac{1}{4}.
\end{equation*}
\end{proposition}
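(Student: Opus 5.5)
We argue by contradiction, as in Proposition \ref{prop:r_root_growth}, but feed the sharper inequality of Corollary \ref{coro:refined_ineq_sharp} into the maximization argument. If $g$ is bounded there is nothing to prove, so we assume \eqref{eq:condition_g_unbounded}. Since the statement for some $\delta$ implies it for every larger $\delta$, it suffices to treat $\delta\in(-1/4,0)$. Suppose then that $\limsup_{r\to\infty} g(r)/(r^{1/2}(\log r)^{\delta})=\infty$.

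\textbf{Heuristic.} Formally $g^2\sim r(\log r)^{2\delta}$ gives $gg'\sim\frac12(\log r)^{2\delta}$ and $\log(r/g)\sim\frac12\log r$. The left side of Corollary \ref{coro:refined_ineq_sharp} is then $\sim(1-\varepsilon/4)(\log r)^{2\delta+1}$. The right side is $\sim(\log r)^{2\delta+1}/(2(2\delta+1))$. These are compatible only if $2(2\delta+1)\le 1+O(\varepsilon)$, i.e.\ $\delta\le -1/4$.

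\textbf{Setup.} Put $\psi(r):=r(\log r)^{2\delta}$ for $r\ge b$, with $b>\max\{a,e\}$ fixed. For $i>b$ let $r_i\in[b,i]$ maximize $g^2/\psi$ on $[b,i]$, and let $K_i$ be the maximum. Then $K_i\to\infty$ and $r_i\to\infty$. As in Step 1 of Proposition \ref{prop:r_root_growth}, $g^2-K_i\psi\le0$ on $[b,i]$ with equality at $r_i$, so
\[
2g(r_i)g'(r_i)\ge K_i\psi'(r_i)=K_i(\log r_i)^{2\delta}\Bigl(1+\tfrac{2\delta}{\log r_i}\Bigr).
\]

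\textbf{Lower bound for the logarithm.} This is the step where a priori $K_i$ could spoil things. By Proposition \ref{prop:r_root_growth}, for any fixed $\alpha\in(1/2,1)$ we have $g(r_i)\le Cr_i^{\alpha}$. Hence
\[
\log\frac{r_i}{g(r_i)}\ge(1-\alpha)\log r_i-C,
\]
independently of $K_i$. The left side of Corollary \ref{coro:refined_ineq_sharp} at $r=r_i$ is therefore at least
\[
\tfrac12(4-\varepsilon)(1-\alpha)(1-o(1))\,K_i(\log r_i)^{2\delta+1}.
\]

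\textbf{Upper bound for the right side.} Integrate by parts as in Step 3 of Proposition \ref{prop:r_root_growth}, using $g^2\le K_i\psi$ on $[b,r_i]$:
\[
\int_a^{r_i/2}\frac{gg'}{t}\,\mathrm dt\le C+\frac{g(r_i/2)^2}{r_i}+\int_b^{r_i/2}\frac{g^2}{2t^2}\,\mathrm dt\le C+K_i(\log r_i)^{2\delta}+\frac{K_i(\log r_i)^{2\delta+1}}{2(2\delta+1)}.
\]
Here $2\delta+1>0$ is used to integrate $(\log t)^{2\delta}/t$, and the constant absorbs $[a,b]$.

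\textbf{Conclusion.} Divide both bounds by $K_i(\log r_i)^{2\delta+1}\to\infty$ and let $i\to\infty$. This yields
\[
\tfrac12(4-\varepsilon)(1-\alpha)\le\frac{1}{2(2\delta+1)}.
\]
Since $2(2\delta+1)>1$ for $\delta>-1/4$, we can first choose $\alpha$ close to $1/2$ and then $\varepsilon$ small so that $(4-\varepsilon)(1-\alpha)(2\delta+1)>1$. This contradicts the displayed inequality and proves the proposition.

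The main obstacle is precisely that the loss in $\log(r_i/g(r_i))$ must be controlled uniformly in $K_i$. The crude exponent from Proposition \ref{prop:r_root_growth} is what makes this work, because it costs only a factor $2(1-\alpha)$, which tends to $1$ as $\alpha\downarrow1/2$.
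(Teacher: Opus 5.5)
Your proposal is correct and follows essentially the same argument as the paper. Both maximize $g^2/(r(\log r)^{2\delta})$, use Proposition~\ref{prop:r_root_growth} to get $\log(r_i/g(r_i))\ge(1-\alpha)\log r_i-C$ uniformly in $K_i$, bound the right-hand side of Corollary~\ref{coro:refined_ineq_sharp} by integration by parts, and arrive at $(4-\varepsilon)(1-\alpha)\le 1/(2\delta+1)$; your extra conveniences (starting at $b>\max\{a,e\}$ and restricting to $\delta<0$) are harmless.
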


\begin{proof}
Similar to Proposition \ref{prop:r_root_growth}, without loss of generality we assume that $g$ satisfies \eqref{eq:condition_g_unbounded}, and then argue by contradiction. Suppose that there exists a $\delta > -1/4$ such that	
\begin{equation}\label{eq:g_refined_growth_1}
\limsup_{r \to \infty} \frac{g(r)}{r^{1/2}(\log r)^{\delta}} = \infty.
\end{equation}
We divide the rest of the proof into four steps.

\textit{Step 1. Set up.} For each $i\in \mathbb{N}$, let $r_{i}\in [a,i]$ be such that	\begin{equation}\label{eq:refined_Ki_def}
\frac{g(r_{i})^{2} }{r_{i}(\log r_{i})^{2\delta}}	 = \max_{r\in [a,i]} \frac{g(r)^{2}}{r(\log r)^{2\delta}}=:K_i.
\end{equation}
Then $K_{i},\, r_i\to\infty$ as $i \to \infty$ by \eqref{eq:g_refined_growth_1}. 
Consider 
\begin{equation*}
	\Phi_{i}(r) := g(r)^{2} - K_{i}r(\log r)^{2\delta}.
\end{equation*}
It follows from \eqref{eq:refined_Ki_def} that $\Phi_{i} \le 0$ on $[a,i]$ and $\Phi_{i}(r_i)=0$. Hence 		\begin{equation}\label{eq:g_refined_growth_2}
	\Phi_{i}^{\prime}(r_{i}) = 2g(r_{i})g^{\prime}(r_{i}) - K_{i}(\log r_{i})^{2\delta} \left(1 + \frac{2\delta}{\log r_{i}}\right) \ge 0.
\end{equation}
where $r_i>a$ for large $i$.

\textit{Step 2. Upper bound for  $\int_{a}^{r_i/2} \frac{g(t)g^{\prime}(t)}{t} \mathrm{d}t$.} 
Integrating by parts gives
\begin{equation*}
	\int_{a}^{\frac{r_i}2} \frac{g(t)g^{\prime}(t)}{t} \mathrm{d}t 
	= \frac{g(r_{i}/2)^{2}}{r_{i}} - \frac{g(a)^{2}}{2a} + \int_{a}^{\frac{r_i}2} \frac{g(t)^{2}}{2t^{2}} \mathrm{d}t.
\end{equation*}
Consequently, using \eqref{eq:refined_Ki_def} we get that for large $i$, 
\begin{equation}\label{eq:g_refined_growth_3}
	\begin{split}
		\int_{a}^{\frac{r_i}2} \frac{g(t)g^{\prime}(t)}{t} \mathrm{d}t 
		&\le \frac{K_i}2\left(\log\frac{r_i}2\right)^{2\delta}+ \int_{a}^{\frac{r_i}2} \frac{K_{i}(\log t)^{2\delta}}{2t} \mathrm{d}t\\
		&\le \frac{K_i}2\left(\log\frac{r_i}2\right)^{2\delta}+\frac{K_i}{2(2\delta+1)}\left(\log\frac{r_i}2\right)^{2\delta+1},
	\end{split}
\end{equation}
where $(\log a)^{2\delta+1}>0$ for $a>1$ has been used in the second inequality.

\textit{Step 3. Lower bound for  $g(r_i)g^{\prime}(r_i)\log\frac{r_i}{g(r_i)}$.}
In view of Proposition \ref{prop:r_root_growth}, for each $\alpha \in (1/2,1)$, there exists an $r_{\alpha}>a$ such that 
$\log g(r)\le \alpha\log r$ for $r > r_{\alpha}$. Hence for large $i$,  one has
\begin{equation*}
	\log\frac{r_{i}}{g(r_{i})} \ge (1-\alpha) \log r_{i}.
\end{equation*}
This, together with \eqref{eq:g_refined_growth_2}, yields 
\begin{equation}\label{eq:g_refined_growth_4}
	g(r_{i})g^{\prime}(r_{i})\log\frac{r_{i}}{g(r_{i})} \ge (1-\alpha)\frac{K_{i}}{2}(\log r_{i})^{2\delta+1} \left(1 + \frac{2\delta}{\log r_{i}}\right).
\end{equation}

\textit{Step 4. Contradiction.} For each $\varepsilon \in (0,1)$ and each $\alpha \in (1/2,1)$, applying \eqref{eq:g_refined_growth_4}, Corollary \ref{coro:refined_ineq_sharp}, and \eqref{eq:g_refined_growth_3} gives that for sufficiently large $i$,
\begin{equation*}
	\begin{split}
		(4-\varepsilon)(1-\alpha)\frac{K_{i}}{2} (\log r_{i})^{2\delta+1}\left(1 + \frac{2\delta}{\log r_{i}}\right) 
		&\le (4-\varepsilon)g(r_{i})g^{\prime}(r_{i})\log\frac{r_{i}}{g(r_{i})}\\
		&\le C_{\varepsilon} + \int_{a}^{\frac {r_i}2} \frac{g(t)g^{\prime}(t)}{t} \mathrm{d}t\\
		&\le C_{\varepsilon} + \frac{K_{i}}{2} \left(\log\frac{r_{i}}2\right)^{2\delta+1}\left(\frac{1}{\log(r_{i}/2)} + \frac{1}{2\delta + 1}\right).
	\end{split}
\end{equation*}
Dividing both sides by $K_i(\log r_i)^{2\delta+1}/2$ and taking the limit $i\to \infty$ yield that for all $\varepsilon\in(0,1)$ and $\alpha\in(1/2,1)$, 
\begin{equation*}
	(4-\varepsilon)(1-\alpha) \le \frac{1}{2\delta + 1}.
\end{equation*}
Letting $\varepsilon \to 0$ and $\alpha \to 1/2$ yields $2 \le 1/(2\delta + 1)$. This means $\delta\le -1/4$ and leads to a contradiction. Hence the proof of the proposition is completed..
\end{proof}

\section{Lower bound for the growth of the free boundary}\label{sec:g_lower_estimate}

This section establishes the lower bound for the growth of the free boundary. As expected (as bounded solutions of the Bernoulli problem without fixed boundary do exist) the estimate from below is considerably more subtle. This will be reflected in the
analysis of the perturbed integral equality derived in the
first subsection.  

\subsection{Derivation of the perturbed integral equality}\label{subsec:sharp_estimate}

In this subsection, we establish expansion formulas for $H_{g}(r)$, $J_g(r)$ and $T_{g}(r)$ as $r\to\infty$,
and we derive the perturbed integral equality Proposition \ref{prop:lower_bound_log1} mentioned in the introduction.

It follows 
from Proposition \ref{prop:g_refined_growth} that 
\begin{equation}\label{eq:sharp_estimate_g}
g(t)\le Ct^{\frac12}(\log t)^{-\frac18}\le Ct^{\frac12} \qquad \text{for } t > a.
\end{equation}
Consequently, using the concavity of $g$ in \eqref{eq:concave_fb} yields
\begin{equation}\label{eq:sharp_estimate_dg}
0\le g^{\prime}(t) \le \frac{g(t)-g(a)}{t-a}\le \frac{2g(t)}{t} \le Ct^{-\frac12} \qquad \text{for } t > 2a.
\end{equation}
Furthermore, the first inequality in \eqref{eq:Tg_lower_Vt} together with \eqref{eq:sharp_estimate_dg} gives   
\begin{equation}\label{eq:sharp_estimate_V}
0\le V(t)\le V(2a)+\frac12\int_{2a}^{t}\lvert g^{\prime}(\tau)\rvert^2\mathrm{d}\tau \le C(1+\log t) 
\qquad \text{for } t >2 a.
\end{equation}

With the above estimates in hand, let us expand $H_g(r)$. 

\begin{lemma}\label{lem:Hg_sharp_estimate}
Assume that $g$ satisfies \eqref{eq:condition_g_sublinear} and \eqref{eq:concave_fb}. Let $H_g$ be defined in \eqref{eq:Hg_def}. 
There exists a fixed constant $C_{H}$ such that
\begin{equation}\label{est_lemma6.1}
H_{g}(r) = C_{H} + o(1) - \frac{1}{2\pi} \int_{0}^{2\pi} \int_{a}^{\frac r2} \frac{g(t)g^{\prime}(t)}{\lvert X\rvert} \mathrm{d}t\mathrm{d}\lambda \qquad\text{as } r \to \infty.
\end{equation}
\end{lemma}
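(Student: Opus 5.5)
We need to show that $H_g(r)$, minus the explicit term $-\frac{1}{2\pi}\int\int g g'/|X|$, converges as $r\to\infty$. Write
\[
H_g(r)+\frac{1}{2\pi}\int_0^{2\pi}\int_a^{r/2}\frac{g(t)g'(t)}{\lvert X\rvert}\,\mathrm dt\,\mathrm d\lambda
=\frac{1}{2\pi}\int_0^{2\pi}\int_a^{r/2}\Big(g(t)V(t)F_g(t,\lambda;r)+\frac{g(t)g'(t)}{\lvert X-Y\rvert}\Big)\mathrm dt\,\mathrm d\lambda .
\]
The plan is to show that the right-hand side converges to
\[
C_H:=\frac{1}{2\pi}\int_0^{2\pi}\int_a^\infty g(t)V(t)\,\frac{tg'(t)-g(t)}{\lvert X\rvert^3}\,\mathrm dt\,\mathrm d\lambda
=\int_a^\infty g(t)V(t)\,\frac{tg'(t)-g(t)}{\lvert X\rvert^3}\,\mathrm dt .
\]
This integral converges absolutely: by \eqref{eq:sharp_estimate_g}–\eqref{eq:sharp_estimate_V},
\[
\lvert g V (tg'-g)\rvert/t^3\le C t^{1/2}(1+\log t)\,t^{1/2}/t^3 ,
\]
which is integrable. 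It then suffices to show that all remaining pieces are $o(1)$.

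\textbf{Pieces to control.} For $t\in(a,r/2)$ we have $\lvert X-Y\rvert\ge r-t\ge r/2$. We split $F_g$ via \eqref{eq:Fg_def} into three parts:
\[
\frac{rg'(t)-g(r)\cos\lambda}{\lvert X-Y\rvert^3},\qquad \frac{tg'(t)-g(t)}{\lvert X\rvert^3},\qquad -\frac{tg'(t)-g(t)}{\lvert X-Y\rvert^3}.
\]
The middle part produces $C_H$ minus its tail over $(r/2,\infty)$, and that tail is $o(1)$. So we must bound three integrals over $(a,r/2)$, using $\lvert X-Y\rvert\ge r/2$, $g\le Ct^{1/2}$, $g'\le Ct^{-1/2}$ and $V\le C(1+\log t)$.

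\textbf{Step 1.} For the first part,
\[
\int_a^{r/2} g V\,\frac{r g'+g(r)}{(r/2)^3}\,\mathrm dt\le C r^{-3}\int_a^{r/2} t^{1/2}\log t\,\big(r t^{-1/2}+r^{1/2}\big)\,\mathrm dt\le Cr^{-1}\log r .
\]
Near $t=a$ we can have $g'(a)=\infty$, but $g'$ is integrable there since $g$ is continuous and monotone, so we treat $(a,2a)$ separately with the bound $\int_a^{2a} g'<\infty$.

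\textbf{Step 2.} For the third part,
\[
\int_a^{r/2} gV\,\lvert tg'-g\rvert\,r^{-3}\,\mathrm dt\le Cr^{-3}\int_a^{r/2} t\log t\,\mathrm dt=O(r^{-1}\log r).
\]

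\textbf{Step 3.} The kernel term gives
\[
\int_a^{r/2}\frac{gg'}{\lvert X-Y\rvert}\,\mathrm dt\le \frac{2}{r}\int_a^{r/2} gg'\,\mathrm dt\le \frac{g(r/2)^2}{r}\le C(\log r)^{-1/4}=o(1),
\]
using $g(t)\le Ct^{1/2}(\log t)^{-1/8}$.

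Combining these, the difference equals $C_H+o(1)$, which is \eqref{est_lemma6.1}.

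\textbf{Main obstacle.} The main obstacle is not any single estimate but the bookkeeping near $t=a$, where $g'$ may blow up. There I would use only $\int_a^{2a}g'\,\mathrm dt<\infty$ together with the boundedness of $g$ and $V$ on $[a,2a]$. Even so, each term on $(a,2a)$ carries a factor $r^{-2}$ or better, or converges by dominated convergence as $r\to\infty$.
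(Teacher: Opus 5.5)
Your proposal is correct and follows the paper's proof essentially step for step. It uses the same constant $C_H$, the same bound $\lvert X-Y\rvert\ge r/2$ on $(a,r/2)$ to get $O(r^{-1}\log r)$ for the $\lvert X-Y\rvert^{-3}$ pieces, and the same estimate $g(r/2)^2/r\le C(\log r)^{-1/4}$ for the kernel term. The paper simply merges your first and third parts of $F_g$ into the single numerator $g(t)+(r-t)g'(t)-g(r)\cos\lambda$. Your separate treatment of $(a,2a)$, where $g'$ may be unbounded, is extra care that the paper leaves implicit.
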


\begin{proof}
It follows from the definitions of $H_g$ in \eqref{eq:Hg_def} and $F_g$ in \eqref{eq:Fg_def} that 
\begin{equation}\label{eq:Hg_rewrite}\begin{split}
H_{g}(r) =&\ \frac{1}{2\pi}\int_{0}^{2\pi} \int_{a}^{\frac r2} g(t)V(t)\frac{tg^{\prime}(t)-g(t)}{\lvert X\rvert^{3}}\mathrm{d}t\mathrm{d}\lambda\\
&\ +\frac{1}{2\pi} \int_{0}^{2\pi}\int_{a}^{\frac r2} g(t)V(t)\frac{g(t)+(r-t)g^{\prime}(t)-g(r)\cos\lambda}{\lvert X-Y\rvert^{3}}\mathrm{d}t\mathrm{d}\lambda\\
&\ +\frac{1}{2\pi} \int_{0}^{2\pi}\int_{a}^{\frac r2} \frac{g(t)g^{\prime}(t)}{\lvert X-Y\rvert} \mathrm{d}t\mathrm{d}\lambda
-\frac{1}{2\pi} \int_{0}^{2\pi} \int_{a}^{\frac r2} \frac{g(t)g^{\prime}(t)}{\lvert X\rvert} \mathrm{d}t\mathrm{d}\lambda,
\end{split}\end{equation}
where $V(t)$ is given by \eqref{eq:Vt_def}.

The estimates \eqref{eq:sharp_estimate_g}-\eqref{eq:sharp_estimate_V} together with $\lvert X\rvert\ge t$ lead to
\begin{equation}\label{eq:Hg_gvX}
\left| g(t)V(t)\frac{tg^{\prime}(t)-g(t)}{\lvert X\rvert^{3}}\right| \le  \frac{C(1+\log t)}{t^{2}}
\qquad \text{for } t > 2a. 
\end{equation}
Note that $(1+\log t)/t^2\in L^{1}(2a,\infty)$. Consequently, it holds that 
\begin{equation}\label{eq:Hg_sharp_estimate_1}
\begin{split}
\frac{1}{2\pi}\int_{0}^{2\pi}\int_{a}^{\frac r2} g(t)V(t)\frac{tg^{\prime}(t)-g(t)}{\lvert X\rvert^{3}}\mathrm{d}t\mathrm{d}\lambda= C_H + o(1) \qquad\text{as } r \to \infty,
\end{split}\end{equation}
where
\begin{equation*}
C_{H} :=\frac{1}{2\pi}\int_{0}^{2\pi}\int_{a}^{\infty} g(t)V(t)\frac{tg^{\prime}(t)-g(t)}{\lvert X\rvert^{3}}\mathrm{d}t\mathrm{d}\lambda.
\end{equation*} 
To estimate the second term of $H_g(r)$ in \eqref{eq:Hg_rewrite}, using \eqref{eq:sharp_estimate_g} and \eqref{eq:sharp_estimate_V} along with $\lvert X-Y\rvert\ge\lvert t-r\rvert\ge r/2$ for $t\le r/2$, we get	\begin{equation}\label{eq:Hg_sharp_estimate_2}
\begin{split}
&\ \left| \frac{1}{2\pi} \int_{0}^{2\pi} \int_{a}^{\frac r2} g(t)V(t)\frac{g(t)+(r-t)g^{\prime}(t)-g(r)\cos\lambda}{\lvert X-Y\rvert^{3}}\mathrm{d}t\mathrm{d}\lambda\right| \\
\le & \ C \int_{a}^{\frac r2} t^{1/2}(1+\log t) \frac{t^{1/2}+rg^{\prime}(t) + r^{1/2}}{r^{3}}\mathrm{d}t
\le \frac{C(1+\log r)}{r}= o(1) 
\qquad\text{as } r \to \infty.
\end{split}
\end{equation}
In addition, by the first inequality in  \eqref{eq:sharp_estimate_g}, one has
	\begin{equation}\label{eq:Hg_sharp_estimate_3}
		\begin{split}
		0 \le \frac{1}{2\pi} \int_{0}^{2\pi} \int_{a}^{\frac r2} \frac{g(t)g^{\prime}(t)}{\lvert X-Y\rvert}\mathrm{d}t\mathrm{d}\lambda 
        &\le C \int_{a}^{\frac r2} \frac{g(t)g^{\prime}(t)}{r}\mathrm{d}t
		= \frac{C}{2r} \left(g\left(\frac r2\right)^{2}-g(a)^{2}\right)\\
        &\le C(\log r)^{-\frac 14}
        = o(1) \qquad\text{as } r \to \infty.
		\end{split}
	\end{equation}
Combining \eqref{eq:Hg_rewrite} with \eqref{eq:Hg_sharp_estimate_1}-\eqref{eq:Hg_sharp_estimate_3} yields the estimate \eqref{est_lemma6.1}.
\end{proof}

Next, we prove the improved estimate for $T_g(r)$.

\begin{lemma}\label{lem:Tg_sharp_estimate}
Assume that $g$ satisfies \eqref{eq:condition_g_sublinear} and \eqref{eq:concave_fb}. Let $T_g$ be defined in \eqref{eq:Tg_def}. Then one has
\begin{equation*}
T_{g}(r) = o(1) \qquad\text{as } r \to \infty.
\end{equation*}
\end{lemma}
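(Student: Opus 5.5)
We need to show $T_g(r)=o(1)$. Lemma \ref{lem:Tg_refine_lower} already gives $T_g(r)\ge -\varepsilon$ for large $r$. It therefore suffices to bound $T_g$ from above by $o(1)$, and we estimate the two pieces of the integrand on $(2r,\infty)$ separately. Throughout we use the sharpened a priori bounds \eqref{eq:sharp_estimate_g}--\eqref{eq:sharp_estimate_V}: $g(t)\le Ct^{1/2}$, $0\le g'(t)\le Ct^{-1/2}$ and $0\le V(t)\le C(1+\log t)$.

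For the $V F_g$ part, take $t>2r$. There $\lvert X-Y\rvert\ge t-r\ge t/2$ and $\lvert X\rvert\ge t$, so by \eqref{eq:Fg_def}
\begin{equation*}
\lvert F_g(t,\lambda;r)\rvert\le \frac{8(rg'(t)+g(r))}{t^3}+\lvert tg'(t)-g(t)\rvert\left(\frac{1}{\lvert X\rvert^{3}}+\frac{1}{\lvert X-Y\rvert^{3}}\right)\le C\frac{r t^{-1/2}+r^{1/2}}{t^3}+C\frac{t^{1/2}}{t^3}.
\end{equation*}
Multiplying by $g(t)V(t)\le Ct^{1/2}(1+\log t)$ and integrating over $(2r,\infty)$ gives a bound of order $(1+\log r)(r\cdot r^{-2}+r^{1/2}r^{-3/2}+r^{-1})=O(r^{-1}\log r)=o(1)$. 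A finer estimate of the difference term $\frac{1}{\lvert X\rvert^3}-\frac{1}{\lvert X-Y\rvert^3}$ is not even needed here.

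For the $gg'$ part, expand $\frac1{\lvert X-Y\rvert}-\frac1{\lvert X\rvert}$ as in Step 2 of Lemma \ref{lem:Tg_lower_bound}, writing
\begin{equation*}
z=\frac{2X\cdot Y-\lvert Y\rvert^2}{\lvert X\rvert^2}\in(0,1).
\end{equation*}
For $t>2r$ we have $z\le C r/t\le C/2<1$, with the constant strictly below $1$ for large $r$. Then $(1-z)^{-1/2}-1\le Cz$, so
\begin{equation*}
0\le \frac1{\lvert X-Y\rvert}-\frac1{\lvert X\rvert}\le \frac{Cr}{t^2}.
\end{equation*}
Consequently the $gg'$ contribution is at most $Cr\int_{2r}^\infty g(t)g'(t)t^{-2}\,dt$. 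Using $g(t)g'(t)\le 2g(t)^2/t$ from \eqref{eq:sharp_estimate_dg}, together with the logarithmic gain $g(t)\le Ct^{1/2}(\log t)^{-1/8}$ in \eqref{eq:sharp_estimate_g}, this is bounded by
\begin{equation*}
Cr\int_{2r}^\infty \frac{(\log t)^{-1/4}}{t^{2}}\,dt\le C(\log r)^{-1/4}=o(1).
\end{equation*}

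The main point of care is exactly this $gg'$ term. The crude bound $g\le Ct^{1/2}$ only gives $O(1)$, so the estimate must use the $(\log t)^{-1/8}$ factor from Proposition \ref{prop:g_refined_growth}, much as in \eqref{eq:Hg_sharp_estimate_3}. Alternatively, one can integrate by parts, writing $gg'=\tfrac12(g^2)'$, and note that $g(t)^2/t\to0$. Indeed, Proposition \ref{prop:g_refined_growth} with any $\delta\in(-1/4,0)$ gives $g(t)^2/t\le C(\log t)^{2\delta}\to0$. Combining the upper bound with Lemma \ref{lem:Tg_refine_lower} yields $T_g(r)=o(1)$.
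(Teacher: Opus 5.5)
Your proof is correct and is essentially the paper's argument: absolute bounds on $(2r,\infty)$ from $\lvert X-Y\rvert\ge t/2$, $\lvert X\rvert\ge t$ and \eqref{eq:sharp_estimate_g}--\eqref{eq:sharp_estimate_V}, the kernel bound $Cr/t^2$, and a logarithmic gain in $gg'$. The paper gets the kernel bound more simply from $\bigl|\lvert X\rvert-\lvert X-Y\rvert\bigr|\le\lvert Y\rvert$, which also avoids your step $z\le Cr/t\le C/2<1$; there the natural constant is $C\approx 2$, and what actually holds is $z\le 3/4+o(1)$, coming from the $-\lvert Y\rvert^2$ term. Your appeal to Lemma \ref{lem:Tg_refine_lower} is superfluous, and formally unavailable since that lemma assumes \eqref{eq:condition_g_unbounded}, because your own estimates are already two-sided: the $VF_g$ bound is in absolute value and the $gg'$ contribution is nonnegative.
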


\begin{proof}
Let $r>a$. Note that $\lvert X\rvert\ge t$, and $\lvert X-Y\rvert\ge t-r>t/2$ for $t > 2r$. Using the definition of $F_g$ (see \eqref{eq:Fg_def}) together with the estimates for $g$, $g^{\prime}$, and $V$ in \eqref{eq:sharp_estimate_g}-\eqref{eq:sharp_estimate_V} yields
\begin{equation}\label{eq:Tg_sharp_estimate_1}
\begin{split}
\left|g(t)V(t)F_{g}(t,\lambda;r)\right|
&=\left|g(t)V(t) \left(\frac{g(t)+(r-t)g^{\prime}(t) - g(r)\cos\lambda}{\lvert X-Y\rvert^{3}} + \frac{tg^{\prime}(t) - g(t)}{\lvert X\rvert^{3}}\right)\right|\\
&\le Ct^{\frac 12}(1+\log t) \left(\frac{2t^{1/2}+r^{1/2}}{t^{3}} +\frac{2t^{1/2}}{t^{3}}\right)\\
&\le \frac{C(1+\log t)}{t^{2}}  \qquad \text{for } t > 2r.
\end{split}
\end{equation}
Moreover, by \eqref{eq:sharp_estimate_g}, one has
\begin{equation*}
\left|\frac{1}{\lvert X-Y\rvert} - \frac{1}{\lvert X\rvert}\right| \le \frac{\lvert Y\rvert}{\lvert X-Y\rvert \lvert X\rvert} \le \frac{2(r^{2}+g(r)^{2})^{1/2}}{t^2}\le \frac{Cr}{t^{2}}\qquad \text{for } t > 2r.
\end{equation*}
Also, applying \eqref{eq:sharp_estimate_dg} and the first inequality in \eqref{eq:sharp_estimate_g} gives  
\begin{equation}\label{eq:sharp_estimate_gdg}
    g(t)g^{\prime}(t)\le C(\log t)^{-\frac18}
    \qquad\text{for } t>2a.
\end{equation}
Hence, 
\begin{equation}\label{eq:Tg_sharp_estimate_2}
\begin{split}
\left|g(t)g^{\prime}(t)\left(\frac{1}{\lvert X-Y\rvert} - \frac{1}{\lvert X\rvert}\right)\right| 
\le C(\log t)^{-\frac18}\frac{r}{t^{2}} 
\qquad \text{for } t > 2r.
\end{split}
\end{equation}
Then it follows from the definition of $T_g$ in \eqref{eq:Tg_def}, \eqref{eq:Tg_sharp_estimate_1} and \eqref{eq:Tg_sharp_estimate_2} that
\begin{equation}\label{eq:Tg_sharp_estimate_3}
	\begin{split}
		\lvert T_{g}(r)\rvert &= \left|\frac{1}{2\pi} \int_{0}^{2\pi}\int_{2r}^{\infty} g(t)V(t)F_{g}(t,\lambda;r) + g(t)g^{\prime}(t)\left(\frac{1}{\lvert X-Y\rvert} - \frac{1}{\lvert X\rvert}\right) \mathrm{d}t\mathrm{d}\lambda\right|\\
		&\le C\int_{2r}^{\infty} \frac{1 + \log t}{t^{2}} + (\log t)^{-\frac18}\frac{r}{t^{2}} \mathrm{d}t \to 0 \qquad \text{as } r \to \infty.
	\end{split}
\end{equation}
This finishes the proof of the lemma.
\end{proof}

We now establish two lemmas that handle certain terms arising from the expansion of $J_{g}(r)$. In Corollary \ref{coro:Jg_sharp_estimate} we will conclude the expansion formula of $J_{g}(r)$. 
 
\begin{lemma}\label{lem:Jg_sharp_estimate_1}
Assume that $g$ satisfies \eqref{eq:condition_g_sublinear} and \eqref{eq:concave_fb}. Let $V$ be as in \eqref{eq:Vt_def}. Then
\begin{equation}\label{eq:Jg_sharp_estimate_1}
\frac{1}{2\pi} \int_{0}^{2\pi} \int_{\frac r2}^{2r} g(t)V(t)\frac{g(r)(1-\cos\lambda)}{\lvert X-Y\rvert^{3}}\mathrm{d}t\mathrm{d}\lambda = V(r) + o(1) \qquad\text{as } r \to \infty.
\end{equation}
\end{lemma}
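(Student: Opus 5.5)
The lower bound $\ge V(r)-o(1)$ is already essentially contained in the proof of Lemma~\ref{lem:Jg_refine_gvF}, because the localization Steps~2--3 there give
\[
\frac{1}{2\pi}\int_0^{2\pi}\int_{\frac r2}^{2r}\cdots \ge (1-\varepsilon)V(r)
\]
for $r>R_\varepsilon$. Now, however, \eqref{eq:sharp_estimate_V} gives $V(r)\le C(1+\log r)$, so $\varepsilon V(r)$ is not $o(1)$. I therefore have to redo the argument quantitatively, with explicit rates. Under the new bounds \eqref{eq:sharp_estimate_g}--\eqref{eq:sharp_estimate_dg} all relative errors are powers of $r$ or of $g(r)/r$, so they beat the logarithm. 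The plan is to prove matching upper and lower bounds, each with error $o(1)$.

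\textbf{Step 1 (localization).} I split $(r/2,2r)$ into the inner window $|t-r|<Mg(r)$ and its complement, choosing $M=M(r)\to\infty$ with $Mg(r)=o(r)$ and the relevant errors small; for instance $M=r^{\beta}$ with $\beta$ small.

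On the outer part I use $|X-Y|\ge|t-r|$. I bound $g(t)V(t)\le Cg(r)(1+\log r)$, using $g(2r)\le 3g(r)$ from \eqref{eq:g2r_r}. This gives the estimate
\[
C g(r)^2(1+\log r)\int_{|s|>Mg(r)}\frac{ds}{|s|^3}\le \frac{C(1+\log r)}{M^2},
\]
which is $o(1)$ as long as $M^2\gg\log r$. Note that the kernel is nonnegative, so for the lower bound I could simply drop the outer part; for the upper bound this estimate is needed.

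\textbf{Step 2 (inner window, approximate the coefficient).} For $|t-r|<Mg(r)$, I use \eqref{eq:sharp_estimate_dg} to get
\[
|g(t)-g(r)|\le g'(r/2)\,Mg(r)\le CMg(r)r^{-1/2},
\]
so $g(t)=g(r)(1+O(Mr^{-1/2}))$. Similarly
\[
|V(t)-V(r)|\le \tfrac12\int|g'|^2\le CMg(r)/r\le CMr^{-1/2}.
\]
The latter is an absolute error, and its integral against the kernel is bounded by a constant (the full integral of $g(r)^2(1-\cos\lambda)/|X-Y|^3$ is $O(1)$), so it contributes $o(1)$.

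Next, for $|X-Y|^2$ I use the two-sided comparison of \eqref{eq:X-Y_order}: it equals $\bigl((t-r)^2+2g(r)^2(1-\cos\lambda)\bigr)(1+O(g'(r/2)))$. The upper bound is already in \eqref{eq:X-Y_order}, and the lower bound is analogous, using $g(t)g(r)\ge g(r)^2(1-O(Mr^{-1/2}))$. Since the $O(g'(r/2))=O(r^{-1/2})$ relative error multiplies $V(r)=O(\log r)$, it is again $o(1)$.

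\textbf{Step 3 (explicit integral).} After substituting $t-r=g(r)u$, the main term becomes
\[
V(r)\,\frac1{2\pi}\int_0^{2\pi}\int_{-M}^{M}\frac{1-\cos\lambda}{(u^2+2(1-\cos\lambda))^{3/2}}\,du\,d\lambda.
\]
The inner integral equals $\sin\theta_{M,\lambda}$ by Step~3 of Lemma~\ref{lem:Jg_refine_gvF}, and $1-\sin\theta_{M,\lambda}\le C(1-\cos\lambda)/M^2$. Hence the defect from $1$ is $O(M^{-2})$, and multiplied by $V(r)$ it gives $O(\log r/M^2)=o(1)$.

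\textbf{Main obstacle.} Combining the steps yields $V(r)+o(1)$. The main difficulty is the bookkeeping: every multiplicative error must decay faster than $1/\log r$, because $V$ may grow logarithmically. The choice $M=r^{\beta}$ with $0<\beta<1/2$ (or even $M=\log r$) satisfies all the constraints, namely $M^2\gg\log r$ and $Mr^{-1/2}\log r\to0$.
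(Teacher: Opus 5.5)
Your proposal is correct and follows essentially the same route as the paper: a window $|t-r|<M(r)g(r)$ with $M(r)\to\infty$, outer parts bounded by $C\,V/M^2$, inner-window relative errors that are negative powers of $r$ and so beat $V(r)=O(\log r)$, and the explicit $u$-integral equal to $1-O(M^{-2})$. The only difference is the choice of $M$: the paper takes $M(r)=(r/g(r))^{1/3}$, whereas you take $M=r^{\beta}$ or $M=\log r$, and both choices satisfy all the constraints.
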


\begin{proof}
Set $M(r):= (r/g(r))^{1/3}$. It follows from $g(r)\le C r^{1/2}$ (see \eqref{eq:sharp_estimate_g}) that $M(r)\to\infty$ as $r\to\infty$, and
\begin{equation*}
M(r)g(r) = r^{\frac13}g(r)^{\frac23} \le Cr^{\frac23}< \frac{r}{2}\qquad \text{for large $r$.}
\end{equation*}
We now divide the integral in \eqref{eq:Jg_sharp_estimate_1} into three integrals over the intervals $[r/2,r-M(r)g(r)]$, $[r-M(r)g(r),r+M(r)g(r)]$, and $[r+M(r)g(r),2r]$ and expand those  integrals one by one.

\textit{Step 1. Estimate on the far regions $[r/2,r-M(r)g(r)]$ and $[r+M(r)g(r),2r]$.} Since $g(r)\le Cr^{1/2}$ and $V(r) \le C(1+\log r)$ (see \eqref{eq:sharp_estimate_g} and \eqref{eq:sharp_estimate_V}), we have
\begin{equation*}
\frac{V(r)}{M(r)^{2}}=V(r)\left(\frac{g(r)}r\right)^{\frac23}\le C (1+\log r)r^{-\frac13}= o(1) \qquad\text{as } r \to \infty.
\end{equation*}
Hence, using the monotonicity of $g$ and $V$  (see \eqref{eq:gV_increasing}) together with $\lvert X-Y\rvert\ge |t-r|$ gives
\begin{equation}\label{eq:Jg_sharp_estimate_11}
\begin{split}
0 &\le \frac{1}{2\pi} \int_{0}^{2\pi} \int_{\frac r2}^{r-M(r)g(r)} g(t)V(t)\frac{g(r)(1-\cos\lambda)}{\lvert X-Y\rvert^{3}}\mathrm{d}t\mathrm{d}\lambda\\ 
&\le \int_{\frac r2}^{r-M(r)g(r)} g(r)V(r)\frac{2g(r)}{\lvert t-r\rvert^{3}}\mathrm{d}t
\le 2g(r)^{2}V(r) \int_{M(r)g(r)}^{\infty} \frac{1}{\tau^{3}}\mathrm{d}\tau\\
&= \frac{V(r)}{M(r)^{2}}= o(1) 
\qquad\text{as } r \to \infty.
\end{split}
\end{equation}
Similarly, since $g(2r)\le 3g(r)$ for large $r$ (see \eqref{eq:g2r_r}) and $V(2r)\le C(1+\log(2r))$, one has
\begin{equation}\label{eq:Jg_sharp_estimate_12}
0 \le \frac{1}{2\pi} \int_{0}^{2\pi} \int_{r+M(r)g(r)}^{2r} g(t)V(t)\frac{g(r)(1-\cos\lambda)}{\lvert X-Y\rvert^{3}}\mathrm{d}t\mathrm{d}\lambda \le C\frac{V(2r)}{M(r)^{2}}=o(1) \qquad\text{as } r \to \infty.
\end{equation}
	
\textit{Step 2. Estimate on the middle region $[r-M(r)g(r),r+M(r)g(r)]$.} 
Let $t\in [r-M(r)g(r),r+M(r)g(r)]\subset[r/2,2r]$ with sufficiently large $r$. By the concavity of $g$ in \eqref{eq:concave_fb} and $g(r)\le Cr^{1/2}$ in \eqref{eq:sharp_estimate_g}, we have
\begin{equation}\label{eq:Jg_sharp_estimate_gtr}
\begin{split}
\lvert g(t)-g(r)\rvert 
\le 2M(r)g(r)g^{\prime}\left(\frac r2\right) 
\le CM(r)g(r)\frac{g(r)}{r} = C g(r)\left(\frac{g(r)}{r}\right)^{\frac23}\le Cr^{-\frac13}g(r).
\end{split}
\end{equation}
Moreover, it follows from the expression of $V$ in \eqref{eq:Vt_def} that
\begin{equation}\label{eq:Jg_sharp_estimate_Vtr}
\begin{split}
\lvert V(t)-V(r)\rvert&=\left\lvert\int_{r}^{t}\left(\sqrt{1 + \lvert g^{\prime}(\tau)\rvert^{2}} - 1\right)\mathrm{d}\tau\right\rvert 
\le \frac12\int_{r-M(r)g(r)}^{r+M(r)g(r)}\lvert g^{\prime}(\tau)\rvert^{2}\mathrm{d}\tau\\
&\le CM(r)g(r)\frac{g(r)^{2}}{r^{2}} 
= Cg(r)\left(\frac{g(r)}{r}\right)^{\frac53}\le Cr^{-\frac13}.
\end{split}
\end{equation}
Therefore, it holds that
\begin{equation}\label{eq:Jg_sharp_estimate_13}
g(t)V(t) = \left(1+O(r^{-1/3})\right)\left(V(r) + O(r^{-1/3})\right)g(r) \qquad\text{as } r\to\infty.
\end{equation}
On the other hand, by $g^{\prime}(r/2)\le Cr^{-1/2}$ (see \eqref{eq:sharp_estimate_dg}) and \eqref{eq:Jg_sharp_estimate_gtr}, one has 
	\begin{equation*}
		\begin{split}
		\lvert X-Y\rvert^{2} &= (t-r)^{2} + (g(t)-g(r))^{2} + 2g(t)g(r)(1-\cos\lambda)\\
		&= (t-r)^{2} + O(r^{-1})(t-r)^{2} + 2g(r)^{2}(1-\cos\lambda) \left(1 + O(r^{-1/3})\right)\\
        &=\left(1 + O(r^{-1/3})\right)\left((t-r)^{2} + 2g(r)^{2}(1-\cos\lambda)\right) 
        \qquad\text{as } r\to\infty.
		\end{split}
	\end{equation*}
	This implies 
	\begin{equation}\label{eq:Jg_sharp_estimate_X-Y}
		\frac{1}{\lvert X-Y\rvert^{3}} =\frac{1 + O(r^{-1/3}) }{\big((t-r)^{2}+2g(r)^{2}(1-\cos\lambda)\big)^{3/2}} 
        \qquad\text{as } r\to\infty.
	\end{equation}
Note that the estimates in \eqref{eq:Jg_sharp_estimate_13} and \eqref{eq:Jg_sharp_estimate_X-Y} hold uniformly for $t\in[r-M(r)g(r),r+M(r)g(r)]$ and $\lambda\in[0,2\pi)$ since the implicit constants in the $O(r^{-1/3})$-terms are independent of $t$ and $\lambda$. Therefore, applying \eqref{eq:Jg_sharp_estimate_13} and \eqref{eq:Jg_sharp_estimate_X-Y}, with the substitution $u=(t-r)/g(r)$, we obtain \begin{equation}\label{equation_7_sharp_estimates_for_H_g,J_g_and_T_g}
		\begin{split}
			& \ \frac{1}{2\pi} \int_{0}^{2\pi} \int_{r-M(r)g(r)}^{r+M(r)g(r)} g(t)V(t)\frac{g(r)(1-\cos\lambda)}{\lvert X-Y\rvert^{3}}\mathrm{d}t\mathrm{d}\lambda\\
			= & \  \frac{\left(1+O(r^{-1/3})\right)\left(V(r) + O(r^{-1/3})\right)}{2\pi}\int_{0}^{2\pi}\int_{r-M(r)g(r)}^{r+M(r)g(r)} \frac{g(r)^{2}(1-\cos\lambda)}{\big((t-r)^{2}+2g(r)^{2}(1-\cos\lambda)\big)^{3/2}}\mathrm{d}t\mathrm{d}\lambda\\
			= & \  \frac{\left(1+O(r^{-1/3})\right)\left(V(r) + O(r^{-1/3})\right)}{2\pi}\int_{0}^{2\pi}\int_{-M(r)}^{M(r)} \frac{1-\cos\lambda}{\big(u^{2}+2(1-\cos\lambda)\big)^{3/2}}\mathrm{d}u\mathrm{d}\lambda.
		\end{split}
	\end{equation}
       
        Recall that (see \eqref{eq:int_M_u})
	\begin{equation*}		\frac1{2\pi}\int_{0}^{2\pi}\int_{-\infty}^{\infty} \frac{1-\cos\lambda}{\big(u^{2}+2(1-\cos\lambda)\big)^{3/2}}\mathrm{d}u\mathrm{d}\lambda = 1.
	\end{equation*}
	Moreover, 
        \begin{equation*}
		\begin{split}
		0 &\le \frac1{2\pi}\int_{0}^{2\pi}\int_{\lvert u\rvert > M(r)} \frac{1-\cos\lambda}{\big(u^{2}+2(1-\cos\lambda)\big)^{3/2}}\mathrm{d}u\mathrm{d}\lambda\\
        &\le \int_{\lvert u\rvert > M(r)} \frac{2}{\lvert u\rvert^{3}}\mathrm{d}u = \frac{2}{M(r)^{2}}=2\left(\frac{g(r)}r\right)^{\frac23}\le Cr^{-\frac13}.
		\end{split}
	\end{equation*}
	Then it follows that 
	\begin{equation*}
		\frac{1}{2\pi}\int_{0}^{2\pi}\int_{-M(r)}^{M(r)} \frac{1-\cos\lambda}{\big(u^{2}+2(1-\cos\lambda)\big)^{3/2}}\mathrm{d}u\mathrm{d}\lambda = 1 + O(r^{-1/3}) \qquad\text{as } r\to\infty.
	\end{equation*}
	Substituting the above estimate into  \eqref{equation_7_sharp_estimates_for_H_g,J_g_and_T_g} and using $V(r) \le C(1+\log r)$ from  \eqref{eq:sharp_estimate_V}, we get
	\begin{equation}\label{eq:Jg_sharp_estimate_14}
		\begin{split}
		\frac{1}{2\pi}\int_{0}^{2\pi}\int_{r-M(r)g(r)}^{r+M(r)g(r)} g(t)V(t)\frac{g(r)(1-\cos\lambda)}{\lvert X-Y\rvert^{3}}\mathrm{d}t\mathrm{d}\lambda 
        &=\left(1 + O(r^{-1/3})\right) \left(V(r) + O(r^{-1/3})\right)\\
		&= V(r) + o(1).
		\end{split}
	\end{equation}
	Combining \eqref{eq:Jg_sharp_estimate_11},  \eqref{eq:Jg_sharp_estimate_12} and \eqref{eq:Jg_sharp_estimate_14} completes the proof of  this lemma.
\end{proof}

\begin{lemma}\label{lem:Jg_sharp_estimate_D}
Assume that $g$ satisfies \eqref{eq:condition_g_sublinear} and \eqref{eq:concave_fb}. Let $V$ be as in \eqref{eq:Vt_def}. Then
\begin{equation}\label{eq:Jg_sharp_estimate_D}
\frac{1}{2\pi} \int_{0}^{2\pi} \int_{\frac r2}^{2r} g(t)V(t)\frac{D(t;r)}{\lvert X-Y\rvert^{3}}\mathrm{d}t\mathrm{d}\lambda = o(1) \qquad\text{as } r \to \infty,
\end{equation}
where
\begin{equation}\label{eq:D_def}
D(t;r):= g(t)-g(r) +(r-t)g^{\prime}(t).
\end{equation}
\end{lemma}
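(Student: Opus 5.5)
Since $g$ is concave on $(a,\infty)$, we have $D(t;r)\ge 0$, and a Taylor-type bound gives
\[
0\le D(t;r)\le (r-t)\bigl(g'(t)-g'(r)\bigr)\ \text{ for } t<r,
\qquad
0\le D(t;r)\le (t-r)\bigl(g'(r)-g'(t)\bigr)\ \text{ for } t>r .
\]
In both cases $D(t;r)\le \lvert t-r\rvert\,\lvert g'(t)-g'(r)\rvert$. Crudely, using \eqref{eq:sharp_estimate_dg}, this gives $D\le C\lvert t-r\rvert r^{-1/2}$ on $[r/2,2r]$. The integrand is therefore nonnegative, and it suffices to prove an upper bound tending to $0$.

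By \eqref{eq:sharp_estimate_g}, \eqref{eq:sharp_estimate_V} and \eqref{eq:g2r_r}, the factor $g(t)V(t)$ is bounded by $Cr^{1/2}(1+\log r)$ on $[r/2,2r]$. For the kernel we use $\lvert X-Y\rvert^{2}\ge (t-r)^{2}+2g(t)g(r)(1-\cos\lambda)$. Averaging in $\lambda$ first, we split at $\lvert t-r\rvert=g(r)$ and treat three ranges.
\begin{itemize}
\item[(i)] On the far range $\lvert t-r\rvert\ge g(r)$, the crude bound gives
\[
\int \frac{C\lvert t-r\rvert r^{-1/2}}{\lvert t-r\rvert^{3}}\,\mathrm{d}t\le \frac{Cr^{-1/2}}{g(r)} .
\]
Multiplied by $r^{1/2}\log r$, this yields $C\log r/g(r)$. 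That is $o(1)$ only if $g(r)\gg \log r$, which is not yet available for the lower-bound analysis. So the crude bound is too weak and must be refined.
\item[(ii)] The refinement uses the concavity more efficiently. Integrate by parts in $t$, or equivalently write
\[
D(t;r)=\int_t^r\bigl(g'(s)-g'(t)\bigr)\,\mathrm{d}s \quad (t<r)
\]
and use Fubini. This turns the integral into one against $-g''\ge 0$, which is a measure of total mass at most $g'(r/2)\le Cg(r)/r$ on $[r/2,2r]$. Concretely, for $t<r$,
\[
D\le \lvert t-r\rvert\bigl(g'(t)-g'(r)\bigr)=\lvert t-r\rvert\int_t^r(-g'')\,\mathrm{d}s ,
\]
so
\[
\int_{r/2}^{r-g(r)}\frac{D}{\lvert t-r\rvert^{3}}\,\mathrm{d}t\le \int_{r/2}^{r}(-g''(s))\int_{r/2}^{\min(s,\,r-g(r))}\frac{\mathrm{d}t}{(r-t)^{2}}\,\mathrm{d}s\le \frac{g'(r/2)}{g(r)}\le \frac{C}{r}.
\]
Multiplied by $g(t)V(t)\le Cr^{1/2}\log r$, this gives $O(r^{-1/2}\log r)=o(1)$. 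The case $t>r$ is symmetric, using $g'(r)\le Cg(r)/r$ and $g(2r)\le 3g(r)$.
\item[(iii)] On the near range $\lvert t-r\rvert<g(r)$, use the same Fubini bound together with the $\lambda$-average estimate
\[
\frac{1}{2\pi}\int_0^{2\pi}\frac{\mathrm{d}\lambda}{\bigl((t-r)^{2}+2g(r)^{2}(1-\cos\lambda)\bigr)^{3/2}}\le \frac{C}{\lvert t-r\rvert\,g(r)^{2}} ,
\]
which follows by computing the $\lambda$ integral near $\lambda=0$. Here $g(t)\approx g(r)$ on this range. This gives a contribution
\[
\le C\,\frac{g(r)V(r)}{g(r)^{2}}\int(-g'')\cdot g(r)\,\mathrm{d}s\le CV(r)\,g'(r/2)\le C\,\frac{\log r}{r^{1/2}}=o(1).
\]
\end{itemize}

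The main obstacle is recognizing that the pointwise bound $D\le C\lvert t-r\rvert r^{-1/2}$ loses too much: it produces a factor $1/g(r)$ that need not vanish, since no lower bound on $g$ is yet known. The fix is to keep $D$ in its integrated form against the nonnegative measure $-g''$, whose total mass on $[r/2,2r]$ is $O(g(r)/r)$. This gains exactly the factor $g(r)$ that cancels the kernel singularity. I expect the delicate point to be checking the Fubini bookkeeping uniformly for $t$ on both sides of $r$ and near the singular point $t=r$. A second point is that if $g$ is bounded, $V$ stays bounded, and the same estimates still apply.
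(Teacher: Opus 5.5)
Steps (i)–(ii) are fine, but step (iii) rests on a false kernel bound. For $\lvert t-r\rvert<g(r)$, the $\lambda$-average of $\bigl((t-r)^{2}+2g(r)^{2}(1-\cos\lambda)\bigr)^{-3/2}$ is not $O\bigl(1/(\lvert t-r\rvert g(r)^{2})\bigr)$. On the set $\lvert\lambda\rvert\le\lvert t-r\rvert/g(r)$ one has $2g(r)^{2}(1-\cos\lambda)\le g(r)^{2}\lambda^{2}\le(t-r)^{2}$. So the integrand is at least $(2(t-r)^{2})^{-3/2}$ on a set of measure $2\lvert t-r\rvert/g(r)$, and the average is $\ge c/(g(r)(t-r)^{2})$. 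This is exactly the order of the paper's upper bound $C/(g(r)(t-r)^{2})$, which is also what the substitution $\lambda=\lvert t-r\rvert u/g(r)$ actually gives. It exceeds your bound by the unbounded factor $g(r)/\lvert t-r\rvert$. With the correct kernel, your first-order bound $D\le\lvert t-r\rvert\int_{[t,r]}(-g'')$ no longer cancels the singularity. After Fubini you get $CV(r)\int_{\lvert s-r\rvert<g(r)}(-g''(s))\log\frac{g(r)}{\lvert s-r\rvert}\,\mathrm{d}s$. Knowing only that $-g''\ge0$ has mass $O(g(r)/r)$ on this interval does not control this log-weighted integral, since mass of $-g''$ concentrating near $s=r$ makes it arbitrarily larger than the mass.

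The missing ingredient is the quadratic vanishing of $D$ at $t=r$. Taylor's formula with integral remainder gives $D(t;r)=\int_{r}^{t}(u-r)(-g''(u))\,\mathrm{d}u$ for $t>r$ and $D(t;r)=\int_{t}^{r}(r-u)(-g''(u))\,\mathrm{d}u$ for $t<r$. Fubini then yields $\int_{r}^{r+g(r)}\frac{D(t;r)}{(t-r)^{2}}\,\mathrm{d}t=\int_{r}^{r+g(r)}(-g''(u))\bigl(1-\frac{u-r}{g(r)}\bigr)\mathrm{d}u\le g'(r)-g'(r+g(r))$, and similarly on $[r-g(r),r]$. Hence $\int_{r-g(r)}^{r+g(r)}D/(t-r)^{2}\,\mathrm{d}t\le Cg(r)/r$, and the near range contributes $O\bigl((1+V(r))g(r)/r\bigr)=o(1)$. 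The paper obtains exactly this by integrating by parts, using $\partial_tD(t;r)=(r-t)g''(t)$ and $D(t;r)/(t-r)\to0$ as $t\to r$.

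Your diagnosis of the main obstacle is also misplaced. On the far ranges the pointwise bound $D\le\lvert t-r\rvert\, g'(r/2)\le C\lvert t-r\rvert\, g(r)/r$ (not $C\lvert t-r\rvert r^{-1/2}$), together with $g(t)V(t)\le 3g(r)V(2r)$, already gives $O(V(2r)g(r)/r)$. That is how the paper argues, so the Fubini refinement in (ii) gains nothing there. The real difficulty is the near range, where any bound of the form $D=O(\lvert t-r\rvert)$ is insufficient. Minor: for $t<r$ the identity should read $D(t;r)=\int_t^r(g'(t)-g'(s))\,\mathrm{d}s$; your version has the opposite sign.
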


\begin{proof}
Let $r>4a$ be sufficiently large such that $g(r)<r/4$. Note that by $g^{\prime\prime}\le0$ on $(a,\infty)$ (see \eqref{eq:concave_fb}), 
\begin{equation*}
D(t;r) = \int_r^t\left(g^{\prime}(\tau)-g^{\prime}(t)\right) \mathrm{d}\tau \ge 0.
\end{equation*}
As in Lemma \ref{lem:Jg_sharp_estimate_1}, we divide the integral in \eqref{eq:Jg_sharp_estimate_D} into three integrals over the three intervals $[r/2,r-g(r)]$, $[r-g(r),r+g(r)]$, and $[r+g(r),2r]$ and expand each of those three integrals one by one. 

\textit{Step 1. Estimate on the outer intervals $[r/2,r-g(r)]$ and $[r+g(r),2r]$.}  
Since $g^{\prime}(t)\le g^{\prime}(r/2)\le Cg(r)/r$ for $t\ge r/2$ (see \eqref{eq:sharp_estimate_dg}), one has   
\begin{equation}\label{eq:Jg_sharp_estimate_D4}
0\le D(t;r) \le C\lvert t-r\rvert \frac{g(r)}{r} \qquad\text{for } t\ge \frac r2.
\end{equation}
Moreover, it follows from $g(r) \le Cr^{1/2}$ and $V(r) \le C(1+\log r)$ (see \eqref{eq:sharp_estimate_g} and \eqref{eq:sharp_estimate_V}) that 
	\begin{equation*}
		\frac{g(r)}{r}V(r) \le Cr^{-\frac 12}(1+\log r) \to 0 \qquad\text{as } r \to \infty.
	\end{equation*}	
Thus using the monotonicity of $g$ and $V$ (see \eqref{eq:gV_increasing}) as well as $\lvert X-Y\rvert\ge\lvert t-r\rvert$ yields
	\begin{equation}\label{eq:Jg_sharp_estimate_D1}
		\begin{split}
		0 &\le \frac{1}{2\pi} \int_{0}^{2\pi} \int_{\frac r2}^{r-g(r)} g(t)V(t) \frac{D(t;r)}{\lvert X-Y\rvert^{3}}\mathrm{d}t\mathrm{d}\lambda \\
        &\le C  \frac{g(r)^{2}}{r}V(r)\int_{\frac r2}^{r-g(r)} \frac{1}{\lvert t-r\rvert^2}\mathrm{d}t
		\le C \frac{g(r)^{2}}{r}V(r) \int_{g(r)}^{\infty} \frac{1}{\tau^{2}}\mathrm{d}\tau\\
        &= C \frac{g(r)}{r}V(r)=o(1) 
        \qquad\text{as } r \to \infty.
		\end{split}
	\end{equation}
	Similarly, with the help of $g(2r)\le 3g(r)$ in  \eqref{eq:g2r_r} and $V(2r) \le C(1+\log (2r))$, we also have
	\begin{equation}\label{eq:Jg_sharp_estimate_D2}
		0 \le \frac{1}{2\pi} \int_{0}^{2\pi} \int_{r+g(r)}^{2r} g(t)V(t) \frac{D(t;r)}{\lvert X-Y\rvert^{3}}\mathrm{d}t\mathrm{d}\lambda= o(1) \qquad\text{as } r \to \infty.
	\end{equation}
	
\textit{Step 2. Estimate on the middle interval $[r-g(r), r+g(r)]$.}  
For $t\in [r-g(r), r+g(r)]$ with sufficiently large $r$ and $\lambda\in(-\pi,\pi)$, it follows from $g(t)\ge g(r/2)\ge g(r)/3$ and the inequality $1-\cos\lambda \ge 2\lambda^{2}/\pi^2$ that
	\begin{equation*}
		\lvert X-Y\rvert^{2} = (t-r)^{2} + (g(t)-g(r))^{2} + 2g(t)g(r)(1-\cos\lambda)\ge C \left((t-r)^{2} + g(r)^{2}\lambda^{2}\right).
	\end{equation*}
    Thus, using the change of variables $u=g(r)\lambda/\lvert t-r\rvert$, one has
        \begin{equation*}\begin{split}
		0 \le \int_{0}^{2\pi} \frac{1}{\lvert X-Y\rvert^{3}}\mathrm{d}\lambda 
        &\le C\int_{-\pi}^{\pi} \frac{1}{\big((t-r)^{2} + g(r)^{2}\lambda^{2}\big)^{3/2}}\mathrm{d}\lambda\\
        &\le \frac{C}{g(r)(t-r)^2}\int_{-\infty}^{\infty} \frac{1}{(1+u^{2})^{3/2}}\mathrm{d}u = \frac{C}{g(r)(t-r)^{2}} \qquad\text{for }t\neq r. 
	\end{split}\end{equation*}	
        This implies that 
	\begin{equation}\label{eq:Jg_sharp_estimate_D5}
		0 \le \frac{1}{2\pi} \int_{0}^{2\pi} \int_{r-g(r)}^{r+g(r)} g(t)V(t)\frac{D(t;r)}{\lvert X-Y\rvert^{3}}\mathrm{d}t\mathrm{d}\lambda \le C \int_{r-g(r)}^{r+g(r)} g(t)V(t) \frac{D(t;r)}{g(r)(t-r)^{2}}\mathrm{d}t.
	\end{equation}
        Furthermore, as in \eqref{eq:Jg_sharp_estimate_Vtr}, one has 
	\begin{equation*}
		\lvert V(t)-V(r)\rvert \le \frac12\int_{r-g(r)}^{r+g(r)}\lvert g^{\prime}(\tau)\rvert^2\mathrm d\tau \le  C \frac{g(r)^{3}}{r^{2}} \qquad \text{for } t\in[r-g(r),r+g(r)]. 
	\end{equation*}
	This combined with \eqref{eq:Jg_sharp_estimate_D5} and $g(r+g(r))\le g(2r)\le 3g(r)$ gives
	\begin{equation}\label{eq:Jg_sharp_estimate_D3}
		0 \le \frac{1}{2\pi} \int_{0}^{2\pi} \int_{r-g(r)}^{r+g(r)} g(t)V(t)\frac{D(t;r)}{\lvert X-Y\rvert^{3}}\mathrm{d}t\mathrm{d}\lambda \le C\left(V(r) + C\frac{g(r)^{3}}{r^{2}}\right) \int_{r-g(r)}^{r+g(r)} \frac{D(t;r)}{(t-r)^{2}}\mathrm{d}t.
	\end{equation}

        To estimate the integral on the right-hand side of \eqref{eq:Jg_sharp_estimate_D3}, we note that by the definition of $D(t;r)$ in \eqref{eq:D_def} and the estimate \eqref{eq:Jg_sharp_estimate_D4}, 
        \begin{equation*}
            \lim_{t \to r} \frac{D(t;r)}{t-r}=0 \quad\text{and}\quad 
            0\le \frac{D(r+g(r);r)}{g(r)}\le C\frac{g(r)}{r}.
        \end{equation*}
        Applying integration by parts and $g^{\prime}(r+g(r))\le g^{\prime}(r)\le Cg(r)/r$ yields 
	\begin{equation*}
		\begin{split}
		0 \le \int_{r}^{r+g(r)} \frac{D(t;r)}{(t-r)^{2}}\mathrm{d}t 
        & = \lim_{t \to r} \frac{D(t;r)}{t-r}-\frac{D(r+g(r);r)}{g(r)} + \int_{r}^{r+g(r)} \frac{\partial_{t}D(t;r)}{t-r}\mathrm{d}t\\
        & \le \int_{r}^{r+g(r)} \frac{(r-t)g^{\prime\prime}(t)}{t-r}\mathrm{d}t
		= g^{\prime}(r) - g^{\prime}(r+g(r))\le C\frac{g(r)}{r}.
		\end{split}
	\end{equation*}
	Similarly, by $g^{\prime}(r-g(r))\le g^{\prime}(r/2)\le Cg(r)/r$, 
	\begin{equation*}
		0\le\int_{r-g(r)}^{r} \frac{D(t;r)}{(t-r)^{2}}\mathrm{d}t \le C\frac{g(r)}{r}.
	\end{equation*}
	As a consequence, from \eqref{eq:Jg_sharp_estimate_D3},
    applying \eqref{eq:sharp_estimate_V} and \eqref{eq:sharp_estimate_g}  in the last of the following inequalities, 
    we obtain
	\begin{equation*}
		\begin{split}
		0 \le \frac{1}{2\pi} \int_{0}^{2\pi} \int_{r-g(r)}^{r+g(r)} g(t)V(t)\frac{D(t;r)}{\lvert X-Y\rvert^{3}}\mathrm{d}t\mathrm{d}\lambda 
        &\le C\left(V(r) +\frac{g(r)^{3}}{r^{2}}\right) \frac{g(r)}{r}\\
		&\le C(1+\log r + r^{-1/2})r^{-1/2} = o(1) \qquad\text{as } r \to \infty.
		\end{split}
	\end{equation*}
        This, together with \eqref{eq:Jg_sharp_estimate_D1} and \eqref{eq:Jg_sharp_estimate_D2}, gives the desired estimate \eqref{eq:Jg_sharp_estimate_D} and finishes the proof of the lemma. 
\end{proof}

Combining Lemmas \ref{lem:Jg_sharp_estimate_1} and \ref{lem:Jg_sharp_estimate_D} we obtain the expansion formula for $J_g(r)$.

\begin{corollary}\label{coro:Jg_sharp_estimate}
Assume that $g$ satisfies \eqref{eq:condition_g_sublinear} and \eqref{eq:concave_fb}. Let $J_g$ be defined in \eqref{eq:Jg_def} and $V$ be as in \eqref{eq:Vt_def}. Then one has
\begin{equation}\label{eq:Jg_sharp_estimate}
J_{g}(r) = V(r) + o(1) + \frac{1}{2\pi} \int_{0}^{2\pi} \int_{\frac r2}^{2r} g(t)g^{\prime}(t)\left(\frac{1}{\lvert X-Y\rvert} - \frac{1}{\lvert X\rvert}\right)\mathrm{d}t\mathrm{d}\lambda \qquad\text{as } r \to \infty.
\end{equation}
\end{corollary}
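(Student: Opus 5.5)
The statement splits $J_g(r)$ into its $F_g$-part and its $g g'$-part. The $g g'$-part is kept verbatim on the right-hand side, so only the $F_g$-part needs to be shown equal to $V(r)+o(1)$. The plan is to decompose $F_g$ algebraically into pieces already controlled by Lemmas \ref{lem:Jg_sharp_estimate_1} and \ref{lem:Jg_sharp_estimate_D}, plus a remainder that is negligible.

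Starting from \eqref{eq:Fg_def}, I write $r g'(t) = t g'(t) + (r-t) g'(t)$. With $D(t;r)$ as in \eqref{eq:D_def}, this gives
\begin{equation*}
F_g(t,\lambda;r) = \frac{g(r)(1-\cos\lambda)}{\lvert X-Y\rvert^{3}} + \frac{D(t;r)}{\lvert X-Y\rvert^{3}} + \frac{t g'(t) - g(t)}{\lvert X\rvert^{3}}.
\end{equation*}
This is exactly the identity behind Lemma \ref{lem:Fg_lower_bound}, now used as an equality: the $\lvert X-Y\rvert^{-3}$ terms with coefficient $t g'(t) - g(t)$ cancel, leaving $g(t) + (r-t) g'(t) - g(r)\cos\lambda = D(t;r) + g(r)(1-\cos\lambda)$.

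I then multiply by $g(t)V(t)$, integrate over $t\in(r/2,2r)$, and average over $\lambda$. The first term contributes $V(r)+o(1)$ by Lemma \ref{lem:Jg_sharp_estimate_1}, and the second contributes $o(1)$ by Lemma \ref{lem:Jg_sharp_estimate_D}.

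The third term is the only remaining piece. By \eqref{eq:Hg_gvX}, its integrand is bounded by $C(1+\log t)/t^{2}$. Hence the integral over $(r/2,2r)$ is at most $C(1+\log r)/r = o(1)$, since $(1+\log t)/t^2 \in L^1$ and we are integrating over a tail.

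Adding the $g g'$-term unchanged then gives \eqref{eq:Jg_sharp_estimate}. The only point requiring care is the algebraic regrouping and confirming the sign conventions match \eqref{eq:D_def}. Everything else is a direct citation of Lemmas \ref{lem:Jg_sharp_estimate_1} and \ref{lem:Jg_sharp_estimate_D} and the bounds \eqref{eq:sharp_estimate_g}--\eqref{eq:sharp_estimate_V}, so no real obstacle is expected.
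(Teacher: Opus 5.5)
Your proposal is correct and is essentially the paper's proof. The paper rewrites $F_g$ as the same three-term sum, applies Lemmas \ref{lem:Jg_sharp_estimate_1} and \ref{lem:Jg_sharp_estimate_D} to the first two pieces, and uses \eqref{eq:Hg_gvX} to show that the $\lvert X\rvert^{-3}$ piece contributes at most $C(1+\log r)/r=o(1)$.
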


\begin{proof}
Recall that 
\begin{equation*}
J_{g}(r)= \frac{1}{2\pi} \int_{0}^{2\pi} \int_{\frac r2}^{2r} g(t)V(t)F_{g}(t,\lambda;r) + g(t)g^{\prime}(t)\left( \frac{1}{\lvert X-Y\rvert} - \frac{1}{\lvert X\rvert} \right) \mathrm{d}t\mathrm{d}\lambda,
\end{equation*}
where $F_g$ defined in \eqref{eq:Fg_def} can be rewritten as 
\begin{equation*}
F_{g}(t,\lambda;r) = \frac{g(r)(1-\cos\lambda)}{\lvert X-Y\rvert^{3}} + \frac{g(t)-g(r) +(r-t)g^{\prime}(t)}{\lvert X-Y\rvert^{3}} + \frac{tg^{\prime}(t)-g(t)}{\lvert X\rvert^{3}}.
\end{equation*}
In view of \eqref{eq:Hg_gvX}, 
	\begin{equation*}
		\begin{split}
		\left|\frac{1}{2\pi}\int_{0}^{2\pi}\int_{\frac r2}^{2r} g(t)V(t)\frac{tg^{\prime}(t)-g(t)}{\lvert X\rvert^{3}}\mathrm{d}t\mathrm{d}\lambda\right| 
        &\le C \int_{\frac r2}^{2r} \frac{1+\log t}{t^2}\mathrm{d}t\\ 
        &\le  C \frac{1+\log r}{r} =o(1) \qquad\text{as } r \to \infty.
		\end{split}
	\end{equation*}
        This combined with Lemmas \ref{lem:Jg_sharp_estimate_1} and \ref{lem:Jg_sharp_estimate_D} yields \eqref{eq:Jg_sharp_estimate}.
\end{proof}

In the following lemma we expand the integral on the right-hand side of \eqref{eq:Jg_sharp_estimate}. 

\begin{lemma}\label{lem:Jg_sharp_estimate_3}
Assume that $g$ satisfies \eqref{eq:condition_g_sublinear} and \eqref{eq:concave_fb}. For each $\eta \in (0,1/4)$, we have
\begin{equation*}
		\frac{1}{2\pi} \int_{0}^{2\pi} \int_{\frac r2}^{2r} \frac{g(t)g^{\prime}(t)}{\lvert X-Y\rvert}\mathrm{d}t\mathrm{d}\lambda \le (2+o(1))g(r+\eta r)g^{\prime}(r-\eta r)\log\frac{\eta r}{g(r)} + o(1) \qquad\text{as } r \to \infty,
	\end{equation*}
    and
	\begin{equation*}
		\frac{1}{2\pi} \int_{0}^{2\pi} \int_{\frac r2}^{2r} \frac{g(t)g^{\prime}(t)}{\lvert X-Y\rvert}\mathrm{d}t\mathrm{d}\lambda \ge (2-o(1))g(r-\eta r)g^{\prime}(r+\eta r)\log\frac{\eta r}{g(r)} - o(1) \qquad\text{as } r \to \infty.
	\end{equation*}
\end{lemma}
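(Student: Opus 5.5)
We split the $t$-integral over $(r/2,2r)$ into the near window $(r-\eta r,r+\eta r)$ and the two outer pieces $(r/2,r-\eta r)$, $(r+\eta r,2r)$. We then estimate each piece using the monotonicity of $g$, the monotonicity of $g^{\prime}$, and the a priori bounds \eqref{eq:sharp_estimate_g}--\eqref{eq:sharp_estimate_dg} and \eqref{eq:sharp_estimate_gdg}. On the near window, $g$ is non-decreasing and $g^{\prime}$ is non-increasing, so
\[
g(r-\eta r)g^{\prime}(r+\eta r)\le g(t)g^{\prime}(t)\le g(r+\eta r)g^{\prime}(r-\eta r).
\]
This reduces both bounds to two-sided estimates of $\frac1{2\pi}\int_0^{2\pi}\int_{r-\eta r}^{r+\eta r}|X-Y|^{-1}\,\mathrm dt\,\mathrm d\lambda$.

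For the outer pieces we have $|X-Y|\ge|t-r|\ge\eta r$. Together with \eqref{eq:sharp_estimate_gdg} this bounds their contribution by $C\eta^{-1}(\log r)^{-1/8}=o(1)$, and since they are nonnegative they can simply be dropped in the lower bound. This is where the additive $\pm o(1)$ terms come from. The point to check is that $g(r\pm\eta r)g^{\prime}(r\mp\eta r)$ may be small, possibly even $o(1)$, so the outer pieces must be treated as a separate additive error rather than absorbed multiplicatively.

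For the near window, the lower bound is already contained in the proof of Lemma \ref{lem:Jg_refine_gdg}. There, \eqref{eq:X-Y_order} gives $|X-Y|^2\le(1+o(1))\left((t-r)^2+4g(r)^2\right)$, and the computation \eqref{eq:Jg_refine_gdg3} yields a factor $\ge 2\log(\eta r/g(r))$, with coefficient $1-o(1)$ because we no longer subtract $1/|X|$. For the upper bound we need the reverse comparison. Using $g(t)\ge g(r/2)\ge g(r)/3$, we get
\[
|X-Y|^2\ge (t-r)^2+c\,g(r)^2(1-\cos\lambda),
\]
and in the region $|t-r|\ge Mg(r)$ with $M=M(r)\to\infty$ slowly, the sharper form
\[
|X-Y|\ge|t-r|\ge(1-o(1))\sqrt{(t-r)^2+4g(r)^2}.
\]
Hence
\[
\int_{Mg(r)\le|t-r|\le\eta r}\frac{\mathrm dt}{|X-Y|}\le 2\log\frac{\eta r}{Mg(r)}\le 2\log\frac{\eta r}{g(r)}.
\]
On the inner region $|t-r|\le Mg(r)$ we use the $\lambda$-averaged bound
\[
\frac1{2\pi}\int_0^{2\pi}\bigl((t-r)^2+c g(r)^2\lambda^2\bigr)^{-1/2}\,\mathrm d\lambda\le C\bigl(1+\log^+ (g(r)/|t-r|)\bigr)/g(r)
\]
(the logarithmic singularity as $t\to r$ is integrable), which gives a contribution $O(M)+O(1)=O(M)$ in total. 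Choosing $M=\log\log r$, say, this is $o(\log(\eta r/g(r)))$, since $\log(r/g(r))\ge\frac12\log r\to\infty$ by \eqref{eq:sharp_estimate_g}. That $\log(r/g(r))\to\infty$ also absorbs the constant $\log 2$ type discrepancies into the factor $2+o(1)$.

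The main obstacle I expect is the inner region in the upper bound: getting a bound on $\int\!\!\int|X-Y|^{-1}$ near the singularity that is genuinely of lower order than $\log(r/g(r))$, uniformly in $\lambda$. That is where the lower bound $g(t)\gtrsim g(r)$ and an explicit evaluation of the $\lambda$-average are needed. The remaining steps are bookkeeping of $o(1)$ factors.
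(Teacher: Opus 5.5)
Your proposal is correct. Its skeleton matches the paper's proof: you split off $(r/2,r-\eta r)\cup(r+\eta r,2r)$ as an additive $o(1)$ error via \eqref{eq:sharp_estimate_gdg}, and you sandwich $g(t)g^{\prime}(t)$ on the window by monotonicity. Your window lower bound, taken from \eqref{eq:X-Y_order} and \eqref{eq:Jg_refine_gdg3} with $4g(r)^2$ in place of $2g(r)^2(1-\cos\lambda)$, is also essentially the paper's.

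The genuine difference is the upper bound on $\frac{1}{2\pi}\int_0^{2\pi}\int_{r-\eta r}^{r+\eta r}\lvert X-Y\rvert^{-1}\,\mathrm{d}t\,\mathrm{d}\lambda$.

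The paper's route:
\begin{itemize}
\item From $\lvert g(t)-g(r)\rvert\le g^{\prime}(r/2)\lvert t-r\rvert$ and \eqref{eq:X-Y_order1} it proves the reverse comparison $\lvert X-Y\rvert^2\ge(1-g^{\prime}(r/2))\big((t-r)^2+2g(r)^2(1-\cos\lambda)\big)$.
\item Hence $\lvert X-Y\rvert^2=(1+o(1))\big((t-r)^2+2g(r)^2(1-\cos\lambda)\big)$ uniformly in $t$ and $\lambda$.
\item It evaluates the $t$-integral exactly as $2\log\frac{\eta r}{g(r)}+\log(2+o(1))-\log(1-\cos\lambda)$, and the last term disappears after averaging because $\log(1-\cos\lambda)$ is integrable in $\lambda$.
\end{itemize}
One computation thus gives both inequalities at once.

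Your route uses only crude lower bounds on $\lvert X-Y\rvert$:
\begin{itemize}
\item On $Mg(r)\le\lvert t-r\rvert\le\eta r$ you use $\lvert X-Y\rvert\ge\lvert t-r\rvert$.
\item On the core you use $\lvert X-Y\rvert^2\ge(t-r)^2+c\,g(r)^2\lambda^2$, from $g(t)\ge g(r)/3$, together with the $\lambda$-averaged logarithmic bound. This is the same device as Step 2 of Lemma \ref{lem:Jg_sharp_estimate_D}.
\end{itemize}
This avoids controlling $(g(t)-g(r))^2$ and the cross term from below. It yields only a one-sided estimate, but that is all the upper bound needs.

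You do not need $M\to\infty$ or the ``sharper form'' $\lvert t-r\rvert\ge(1-o(1))\sqrt{(t-r)^2+4g(r)^2}$. With $M=1$, the bulk contributes at most $2\log\frac{\eta r}{g(r)}$ and the core $O(1)$. That $O(1)$ is already $o\big(\log\frac{\eta r}{g(r)}\big)$, since $g(r)\le Cr^{1/2}$.
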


\begin{proof}
Fix $\eta\in(0,1/4)$ and let $r>4a$. Using $g(t)g^{\prime}(t)\le C(\log t)^{-1/8}$ for $t>2a$ (see \eqref{eq:sharp_estimate_gdg}), combined with $\lvert X-Y\rvert\ge \lvert t-r\rvert\ge \eta r$ for $t\in[r/2,r-\eta r]\cup[r+\eta r,2r]$, leads to 
	\begin{equation*}
		\begin{split}
		0 &\le \frac{1}{2\pi} \int_{0}^{2\pi} \int_{\frac r2}^{r-\eta r} \frac{g(t)g^{\prime}(t)}{\lvert X-Y\rvert}\mathrm{d}t\mathrm{d}\lambda+\frac{1}{2\pi} \int_{0}^{2\pi} \int_{r+\eta r}^{2r} \frac{g(t)g^{\prime}(t)}{\lvert
		 X-Y\rvert}\mathrm{d}t\mathrm{d}\lambda\\
		&\le C \frac{r-\eta r}{\eta r}\left(\log \frac r2\right)^{-\frac18}=o(1) \qquad\text{as } r \to \infty.
		\end{split}
	\end{equation*}
         Therefore, it suffices to show 
          \begin{equation}\label{eq:Jg_sharp_estimate_31}
		\frac{1}{2\pi}\int_{0}^{2\pi} \int_{r-\eta r}^{r+\eta r} \frac{g(t)g^{\prime}(t)}{\lvert X-Y\rvert}\mathrm{d}t\mathrm{d}\lambda \le (2+o(1))g(r+\eta r)g^{\prime}(r-\eta r)\log\frac{\eta r}{g(r)} \qquad\text{as } r \to \infty,
	\end{equation}
    and
       \begin{equation}\label{eq:Jg_sharp_estimate_32}
		\frac{1}{2\pi}\int_{0}^{2\pi} \int_{r-\eta r}^{r+\eta r} \frac{g(t)g^{\prime}(t)}{\lvert X-Y\rvert}\mathrm{d}t\mathrm{d}\lambda \ge (2-o(1))g(r-\eta r)g^{\prime}(r+\eta r)\log\frac{\eta r}{g(r)} \qquad\text{as } r \to \infty.
	\end{equation}
	
For $t\in[r-\eta r,r+\eta r]\subset(r/2,2r)$, with the help of $\lvert g(t)-g(r)\rvert\le g^{\prime}(r/2)\lvert t-r\rvert$ and \eqref{eq:X-Y_order1}, we get
\begin{equation*}
	\begin{split}
		\lvert X-Y\rvert^{2} &= (t-r)^{2} + (g(t)-g(r))^{2} + 2g(t)g(r)(1-\cos\lambda)\\
		&\ge (t-r)^{2} + 2g(r)^{2}(1-\cos\lambda) - 2g^{\prime}(r/2)\lvert t-r\rvert g(r)(1-\cos\lambda)\\
		&\ge (1-g^{\prime}(r/2)) \left((t-r)^{2} + 2g(r)^{2}(1-\cos\lambda)\right) \qquad \text{as } r\to\infty.
\end{split}\end{equation*}
This together with \eqref{eq:X-Y_order} yields
	\begin{equation*}
		\lvert X-Y\rvert^{2} = (1+o(1)) \big((t-r)^{2} + 2g(r)^{2}(1-\cos\lambda)\big) \qquad\text{as } r \to \infty,
	\end{equation*}
        where the $o(1)$-term is uniform with respect to  $t\in[r-\eta r,r+\eta r]$ and $\lambda\in[0,2\pi)$. 
	Hence
	\begin{equation}\label{eq:Jg_sharp_estimate_33}
		\begin{split}
		\int_{r-\eta r}^{r+\eta r} \frac{1}{\lvert X-Y\rvert}\mathrm{d}t 
        =(1+o(1))\int_{r-\eta r}^{r+\eta r} \frac{1}{\sqrt{(t-r)^{2}+2g(r)^{2}(1-\cos\lambda)}}\mathrm{d}t
        \qquad\text{as } r \to \infty.
		\end{split}
	\end{equation}
        A similar calculation as in  \eqref{eq:Jg_refine_gdg3} gives that for $\lambda\in(0,2\pi)$,
        \begin{equation*}
		\begin{split}
		\int_{r-\eta r}^{r+\eta r} \frac{1}{\sqrt{(t-r)^{2} + 2g(r)^{2}(1-\cos\lambda)}} \mathrm{d}t 
		&=\log\frac{\left(\eta r + \sqrt{(\eta r)^{2} + 2g(r)^{2}(1-\cos\lambda)}\right)^{2}}{2g(r)^{2}(1-\cos\lambda)}.
		\end{split}
		\end{equation*}
        Applying $g(r)=o(\eta r)$ as $r\to \infty$ to the square root on the right-hand side yields
\begin{equation*}
\sqrt{(\eta r)^2 + 2g(r)^2(1-\cos\lambda)}
= (1+o(1))\eta r \qquad\text{as }r\to\infty,
\end{equation*}
with uniform convergence for $t\in[r-\eta r,r+\eta r]$ and $\lambda\in[0,2\pi)$. 
Then it follows that for $\lambda\in(0,2\pi)$,
        \begin{equation}\label{eq:Jg_sharp_estimate_34}
		\begin{split}
		&\ \int_{r-\eta r}^{r+\eta r} \frac{1}{\sqrt{(t-r)^{2} + 2g(r)^{2}(1-\cos\lambda)}} \mathrm{d}t\\
        =&\ \log\frac{\left((2+o(1))\eta r\right)^2}{2g(r)^2(1-\cos\lambda)}
        =2\log\frac{\eta r}{g(r)} + \log(2+o(1)) - \log (1-\cos\lambda) \qquad\text{as } r \to \infty.
		\end{split}
		\end{equation}
Note that $\lambda \mapsto \log(1-\cos\lambda) \in L^{1}(0,2\pi)$. Thus, after integrating in $\lambda$, the terms $\log(2+o(1))$ and 
$\log(1-\cos\lambda)$ in \eqref{eq:Jg_sharp_estimate_34} contribute only $O(1)$. Therefore, from \eqref{eq:Jg_sharp_estimate_33} and \eqref{eq:Jg_sharp_estimate_34}, we obtain 
	\begin{equation}\label{eq:Jg_sharp_estimate_35}
		\begin{split}
		\frac{1}{2\pi}\int_{0}^{2\pi} \int_{r-\eta r}^{r+\eta r} \frac{1}{\lvert X-Y\rvert}\mathrm{d}t\mathrm{d}\lambda &= (2+o(1)) \log\frac{\eta r}{g(r)} +O(1)
		= (2+o(1))\log\frac{\eta r}{g(r)} \qquad\text{as } r \to \infty.
		\end{split}
	\end{equation}
    Since $g$ is non-decreasing while $g^{\prime}$ is non-increasing on $(a,\infty)$ (by \eqref{eq:condition_dg_sign} and \eqref{eq:concave_fb}), we have
\begin{equation*}
g(r-\eta r)g^{\prime}(r+\eta r) \le g(t)g^{\prime}(t) \le g(r+\eta r)g^{\prime}(r-\eta r) \qquad \text{for } t\in[r-\eta r,r+\eta r].
\end{equation*}
This, together with \eqref{eq:Jg_sharp_estimate_35}, yields \eqref{eq:Jg_sharp_estimate_31} and \eqref{eq:Jg_sharp_estimate_32}. Hence the proof of the lemma is completed.
\end{proof} 
Moreover, we simplify the head term up to higher order as follows.
\begin{lemma}\label{lem:Jg_sharp_estimate_4}
Assume that $g$ satisfies \eqref{eq:condition_g_sublinear} and \eqref{eq:concave_fb}. Then there exists a fixed constant $C_{B}$ such that
	\begin{equation}\label{est_lemma6.7}
		\frac{1}{2\pi}\int_{0}^{2\pi}\int_{a}^{2r} \frac{g(t)g^{\prime}(t)}{\lvert X\rvert}\mathrm{d}t\mathrm{d}\lambda = C_{B} + o(1)+\int_{a}^{2r} \frac{g(t)g^{\prime}(t)}{t}\mathrm{d}t  \qquad\text{as } r \to \infty.
	\end{equation}
\end{lemma}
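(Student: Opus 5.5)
The integrand does not depend on $\lambda$, since $\lvert X\rvert=\sqrt{t^2+g(t)^2}$. The left-hand side of \eqref{est_lemma6.7} is therefore simply $\int_a^{2r} g(t)g'(t)/\sqrt{t^2+g(t)^2}\,\mathrm{d}t$. I will write
\begin{equation*}
\int_{a}^{2r}\frac{g(t)g'(t)}{\lvert X\rvert}\mathrm{d}t-\int_{a}^{2r}\frac{g(t)g'(t)}{t}\mathrm{d}t=-\int_{a}^{2r} g(t)g'(t)\,\frac{\lvert X\rvert-t}{t\lvert X\rvert}\,\mathrm{d}t ,
\end{equation*}
so it suffices to show that this difference integrand is in $L^1(a,\infty)$. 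Then $C_B:=-\int_a^\infty g g'(\lvert X\rvert-t)/(t\lvert X\rvert)\,\mathrm{d}t$, and the remainder $\int_{2r}^\infty$ of a convergent integral is $o(1)$.

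For the integrability I use $\lvert X\rvert-t=g(t)^2/(\lvert X\rvert+t)\le g(t)^2/(2t)$ and $\lvert X\rvert\ge t$. These give the pointwise bound
\begin{equation*}
0\le g(t)g'(t)\frac{\lvert X\rvert-t}{t\lvert X\rvert}\le \frac{g(t)^3g'(t)}{2t^3}.
\end{equation*}
On $(2a,\infty)$ I apply \eqref{eq:sharp_estimate_g} and \eqref{eq:sharp_estimate_dg}, namely $g\le Ct^{1/2}$ and $g'\le Ct^{-1/2}$. This bounds the right-hand side by $Ct^{3/2}t^{-1/2}t^{-3}=Ct^{-2}$, which is integrable at infinity.

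On the compact piece $[a,2a]$ the only issue is that $g'(a)$ may be infinite. However, $g$ is concave and nondecreasing there, so $g'\ge0$ and $\int_a^{2a} g'\,\mathrm{d}t=g(2a)-g(a)<\infty$. Since $g$ is bounded on $[a,2a]$ and $t\ge a>1$, the integrand is bounded by a constant times $g'$, hence integrable. The same remark shows that both $\int_a^{2r} gg'/\lvert X\rvert$ and $\int_a^{2r} gg'/t$ are finite, so the subtraction above is legitimate.

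The main point to double-check is exactly this behaviour near $t=a$, where $g'$ may blow up. The monotonicity argument above handles it, so I expect no real obstacle; the lemma is essentially an elementary $L^1$ estimate using the growth bounds of Proposition \ref{prop:g_refined_growth}.
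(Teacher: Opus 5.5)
Your proof is correct and follows essentially the same route as the paper: bound $\bigl|1/\lvert X\rvert-1/t\bigr|\lesssim g(t)^2/t^3$, combine it with the growth bounds \eqref{eq:sharp_estimate_g}--\eqref{eq:sharp_estimate_dg} to get an integrable $Ct^{-2}$ majorant on $(2a,\infty)$, and define $C_B$ as the convergent integral over $(a,\infty)$. Your explicit treatment of $[a,2a]$, using $g'\ge 0$ and $\int_a^{2a}g'\,\mathrm{d}t=g(2a)-g(a)$ to handle a possibly infinite $g'(a)$, fills in a detail the paper leaves implicit.
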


\begin{proof}
	Note that $\lvert X\rvert=(t^2+g(t)^2)^{1/2}$. It follows that
        \begin{equation*}
		\begin{split}
		 \left|\frac{1}{\lvert X\rvert}-\frac{1}{t}\right| 
        = \frac{1}{t} \left|\left(1+\frac{g(t)^{2}}{t^{2}}\right)^{-1/2}-1\right| \le C \frac{g(t)^{2}}{t^{3}} \qquad\text{for }t>a.
		\end{split}
	\end{equation*}
        Thus by $g^{\prime}(t)\le 2g(t)/t$ for $t>2a$ and $g(t)\le Ct^{1/2}$ (see \eqref{eq:sharp_estimate_dg} and \eqref{eq:sharp_estimate_g}), we have  
	\begin{equation*}
		\left|g(t)g^{\prime}(t) \left(\frac{1}{\lvert X\rvert}-\frac{1}{t}\right)\right| \le C\frac{g(t)^4}{t^4} \le \frac{C}{t^{2}} \qquad\text{for }t>2a.
	\end{equation*}
        Note that $C/t^{2}\in L^{1}(2a,\infty)$. Therefore 
	\begin{equation*}
		\begin{split}
		&\ \frac{1}{2\pi}\int_{0}^{2\pi}\int_{a}^{2r} \frac{g(t)g^{\prime}(t)}{\lvert X\rvert}\mathrm{d}t\mathrm{d}\lambda - \int_{a}^{2r} \frac{g(t)g^{\prime}(t)}{t}\mathrm{d}t\\
        =&\ \int_{a}^{2r} g(t)g^{\prime}(t)\left(\frac{1}{\lvert X\rvert}-\frac{1}{t}\right)\mathrm{d}t
        =\int_{a}^{\infty} g(t)g^{\prime}(t)\left(\frac{1}{\lvert X\rvert}-\frac{1}{t}\right)\mathrm{d}t+o(1) \qquad\text{as } r \to \infty.
		\end{split}
	\end{equation*}
    Denote 
    \[
    C_B = \int_{a}^{\infty} g(t)g^{\prime}(t)\left(\frac{1}{\lvert X\rvert}-\frac{1}{t}\right)\mathrm{d}t.
    \]
    Then one immediately has \eqref{est_lemma6.7} and finishes the proof of the lemma.
\end{proof}
We are now in a position to prove the perturbed integral equality.

\begin{proposition}[The perturbed integral equality]\label{prop:lower_bound_log1}
Assume that $g$ satisfies \eqref{eq:condition_g_sublinear} and \eqref{eq:concave_fb}. Let $V$ be as in \eqref{eq:Vt_def}. There exists a fixed constant $C$ such that
	\begin{equation*}
		V(r) + (1+o(1))g(r)g^{\prime}(r)\log\frac{r}{g(r)} - \frac{1}{2} \int_{a}^{r} \frac{g(t)g^{\prime}(t)}{t}\mathrm{d}t = C+o(1) \qquad\text{as } r \to \infty.
	\end{equation*}
\end{proposition}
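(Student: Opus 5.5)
We first dispose of the bounded case: if $g$ is bounded, then $g'\to 0$ together with concavity gives $g(r)g'(r)\log(r/g(r))\to0$ (because $g'(r)\le C/r$), the integral $\int_a^r g g'/t\,\mathrm{d}t$ converges, and $V(r)$ converges since $\int |g'|^2<\infty$. Hence we may assume \eqref{eq:condition_g_infty}. We then insert the expansions already proved into the decomposition \eqref{eq:pe_main_NHJT}. Lemma \ref{lemma_1_potential_expansion} gives $N_g=C_N+o(1)$. Lemma \ref{lem:Tg_sharp_estimate} gives $T_g=o(1)$. Lemma \ref{lem:Hg_sharp_estimate} gives $H_g=C_H+o(1)-\frac1{2\pi}\iint_a^{r/2} gg'/|X|$. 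Corollary \ref{coro:Jg_sharp_estimate} gives $J_g=V(r)+o(1)+\frac1{2\pi}\iint_{r/2}^{2r}gg'/|X-Y|-\frac1{2\pi}\iint_{r/2}^{2r}gg'/|X|$. The two $1/|X|$ terms combine into $\frac1{2\pi}\iint_a^{2r}gg'/|X|$, which by Lemma \ref{lem:Jg_sharp_estimate_4} equals $C_B+o(1)+\int_a^{2r}gg'/t$. The result is
\[
-2V(r)+2V(0)=C+o(1)+\frac1{2\pi}\int_0^{2\pi}\!\!\int_{r/2}^{2r}\frac{g(t)g'(t)}{|X-Y|}\,\mathrm{d}t\,\mathrm{d}\lambda-\int_a^{2r}\frac{g(t)g'(t)}{t}\,\mathrm{d}t .
\]

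The next step is to replace $\int_a^{2r}$ by $\int_a^r$. The difference $\int_r^{2r} gg'/t$ is at most $\frac1r\int_r^{2r}gg'=\frac{g(2r)^2-g(r)^2}{2r}\le C(\log r)^{-1/4}=o(1)$ by \eqref{eq:sharp_estimate_g}. Dividing by $-2$ then leaves only the kernel term to identify. We must show
\[
\frac1{2\pi}\iint_{r/2}^{2r}\frac{g(t)g'(t)}{|X-Y|}=(2+o(1))\,g(r)g'(r)\log\frac{r}{g(r)}+o(1).
\]

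This is the main obstacle. Lemma \ref{lem:Jg_sharp_estimate_3} only provides two-sided bounds with the $\eta$-shifted products $g(r\pm\eta r)g'(r\mp\eta r)$ and with $\log(\eta r/g(r))$. We handle the parts in turn.
\begin{itemize}
\item The log factor is easy: since $\log(r/g(r))\ge c\log r\to\infty$, we have $\log(\eta r/g(r))=(1+o(1))\log(r/g(r))$ for each fixed $\eta$.
\item The $g$ factor is handled as in \eqref{eq:refined_ineq_6}: we get $g(r\pm\eta r)=(1+O(\eta))g(r)$.
\item The $g'$ factor is the real problem. Concavity only gives $g'(r+\eta r)\le g'(r)\le g'(r-\eta r)$, with no comparability in the reverse direction.
\end{itemize}

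To handle $g'$, we would not compare pointwise. We use that the statement is phrased with a $(1+o(1))$ multiplying $g g'\log$, and first try to absorb the discrepancy via monotonicity of the whole bracket. The plan is to write $E(r):=V(r)+\frac12\cdot\frac1{2\pi}\iint gg'/|X-Y|-\frac12\int_a^r gg'/t$; this equals a constant plus $o(1)$ by the above. We then bound the kernel term between $(1-C\eta)P(r+\eta r)$ and $(1+C\eta)P(r-\eta r)$, where $P(s):=g(s)g'(s)\log(s/g(s))$. This uses $g'$ monotone, $g(r\pm\eta r)\approx g(r)$, and $\log$ slowly varying. At that point we use the identity itself at the shifted points $r\pm\eta r$. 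The quantities $V$ and $\int_a^r gg'/t$ change by $o(1)$ plus $O(\eta)\cdot(\text{their size})$ between $r$ and $r\pm\eta r$: by \eqref{eq:Jg_sharp_estimate_Vtr}-type estimates $V(r+\eta r)-V(r)\le C\eta r\,g'(r)^2\le C\eta g(r)g'(r)$, and similarly for the integral. Hence the identity at $r$ and at $r\pm\eta r$ pins $P$ to within a factor $1+O(\eta)$ plus additive $o(1)+O(\eta)\,g(r)g'(r)$. That additive error is of size $O(\eta)P(r)/\log(r/g(r))$, which is again absorbable. Letting $\eta\to0$ slowly (a diagonal choice $\eta=\eta(r)\to0$ compatible with the $o(1)$ terms of Lemma \ref{lem:Jg_sharp_estimate_3}) yields $(2+o(1))P(r)+o(1)$. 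Multiplying out by $\frac12$ gives the proposition with $C=V(0)-\tfrac12(C_N+C_H-C_B)$. If the sandwich argument fails to close, the fallback is to prove the statement with the coefficient $(1+o(1))$ attached to the averaged quantity $\frac1{2\eta r}\int_{r-\eta r}^{r+\eta r}gg'$, and then convert that average using concavity.
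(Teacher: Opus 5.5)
Your proposal is correct and is essentially the paper's proof. You insert the expansions of $N_g,H_g,J_g,T_g$ and Lemma~\ref{lem:Jg_sharp_estimate_4} into the identity, sandwich the kernel term via Lemma~\ref{lem:Jg_sharp_estimate_3}, and remove the mismatch in the argument of $g'$ by re-basing at the shifted points $r\mp\eta r$; this is harmless because $V$ and $\int_a^r gg'/t\,\mathrm{d}t$ change only by $o(1)$ over such windows.

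One small point: these changes are $o(1)$ outright in both directions, e.g.\ $|V(r\pm\eta r)-V(r)|\le \tfrac12 g'(r/2)g(2r)\le C(\log r)^{-1/8}$, and \eqref{eq:sharp_estimate_gdg} handles the integral. This is cleaner than your bound $C\eta\, g(r)g'(r)$, which is valid only for the forward shift because the backward one involves $g'(r-\eta r)$. With it the sandwich closes directly and no fallback is needed.
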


\begin{proof}
It follows from the identity \eqref{eq:pe_main_NHJT}, Lemmas \ref{lemma_1_potential_expansion},  \ref{lem:Hg_sharp_estimate} and \ref{lem:Tg_sharp_estimate}, as well as Corollary \ref{coro:Jg_sharp_estimate} that
    \begin{equation*}\begin{split}
		-V(r) + 2V(0)
		&= V(r) + \frac{1}{2\pi} \int_{0}^{2\pi}\int_{\frac r2}^{2r} \frac{g(t)g^{\prime}(t)}{\lvert X-Y\rvert}\mathrm{d}t\mathrm{d}\lambda - \frac{1}{2\pi}\int_{0}^{2\pi}\int_{a}^{2r} \frac{g(t)g^{\prime}(t)}{\lvert X\rvert}\mathrm{d}t\mathrm{d}\lambda\\
        &\quad\ +  C_{N} + C_{H} + o(1) \qquad\text{as } r \to \infty.
	 \end{split}\end{equation*} 
        Replacing the second integral by Lemma \ref{lem:Jg_sharp_estimate_4}, and consolidating constants, we obtain
	\begin{equation*}
		2V(r) + \frac{1}{2\pi}\int_{0}^{2\pi}\int_{\frac r2}^{2r} \frac{g(t)g^{\prime}(t)}{\lvert X-Y\rvert}\mathrm{d}t\mathrm{d}\lambda - \int_{a}^{2r} \frac{g(t)g^{\prime}(t)}{t}\mathrm{d}t = C + o(1) \qquad\text{as } r \to \infty.
	\end{equation*}
         Substituting the estimates from Lemma \ref{lem:Jg_sharp_estimate_3} into the left-hand side yields the two-sided bounds
	\begin{equation}\label{eq:lower_bound_log11}
		\begin{split}
		V(r) + (1+o(1))g(r+\eta r)g^{\prime}(r-\eta r)\log\frac{\eta r}{g(r)} - \frac{1}{2}\int_{a}^{2r} \frac{g(t)g^{\prime}(t)}{t}\mathrm{d}t \ge C - o(1),\\
		V(r) + (1-o(1))g(r-\eta r)g^{\prime}(r+\eta r)\log\frac{\eta r}{g(r)} - \frac{1}{2}\int_{a}^{2r} \frac{g(t)g^{\prime}(t)}{t}\mathrm{d}t \le C + o(1), 
		\end{split}
	\end{equation}
	as $r\to\infty$, where $\eta\in(0,1/4)$ and $C$ is a fixed constant independent of $\eta$.
	
Since $g(t) \le Ct^{1/2}(\log t)^{-1/8}$ and $g^{\prime}(t)\le Ct^{-1/2}$ for $t>2a$ (see \eqref{eq:sharp_estimate_g} and \eqref{eq:sharp_estimate_dg}), we have
	\begin{equation*}\begin{split}
		\lvert V(r\pm\eta r) - V(r)\rvert 
        &\le \left|\frac12\int_{r\pm\eta r}^{r}\lvert g^{\prime}(t)\rvert^{2} \mathrm{d}t\right| 
        \le \frac12g^{\prime}\left(\frac r2\right)g(2r)\le C(\log r)^{-\frac18}=o(1) \qquad\text{as }r\to\infty,
	\end{split}\end{equation*}
	and
	\begin{equation*}
		\left|\int_{r\pm\eta r}^{2r} \frac{g(t)g^{\prime}(t)}{t}\mathrm{d}t\right| \le C(\log r)^{-\frac18}\log 4= o(1) \qquad\text{as }r\to\infty.
	\end{equation*}
	Combining the above estimates with \eqref{eq:lower_bound_log11} gives
	\begin{equation}\label{eq:lower_bound_log12}
		\begin{split}
		V(r-\eta r) + (1+o(1))g(r+\eta r)g^{\prime}(r-\eta r)\log\frac{\eta r}{g(r)} - \frac{1}{2}\int_{a}^{r-\eta r} \frac{g(t)g^{\prime}(t)}{t}\mathrm{d}t \ge C - o(1),\\
		V(r+\eta r) + (1-o(1))g(r-\eta r)g^{\prime}(r+\eta r)\log\frac{\eta r}{g(r)} - \frac{1}{2}\int_{a}^{r+\eta r} \frac{g(t)g^{\prime}(t)}{t}\mathrm{d}t \le C + o(1),
		\end{split}
	\end{equation}
        as $r\to\infty$.
	
        In view of \eqref{eq:refined_ineq_6} and \eqref{eq:refined_ineq_5}, 
	\begin{equation}\label{eq:lower_bound_log13}
		g(r+\eta r) \ge g(r-\eta r) \ge \frac{g(r+\eta r)}{1+4\eta}
	\end{equation} 
        for sufficiently large $r$. Moreover, by the monotonicity of $g$ in \eqref{eq:condition_dg_sign},   
	\begin{equation}\label{eq:lower_bound_log14}
		\begin{split}
		&\log\frac{\eta r}{g(r)} \le \log\frac{\eta r}{g(r-\eta r)}=\log\frac{r-\eta r}{g(r-\eta r)} + \log\frac{\eta}{1-\eta},\\
		&\log\frac{\eta r}{g(r)} 
        \ge\log\frac{\eta r}{g(r+\eta r)}
        = \log\frac{r+\eta r}{g(r+\eta r)} + \log\frac{\eta}{1+\eta}.
		\end{split}
	\end{equation}  
	Let $\tilde{r} := r-\eta r$ and $\tilde{R} := r+\eta r$. From \eqref{eq:lower_bound_log12}-\eqref{eq:lower_bound_log14}, we obtain
	\begin{equation*}
		\begin{split}
		&V(\tilde{r}) + (1+o(1))(1+4\eta)g(\tilde{r})g^{\prime}(\tilde{r}) \left(\log\frac{\tilde{r}}{g(\tilde{r})} + \log\frac{\eta}{1-\eta}\right) - \frac{1}{2}\int_{a}^{\tilde{r}} \frac{g(t)g^{\prime}(t)}{t}\mathrm{d}t \ge C - o(1),\\
		&V(\tilde{R}) + (1-o(1))\frac{g(\tilde{R})}{1+4\eta}g^{\prime}(\tilde{R}) \left(\log\frac{\tilde{R}}{g(\tilde{R})} + \log\frac{\eta}{1+\eta}\right) - \frac{1}{2}\int_{a}^{\tilde{R}} \frac{g(t)g^{\prime}(t)}{t}\mathrm{d}t \le C + o(1),
		\end{split}
	\end{equation*}
      as $r\to\infty$. 
	Note that $\log(r/g(r))\to \infty$ as $r \to \infty$. For each $\eta\in (0,1/4)$, there exists sufficiently large  $R_{\eta}$ such that
	\begin{equation*}
		\begin{split}
		&V(\tilde{r}) + (1+8\eta)g(\tilde{r})g^{\prime}(\tilde{r})\log\frac{\tilde{r}}{g(\tilde{r})} - \frac{1}{2}\int_{a}^{\tilde{r}} \frac{g(t)g^{\prime}(t)}{t}\mathrm{d}t \ge C - \eta \qquad \text{for } \tilde{r} \ge (1-\eta)R_{\eta},\\
		&V(\tilde{R}) + (1-8\eta)g(\tilde{R})g^{\prime}(\tilde{R})\log\frac{\tilde{R}}{g(\tilde{R})} - \frac{1}{2}\int_{a}^{\tilde{R}} \frac{g(t)g^{\prime}(t)}{t}\mathrm{d}t \le C + \eta \qquad \text{for } \tilde{R} \ge (1+\eta)R_{\eta}.
		\end{split}
	\end{equation*}
	The desired estimate follows from the above bounds.
\end{proof}

\subsection{Positive lower bound for $\mathbf{g(r)g^{\prime}(r)\log(r/g(r))}$}\label{subsec:logbound}

The goal of this subsection is to show that, assuming the concavity of the fixed boundary $N$, a positive lower bound for $g(r)g^{\prime}(r)\log(r/g(r))$ holds, which plays a crucial role in establishing the desired growth estimate for the free boundary.

The following observation will be used in the subsequent argument.

\begin{lemma}\label{lem:Nconcave_liminf1}
Let $F_g$ be defined in \eqref{eq:Fg_def}. Then
\begin{equation*}
\frac{1}{2\pi} \int_{0}^{2\pi}\int_{1}^{\infty}g(t)F_{g}(t,\lambda;r)\mathrm{d}t\mathrm{d}\lambda = 1
\qquad \text{for } r > a.
\end{equation*}
\end{lemma}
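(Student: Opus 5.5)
We will obtain the identity by applying the Green representation behind Proposition \ref{prop:pe} to a suitable explicit harmonic function and not to $V$. The first step is to identify the integrand. By the computation leading to \eqref{eq:pe_F1}, for $X=(t,g(t),\lambda)\in N\cup\Gamma$ we have
\begin{equation*}
g(t)\sqrt{1+\lvert g'(t)\rvert^2}\,\partial_{\nu}G(X,Y)=g(t)F_g(t,\lambda;r).
\end{equation*}
The left-hand side of the claim is therefore
\begin{equation*}
\frac{1}{2\pi}\int_{N\cup\Gamma}\partial_\nu G(X,Y)\,\mathrm{d}\mathcal H^2(X).
\end{equation*}
This is the identity \eqref{eq:pe_1} evaluated formally with the constant function $1$ in place of $V$. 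Since $\partial_\nu 1=0$, the single-layer term disappears. Green's identity \eqref{eq:pe_2} with $V\equiv1$, taken at a boundary point $Y\in\Gamma$ where the solid angle is $2\pi$, gives
\begin{equation*}
-\frac12+1=\frac{1}{4\pi}\int_{\partial(B_R\cap D)}\partial_\nu G(X,Y)\,\mathrm{d}\mathcal H^2(X).
\end{equation*}
Here the $-\tfrac12$ comes from the singularity $1/\lvert X-Y\rvert$ at the boundary point $Y$, and the $+1$ comes from the singularity $-1/\lvert X\rvert$ at the interior point $0$. With the sign conventions used in Proposition \ref{prop:pe}, this yields
\begin{equation*}
\frac{1}{4\pi}\int_{\partial(B_R\cap D)}\partial_\nu G=\frac12,
\end{equation*}
so after multiplying by $2$ we get the value $1$, provided the spherical cap contributes nothing in the limit.

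The second step is to check that the contribution of $\partial B_R\cap D$ vanishes as $R\to\infty$. For this we use the bound $\partial_\nu G(X,Y)=O(R^{-3})$ on $\partial B_R$, which was already established in the proof of Proposition \ref{prop:pe}, together with the area bound $\mathcal H^2(\partial B_R)=O(R^2)$. Together these give a contribution of order $O(R^{-1})\to0$. Unlike in Proposition \ref{prop:pe}, no information about $\phi$ is needed here; the computation is purely geometric.

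The step that needs genuine care is the convergence of the boundary integral near $X=Y$ and at infinity. Near $Y$, we use that $g$ is smooth at $r>a$. Then
\begin{equation*}
rg'(t)-g(r)\cos\lambda+(\text{terms})=O(\lvert X-Y\rvert^2)
\end{equation*}
in the combination $(X-Y)\cdot\nu$, which makes the kernel $O(\lvert X-Y\rvert^{-1})$ and hence integrable on the surface. This is the usual double-layer regularity on a $C^{1,1}$ surface, and it also justifies the principal value giving exactly half the point mass. At infinity, we use $F_g=O(t^{-3}(g(t)+rg'(t)+r))$ together with the sublinear growth of $g$, so that $g(t)F_g$ is integrable in $t$. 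The corner at $t=a$ is harmless because $g'(a-)=g'(a+)$. If $g'$ is infinite at $t=1$ (where $N$ meets the axis), the surface is still a closed Lipschitz surface near the axis, and the divergence theorem still applies.

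An alternative, purely computational route would be to integrate in $\lambda$ and $t$ directly and recognize exact derivatives. That route looks messy, so the Green-identity argument above is the one we will write down.
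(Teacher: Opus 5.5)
Your argument is correct and is essentially the paper's. The paper applies \eqref{eq:pe_main} to $\phi+1$ (i.e.\ to $V+1$) and subtracts the formula for $V$; by linearity this is exactly your application of Green's identity to the constant harmonic function $1$, except that the paper inherits the vanishing of the spherical cap from Proposition~\ref{prop:pe} instead of redoing it.

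One small slip: your bound $F_g=O(t^{-3}(g(t)+rg'(t)+r))$ is too crude to give integrability from sublinearity alone, since it produces a term $g(t)^2/t^3$. In fact the $g(t)$ in $F_g$ is multiplied by $\lvert X\rvert^{-3}-\lvert X-Y\rvert^{-3}=O(rt^{-4})$, which gives enough decay. Alternatively, your cap estimate already shows that the truncated integrals over $B_R\cap\partial D$ converge, so absolute integrability is not needed.
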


\begin{proof}
Since $\phi + 1$ is also a velocity potential, the representation formula \eqref{eq:pe_main} holds with $V=\phi-\phi_{\infty}$
replaced by $V+1=\phi+1-\phi_{\infty}$ (recall that $\phi_\infty$ is the blow-down limit defined in \eqref{eq:condition_blowdown}). Hence, taking the difference of both representation formulas,
\begin{equation*}
	\begin{split}
		1 =&\ \big(-(V(r)+1) + 2(V(0)+1)\big) - \big(-V(r)+2V(0)\big)\\
		=&\ \frac{1}{2\pi} \int_{0}^{2\pi} \int_{1}^{\infty} g(t)\big(V(t)+1\big)F_{g}(t,\lambda;r) + g(t)g^{\prime}(t)\left(\frac{1}{\lvert X-Y\rvert} - \frac{1}{\lvert X\rvert}\right)\mathrm{d}t\mathrm{d}\lambda\\
		&\ - \frac{1}{2\pi} \int_{0}^{2\pi} \int_{1}^{\infty} g(t)V(t)F_{g}(t,\lambda;r) + g(t)g^{\prime}(t)\left(\frac{1}{\lvert X-Y\rvert} - \frac{1}{\lvert X\rvert}\right)\mathrm{d}t\mathrm{d}\lambda\\
		=&\ \frac{1}{2\pi} \int_{0}^{2\pi}\int_{1}^{\infty} g(t)F_{g}(t,\lambda;r)\mathrm{d}t\mathrm{d}\lambda.
	\end{split}
\end{equation*}
This finishes the proof. 
\end{proof}

We fix a free boundary point $Y_{0}:=(r_{0},g(r_{0}),0)$, where $r_0>a$ is given by the following lemma. 

\begin{lemma}\label{lem:Nconcave_liminf_r0}
Assume that $g$ satisfies \eqref{eq:condition_g_sublinear} and \eqref{eq:concave_fb}. There exists $r_0>a$ such that
\begin{equation}\label{eq:Nconcave_liminf_r0}
\lvert X-Y_{0}\rvert\le t-1 \qquad \text{for } t > r_{0}.
\end{equation}
\end{lemma}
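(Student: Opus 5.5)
We argue by a direct computation of $\lvert X-Y_0\rvert^2$ and comparison with $(t-1)^2$. For $X=(t,g(t),\lambda)$ and $Y_0=(r_0,g(r_0),0)$,
\begin{equation*}
\lvert X-Y_0\rvert^2=(t-r_0)^2+(g(t)-g(r_0))^2+2g(t)g(r_0)(1-\cos\lambda)\le (t-r_0)^2+(g(t)+g(r_0))^2,
\end{equation*}
while
\begin{equation*}
(t-1)^2=(t-r_0)^2+2(r_0-1)(t-r_0)+(r_0-1)^2 .
\end{equation*}
It therefore suffices to choose $r_0>a$ such that
\begin{equation*}
(g(t)+g(r_0))^2\le 2(r_0-1)(t-r_0)+(r_0-1)^2 \qquad\text{for all } t>r_0 .
\end{equation*}

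The right-hand side is linear in $t$ with slope $2(r_0-1)$. The left-hand side grows sublinearly in $t$: by \eqref{eq:condition_g_sublinear} and the concavity \eqref{eq:concave_fb}, $g$ is concave and nondecreasing on $(a,\infty)$ with $g'\to 0$ (see \eqref{eq:condition_dg_to0}). The comparison is made in two regimes.

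At $t=r_0$ we need $4g(r_0)^2\le (r_0-1)^2$, i.e.\ $g(r_0)\le (r_0-1)/2$. This holds for large $r_0$ because $g(r_0)=o(r_0)$.

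For $t>r_0$, we compare derivatives. Since $g$ is nondecreasing and concave, $g(t)\le g(r_0)+g'(r_0)(t-r_0)$, so the left-hand side is at most $\bigl(2g(r_0)+g'(r_0)(t-r_0)\bigr)^2$. Writing $s=t-r_0\ge 0$, it remains to verify
\begin{equation*}
4g(r_0)^2+4g(r_0)g'(r_0)s+g'(r_0)^2s^2\le (r_0-1)^2+2(r_0-1)s .
\end{equation*}
The quadratic term $g'(r_0)^2s^2$ is the obstacle, since it eventually beats the linear term if $g'(r_0)>0$. This particular linearization is therefore too crude, and we use sublinearity in a stronger form instead.

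The fix uses the concavity bound $g(t)\le g(a)+\frac{g(r_0)-g(a)}{r_0-a}(t-a)$ from \eqref{eq:g2r_r}, which is too crude as well. The better route is to use $g(t)=o(t)$ directly. Choose $T$ so large that $g(t)\le \delta t$ for $t\ge T$, with $\delta$ small. Then for $t\ge r_0\ge T$,
\begin{equation*}
(g(t)+g(r_0))^2\le (2\delta t)^2=4\delta^2t^2 .
\end{equation*}
We need $4\delta^2t^2\le (t-1)^2-(t-r_0)^2$ for all $t>r_0$, but the right-hand side is only linear in $t$. So again the quadratic growth has to be handled, and uniformity in all $t$ requires more than $o(t)$.

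Here the earlier bound \eqref{eq:sharp_estimate_g} settles it: $g(t)\le Ct^{1/2}$ for $t>a$. Hence
\begin{equation*}
(g(t)+g(r_0))^2\le 4C^2t \qquad\text{for } t\ge r_0 ,
\end{equation*}
which is linear in $t$. The requirement becomes
\begin{equation*}
4C^2t\le 2(r_0-1)(t-r_0)+(r_0-1)^2=2(r_0-1)t-(r_0^2-1).
\end{equation*}
Take $r_0$ with $2(r_0-1)\ge 8C^2$, so that $2(r_0-1)t-4C^2t\ge \tfrac12\cdot 2(r_0-1)t\ge (r_0-1)t$. For $t\ge r_0$ we have $(r_0-1)t\ge r_0^2-r_0$. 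This is still short of $r_0^2-1$ by $r_0-1$, so we need a sharper handling near $t=r_0$.

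To close the gap, use the exact right-hand side $(t-1)^2-(t-r_0)^2=(r_0-1)(2t-r_0-1)$. We need
\begin{equation*}
4C^2t\le (r_0-1)(2t-r_0-1) \qquad\text{for all } t\ge r_0 .
\end{equation*}
Both sides are linear in $t$, so it suffices to check $t=r_0$ and compare slopes. At $t=r_0$ the condition is $4C^2r_0\le (r_0-1)^2$, and the slopes satisfy $4C^2\le 2(r_0-1)$. Both hold once $r_0$ is large, which proves \eqref{eq:Nconcave_liminf_r0}.

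The main point is precisely this use of $g(t)\le Ct^{1/2}$ from Proposition~\ref{prop:g_refined_growth} to obtain a uniform inequality in $t$. The plain sublinearity \eqref{eq:condition_g_sublinear} does not suffice directly, because it leaves a quadratic term that cannot be absorbed for all $t>r_0$.
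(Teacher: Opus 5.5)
Your final argument is correct and is essentially the paper's proof. You bound $\lvert X-Y_0\rvert^2$ by $(t-r_0)^2+(g(t)+g(r_0))^2\le (t-r_0)^2+4C^2t$ using $g(t)\le Ct^{1/2}$ from Proposition~\ref{prop:g_refined_growth}, and reduce to $4C^2t\le (r_0-1)(2t-r_0-1)$ for $t\ge r_0$, which holds for large $r_0$. The paper factors out $2t(r_0-1)$ where you compare the two linear functions at $t=r_0$ and by slope. Drop the abandoned linearization and plain-sublinearity attempts from a write-up; your remark that $g=O(t^{1/2})$ is genuinely needed here is correct.
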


\begin{proof}
Let $t>r_0>a$. Note that $g(r_0)\le g(t)$ by \eqref{eq:condition_dg_sign}. Direct calculations give
\begin{equation*}
\lvert X-Y_{0}\rvert^{2}= (t-r_{0})^{2} + (g(t)-g(r_{0}))^{2} + 2g(t)g(r_{0})(1-\cos\lambda)\le (t-r_{0})^{2}+4g(t)^{2}
\end{equation*}
and 
\begin{equation*}
(t-1)^2=(t-r_0)^2-(r_0^2-1)+2t(r_{0}-1).
\end{equation*} 
Hence
\begin{equation}\label{eq:Nconcave_liminf_r01}
\begin{split}
\lvert X-Y_{0}\rvert^{2}-(t-1)^{2} 
&\le 4g(t)^{2}+(r_0^2-1)-2t(r_{0}-1)\\
&= 2t(r_{0}-1)\left(\frac{2g(t)^{2}}{t(r_{0}-1)}+\frac{r_{0}+1}{2t}-1 \right).
\end{split}
\end{equation}
By $g(t)\le Ct^{1/2}$ in \eqref{eq:sharp_estimate_g} and $t>r_0$, we have  
\begin{equation*}
\frac{2g(t)^{2}}{t(r_{0}-1)} \le \frac{C}{r_{0}-1}\to 0
\quad\text{and}\quad
\frac{r_{0}+1}{2t}<\frac{r_{0}+1}{2r_{0}} = \frac{1}{2}+ \frac{1}{2r_{0}}\to\frac12 
\qquad\text{as } r_0\to\infty.
\end{equation*}
Therefore, the right-hand side of \eqref{eq:Nconcave_liminf_r01} is negative for sufficiently large $r_{0}$. 
This implies \eqref{eq:Nconcave_liminf_r0}. 
\end{proof}

From the identity \eqref{eq:pe_main}, we have 
\begin{equation}\label{eq:Vdiff_initial}
\begin{split}
    -V(r)+V(r_{0})
     &=  \big(-V(r)+2V(0)\big) - \big(-V(r_{0})+2V(0)\big)\\
     &= \frac{1}{2\pi} \int_{0}^{2\pi}\int_{1}^{\infty} g(t)V(t)\big(F_{g}(t,\lambda;r) - F_{g}(t,\lambda;r_{0})\big)\mathrm{d}t\mathrm{d}\lambda\\
     &\quad\ +\frac{1}{2\pi} \int_{0}^{2\pi}\int_{1}^{\infty} g(t)g^{\prime}(t)\left(\frac{1}{\lvert X-Y\rvert} - \frac{1}{\lvert X-Y_{0}\rvert}\right)\mathrm{d}t\mathrm{d}\lambda.
\end{split}
\end{equation}
Since Lemma \ref{lem:Nconcave_liminf1} gives
\begin{equation*}
\frac{1}{2\pi} \int_{0}^{2\pi}\int_{1}^{\infty} g(t)V(r_{0})\big(F_{g}(t,\lambda;r) - F_{g}(t,\lambda;r_{0})\big)\mathrm{d}t\mathrm{d}\lambda=0,   
\end{equation*}
the function $V(t)$ in the first integrand of \eqref{eq:Vdiff_initial} can be replaced by $V(t)-V(r_0)$. That is, 
\begin{equation}\label{eq:Vdiff_transformed}
\begin{split}
    -V(r)+V(r_{0})
    &=\frac{1}{2\pi}\int_{0}^{2\pi}\int_{1}^{\infty} g(t)\big(V(t)-V(r_{0})\big)\big(F_{g}(t,\lambda;r) - F_{g}(t,\lambda;r_{0})\big)\mathrm{d}t\mathrm{d}\lambda\\
    &\quad\ +\frac{1}{2\pi}\int_{0}^{2\pi}\int_{1}^{\infty} g(t)g^{\prime}(t)\left(\frac{1}{\lvert X-Y\rvert} - \frac{1}{\lvert X-Y_{0}\rvert}\right)\mathrm{d}t\mathrm{d}\lambda.
\end{split}
\end{equation}
We decompose the integrals in \eqref{eq:Vdiff_transformed} according to $t\in(1,r_0)$ and $t\in(r_0,\infty)$, and define
\begin{align}
&\begin{gathered}
    I_{0}(r;r_{0}):= \frac{1}{2\pi} \int_{0}^{2\pi}\int_{1}^{r_{0}} g(t)\big(V(t)-V(r_{0})\big)\big(F_{g}(t,\lambda;r) - F_{g}(t,\lambda;r_{0})\big)\mathrm{d}t\mathrm{d}\lambda\\
    \qquad + \frac{1}{2\pi} \int_{0}^{2\pi}\int_{1}^{r_{0}}  g(t)g^{\prime}(t)\left(\frac{1}{\lvert X-Y\rvert} - \frac{1}{\lvert X-Y_{0}\rvert}\right)\mathrm{d}t\mathrm{d}\lambda,
    \end{gathered} \label{eq:I0_def}\\
    &I_{1}(r;r_{0}): = \frac{1}{2\pi} \int_{0}^{2\pi} \int_{r_{0}}^{\infty} g(t)\big(V(t)-V(r_{0})\big)\big(F_{g}(t,\lambda;r) - F_{g}(t,\lambda;r_{0})\big)\mathrm{d}t\mathrm{d}\lambda,\label{eq:I1_def}\\
    &I_{2}(r;r_{0}): = \frac{1}{2\pi} \int_{0}^{2\pi}\int_{r_{0}}^{\infty} g(t)g^{\prime}(t)\left(\frac{1}{\lvert X-Y\rvert} - \frac{1}{\lvert X-Y_{0}\rvert}\right)\mathrm{d}t\mathrm{d}\lambda.\label{eq:I2_def}
\end{align}
With these quantities, \eqref{eq:Vdiff_transformed} is rewritten as
\begin{equation}\label{eq:Vdiff_final}
-V(r)+V(r_{0})
=I_{0}(r;r_{0}) + I_{1}(r;r_{0}) + I_{2}(r;r_{0}).
\end{equation}
We proceed to estimate the three integrals on the right-hand side separately. 

The estimate for $I_0(r;r_{0})$ is as follows.

\begin{lemma}\label{lem:Nconcave_liminf3}
Assume that $g$ satisfies \eqref{eq:condition_g_sublinear}, \eqref{eq:concave_N}, and \eqref{eq:concave_fb}. Let $I_0$ be defined in \eqref{eq:I0_def}. For each $\varepsilon \in (0,1)$, there exist constants $C_0=C_0(r_0) > 0$ independent of $\varepsilon$ and $R_{\varepsilon} > 2r_0$ such that
\begin{equation}\label{eq:Nconcave_liminf_I0}
I_{0}(r;r_{0}) \le -C_0 + \varepsilon\qquad \text{for } r > R_{\varepsilon}.
\end{equation}
\end{lemma}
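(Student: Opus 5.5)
The plan is to pass to the limit $r\to\infty$ inside $I_0(r;r_0)$, where $t$ ranges only over the compact interval $[1,r_0]$. Then I would show that the limiting quantity is a strictly negative number $-2C_0$, depending only on $r_0$ and $g|_{[1,r_0]}$. Given that, \eqref{eq:Nconcave_liminf_I0} follows with $C_0$ replaced by $2C_0-\varepsilon$ absorbed appropriately, i.e. $I_0\le -2C_0+\varepsilon\le -C_0+\varepsilon$ for $r>R_\varepsilon$.

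\emph{Step 1 (limit).} For $t\in[1,r_0]$ and $Y=(r,g(r),0)$ we have $\lvert X-Y\rvert\ge r-r_0$. By \eqref{eq:sharp_estimate_g}, $rg'(t)/\lvert X-Y\rvert^3=O(r^{-2})$, $g(r)/\lvert X-Y\rvert^3=O(r^{-5/2})$, $(tg'-g)/\lvert X-Y\rvert^3=O(r^{-3})$ and $1/\lvert X-Y\rvert=O(r^{-1})$. Hence, uniformly in $(t,\lambda)$,
\[
F_g(t,\lambda;r)\to\frac{tg'(t)-g(t)}{\lvert X\rvert^3}.
\]
Near $t=1$ and $t=a$ the factors $g'$ may be large, but $g\sqrt{1+g'^2}$ is integrable because $N$ has finite area. $V$ is bounded on $N$ and $V(t)-V(r_0)$ is bounded on $[1,r_0]$. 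Dominated convergence therefore gives $I_0(r;r_0)\to I_0^\infty$. Using the identity
\[
F_g(t,\lambda;r_0)-\frac{tg'-g}{\lvert X\rvert^3}=\frac{g(t)+(r_0-t)g'(t)-g(r_0)\cos\lambda}{\lvert X-Y_0\rvert^3},
\]
I get
\[
I_0^\infty=-\frac1{2\pi}\int_0^{2\pi}\!\!\int_1^{r_0}\Big[g(t)\big(V(t)-V(r_0)\big)\frac{g(t)+(r_0-t)g'(t)-g(r_0)\cos\lambda}{\lvert X-Y_0\rvert^3}+\frac{g(t)g'(t)}{\lvert X-Y_0\rvert}\Big]\mathrm dt\,\mathrm d\lambda .
\]

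\emph{Step 2 (sign).} By \eqref{eq:concave_N}, \eqref{eq:concave_fb} and $g'(a-)=g'(a+)$, the function $g$ is concave on all of $[1,\infty)$. Hence the numerator satisfies $g(t)+(r_0-t)g'(t)-g(r_0)\cos\lambda\ge g(r_0)(1-\cos\lambda)\ge0$. On $(a,r_0)$, $V$ is increasing, so $V(t)-V(r_0)\le0$; there the first term of the bracket has a sign opposite to what we want and must be beaten by the $gg'$ term. On $N$, $V(t)-V(r_0)$ has no evident sign. The only information there is $\lvert dV/dt+1\rvert=\lvert\partial_s\phi\rvert\sqrt{1+g'^2}\le\sqrt{1+g'^2}$, which comes from \eqref{eq:condition_speed}. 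I would therefore write $V(t)-V(r_0)=-\int_t^{r_0}V'(\tau)\,\mathrm d\tau$ and apply Fubini. This converts the first term into an integral of $V'(\tau)$ against the kernel accumulated over $[1,\tau]$. Since $V'\ge -1-\sqrt{1+g'^2}$ and $V'\le\sqrt{1+g'^2}-1$, the whole integrand can be compared pointwise with an expression in the normalized quantities $p=g'$, $x$, $y$ (ratios such as $(r_0-t)/g$ and $g(r_0)/g$) and $w=1-\cos\lambda\in[0,2]$.

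I would then invoke Lemma \ref{lem:Nconcave_liminf2}, which has exactly the right shape: the denominator $1+\sqrt{1+p^2}$ arises from rationalizing $\sqrt{1+p^2}-1$. The goal is to conclude that the bracket, after this rearrangement, is strictly positive on a set of positive measure, for instance near $t=r_0$ where $g>0$. That yields $I_0^\infty<0$.

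\emph{Main obstacle.} Step 2 is the hard part: organizing the Fubini/integration-by-parts rearrangement so that the resulting pointwise integrand matches the algebraic form in Lemma \ref{lem:Nconcave_liminf2}. One also has to handle the boundary contributions at $t=1$ (where $g$ may vanish) and at $t=a$ (where $g'$ may blow up), using concavity and the finite area of $N$. I would first try taking $x=(r_0-t)g'(t)/g(t)$ and $y=(g(r_0)-g(t))/g(t)$, since these are the natural ratios produced by the numerator, and check that every term fits. If they do not, the fallback is to apply Lemma \ref{lem:Nconcave_liminf1} with $r=r_0$ to adjust the constant subtracted from $V$.
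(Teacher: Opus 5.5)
Your Step~1 is correct and matches the paper's first step. Since $t$ ranges only over $[1,r_0]$, letting $r\to\infty$ reduces the lemma to proving that the constant
\[
C_0=\frac{1}{2\pi}\int_0^{2\pi}\!\!\int_1^{r_0}\frac{g(t)}{\lvert X-Y_0\rvert^{3}}\Big[\big(V(t)-V(r_0)\big)K+g'(t)\lvert X-Y_0\rvert^{2}\Big]\,\mathrm dt\,\mathrm d\lambda,
\qquad K:=g(t)+(r_0-t)g'(t)-g(r_0)\cos\lambda,
\]
is positive. This $C_0$ is minus your limit $I_0^\infty$.

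That positivity is the whole content of the lemma; it is where \eqref{eq:concave_N} and \eqref{eq:condition_speed} are actually used. Your Step~2 leaves it as the ``main obstacle'', and the route you sketch would not resolve it:
\begin{itemize}
\item After your Fubini rearrangement you must compare, at each $\tau$, the local term $g(\tau)g'(\tau)/\lvert X(\tau)-Y_0\rvert$ with $(\sqrt{1+g'(\tau)^2}-1)$ times the kernel $gK/\lvert X-Y_0\rvert^3$ accumulated over $t\in[1,\tau]$. That is nonlocal, not an algebraic expression in $(p,x,y,w)$ at a single point, so Lemma~\ref{lem:Nconcave_liminf2} cannot be applied.
\item Your ratios $x=(r_0-t)g'/g$ and $y=(g(r_0)-g)/g$ do not produce the form of that lemma either. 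Its constant term $1$ comes from normalizing by $(r_0-t)^2$, not by $g(t)^2$.
\item The fallback via Lemma~\ref{lem:Nconcave_liminf1} is unavailable, because that identity concerns the integral over all of $(1,\infty)$, not $I_0$ alone.
\item ``Strictly positive on a set of positive measure'' is only enough together with nonnegativity everywhere, which you have not obtained.
\end{itemize}

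The missing idea is to localize the nonlocal term by concavity, working pointwise in $t$. Since $K\ge g(r_0)(1-\cos\lambda)\ge 0$, you only need the one-sided bound $\mathrm dV/\mathrm dt\le\sqrt{1+g'^2}-1$, which comes from $\lvert\nabla\phi\rvert\le 1$ on $N$. It gives directly, with no Fubini,
\[
V(t)-V(r_0)\ge\tilde V(t;r_0):=-\int_t^{r_0}\big(\sqrt{1+g'(\tau)^2}-1\big)\,\mathrm d\tau .
\]
Now work on the interval $(1,\tau_0)$ where $g'>0$. Use $0\le g'(\tau)\le g'(t)$ for $\tau\ge t$, which holds by concavity of $g$ on all of $(1,\infty)$, together with the monotonicity of $z\mapsto z/(1+\sqrt{1+z^2})$. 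This gives
\[
\tilde V(t;r_0)=-\int_t^{r_0}\frac{g'(\tau)^2}{1+\sqrt{1+g'(\tau)^2}}\,\mathrm d\tau\;\ge\;-\,g'(t)\,\frac{g(r_0)-g(t)}{1+\sqrt{1+g'(t)^2}} .
\]
This produces a factor $g'(t)$ that can be pulled out together with the $gg'$ term. The integrand is then bounded below by $g(t)g'(t)\lvert X-Y_0\rvert^{-3}$ times a purely local bracket. Dividing that bracket by $(r_0-t)^2$ gives exactly the expression of Lemma~\ref{lem:Nconcave_liminf2} with
\[
p=g'(t),\qquad x=\frac{g(t)-g(r_0)}{t-r_0},\qquad y=-\frac{g(t)}{t-r_0},\qquad w=1-\cos\lambda .
\]
Hence the integrand is strictly positive on $(1,\tau_0)$. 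On $(\tau_0,r_0)$, if that interval is nonempty, $g'\equiv 0$ and $\tilde V=0$, so the integrand is nonnegative there.
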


\begin{proof}
The proof consists of two steps.

\textit{Step 1. Proof of the estimate \eqref{eq:Nconcave_liminf_I0} with an unsigned constant.} By the definition of $I_0$ in \eqref{eq:I0_def}, 
\begin{equation}\label{eq:I0_expansion}\begin{split}
    I_{0}(r;r_{0})&= \frac{1}{2\pi} \int_{0}^{2\pi}\int_{1}^{r_{0}} g(t)\big(V(t)-V(r_{0})\big)\big(F_{g}(t,\lambda;r) - F_{g}(t,\lambda;r_{0})\big)\mathrm{d}t\mathrm{d}\lambda\\
    &\quad\ + \frac{1}{2\pi} \int_{0}^{2\pi}\int_{1}^{r_{0}}  \frac{g(t)g^{\prime}(t)}{\lvert X-Y\rvert} \mathrm{d}t\mathrm{d}\lambda-\frac{1}{2\pi} \int_{0}^{2\pi}\int_{1}^{r_{0}}\frac{g(t)g^{\prime}(t)}{\lvert X-Y_{0}\rvert}\mathrm{d}t\mathrm{d}\lambda.
\end{split}
\end{equation}

It follows from the definition of $F_g$ in \eqref{eq:Fg_def} that
\begin{equation}\label{eq:Nconcave_liminf_I02}
F_{g}(t,\lambda;r) - F_{g}(t,\lambda;r_{0}) = \frac{g(t)+(r-t)g^{\prime}(t)-g(r)\cos\lambda}{\lvert X-Y\rvert^{3}} - \frac{g(t)+(r_{0}-t)g^{\prime}(t)-g(r_{0})\cos\lambda}{\lvert X-Y_{0}\rvert^{3}}.
\end{equation}
For $t\in (1,r_{0})$ and $r>2r_0$, we have
\begin{equation}\label{eq:Nconcave_liminf_I01}
\lvert X-Y\rvert=\left((t-r)^2+(g(t)-g(r))^2+2g(t)g(r)(1-\cos\lambda)\right)^{1/2}\ge \frac r2.
\end{equation}
Hence, 
\begin{equation*}
\left\lvert\frac{1}{2\pi} \int_{0}^{2\pi}\int_{1}^{r_{0}} g(t)\big(V(t)-V(r_{0})\big)\frac{g(t)+(r-t)g^{\prime}(t)-g(r)\cos\lambda}{\lvert X-Y\rvert^{3}}\mathrm{d}t\mathrm{d}\lambda\right\rvert\le \frac{C}{r^2} \qquad \text{as } r \to \infty,
\end{equation*}
where the constant $C$ depends on $r_0$. 
Therefore, in view of \eqref{eq:Nconcave_liminf_I02}, for fixed $r_0$ and each $\varepsilon \in (0,1)$, there exists $R_{\varepsilon}>2r_0$ such that
\begin{equation*}
	\begin{split}
		&\ \frac{1}{2\pi} \int_{0}^{2\pi} \int_{1}^{r_{0}} g(t)\big(V(t)-V(r_{0})\big) \big(F_{g}(t,\lambda;r) - F_{g}(t,\lambda;r_{0})\big)\mathrm{d}t\mathrm{d}\lambda\\
		\le&\ \varepsilon- \frac{1}{2\pi} \int_{0}^{2\pi} \int_{1}^{r_{0}} g(t)\big(V(t)-V(r_{0})\big) \frac{g(t)+(r_0-t)g^{\prime}(t)-g(r_{0})\cos\lambda}{\lvert X-Y_{0}\rvert^{3}}\mathrm{d}t\mathrm{d}\lambda
        \qquad \text{for } r > R_{\varepsilon}.
	\end{split}
\end{equation*}
Also, by \eqref{eq:Nconcave_liminf_I01}, we have  
\begin{equation*}
\begin{split}
\frac{1}{2\pi} \int_{0}^{2\pi} \int_{1}^{r_{0}}\frac{ g(t)g^{\prime}(t)}{\lvert X-Y\rvert} \mathrm{d}t\mathrm{d}\lambda \le C\frac{g^2(r_0)}{r}
\le \varepsilon
\qquad \text{for } r > R_{\varepsilon}
\end{split}
\end{equation*}
with a possibly larger $R_\varepsilon$. 
Plugging the above two estimates into \eqref{eq:I0_expansion} yields \eqref{eq:Nconcave_liminf_I0} with $C_0$ defined by
\begin{equation}\label{eq:Nconcave_liminf_C0}
C_0 := \frac{1}{2\pi} \int_{0}^{2\pi} \int_{1}^{r_{0}} g(t)\big(V(t)-V(r_{0})\big) \frac{g(t)+(r_0-t)g^{\prime}(t)-g(r_{0})\cos\lambda}{\lvert X-Y_{0}\rvert^{3}} + \frac{g(t)g^{\prime}(t)}{\lvert X-Y_{0}\rvert}\mathrm{d}t\mathrm{d}\lambda.
\end{equation}

\textit{Step 2. Positivity of the constant $C_0$.} 
Note that $g(t)+(r-t)g^{\prime}(t)\ge g(r)$ for $t,\, r>1$ by the concavity of $g$ on $(1,\infty)$ (see \eqref{eq:concave_N} and \eqref{eq:concave_fb}). Thus
\begin{equation}\label{eq:g_lambda}
g(t)+(r_0-t)g^{\prime}(t)-g(r_{0})\cos\lambda \ge g(r_{0})(1-\cos\lambda) \ge 0 \qquad\text{for } t>1.
\end{equation}
Moreover, from the expression of $V$ in \eqref{eq:Vt_expression}, we have
\begin{equation*}\begin{split}
V(t)-V(r_{0}) 
&= -\int^{r_{0}}_{t}\left(\partial_{s}\phi(\tau)\sqrt{1 + \lvert g^{\prime}(\tau)\rvert^{2}} - 1\right) \mathrm{d}\tau\\ 
&\ge \int_{t}^{r_{0}} \left(1 - \sqrt{1 + \lvert g^{\prime}(\tau)\rvert^{2}}\right)\mathrm{d}\tau =: \tilde{V}(t;r_{0}) \qquad\text{for } t\in(1,r_0),
\end{split}\end{equation*}
where $\lvert \nabla \phi \rvert \le 1$ on $N$ (see \eqref{eq:condition_speed}) and $\lvert \nabla \phi \rvert=1$ on $\Gamma$ (see \eqref{eq:cavity_pde}) have been used in the inequality.  
Thus $C_0$ defined in \eqref{eq:Nconcave_liminf_C0} has the lower bound 
\begin{equation*}
C_0 \ge \frac{1}{2\pi} \int_{0}^{2\pi}\int_{1}^{r_{0}} \frac{g(t)}{\lvert X-Y_{0}\rvert^{3}} \left(\tilde{V}(t;r_{0}) \big(g(t)+(r_0-t)g^{\prime}(t)-g(r_{0})\cos\lambda\big) + g^{\prime}(t)\lvert X-Y_{0}\rvert^{2}\right)\mathrm{d}t\mathrm{d}\lambda.
\end{equation*}

Since $g\not\equiv0$ on $[1,\infty)$ (see \eqref{assumption_g_not_zero}), there exists a largest $\tau_0\in(1,r_0]$ such that $g^{\prime}>0$ on $(1,\tau_0)$. If $\tau_0<r_0$ and $g^{\prime}(\tau_0)=0$, then by concavity $g^{\prime}\equiv0$ on $[\tau_0,\infty)$, and hence $ \tilde{V}(t;r_{0})=0$ for all $t\in(\tau_0,r_0]$. 
Thus it suffices to show that 
\begin{equation}\label{eq:C0_lower_bound}
\frac{\tilde{V}(t;r_{0})}{g^{\prime}(t)} \big(g(t)+(r_0-t)g^{\prime}(t)-g(r_{0})\cos\lambda\big) + \lvert X-Y_{0}\rvert^{2} > 0 \qquad \text{for } t \in (1,\tau_{0}).
\end{equation}

The concavity of $g$ on $(1,\infty)$ implies that $g^{\prime}(\tau)\le g^{\prime}(t)$ for $\tau\ge t>1$, hence ---using the fact that $z\mapsto -z/(1+\sqrt{1+z^2})$ is non-increasing--- we get
\begin{equation*}
	\begin{split}
		\frac{\tilde{V}(t;r_{0})}{g^{\prime}(t)} 
        &\geq \frac{1}{g^{\prime}(t)} \int_{t}^{r_{0}} \frac{-\lvert g^{\prime}(\tau)\rvert^{2}}{1 + \sqrt{1 + \lvert g^{\prime}(\tau)\rvert^{2}}}\mathrm{d}\tau 
        \ge \frac{1}{g^{\prime}(t)} \int_{t}^{r_{0}} \frac{-g^{\prime}(t)g^{\prime}(\tau)}{1 + \sqrt{1 + \lvert g^{\prime}(t)\rvert^{2}}}\mathrm{d}\tau\\
		&= \frac{g(t)-g(r_{0})}{1+\sqrt{1 + \lvert g^{\prime}(t)\rvert^{2}}} \qquad \text{for } t \in (1,\tau_{0}).
	\end{split}
\end{equation*}
Therefore, 
\begin{equation*}
	\begin{split}
		& \frac{\tilde{V}(t;r_{0})}{g^{\prime}(t)} \big( g(t)+(r_0-t)g^{\prime}(t)-g(r_{0})\cos\lambda \big) + \lvert X-Y_{0}\rvert^{2}\\
		\ge &\ \frac{g(t)-g(r_{0})}{1+\sqrt{1+\lvert g^{\prime}(t)\rvert^{2}}} \big( g(t)-g(r_{0}) + (r_{0}-t)g^{\prime}(t) + g(r_{0})(1-\cos\lambda) \big)\\
		&\ + (t-r_{0})^{2} + (g(t)-g(r_{0}))^{2} + 2g(t)g(r_{0})(1-\cos\lambda)\\
		= &\ \frac{(t-r_{0})^{2}}{1+\sqrt{1+\lvert g^{\prime}(t)\rvert^{2}}} \left( \frac{(g(t)-g(r_{0}))^{2}}{(t-r_{0})^{2}} - \frac{g(t)-g(r_{0})}{t-r_{0}}g^{\prime}(t) + \frac{g(t)-g(r_{0})}{t-r_{0}} \frac{g(r_{0})}{t-r_{0}}(1-\cos\lambda) \right)\\
		&\ + (t-r_{0})^{2} \left( 1+ \frac{(g(t)-g(r_{0}))^{2}}{(t-r_{0})^{2}} + \frac{2g(t)g(r_{0})}{(t-r_{0})^{2}}(1-\cos\lambda) \right) \qquad \text{for } t \in (1,\tau_{0}).
	\end{split}
\end{equation*}

Applying now for $t \in (1,\tau_{0})$ Lemma \ref{lem:Nconcave_liminf2} to the right-hand side of the preceding formula with
\begin{equation*}
p=g^{\prime}(t),\quad x=\frac{g(t)-g(r_{0})}{t-r_{0}},\quad 
y=-\frac{g(t)}{t-r_{0}}, \quad w=1-\cos\lambda,
\end{equation*}
we obtain \eqref{eq:C0_lower_bound}.
\end{proof}

We now estimate $I_1(r;r_{0})$.

\begin{lemma}\label{lem:Nconcave_liminf4}
Assume that $g$ satisfies \eqref{eq:condition_g_sublinear} and \eqref{eq:concave_fb}. Let $I_1$ be defined in \eqref{eq:I1_def} and $V$ be as in \eqref{eq:Vt_def}. For each $\varepsilon \in (0,1)$, there exists $R_{\varepsilon}>2r_0$ such that
\begin{equation}\label{eq:Nconcave_liminf_I1}
I_{1}(r;r_{0}) \le V(r)-V(r_{0}) + \varepsilon\qquad \text{for } r > R_{\varepsilon}.
\end{equation}
\end{lemma}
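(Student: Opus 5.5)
We split $I_1(r;r_0)$ into the contribution of $F_g(t,\lambda;r)$ and that of $-F_g(t,\lambda;r_0)$, and treat them separately.

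\textbf{The $-F_g(\cdot;r_0)$ part.} For $t>r_0$ we have $V(t)-V(r_0)\ge 0$ by \eqref{eq:gV_increasing}. Moreover, by concavity of $g$ on $(a,\infty)$,
\[
g(t)+(r_0-t)g'(t)-g(r_0)\cos\lambda\ge g(r_0)(1-\cos\lambda)\ge 0.
\]
The remaining term $(tg'(t)-g(t))/|X|^3$ is the same for both $F_g(\cdot;r)$ and $F_g(\cdot;r_0)$, so it cancels in the difference. Hence the contribution of $-F_g(t,\lambda;r_0)$ is nonpositive, and we may simply drop it from the upper bound. This part is routine.

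\textbf{Reduction of the $F_g(\cdot;r)$ part.} Since $-F_g(\cdot;r_0)$ has been discarded, the common term is no longer cancelled, and we write
\[
F_g(t,\lambda;r)=\frac{g(r)(1-\cos\lambda)}{|X-Y|^3}+\frac{D(t;r)}{|X-Y|^3}+\frac{tg'(t)-g(t)}{|X|^3}.
\]
It remains to show
\[
\frac1{2\pi}\int_0^{2\pi}\!\!\int_{r_0}^\infty g(t)\bigl(V(t)-V(r_0)\bigr)F_g(t,\lambda;r)\,\mathrm dt\,\mathrm d\lambda\le V(r)-V(r_0)+o(1).
\]
We split the $t$-range into $(r_0,r/2)$, $[r/2,2r]$ and $(2r,\infty)$, and use \eqref{eq:sharp_estimate_g}--\eqref{eq:sharp_estimate_V} throughout. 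On the first and last ranges, the estimates behind \eqref{eq:Hg_sharp_estimate_2} and \eqref{eq:Tg_sharp_estimate_1} apply verbatim, with $V$ replaced by $V-V(r_0)$, which is still $O(1+\log t)$. Those arguments give absolutely integrable bounds, so these contributions are $o(1)$. The exception is the $|X|^{-3}$ term on $(r_0,r/2)$, which converges to a constant rather than vanishing, as in \eqref{eq:Hg_sharp_estimate_1}.

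\textbf{The transition window.} On $[r/2,2r]$, Lemma~\ref{lem:Jg_sharp_estimate_1} with $V$ replaced by $V-V(r_0)$ gives $V(r)-V(r_0)+o(1)$. Its proof only used monotonicity of $V$, the bound $O(1+\log r)$, and the oscillation estimate \eqref{eq:Jg_sharp_estimate_Vtr}, all of which survive a constant shift. Lemma~\ref{lem:Jg_sharp_estimate_D} likewise gives $o(1)$ for the $D$-term, and the $|X|^{-3}$ part is $O((1+\log r)/r)$ on this window.

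\textbf{Main obstacle.} The delicate point is the leftover $|X|^{-3}$ contribution on $(r_0,r/2)$, namely
\[
\frac1{2\pi}\int_0^{2\pi}\!\!\int_{r_0}^{\infty} g(t)\bigl(V(t)-V(r_0)\bigr)\frac{tg'(t)-g(t)}{|X|^3}\,\mathrm dt\,\mathrm d\lambda .
\]
This tends to a constant, not to zero. However, \eqref{eq:concave_fb} gives $tg'(t)-g(t)\le ag'(t)-g(a)\le 0$ for $t\ge a$, because $t\mapsto tg'(t)-g(t)$ is nonincreasing. Together with $V(t)-V(r_0)\ge 0$, the integrand is nonpositive, so this term can be dropped in the upper bound. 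The sign conventions here need care, and if the sign argument failed one would have to carry a constant $C_H'$ instead. Collecting the pieces and choosing $R_\varepsilon$ large yields \eqref{eq:Nconcave_liminf_I1}.
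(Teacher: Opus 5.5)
Your first paragraph and your range-by-range estimates are exactly the paper's proof. In $F_g(\cdot;r)-F_g(\cdot;r_0)$ the $|X|^{-3}$ terms cancel, and the $-|X-Y_0|^{-3}$ term enters with a nonpositive sign. This gives
\[
I_1(r;r_0)\le \frac1{2\pi}\int_0^{2\pi}\!\!\int_{r_0}^\infty g(t)\bigl(V(t)-V(r_0)\bigr)\frac{g(t)+(r-t)g'(t)-g(r)\cos\lambda}{|X-Y|^3}\,\mathrm dt\,\mathrm d\lambda .
\]
Your estimates of this integral finish the proof: \eqref{eq:Hg_sharp_estimate_2} on $(r_0,r/2)$, \eqref{eq:Tg_sharp_estimate_1} on $(2r,\infty)$, and Lemmas \ref{lem:Jg_sharp_estimate_1} and \ref{lem:Jg_sharp_estimate_D} with $V$ replaced by $V-V(r_0)$ on the window.

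The detour that follows is an error and should be deleted. Your sentence ``since $-F_g(\cdot;r_0)$ has been discarded, the common term is no longer cancelled'' contradicts the cancellation you just used. You only showed that the $|X-Y_0|^{-3}$ part of $F_g(\cdot;r_0)$ has a sign. The reduction to bounding $\frac1{2\pi}\int\int g(V-V(r_0))F_g(\cdot;r)$ would need $\int\int g(V-V(r_0))F_g(\cdot;r_0)\ge0$, and you have not proved that.

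The direction is also wrong. Write $P$ for the $|X-Y|^{-3}$ part and $Q$ for the $|X|^{-3}$ part of that integral. The true bound is $I_1\le P$. Since you yourself argue $Q\le 0$, an upper bound for $P+Q$ would not imply one for $P$. Your estimates happen to bound $P$ directly, and that is all that is needed.

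The sign argument in your ``main obstacle'' paragraph is also false as stated. The inequality $tg'(t)-g(t)\le ag'(t)-g(a)$ does follow from concavity. But $ag'(t)-g(a)\le 0$, or $ag'(a)-g(a)\le0$, fails in general. For a conical body $g(t)=c(t-1)$ with $g'(a+)=g'(a-)=c$, one gets $ag'(a)-g(a)=c>0$.

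One only has $tg'(t)-g(t)<0$ beyond some $t_1$, because $tg'-g$ is nonincreasing and $g(t)/t\to0$. Using that would force you to enlarge $r_0$. None of this is necessary: the ``leftover constant'' never appears once you keep the cancellation.
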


\begin{proof}
From the expression of $V$ in \eqref{eq:Vt_def}, we have
\begin{equation}\label{eq:V_diff_positive}
V(t)-V(r_{0}) = \int_{r_{0}}^{t} \left(\sqrt{1+\lvert g^{\prime}(\tau)\rvert^{2}}-1\right)\mathrm{d}\tau\ge0 \qquad\text{for } t>r_0.
\end{equation}
Moreover, as in \eqref{eq:g_lambda}, by the concavity of $g$ on $(a,\infty)$,  
\begin{equation}\label{eq:concav6}
g(t)+(r-t)g^{\prime}(t)-g(r)\cos\lambda\ge g(r)(1-\cos\lambda)\ge 0 \qquad\text{for } t,\, r>a.
\end{equation}
Hence, in view of the expression of $F_{g}(t,\lambda;r) - F_{g}(t,\lambda;r_{0})$ in \eqref{eq:Nconcave_liminf_I02}, 
$I_1$ defined in \eqref{eq:I1_def} satisfies ---applying \eqref{eq:concav6} with $r=r_0$---
\begin{equation*}\begin{split}
I_{1}(r;r_{0}) &= \frac{1}{2\pi} \int_{0}^{2\pi} \int_{r_{0}}^{\infty} g(t)\big(V(t)-V(r_{0})\big)\big(F_{g}(t,\lambda;r) - F_{g}(t,\lambda;r_{0})\big)\mathrm{d}t\mathrm{d}\lambda\\
&\le \frac{1}{2\pi} \int_{0}^{2\pi} \int_{r_{0}}^{\infty} g(t)\big(V(t)-V(r_{0})\big)\frac{g(t)+(r-t)g^{\prime}(t)-g(r)\cos\lambda}{\lvert X-Y\rvert^{3}}\mathrm{d}t\mathrm{d}\lambda.
\end{split}\end{equation*}

From \eqref{eq:V_diff_positive}, \eqref{eq:concav6} and \eqref{eq:Hg_sharp_estimate_2} we infer that
\begin{equation*}
\frac{1}{2\pi} \int_{0}^{2\pi} \int_{r_{0}}^{\frac r2} g(t)\big(V(t)-V(r_{0})\big)\frac{g(t)+(r-t)g^{\prime}(t)-g(r)\cos\lambda}{\lvert X-Y\rvert^{3}}\mathrm{d}t\mathrm{d}\lambda =o(1) \qquad\text{as } r \to \infty.
\end{equation*}
The same arguments as in the proofs of \eqref{eq:Tg_sharp_estimate_1} and \eqref{eq:Tg_sharp_estimate_3} give
\begin{equation*}
\frac{1}{2\pi} \int_{0}^{2\pi} \int_{2r}^{\infty} g(t)\big(V(t)-V(r_{0})\big)\frac{g(t)+(r-t)g^{\prime}(t)-g(r)\cos\lambda}{\lvert X-Y\rvert^{3}}\mathrm{d}t\mathrm{d}\lambda= o(1) \qquad\text{as } r \to \infty.
\end{equation*}
Additionally, Lemma \ref{lem:Jg_sharp_estimate_1} together with Lemma \ref{lem:Jg_sharp_estimate_D}, with $V(t)$ replaced by $V(t)-V(r_0)$ in both, yields
\begin{equation*}\begin{split}
&\ \frac{1}{2\pi} \int_{0}^{2\pi} \int_{\frac r2}^{2r} g(t)\big(V(t)-V(r_{0})\big) \frac{g(t)+(r-t)g^{\prime}(t)-g(r)\cos\lambda}{\lvert X-Y\rvert^{3}}\mathrm{d}t\mathrm{d}\lambda\\ 
=&\ V(r)-V(r_{0}) + o(1) \qquad\text{as } r \to \infty.
\end{split}\end{equation*}
Combining the above estimates yields \eqref{eq:Nconcave_liminf_I1}.
\end{proof}

It remains to estimate $I_2(r;r_{0})$.
 
\begin{lemma}\label{lem:Nconcave_liminf5}
Assume that $g$ satisfies \eqref{eq:condition_g_sublinear} and \eqref{eq:concave_fb}. Let $I_2$ be defined in \eqref{eq:I2_def}. For each $\varepsilon \in (0,1)$ and each  $\eta \in (0,1/4)$, there exists $R_{\varepsilon,\eta}>2r_0$ such that
\begin{equation*}
I_{2}(r;r_{0}) \le (2+\varepsilon) g(r+\eta r)g^{\prime}(r-\eta r)\log\frac{\eta r}{g(r)} - \frac{1}{2\pi}\int_{0}^{2\pi}\int_{r_{0}}^{\frac r2} \frac{g(t)g^{\prime}(t)}{\lvert X-Y_{0}\rvert}\mathrm{d}t\mathrm{d}\lambda + \varepsilon \qquad \text{for } r > R_{\varepsilon,\eta}.
\end{equation*}
\end{lemma}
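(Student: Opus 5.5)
We split $I_2(r;r_0)$ into three pieces,
\[
I_2=\frac{1}{2\pi}\int_0^{2\pi}\int_{r_0}^{\infty}\frac{g(t)g'(t)}{\lvert X-Y\rvert}\,\mathrm dt\,\mathrm d\lambda-\frac{1}{2\pi}\int_0^{2\pi}\int_{r_0}^{\frac r2}\frac{g(t)g'(t)}{\lvert X-Y_0\rvert}\,\mathrm dt\,\mathrm d\lambda-\frac{1}{2\pi}\int_0^{2\pi}\int_{\frac r2}^{\infty}\frac{g(t)g'(t)}{\lvert X-Y_0\rvert}\,\mathrm dt\,\mathrm d\lambda .
\]
The second piece appears verbatim in the statement. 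The individual integrals over $(r_0,\infty)$ may diverge, since $g g'\sim(\log t)^{-1/2}$ and $1/\lvert X-Y\rvert\sim 1/t$. We therefore first split $(r_0,\infty)$ into $(r_0,r/2)\cup(r/2,2r)\cup(2r,\infty)$. On $(2r,\infty)$ we keep the difference $1/\lvert X-Y\rvert-1/\lvert X-Y_0\rvert$ together, and only then separate the remaining pieces.

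\emph{Tail $(2r,\infty)$.} As in Lemma \ref{lem:Tg_sharp_estimate},
\[
\Bigl\lvert\frac1{\lvert X-Y\rvert}-\frac1{\lvert X-Y_0\rvert}\Bigr\rvert\le\frac{\lvert Y-Y_0\rvert}{\lvert X-Y\rvert\,\lvert X-Y_0\rvert}\le \frac{Cr}{t^2}
\]
for $t>2r$. Combined with $gg'\le C(\log t)^{-1/8}$ from \eqref{eq:sharp_estimate_gdg}, this gives $O((\log r)^{-1/8})=o(1)$.

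\emph{Window $(r/2,2r)$.} The $Y$-term is exactly the quantity bounded in Lemma \ref{lem:Jg_sharp_estimate_3}. Its upper bound gives $(2+o(1))g(r+\eta r)g'(r-\eta r)\log\frac{\eta r}{g(r)}+o(1)$, and for large $r$ we write $2+o(1)\le 2+\varepsilon$. The $Y_0$-term over $(r/2,2r)$ is nonnegative, since $g'\ge0$, so dropping it only helps the upper bound.

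\emph{Head $(r_0,r/2)$.} For the $Y$-term, we use $\lvert X-Y\rvert\ge r/2$ for $t\le r/2$. This bounds it by
\[
\frac{2}{r}\int_{r_0}^{r/2}gg'\,\mathrm dt\le \frac{g(r/2)^2}{r}\le C(\log r)^{-1/4}=o(1),
\]
exactly as in \eqref{eq:Hg_sharp_estimate_3}. The $Y_0$-term over $(r_0,r/2)$ is kept with its minus sign and is the term in the statement.

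Collecting these pieces gives the claim for $r>R_{\varepsilon,\eta}$, after absorbing the $o(1)$ errors into $\varepsilon$. Two points need care. The first is that $Y_0$ is a fixed point, so the bound $\lvert X-Y_0\rvert\ge t/2$ for $t\ge 2r_0$ (needed in the tail) must be checked; it is immediate from Lemma \ref{lem:Nconcave_liminf_r0} or directly from $g(t)=O(t^{1/2})$. The second is that all integrals must be finite on the bounded ranges, which holds because $\lvert X-Y_0\rvert\ge t-r_0$. I do not expect a genuine obstacle here. The only place where something nontrivial enters is the window estimate, and it is already supplied by Lemma \ref{lem:Jg_sharp_estimate_3}.
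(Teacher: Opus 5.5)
Your proposal is correct and follows the paper's proof essentially verbatim. Both use the same three-way split of $(r_0,\infty)$, the $o(1)$ bound for the $Y$-term on $(r_0,r/2)$ via \eqref{eq:Hg_sharp_estimate_3}, and the $Cr/t^2$ bound on the difference over $(2r,\infty)$ combined with \eqref{eq:sharp_estimate_gdg}. On $(r/2,2r)$ both drop the nonnegative $Y_0$-term and then invoke Lemma \ref{lem:Jg_sharp_estimate_3}. One small slip: Lemma \ref{lem:Nconcave_liminf_r0} gives an upper bound on $\lvert X-Y_0\rvert$, not the lower bound you need there, but $\lvert X-Y_0\rvert\ge t-r_0\ge t/2$ for $t\ge 2r_0$ is trivial anyway.
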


\begin{proof}
Fix $\varepsilon\in(0,1)$ and $\eta\in(0,1/4)$. Let $r>2r_0$. As in Lemma \ref{lem:Nconcave_liminf4}, we estimate
$I_{2}(r;r_{0})$ by splitting the integration interval $(r_0,\infty)$ into three subintervals $(r_0,r/2)$, $(r/2,2r)$, and $(2r,\infty)$.

By virtue of  \eqref{eq:Hg_sharp_estimate_3}, there exists $R_{\varepsilon}>2r_0$ such that 
\begin{equation}\label{eq:Nconcave_liminf_I21}
\begin{split}
&\ \frac{1}{2\pi} \int_{0}^{2\pi}\int_{r_{0}}^{\frac r2} g(t)g^{\prime}(t)\left(\frac{1}{\lvert X-Y\rvert} - \frac{1}{\lvert X-Y_{0}\rvert}\right)\mathrm{d}t\mathrm{d}\lambda \\
\le&\ \varepsilon- \frac{1}{2\pi}\int_{0}^{2\pi}\int_{r_{0}}^{\frac r2} \frac{g(t)g^{\prime}(t)}{\lvert X-Y_{0}\rvert}\mathrm{d}t\mathrm{d}\lambda \qquad \text{for } r > R_{\varepsilon}.
\end{split}\end{equation}
If $r$ is sufficiently large,
for $t>2r$,  one has 
\begin{equation*}
\lvert X-Y\rvert\lvert X-Y_0\rvert\ge \lvert t-r\rvert\lvert t-r_0\rvert\ge \frac {t^2}4,
\end{equation*}
and
\begin{equation*}
\lvert Y-Y_0\rvert=((r-r_0)^2+(g(r)-g(r_0))^2)^{1/2}\le Cr. 
\end{equation*}
Therefore, it holds that 
\begin{equation*}
\left|\frac{1}{\lvert X-Y\rvert} - \frac{1}{\lvert X-Y_{0}\rvert}\right| \le \frac{\lvert Y-Y_{0}\rvert}{\lvert X-Y\rvert \lvert X-Y_{0}\rvert} \le \frac{Cr}{t^{2}}.
\end{equation*}
This together with $g(t)g^{\prime}(t)\le C(\log t)^{-1/8}$ for $t>2a$ in \eqref{eq:sharp_estimate_gdg} gives
\begin{equation}\label{eq:Nconcave_liminf_I22}
\begin{split}
&\ \frac{1}{2\pi} \int_{0}^{2\pi}\int_{2r}^{\infty} g(t)g^{\prime}(t)\left(\frac{1}{\lvert X-Y\rvert} - \frac{1}{\lvert X-Y_{0}\rvert}\right)\mathrm{d}t\mathrm{d}\lambda\\ 
\le&\ C\int_{2r}^{\infty} (\log t)^{-\frac18}\frac r{t^{2}}\mathrm{d}t
\le C(\log r)^{-\frac18} 
<\varepsilon \qquad \text{for } r>R_{\varepsilon},
\end{split}
\end{equation}
with a possibly larger $R_{\varepsilon}$. 
Finally, by Lemma \ref{lem:Jg_sharp_estimate_3}, there exists $R_{\varepsilon,\eta}>R_{\varepsilon}$ such that one has
\begin{equation}\label{eq:Nconcave_liminf_I23}
\begin{split}
&\ \frac{1}{2\pi} \int_{0}^{2\pi}\int_{\frac r2}^{2r} g(t)g^{\prime}(t)\left(\frac{1}{\lvert X-Y\rvert} - \frac{1}{\lvert X-Y_{0}\rvert}\right)\mathrm{d}t\mathrm{d}\lambda \\
\le&\  \frac{1}{2\pi}\int_{0}^{2\pi}\int_{\frac r2}^{2r} \frac{g(t)g^{\prime}(t)}{\lvert X-Y\rvert}\mathrm{d}t\mathrm{d}\lambda
\le (2+\varepsilon)g(r+\eta r)g^{\prime}(r-\eta r)\log\frac{\eta r}{g(r)} + \varepsilon \ \quad \text{for } r > R_{\varepsilon,\eta}.
\end{split}
\end{equation}
Combining \eqref{eq:Nconcave_liminf_I21}, \eqref{eq:Nconcave_liminf_I22}, \eqref{eq:Nconcave_liminf_I23}, and the definition of $I_2(r;r_0)$ in \eqref{eq:I2_def} yields the desired estimate.
\end{proof}

We shall need the following estimate prior to the proof of the lower bound for $g(r)g^{\prime}(r)\log(r/g(r))$.

\begin{lemma}\label{lem:Nconcave_liminf6}
Assume that $g$ satisfies \eqref{eq:condition_g_sublinear}, \eqref{eq:concave_N}, and \eqref{eq:concave_fb}.  Let $V$ be as in \eqref{eq:Vt_def}. For each $\varepsilon \in (0,1)$, there exists $R_{\varepsilon}>2r_0$ such that
\begin{equation*}
2\big(V(r)-V(r_{0})\big)\le \int_{r_{0}}^{\frac r2} \frac{g(t)g^{\prime}(t)}{t-1}\mathrm{d}t+\varepsilon \qquad\text{for }r>R_{\varepsilon}.
\end{equation*}
\end{lemma}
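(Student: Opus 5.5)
The plan is to bound $V(r)-V(r_0)$ by an integral of $\lvert g'\rvert^2$, and then trade one factor $g'$ for a difference quotient of $g$ taken from the endpoint $t=1$. This is where the concavity of the fixed boundary \eqref{eq:concave_N} enters. Since $r_0>a$, \eqref{eq:Vt_def} gives
\begin{equation*}
V(r)-V(r_0)=\int_{r_0}^{r}\left(\sqrt{1+\lvert g'(t)\rvert^{2}}-1\right)\mathrm{d}t\le\frac12\int_{r_0}^{r}\lvert g'(t)\rvert^{2}\,\mathrm{d}t ,
\end{equation*}
so $2(V(r)-V(r_0))\le\int_{r_0}^{r}\lvert g'\rvert^{2}\,\mathrm{d}t$.

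Next, I will use concavity on the whole interval. By \eqref{eq:concave_N} together with \eqref{eq:concave_fb} and $g'(a-)=g'(a+)$, $g$ is concave on $(1,\infty)$. Hence for $t>r_0$,
\begin{equation*}
g'(t)\le\frac{g(t)-g(1)}{t-1}\le\frac{g(t)}{t-1},
\end{equation*}
where the last step uses $g(1)\ge0$. Multiplying by $g'(t)\ge0$ (see \eqref{eq:condition_dg_sign}) yields $\lvert g'(t)\rvert^{2}\le g(t)g'(t)/(t-1)$, and therefore
\begin{equation*}
2\big(V(r)-V(r_0)\big)\le\int_{r_0}^{\frac r2}\frac{g(t)g'(t)}{t-1}\,\mathrm{d}t+\int_{\frac r2}^{r}\frac{g(t)g'(t)}{t-1}\,\mathrm{d}t .
\end{equation*}
This is the one place where the choice of the endpoint $1$ instead of $a$ matters. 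Comparing with the secant from $a$ would only give $g(t)/(t-a)$, which does not match the kernel $1/(t-1)$ appearing in the statement. The analogous choice was made in Lemma \ref{lem:Nconcave_liminf_r0}, where $\lvert X-Y_0\rvert\le t-1$.

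Finally, I will show the last integral is small. By \eqref{eq:sharp_estimate_gdg}, $g(t)g'(t)\le C(\log t)^{-1/8}$ for $t>2a$, so for large $r$
\begin{equation*}
\int_{\frac r2}^{r}\frac{g(t)g'(t)}{t-1}\,\mathrm{d}t\le C\Big(\log\frac r2\Big)^{-\frac18}\log\frac{r-1}{\frac r2-1}\le C(\log r)^{-\frac18}\to0 .
\end{equation*}
Choosing $R_\varepsilon>2r_0$ so that this is less than $\varepsilon$ for $r>R_\varepsilon$ completes the argument. I expect no real obstacle here. The only subtle point is the concavity across the junction $t=a$ on all of $(1,\infty)$, which is exactly what hypothesis \eqref{eq:concave_N} supplies.
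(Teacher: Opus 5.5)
Your proof is correct and follows the paper's argument. The three ingredients match: the bound $2(V(r)-V(r_0))\le\int_{r_0}^{r}\lvert g'\rvert^2\,\mathrm{d}t$, the concavity estimate $g'(t)\le g(t)/(t-1)$ on $(1,\infty)$, and a smallness estimate on $(r/2,r)$. The only cosmetic difference is in the tail: the paper splits at $r/2$ before using concavity and bounds $\int_{r/2}^{r}\lvert g'\rvert^2\,\mathrm{d}t\le g'(r/2)g(r)$ directly, whereas you apply the concavity bound on all of $(r_0,r)$ and control the tail through \eqref{eq:sharp_estimate_gdg}. Both work.
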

\begin{proof}
By the expression of $V$ in \eqref{eq:Vt_def}, one has
\begin{equation*}\begin{split}
2\big(V(r)-V(r_{0})\big) &=2\int_{r_{0}}^{r} \left(\sqrt{1+\lvert g^{\prime}(t)\rvert^{2}}-1\right)\mathrm{d}t\\
&\le \int_{r_{0}}^{r} \lvert g^{\prime}(t)\rvert^{2}\mathrm{d}t = \int_{r_{0}}^{\frac r2} \lvert g^{\prime}(t)\rvert^{2}\mathrm{d}t + \int_{\frac r2}^{r} \lvert g^{\prime}(t)\rvert^{2}\mathrm{d}t \qquad\text{for } r>2r_0.
\end{split}\end{equation*}
The concavity of $g$ on $(1,\infty)$ (see \eqref{eq:concave_N} and \eqref{eq:concave_fb}) implies $g^{\prime}(t)\le g(t)/(t-1)$ for $t>1$. Hence
\begin{equation*}
\int_{r_{0}}^{\frac r2} \lvert g^{\prime}(t)\rvert^{2}\mathrm{d}t \le \int_{r_{0}}^{\frac r2} \frac{g(t)g^{\prime}(t)}{t-1}\mathrm{d}t \qquad\text{for } r>2r_0.
\end{equation*}
Moreover, using $g(t) \le Ct^{1/2}(\log t)^{-1/8}$ and $g^{\prime}(t)\le Ct^{-1/2}$ for $t>2a$ (see \eqref{eq:sharp_estimate_g} and \eqref{eq:sharp_estimate_dg}) yields
\begin{equation*}\begin{split}
\int_{\frac r2}^{r} \lvert g^{\prime}(t)\rvert^{2}\mathrm{d}t &\le g^{\prime}(r/2)\int_{\frac r2}^rg^{\prime}(t)\mathrm dt\le 
g^{\prime}(r/2)g(r)\le C(\log r)^{-1/8}=o(1)
\qquad\text{as } r \to \infty.
\end{split}\end{equation*}
Combining the above estimates finishes the proof of the lemma.
\end{proof}

We are now ready to prove the lower bound for $g(r)g^{\prime}(r)\log(r/g(r))$. 

\begin{proposition}\label{prop:concave_log}
Assume that $g$ satisfies \eqref{eq:condition_g_sublinear}, \eqref{eq:concave_N}, and \eqref{eq:concave_fb}. Then
\begin{equation*}
\liminf_{r \to \infty} g(r)g^{\prime}(r)\log\frac{r}{g(r)} > 0.
\end{equation*}
\end{proposition}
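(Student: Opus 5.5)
We start from the decomposition \eqref{eq:Vdiff_final}, $-V(r)+V(r_0)=I_0+I_1+I_2$, with $r_0$ fixed as in Lemma \ref{lem:Nconcave_liminf_r0}, and insert the upper bounds of Lemmas \ref{lem:Nconcave_liminf3}, \ref{lem:Nconcave_liminf4} and \ref{lem:Nconcave_liminf5}. For fixed $\varepsilon,\eta$ and $r>R_{\varepsilon,\eta}$ this gives
\begin{equation*}
-V(r)+V(r_0)\le -C_0+3\varepsilon+V(r)-V(r_0)+(2+\varepsilon)g(r+\eta r)g'(r-\eta r)\log\frac{\eta r}{g(r)}-\frac{1}{2\pi}\int_0^{2\pi}\int_{r_0}^{\frac r2}\frac{g(t)g'(t)}{\lvert X-Y_0\rvert}\mathrm{d}t\mathrm{d}\lambda.
\end{equation*}
Rearranging, we get
\begin{equation*}
C_0-3\varepsilon\le 2\big(V(r)-V(r_0)\big)-\frac{1}{2\pi}\int_0^{2\pi}\int_{r_0}^{\frac r2}\frac{g(t)g'(t)}{\lvert X-Y_0\rvert}\mathrm{d}t\mathrm{d}\lambda+(2+\varepsilon)g(r+\eta r)g'(r-\eta r)\log\frac{\eta r}{g(r)}.
\end{equation*}

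Next we bound the first two terms on the right together. By Lemma \ref{lem:Nconcave_liminf6}, $2(V(r)-V(r_0))\le\int_{r_0}^{r/2}\frac{g g'}{t-1}\,\mathrm{d}t+\varepsilon$. By Lemma \ref{lem:Nconcave_liminf_r0}, $\lvert X-Y_0\rvert\le t-1$ for $t>r_0$, hence $\frac{1}{\lvert X-Y_0\rvert}\ge\frac{1}{t-1}$. The $r_0$-choice was made precisely for this cancellation, so the sum of the first two terms is at most $\varepsilon$. Therefore
\begin{equation*}
C_0-4\varepsilon\le(2+\varepsilon)g(r+\eta r)g'(r-\eta r)\log\frac{\eta r}{g(r)}\qquad\text{for }r>R_{\varepsilon,\eta}.
\end{equation*}
Choosing $\varepsilon=C_0/8$, which is allowed since $C_0>0$ by the concavity of $N$ and \eqref{eq:condition_speed} as shown in Lemma \ref{lem:Nconcave_liminf3}, gives the positive lower bound $C_0/(2(2+\varepsilon))$ for the $\eta$-shifted quantity.

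It remains to remove the $\eta$-shift, and this is the step needing the most care. We use \eqref{eq:lower_bound_log13}, $g(r+\eta r)\le(1+4\eta)g(r-\eta r)$. We also use $\log\frac{\eta r}{g(r)}\le\log\frac{r-\eta r}{g(r-\eta r)}+\log\frac{\eta}{1-\eta}\le\log\frac{\tilde r}{g(\tilde r)}$ for $\tilde r=r-\eta r$, by \eqref{eq:lower_bound_log14}, since $\log\frac{\eta}{1-\eta}<0$. Together these yield $(1+4\eta)(2+\varepsilon)\,g(\tilde r)g'(\tilde r)\log\frac{\tilde r}{g(\tilde r)}\ge C_0/2$ for all large $\tilde r$. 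Fixing, say, $\eta=1/8$, we conclude $\liminf_{r\to\infty}g(r)g'(r)\log(r/g(r))\ge C_0/(6(2+\varepsilon))>0$.

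Finally, the bounded $g$ case should be handled consistently. If $g$ is bounded, concavity gives $g'\equiv0$ eventually, which is not excluded a priori. In the argument above, however, the lower bound comes out directly from positivity of $C_0$, so no unboundedness assumption is used. Hence boundedness is in fact ruled out as a byproduct.
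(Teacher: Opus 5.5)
Your proposal is correct and follows the paper's proof essentially step for step. You combine \eqref{eq:Vdiff_final} with Lemmas \ref{lem:Nconcave_liminf3}--\ref{lem:Nconcave_liminf5}, cancel $2(V(r)-V(r_0))$ against the $\lvert X-Y_0\rvert^{-1}$ integral via Lemmas \ref{lem:Nconcave_liminf_r0} and \ref{lem:Nconcave_liminf6}, and remove the $\eta$-shift with \eqref{eq:lower_bound_log13}--\eqref{eq:lower_bound_log14}; dropping $\log\frac{\eta}{1-\eta}<0$ is a slightly cleaner route than the paper's $(1+o(1))$ factor. Two small slips: take $\varepsilon=\min\{C_0/8,1/2\}$ so that $\varepsilon\in(0,1)$ as the lemmas require, and your side remark that a bounded concave $g$ must have $g'\equiv0$ eventually is false (e.g.\ $g(t)=2-1/t$), though, as you note, nothing in the argument uses it.
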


\begin{proof}
For each $\varepsilon\in(0,1)$ and $\eta\in(0,1/4)$, from  \eqref{eq:Vdiff_final} and Lemmas \ref{lem:Nconcave_liminf3}-\ref{lem:Nconcave_liminf5}, we obtain
\begin{equation*}
	\begin{split}
		-V(r)+V(r_{0})
		\le&\ -C_0 + V(r)-V(r_{0}) + (2+\varepsilon)g(r+\eta r)g^{\prime}(r-\eta r)\log\frac{\eta r}{g(r)}\\
		&\ - \frac{1}{2\pi}\int_{0}^{2\pi}\int_{r_{0}}^{\frac r2} \frac{g(t)g^{\prime}(t)}{\lvert X-Y_{0}\rvert}\mathrm{d}t\mathrm{d}\lambda + \varepsilon \qquad \text{for } r > R_{\varepsilon,\eta},
	\end{split}
\end{equation*}
where the constants $C_0>0$ and $R_{\varepsilon,\eta}>2r_0$. Equivalently, 
\begin{equation*}\begin{split}
&\ (2+\varepsilon)g(r+\eta r)g^{\prime}(r-\eta r)\log\frac{\eta r}{g(r)} \\
\ge&\ C_0 - 2\big(V(r)-V(r_{0})\big) + \frac{1}{2\pi}\int_{0}^{2\pi} \int_{r_{0}}^{\frac r2} \frac{g(t)g^{\prime}(t)}{\lvert X-Y_{0}\rvert}\mathrm{d}t\mathrm{d}\lambda- \varepsilon 
\qquad \text{for } r > R_{\varepsilon,\eta}.
\end{split}\end{equation*}
In view of $\lvert X-Y_0\lvert\le t-1$ for $t>r_0$ (see Lemma \ref{lem:Nconcave_liminf_r0}) and Lemma \ref{lem:Nconcave_liminf6}, we further deduce
\begin{equation}\label{eq:Nconcave_liminf61}\begin{split}
(2+\varepsilon)g(r+\eta r)g^{\prime}(r-\eta r) \log\frac{\eta r}{g(r)} 
&\ge C_0 - 2\big(V(r)-V(r_{0})\big) + \int_{r_{0}}^{\frac r2} \frac{g(t)g^{\prime}(t)}{t-1}\mathrm{d}t- \varepsilon\\
&\ge C_0-2\varepsilon
\qquad \text{for } r > R_{\varepsilon,\eta},
\end{split}\end{equation}
where $R_{\varepsilon,\eta}$ is chosen larger if necessary.

Let $\eta = 1/10$ and $\tilde{r}:=r-\eta r$. As shown in \eqref{eq:refined_ineq_6} and \eqref{eq:refined_ineq_5}, by the concavity of $g$, 
\begin{equation*}
g(r+\eta r) \le Cg(\tilde{r})
\end{equation*}
for some $C>0$. On the other hand, using the monotonicity of $g$ (see \eqref{eq:condition_dg_sign}) gives 
\begin{equation*}
\log\frac{\eta r}{g(r)} 
\le \log \frac{\eta r}{g(\tilde{r})} = \log \frac{\tilde{r}}{g(\tilde{r})} + \log\frac{\eta}{1-\eta} = (1+o(1))\log\frac{\tilde{r}}{g(\tilde{r})} \qquad\text{as } r \to \infty.
\end{equation*}
Plugging the upper bounds of $g(r+\eta r)$ and $\log(\eta r/g(r))$ into \eqref{eq:Nconcave_liminf61} leads to
\begin{equation*}
C(2+\varepsilon)(1+o(1))g(\tilde{r})g^{\prime}(\tilde{r})\log \frac{\tilde{r}}{g(\tilde{r})} \ge C_0-2\varepsilon \qquad \text{for } r > R_{\varepsilon,1/10}.
\end{equation*}
Letting $r \to \infty$ and then $\varepsilon \to 0$ yields
\begin{equation*}
	\liminf_{\tilde{r}\to\infty} g(\tilde{r})g^{\prime}(\tilde{r})\log\frac{\tilde{
    r}}{g(\tilde{r})} \ge \frac{C_0}{2C} > 0.
\end{equation*}
Hence the proof of the proposition is completed.
\end{proof}

\subsection{A Tauberian step and proof of the lower growth bound as well as the main theorems}\label{subsec:taub}

In this subsection, we show that the positive lower bound on $g(r)g^{\prime}(r)\log(r/g(r))$ in Proposition \ref{prop:concave_log} gives the desired lower estimate for $g(r)$. 
The following Tauberian-type growth proposition, based on phase-space analysis, bridges the refined asymptotic equality from Proposition \ref{prop:lower_bound_log1}, with error improved to $o(g(r)g^{\prime}(r)\log(r/g(r)))$, to the desired lower bound for $g$.

\begin{proposition}[A Tauberian-type growth result]\label{prop:Tauberian}
Assume that $g$ satisfies \eqref{eq:condition_g_sublinear} and \eqref{eq:concave_fb}. Let $V$ be as in \eqref{eq:Vt_def}. If there exists a fixed constant $\tilde C$ (not necessarily to be positive) such that 
\begin{equation}\label{eq:Tauberian_condition}
V(r) + g(r)g^{\prime}(r)\log\frac{r}{g(r)} - \frac{1}{2}\int_{a}^{r} \frac{g(t)g^{\prime}(t)}{t}\mathrm{d}t 
= \tilde C+o\left(g(r)g^{\prime}(r)\log\frac{r}{g(r)}\right) \qquad\text{as } r \to \infty,
\end{equation}
then 
\begin{equation*}
\liminf_{r \to \infty} \frac{g(r)}{r^{1/2}(\log r)^{\delta}} > 0 \qquad \text{for every } \delta < -\frac{1}{4}.
\end{equation*}
\end{proposition}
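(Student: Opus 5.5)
The plan is to turn the hypothesis \eqref{eq:Tauberian_condition} into a first-order differential inequality in the logarithmic variable $s:=\log r$ and integrate it. I would introduce the auxiliary quantities
\begin{equation*}
P(r):=g(r)g^{\prime}(r)L(r),\qquad L(r):=\log\frac{r}{g(r)},\qquad Z(r):=\frac12\int_a^r\frac{g(t)g^{\prime}(t)}{t}\mathrm{d}t-V(r),\qquad Y:=\tilde C+Z .
\end{equation*}
In this notation \eqref{eq:Tauberian_condition} reads $P=(1+o(1))Y$. By \eqref{eq:Vt_def}, $V^{\prime}=\sqrt{1+g^{\prime 2}}-1\le \frac12 g^{\prime 2}$. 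Hence
\begin{equation*}
Z^{\prime}(r)\ge \frac{g^{\prime}(r)}{2}\Big(\frac{g(r)}{r}-g^{\prime}(r)\Big)=\frac{g(r)g^{\prime}(r)}{2r}\,\big(1-\kappa(r)\big),
\end{equation*}
where $\kappa(r):=rg^{\prime}(r)/g(r)$ is the frequency. By concavity \eqref{eq:concave_fb}, $g^{\prime}\le (g-g(a))/(r-a)$, so $1-\kappa\ge -O(1/r)$ and $Z$ is essentially nondecreasing. The first step is to check that $Y$ is eventually positive. I would argue that if $g^{\prime}$ vanished at some point then, by concavity, $g$ would be eventually constant, and this degenerate case has to be excluded. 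This is exactly where the nontriviality and unboundedness of $g$ enter, since constant $g$ satisfies \eqref{eq:Tauberian_condition} trivially. Otherwise $Z$ is strictly increasing, and $P\ge0$ together with $P=(1+o(1))Y$ forces $Y>0$ from some point on.

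The key computation is the observation that, in the variable $s$,
\begin{equation*}
\frac{\mathrm{d}}{\mathrm{d}s}\log g=\kappa,\qquad \frac{\mathrm{d}L}{\mathrm{d}s}=1-\kappa .
\end{equation*}
Thus the factor $1-\kappa$ in $Z^{\prime}$ is exactly $\mathrm{d}L/\mathrm{d}s$, and the possibly oscillating frequency never needs to be controlled separately. Using $gg^{\prime}=P/L$, the inequality for $Z$ becomes
\begin{equation*}
\frac{\mathrm{d}Y}{\mathrm{d}s}=r Z^{\prime}\ge \frac{P}{2L}\,\frac{\mathrm{d}L}{\mathrm{d}s}-O(r^{-1}P)=\frac{1+o(1)}{2}\,Y\,\frac{\mathrm{d}\log L}{\mathrm{d}s}-o(1)\,Y,
\end{equation*}
where $L\to\infty$ by \eqref{eq:condition_g_sublinear}. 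Integrating this differential inequality gives, for every $\varepsilon>0$, $Y(r)\ge c_\varepsilon L(r)^{1/2-\varepsilon}$ for large $r$. This is the dynamical-system version of the ODE \eqref{ODE}, and it is exactly where the exponent $1/2$, and hence later $-1/4$, is selected.

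To return to $g$, I would write $P=\kappa\,\frac{g^2}{r}\,L$ and use $\kappa\le r/(r-a)\le 2$. This gives
\begin{equation*}
\frac{g(r)^2}{r}\ge \frac{P}{2L}\ge c\,\frac{Y}{L}\ge c_\varepsilon L^{-1/2-\varepsilon}.
\end{equation*}
Since $L=\log r-\log g\le \log r$ once $g\ge1$, this yields $g(r)\ge c_\varepsilon r^{1/2}(\log r)^{-1/4-\varepsilon/2}$, which is the claim with $\delta=-\frac14-\frac{\varepsilon}{2}$ for arbitrary $\varepsilon>0$. The step I expect to be the main obstacle is keeping the perturbation terms honest while integrating the differential inequality. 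These are the $o(P)$ error and the $O(1/r)$ defect in $1-\kappa$ coming from $g^{\prime}\le g/(r-a)$ rather than $g^{\prime}\le g/r$. The $o(1)Y$ term is harmless in the integration only if it is $o(Y\,\mathrm{d}\log L/\mathrm{d}s)$. Since $\mathrm{d}\log L/\mathrm{d}s$ may be small where $\kappa$ is close to $1$, I would keep the error multiplicative in the form $(1+o(1))\,\mathrm{d}\log L/\mathrm{d}s$ and absorb the $O(1/r)$ defect, whose integral in $s$ is finite, into a constant factor. Handling the a priori possibility $Y\to0^+$, and the degenerate bounded case, is the other delicate point.
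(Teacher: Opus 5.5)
Your argument is correct, and it takes a genuinely different and shorter route than the paper's. The paper works with $p=rU/(g^2\log(r/g))$ (its $U$ is your $Y$) and $w=2\log g-\log r$, in three stages:
\begin{itemize}
\item it derives the first-order system $p'/p=1+o(1)-(2+o(1))p$;
\item it shows by a phase-line argument that $p\to\frac12$, i.e.\ that the frequency $\kappa=rg'/g$ converges to $\frac12$;
\item only then does it compute $(\log p+w)'=-\frac{1+o(1)}{4\log(r/g)}$.
\end{itemize}
The last stage expands $V'$ to leading order and uses the a priori bound $g\le Cr^{1/2}$ from \eqref{eq:sharp_estimate_g} to get $\log(r/g)\ge(\frac12-o(1))\log r$.

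You bypass the convergence of the frequency altogether. You combine three ingredients: the one-sided bound $V'\le\frac12 g'^2$, the identity $\frac{\mathrm{d}}{\mathrm{d}s}\log L=(1-\kappa)/L$, and the fact that the hypothesis enters only multiplicatively through $Y=(1+o(1))P$. Together they give
\[
\frac{\mathrm{d}\log Y}{\mathrm{d}s}\ \ge\ \frac{1-\varepsilon}{2}\,\frac{\mathrm{d}\log L}{\mathrm{d}s}-\frac{C}{r}.
\]
Then $g^2/r=P/(\kappa L)$ with $\kappa\le r/(r-a)$ turns $Y\gtrsim L^{1/2-\varepsilon}$ into the claim.

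You never use $g\le Cr^{1/2}$, which in the paper rests on the representation formula via Proposition~\ref{prop:g_refined_growth}. So your version is a self-contained real-variable lemma for any concave sublinear $g$ satisfying \eqref{eq:Tauberian_condition}. What the paper's route buys in exchange is extra structural information: $rg'/g\to\frac12$ and $U\sim g^2\log(r/g)/(2r)$, i.e.\ the profile sits at the attracting equilibrium of the reduced system.

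On the delicate points you flag:
\begin{itemize}
\item \textbf{The degenerate case is a genuine exception.} Unboundedness is not an assumption of the proposition. An eventually constant $g$ satisfies \eqref{eq:condition_g_sublinear} and \eqref{eq:concave_fb}. The left-hand side of \eqref{eq:Tauberian_condition} is then eventually constant and $g g'\log(r/g)\equiv0$ eventually, so the hypothesis holds with $\tilde C$ equal to that constant, yet the conclusion fails. So the statement genuinely needs $g'>0$ for large $r$. The paper's proof uses this implicitly when it asserts $U(e^x)>0$, and in the application (Proposition~\ref{prop:g_growth_lower}) it is supplied by \eqref{eq:liminf_positive}.
\item \textbf{Positivity of $Y$.} Once $g'>0$, $Y>0$ eventually follows directly from $Y=(1+o(1))P$ and $P>0$. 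No monotonicity of $Z$ is needed.
\item \textbf{$Y\to0^+$.} This cannot occur: integrating from any $s_0$ beyond which $Y>0$ already forces $Y\to\infty$.
\item \textbf{$g\ge1$ is not needed.} $g$ is nondecreasing and eventually positive, so $L\le\log r+O(1)$. In particular your argument itself shows that $g$ is unbounded.
\item \textbf{The $O(1/r)$ defect can be dropped.} Set $h:=g-rg'$; then $h'=-rg''\ge0$. If $h\le0$ on all of $(a,\infty)$, then $g/r$ would be nondecreasing, which is impossible for $g=o(r)$ with $g'>0$. Hence $\kappa<1$ for large $r$.
\end{itemize}
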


\begin{proof}
First, we denote
\begin{equation}\label{eq:Tauberian_U_def}
U(r):= \frac{1}{2}\int_{a}^{r}\frac{g(t)g^{\prime}(t)}{t}\mathrm{d}t - V(r) + \tilde C, \qquad x:=\log r,
\end{equation}
and let
\begin{equation}\label{eq:Tauberian_notation}\begin{split}
&p(x):=\frac{e^xU(e^x)}{g(e^x)^2\log(e^x/g(e^x))}=\frac{rU(r)}{g(r)^2\log(r/g(r))},\\
&w(x):=2\log g(e^x)-x=2\log g(r)-\log r.  
\end{split}\end{equation}
The proof proceeds in four steps.

\textit{Step 1. Estimate for $p^{\prime}(x)/p(x)$.} Since
\begin{equation*}
\log p(x) = \log\frac{e^{x}U(e^{x})}{g(e^{x})^{2}\log(e^x/g(e^x))} = x + \log U(e^{x}) - 2\log g(e^{x}) - \log\log(e^x/g(e^x)),
\end{equation*}
differentiating with respect to $x$ gives
\begin{equation}\label{eq:Tauberian_pdp_expression}
\begin{split}
\frac{p^{\prime}(x)}{p(x)}=\left(\log p(x)\right)^{\prime}&=1+\frac{e^xU^{\prime}(e^x)}{U(e^x)}-\frac{2e^xg^{\prime}(e^x)}{g(e^x)}-\frac{g(e^x)-e^xg^{\prime}(e^x)}{g(e^x)\log(e^x/g(e^x))}\\
&=1+\frac{rU^{\prime}(r)}{U(r)}-\frac{2rg^{\prime}(r)}{g(r)}-\frac{g(r)-rg^{\prime}(r)}{g(r)\log(r/g(r))}.
\end{split}    
\end{equation}

We claim that
\begin{equation}\label{eq:Tauberian_pdp}
\frac{p^{\prime}(x)}{p(x)} = 1  + o(1) - (2+o(1))p(x)
\qquad\text{as } x \to \infty.
\end{equation}
To prove this, we estimate the last three terms on the right-hand side of \eqref{eq:Tauberian_pdp_expression}. 
It follows from the definition of $U$ and the expression of $V$ (see \eqref{eq:Tauberian_U_def} and \eqref{eq:Vt_def}) that
\begin{equation*}
U^{\prime}(r)=\frac{g(r)g^{\prime}(r)}{2r}-\left(\sqrt{1+\lvert g^{\prime}(r)\rvert^2}-1\right)=\frac{g(r)g^{\prime}(r)}{2r}-\frac{\lvert g^{\prime}(r)\rvert^2}{\sqrt{1+\lvert g^{\prime}(r)\rvert^2}+1} \qquad\text{for } r>a.
\end{equation*}
Moreover, in view of \eqref{eq:Tauberian_condition}, one has
\begin{equation}\label{eq:Tauberian_U_estimate}
U(r) =(1+o(1))g(r)g^{\prime}(r)\log\frac{r}{g(r)}
\qquad\text{as } r \to \infty.
\end{equation}
Hence, using $g(r)\le Cr^{1/2}$ and $0\le g^{\prime}(r)\le 2g(r)/r$ for $r>2a$ (see \eqref{eq:sharp_estimate_g} and \eqref{eq:sharp_estimate_dg}) yields
\begin{equation}\label{eq:Tauberian_pdp1}
\frac{rU^{\prime}(r)}{U(r)}
=\frac{1+o(1)}{\log(r/g(r))}\left(\frac{1}{2}-\frac{rg^{\prime}(r)}{g(r)}\cdot\frac{1}{\sqrt{1+\lvert g^{\prime}(r)\rvert^2}+1}\right)\to0  \qquad\text{as } r \to \infty.
\end{equation}
Also, 
\begin{equation}\label{eq:Tauberian_pdp3}
    \frac{g(r)-rg^{\prime}(r)}{g(r)\log(r/g(r))}\to0 
    \qquad\text{as } r \to \infty.
\end{equation}
On the other hand, by the estimate for $U(r)$ in \eqref{eq:Tauberian_U_estimate} and the definition of $p(x)$ in \eqref{eq:Tauberian_notation}, 
\begin{equation}\label{eq:Tauberian_pdp2}
\frac{2rg^{\prime}(r)}{g(r)}=\frac{2g(r)g^{\prime}(r)\log(r/g(r))}{U(r)}\cdot\frac{rU(r)}{g(r)^2\log(r/g(r))}=2(1+o(1))p(x) \qquad\text{as } r \to \infty.
\end{equation}
Then \eqref{eq:Tauberian_pdp} follows from \eqref{eq:Tauberian_pdp_expression} and \eqref{eq:Tauberian_pdp1}-\eqref{eq:Tauberian_pdp2}. 

\textit{Step 2. Asymptotics of $p(x)$.} 
Our goal of this step is to show
\begin{equation}\label{eq:Tauberian_p_lim}
\lim_{x \to \infty} p(x) = \frac{1}{2}.
\end{equation}
This is equivalent to prove that for each $\varepsilon \in (0,1/10)$, there exists an $x_{\varepsilon}>0$ such that
\begin{equation}\label{eq:Tauberian_p_lim1}
\frac{1}{2} - \varepsilon \le p(x) \le \frac{1}{2} + \varepsilon \qquad \text{for all } x > x_{\varepsilon}.
\end{equation}

Note that for large $x$, $U(e^x)>0$ by \eqref{eq:Tauberian_U_estimate} and hence $p(x)>0$ by its definition. In view of \eqref{eq:Tauberian_pdp}, for each $\varepsilon \in (0,1/10)$, there exists an $x_{\varepsilon}>0$ such that
\begin{equation*}
1 - \varepsilon^{2} - (2+\varepsilon^{2})p(x) \le \frac{p^{\prime}(x)}{p(x)} \le 1 + \varepsilon^{2} - (2-\varepsilon^{2})p(x) \qquad \text{for all } x > x_{\varepsilon}.
\end{equation*}
We proceed the analysis through the following two cases. 

\textit{Case (i). $p(x) > 1/2 + \varepsilon$ for some $x > x_{\varepsilon}$.} 
For such $x$, 
\begin{equation*}
\frac{p'(x)}{p(x)}\le 1 + \varepsilon^{2} - (2-\varepsilon^{2})\left(\frac12 + \varepsilon\right)
= -2\varepsilon + \frac32\varepsilon^{2} + \varepsilon^{3} < -\varepsilon.
\end{equation*}
Thus, integrating from $x$ to $y$ as long as the integration variable satisfies $p(t) \ge 1/2 + \varepsilon$ on $(x,y)$ gives $p(y) \le p(x) e^{-\varepsilon(y-x)}$. In particular, $p$ decays exponentially until it falls below the level $1/2+\varepsilon$. Therefore, by increasing $x_{\varepsilon}$ if necessary, we obtain $p(x) \le 1/2 + \varepsilon$ for all $x > x_{\varepsilon}$.

\textit{Case (ii). $p(x) < 1/2 - \varepsilon$ for some $x > x_{\varepsilon}$.} 
For such $x$,
\begin{equation*}
\frac{p'(x)}{p(x)}\ge 1 - \varepsilon^{2} - (2+\varepsilon^{2})\left(\frac12 - \varepsilon\right)
= 2\varepsilon - \frac{3\varepsilon^2}{2} + \varepsilon^3 > \varepsilon.
\end{equation*}
Hence, $p(y)\ge p(x) e^{\varepsilon(y-x)}$ on the maximal interval containing $x$ on which $p(t) \leq 1/2-\varepsilon$. 
This implies $p$ grows exponentially until it reaches the level $1/2-\varepsilon$. Increasing $x_{\varepsilon}$ if necessary, we conclude that $p(x)\ge 1/2-\varepsilon$ for all $x > x_{\varepsilon}$. 

Combining the analysis for the both cases yields \eqref{eq:Tauberian_p_lim1}.

\textit{Step 3. Lower bound for $(\log p(x)+w(x))^{\prime}$.} 
For $w(x)$ defined in \eqref{eq:Tauberian_notation}, the straightforward calculations give 
\begin{equation}\label{eq:Tauberian_dw}
w^{\prime}(x) = \frac{2e^{x}g^{\prime}(e^x)}{g(e^{x})} - 1 =\frac{2rg^{\prime}(r)}{g(r)} - 1.
\end{equation} 
Note that by \eqref{eq:Tauberian_pdp2} and \eqref{eq:Tauberian_p_lim},
\begin{equation*}
\frac{rg^{\prime}(r)}{g(r)}=\frac{1+o(1)}2 \qquad\text{as } r\to\infty;
\end{equation*}
moreover, $g^{\prime}(r)\to 0$ as $r\to\infty$ (see \eqref{eq:condition_dg_to0}). 
It follows from \eqref{eq:Tauberian_pdp_expression}, \eqref{eq:Tauberian_dw}, and \eqref{eq:Tauberian_pdp1} that 
\begin{equation*}\begin{split}
\left(\log p(x) + w(x)\right)^{\prime}
&= \frac{rU^{\prime}(r)}{U(r)}-\frac{g(r)-rg^{\prime}(r)}{g(r)\log(r/g(r))}\\
&=\frac{1+o(1)}{4\log(r/g(r))}-\frac{1+o(1)}{2\log(r/g(r))}=-\frac{1+o(1)}{4(x-\log g(e^x))} \qquad\text{as } x\to\infty.
\end{split}\end{equation*}
Since $g(r)\le Cr^{1/2}$ (see \eqref{eq:sharp_estimate_g}), we have 
\[
\log g(e^x)\le \frac{1+o(1)}2x\qquad 
\text{as }  x\to\infty.
\]
Therefore, it holds that 
\begin{equation*}
\left(\log p(x) + w(x)\right)^{\prime}\ge -\frac{1+o(1)}{2x}
\qquad\text{as } x\to\infty.
\end{equation*}

\textit{Step 4. Lower bound for the growth of $g$.} 
By Step 3, for each $\varepsilon > 0$, there exists an $x_{\varepsilon}>0$ such that
\begin{equation*}
\left(\log p(x) + w(x)\right)^{\prime} \ge - \frac{1+\varepsilon}{2x} \qquad \text{for } x > x_{\varepsilon}.
\end{equation*}
Integrating over $[x_\varepsilon,x]$ for $x>x_\varepsilon$ yields
\begin{equation*}
\log p(x) + w(x) \ge \log p(x_{\varepsilon}) + w(x_{\varepsilon}) - \frac{1+\varepsilon}{2} \log\frac{x}{x_{\varepsilon}} = - \frac{1+\varepsilon}{2}\log x - C_{\varepsilon} \qquad \text{for } x > x_{\varepsilon}.
\end{equation*}
Since $p(x)\to1/2$ as $x\to\infty$ (see Step 2), choosing a larger $x_{\varepsilon}$ if necessary, one has $\log p(x) < 0$ for $x > x_{\varepsilon}$. Hence
\begin{equation*}
w(x) \ge -\frac{1+\varepsilon}{2}\log x - C_{\varepsilon} \qquad \text{for } x > x_{\varepsilon}.
\end{equation*}
Recall that $w(x)=2\log g(r)-\log r$. It follows that
\begin{equation*}
\begin{split}
\frac{g(r)^{2}}{r} = e^{w(x)} \ge e^{-\frac{1+\varepsilon}{2}\log x - C_{\varepsilon}} = e^{-C_{\varepsilon}} x^{-\frac{1+\varepsilon}{2}} = e^{-C_{\varepsilon}}(\log r)^{-\frac{1+\varepsilon}{2}} \qquad \text{for } r > R_{\varepsilon} := e^{x_{\varepsilon}}.
\end{split}
\end{equation*}
That is,
\begin{equation*}
g(r) \ge e^{-C_{\varepsilon}}r^{\frac 12}(\log r)^{-\frac{1+\varepsilon}{4}} \qquad \text{for } r > R_{\varepsilon}.
\end{equation*} 
Hence the proof of the proposition is completed. 
\end{proof}

Combining the previous proposition with Proposition \ref{prop:lower_bound_log1}, we obtain the following.

\begin{proposition}\label{prop:g_growth_lower}
Assume that $g$ satisfies \eqref{eq:condition_g_sublinear} and \eqref{eq:concave_fb}. If 
\begin{equation}\label{eq:liminf_positive}
\liminf_{r \to \infty} g(r)g^{\prime}(r)\log\frac{r}{g(r)} > 0,
\end{equation}
then 
\begin{equation}\label{eq:lower_bound_log}
\liminf_{r \to \infty} \frac{g(r)}{r^{1/2}(\log r)^{\delta}} > 0 \qquad \text{for every } \delta < -\frac{1}{4}.
\end{equation}
\end{proposition}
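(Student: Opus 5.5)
The plan is to verify the hypothesis \eqref{eq:Tauberian_condition} of Proposition \ref{prop:Tauberian} and then invoke it directly. If $g$ is bounded, then $g(r)g'(r)\log(r/g(r))\to 0$: indeed $g'(r)\le 2g(r)/r$ by \eqref{eq:sharp_estimate_dg}, so $g g'\log(r/g)\le C\log r/r\to0$. This contradicts \eqref{eq:liminf_positive}. Hence $g$ is unbounded, and by concavity \eqref{eq:condition_g_infty} holds. In particular all earlier results apply, notably Proposition \ref{prop:lower_bound_log1}.

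Write $L(r):=g(r)g'(r)\log(r/g(r))$. Proposition \ref{prop:lower_bound_log1} gives a constant $C$ with
\[
V(r)+(1+o(1))L(r)-\tfrac12\int_a^r\frac{g(t)g'(t)}{t}\,\mathrm dt=C+o(1).
\]
Rewriting $(1+o(1))L(r)=L(r)+o(L(r))$, this becomes
\[
V(r)+L(r)-\tfrac12\int_a^r\frac{gg'}{t}\,\mathrm dt=C+o(L(r))+o(1).
\]
By \eqref{eq:liminf_positive} there are $c>0$ and $R$ with $L(r)\ge c$ for $r>R$. Then $o(1)\le c^{-1}L(r)\,o(1)=o(L(r))$, so the error is $o(L(r))$. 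This is exactly \eqref{eq:Tauberian_condition} with $\tilde C=C$.

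Proposition \ref{prop:Tauberian} then yields \eqref{eq:lower_bound_log} for every $\delta<-1/4$. The only point requiring care is the meaning of the $o(1)$ multiplying $L$ in Proposition \ref{prop:lower_bound_log1}. It is a genuine multiplicative factor $1+o(1)$: its proof gives two-sided bounds with coefficients $1\pm8\eta$ for every $\eta$. Consequently the product is $L+o(L)$ without any sign issue, since $L\ge0$. The absorption of the additive $o(1)$ into $o(L)$ is the crucial use of the positive lower bound; this is the main (though short) obstacle, and it is resolved by \eqref{eq:liminf_positive}.
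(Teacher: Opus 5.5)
Your proposal is correct and follows the paper's proof. Both arguments combine Proposition \ref{prop:lower_bound_log1} with \eqref{eq:liminf_positive}, which absorbs the multiplicative $o(1)$ and the additive $o(1)$ into an $o\bigl(g(r)g'(r)\log(r/g(r))\bigr)$ error; that is exactly hypothesis \eqref{eq:Tauberian_condition} of Proposition \ref{prop:Tauberian}. Your preliminary exclusion of bounded $g$ is harmless but unnecessary, since Propositions \ref{prop:lower_bound_log1} and \ref{prop:Tauberian} are stated under \eqref{eq:condition_g_sublinear} and \eqref{eq:concave_fb} alone.
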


\begin{proof}
It follows from Proposition \ref{prop:lower_bound_log1} and \eqref{eq:liminf_positive} that 
\begin{equation*}\begin{split}
&\ V(r) + (1+o(1))g(r)g^{\prime}(r)\log\frac{r}{g(r)} - \frac{1}{2} \int_{a}^{r} \frac{g(t)g^{\prime}(t)}{t}\mathrm{d}t\\
=&\ C +o\left(g(r)g^{\prime}(r)\log\frac{r}{g(r)}\right) \qquad\text{as } r \to \infty.
\end{split}\end{equation*}
This equality is equivalent to the condition \eqref{eq:Tauberian_condition} in Proposition \ref{prop:Tauberian}. Consequently,  \eqref{eq:lower_bound_log} holds.
\end{proof}

\begin{proof}[Proof of Theorems \ref{thm:results_1} and \ref{thm:existence}]
Lemma \ref{lem:concavity} and Propositions \ref{prop:g_refined_growth}, \ref{prop:concave_log}, and \ref{prop:g_growth_lower} imply Theorem \ref{thm:results_1}. Together with the existence theory of \cite{CF_1982,GLS_1952} for axially symmetric cavity flows with sublinear growth at infinity, Theorem \ref{thm:results_1}, which applies to conical fixed boundaries (being both convex and concave), yields Theorem \ref{thm:existence}.
\end{proof}

\medskip\textbf{Acknowledgement.} Y. Li is partially supported by NSFC Grant No. 12526508 and Zhejiang Provincial Natural Science Foundation of China under Grant No. LQN26A010009.  The research of Xie is partially supported by NSFC grants 12571238 and 12426203.

\end{document}